\documentclass[11pt,reqno]{amsart}
\usepackage{mathrsfs}
\usepackage{amsgen}
\usepackage{amscd}
\usepackage{amsmath}
\usepackage{latexsym}
\usepackage{amsfonts}
\usepackage{amssymb}
\usepackage{amsthm}
\usepackage{accents}
\usepackage{graphicx}
\usepackage{color}
\usepackage{amsthm,amssymb}
\usepackage{graphicx}
\usepackage{enumerate}
\usepackage{amssymb,amsmath,graphicx,amsfonts,euscript}
\usepackage{color}
\usepackage{mathrsfs}
\usepackage{empheq}
\usepackage{harpoon}
\usepackage{cases}
\usepackage{cite}
\usepackage{multirow}

\usepackage[colorlinks=true,backref]{hyperref}
\hypersetup{urlcolor=blue, citecolor=green, linkcolor=blue}

%
%

\setlength{\parindent}{2em}

\usepackage[margin=3cm, a4paper]{geometry}

\def\H{{\cal H}}
\def\F{\mathscr{F} }
\def\N{\mathbb{N}}
\def\R{\mathbb{R}}

\def\T{\mathbb{T}}
\def\C{\mathbb{C}}

\def\H2{H^2(\R^N)}
\def\L2{L^2(\R^N)}
\def\to{\rightarrow}

\newcommand{\myvec}[1]%
{\stackrel{\raisebox{-2pt}[0pt][0pt]{\small$\rightharpoonup$}}{#1}}

\parindent = 20 pt
\parskip = 5 pt

\textwidth 6.5in \textheight 9.2in \setlength{\topmargin}{0.1in}
\addtolength{\topmargin}{-\headheight}
\addtolength{\topmargin}{-\headsep}

\setlength{\oddsidemargin}{0in} \oddsidemargin  0.0in
\evensidemargin 0.0in

\def\jb#1{\langle#1\rangle} \def\norm#1{\|#1\|}
\def\normb#1{\bigg\|#1\bigg\|} 

\def\H{{\cal H}}

\def\H1{H^1(\R)}

\newcommand{\al}{\alpha}

   \newcommand{\I}{\infty}

 \newcommand{\Del}[1]{}

\numberwithin{equation}{section}

\newtheorem{thm}{Theorem}[section]

\newtheorem{lem}[thm]{Lemma}
\newtheorem{prop}[thm]{Proposition}
\newtheorem{definition}[thm]{Definition}
\newtheorem{conjecture}[thm]{Conjecture}
\theoremstyle{remark}
\newtheorem{remark}[thm]{Remark}
\newtheorem*{exam*}{Examples}

\newcommand{\EQ}[1]{\begin{align*}\begin{split} #1 \end{split}\end{align*}}
\setlength{\marginparwidth}{2cm}
\newcommand{\EQn}[1]{\begin{align}\begin{split} #1 \end{split}\end{align}}
\setlength{\marginparwidth}{2cm}

\setlength{\marginparwidth}{2cm}
\newcommand{\EQnnsub}[1]{\begin{subequations}\begin{align} #1 \end{align}\end{subequations}}
\setlength{\marginparwidth}{2cm}

\def\norm#1{\left\|#1\right\|}

\def\normb#1{\big\|#1\big\|}
\def\normbb#1{\Big\|#1\Big\|}

\def\jb#1{\langle#1\rangle}




\def\lsm{\lesssim}




\def\al{\alpha}

\begin{document}

	\setcounter{page}{1}

	\title[NLS with rough potential]{Regularization for the Schr\"{o}dinger equation with rough potential: high-dimensional case}

\author{Ruobing Bai}
\address{Ruobing Bai \newline School of Mathematics and Statistics\\
Henan University\\
Kaifeng 475004, China}
\email{baimaths@hotmail.com}
\thanks{}

	\author{Yajie Lian}
	\address{Yajie Lian \newline Center for Applied Mathematics\\
		Tianjin University\\
		Tianjin 300072, China}
	\email{yjlian@tju.edu.cn}
	\thanks{}

	\author{Yifei Wu}
	\address{Yifei Wu  \newline School of Mathematical Sciences\\
		Nanjing Normal University\\
		Nanjing 210046, China}
	\email{yerfmath@gmail.com}
	\thanks{}
	\subjclass[2010]{Primary  35Q55; Secondary 35B40}
	
	
	\keywords{Nonlinear Schr\"odinger equation, rough potential, global well-posedness, ill-posedness}

	\begin{abstract}\noindent
		In this work, we investigate the regularization mechanisms of the Schr\"odinger equation with a spatial potential
		\begin{align*}
			i\partial_t u+\Delta u+\eta u =0,
		\end{align*}
		where $\eta$ denotes a given spatial potential. The regularity of solutions constitutes one of the central problems in the theory of dispersive equations. Recent works \cite{Bai-Lian-Wu-2024, M-Wu-Z24} have established the sharp regularization mechanisms for this model in the whole space $\R$ and on the torus $\T$, with $\eta$ being a rough potential.

The present paper extends the line of research to the high-dimensional setting with rough potentials $\eta \in L_x^r+L_x^{\infty}$.
More precisely, we first show that when $1\leq r <\frac d2$, there exists some $\eta \in L_x^r+L_x^{\infty}$ such that the equation is ill-posed in $H_x^{\gamma}$ for any $\gamma \in \R$. Conversely, when $\frac d2 \leq r \leq \infty$, the expected optimal regularity is given by
$$H_x^{\gamma_*}, \quad \gamma_*=\mbox{min}\{2+\frac d2-\frac dr, 2\}.$$
 We establish a comprehensive characterization of the regularity, with the exception of two dimensional endpoint case $d=2, r=1$.
Our novel theoretical framework combines several fundamental ingredients: the construction of counterexamples, the proposal of splitting normal form method, and the iterative Duhamel construction. Furthermore, we briefly discuss the effect of the interaction between rough potentials and nonlinear terms on the regularity of solutions.
		
	\end{abstract}
	
\maketitle
%
	
	\section{Introduction}
	\vskip 0.2cm

	In this paper, we study the linear Schr\"odinger equation with a ``rough" spatial potential
	\begin{equation}\label{eq:NLS}
		\left\{ \aligned
		&i\partial_t u(t, x)+\Delta u(t, x)+\eta(x) u(t,x)=0,
		\\
		&u(0,x)=u_0(x),
		\endaligned
		\right.
	\end{equation}
	where $u(t, x):\R^+\times \R^d\rightarrow \C$ is an unknown function, and $\eta:\R^d\rightarrow \C$ is a given spatial potential.

	
	The equation \eqref{eq:NLS} has a rich physical background, arising in the mathematical description of phenomena in nonlinear optics and plasma physics. In particular, it is often referred to as the disordered Schr\"odinger equation, where $\eta(x)$ represents a given complex-valued, random, or sufficiently irregular potential. This formulation is closely related to Anderson localization \cite{Anderson-1958}, a phenomenon that has been extensively studied and applied in diverse areas, including Metal-Insulator Transition, superconductors, suppressing epileptic seizures and so on.

The aim of this paper is to explore the regularization mechanisms of Schr\"odinger equations with irregular potentials $\eta\in L_x^r+L_x^{\infty}$. The regularity of solutions is a central issue in the study of the nonlinear dispersive equations when the potential is irregular. This regularity reveals how the interplay between nonlinearity, smooth initial conditions, and the roughness of the potential affects the localization phenomena. Moreover, as pointed out in \cite{M-Wu-Z24}, the regularity properties are essential for the design and analysis of numerical schemes of approximating solutions, where the smoothness ensures the convergence and accuracy of computational methods.
	
	
	
The equation \eqref{eq:NLS} exhibits two types of critical indices that play a fundamental role in the analysis of the well-posedness/regularization.

{\bf{Critical index for the potential}}.
Note that the class of solutions to equation \eqref{eq:NLS} is invariant under the scaling
\begin{align*}
u(t,x)\rightarrow u_{\lambda}(t,x)=u(\lambda^2t,\lambda x),
\end{align*}	
\begin{align*}
\eta(x)\rightarrow\eta^\lambda(x)= \lambda^2\eta(\lambda x),
\end{align*}
with $\lambda>0$, which maps the initial data
\begin{align*}
u(0)\rightarrow u_\lambda(0):=u_0(\lambda x).
\end{align*}
This scaling leaves the $L_x^{\frac d2}$-norm of the potential $\eta$ invariant, that is,
\begin{align*}
	\big\|\eta^\lambda\big\|_{L^{\frac d2}(\R^d)}=\big\|\eta\big\|_{L^{\frac d2}(\R^d)}.
\end{align*}
Hence, the space $L^{\frac d2}(\R
	^d)$ is the critical space for the potential in the sense of scaling. Accordingly, for $\eta\in L_x^r+L_x^{\infty}(\R^d)$, we call the potential $\eta$ supercritical, critical and subcritical, if $r<\frac d2$, $r=\frac d2$, and $r>\frac d2$, respectively. This suggests that the problem \eqref{eq:NLS} is ill-posed for some $\eta$ lying in the supercritical region, that is, $\eta\in L_x^r+L_x^{\infty}(\R^d)$, $r<\frac d2$. This will be rigorously established below.

{\bf{Critical index for the regularity}}.
The second critical index concerns the regularity of the solution for a fixed potential $\eta$. For $\eta\in L_x^r, r\in [1,2)$ and smooth function $f$, one expects the best that
$$
\eta f\in L_x^r(\R^d), \quad \mbox{ or }\quad \eta f\in H_x^{\alpha_*}(\R^d),\quad \alpha_*= \alpha_*(r)=\frac d2-\frac dr.
$$
Considering the inhomogeneous Schr\"odinger equation,
$$
iu_t+\Delta u=F,
$$
with $F\in L^\infty_t H^{\alpha_*}_x$, then  the corresponding expected optimal regularity of the solution is $\gamma_*=\gamma_*(r)=\alpha_*(r)+2=2+\frac d2-\frac dr$. This suggests that for $L_x^r$-potential, the best expected regularity of the solution to \eqref{eq:NLS} is $H_x^{\gamma_*}$. This also will be rigorously proved in the following.

The general form of \eqref{eq:NLS} is the nonlinear Schr\"odinger equation
\begin{equation}\label{eq:NLS-2}
		\left\{ \aligned
		&i\partial_t u(t, x)+\Delta u(t, x)+\eta(x) u(t,x)=\lambda |u(t,x)|^{p} u(t,x),
		\\
		&u(0,x)=u_0(x),
		\endaligned
		\right.
	\end{equation}
where $u(t, x):\R^+\times \R^d\rightarrow \C$ and $\lambda \in \R$.
In this case, an additional critical index arises from the nonlinearity. Without the potential term, there is a critical index $s_c=\frac d2-\frac 2p$, which reads the {\bf{scaling critical index}}. This index arises from the invariance of the $\dot{H}_x^{s_c}$ norm under the scaling transformation,
\begin{align*}
u(t,x)\rightarrow u_{\lambda}(t,x)=\lambda^{\frac 2p}u(\lambda^2t,\lambda x), \mbox{ for }\lambda>0.
\end{align*}
Combining this with the critical indices above, for $\eta\in L_x^r+L_x^{\infty}, r\ge \frac d2$, the best expectation for well-posedness is
$$
 u\in H_x^s(\R^d),\quad \mbox{max}\{s_c, 0\}\le s\le \gamma_*.
$$
This will be further discussed in Section \ref{sec:114}.

We nextly briefly review some existing results on the regularization theory for nonlinear Schr\"odinger (NLS) equation \eqref{eq:NLS-2}. When $\eta$ is random or sufficiently rough, the regularization theory for the equation \eqref{eq:NLS-2} remains underdeveloped, with only a few notable results. The most relevant findings are summarized as follows.
 Cazenave \cite{Cazenave--03} proved that if $\eta\in L^{\infty}$ is real-valued, the equation \eqref{eq:NLS-2} is globally well-posed in $H_x^1(\R^d)$ for small data, where $d\geq 1$. In the same work, Cazenave also established the local well-posedness in $H_x^2(\R^d)$ when $\eta \in L_x^2+L_x^{\infty}(\R^d)$ and $d\geq 1$\footnote{This result is not true for $d>4$, see Theorem \ref{theorem-main 1} below for further details.}.

 In the recent work \cite{M-Wu-Z24}, Mauser, Zhao, and the third author considered the periodic case in one dimension. Their results represent the first sharp well-posedness results for this model. Based on the theoretical theory, the authors designed a
low-regularity integrator tailored to rough potentials, for which they proved convergence
rates with sharp regularity dependence.


Moreover, in \cite{Bai-Lian-Wu-2024}, we studied the equation \eqref{eq:NLS} with potentials $\eta \in L_x^r+L_x^{\infty}(\R)$ for any $r \ge 1$. Specifically, we proved that the equation is globally well-posed in $H^{\tfrac{3}{2}-}(\R)$ when $r=1$; $H^{\tfrac{5}{2}-\tfrac{1}{r}}(\R)$ when $1<r \le 2$, and $H^2(\R)$ when $r>2$,
while in each case there exists some $\eta$ such that it becomes ill-posed in the corresponding space $H^{\tfrac{3}{2}}(\R)$, $H^{\tfrac{5}{2}-\tfrac{1}{r}+}(\R)$, and $H^{2+}(\R)$, respectively. Here and below, we denote $a\pm:=a\pm\epsilon$ for arbitrary small $\epsilon>0$. The analysis relies on commutator estimates, local smoothing effects, and the normal form method.


There are also related results for the stochastic NLS equations.
 For temporally stochastic but spatially regular potentials, Bouard and Debussche \cite{Bouard-De-99} studied the stochastic NLS with a multiplicative noise and demonstrated that for some subcritical nonlinearities, the $L^2(\R^d)$ solution is almost surely global and unique, using the fixed point argument.
	For spatial white noise potentials, Debussche and Weber \cite{De-We-19} proved that the defocusing NLS equation \eqref{eq:NLS-2} with smooth initial data has a global solution almost surely in $H^1(\T^2)$. They also proved that the focusing NLS equation \eqref{eq:NLS-2} has the same result under the additional smallness condition, which is based on a renormalization of this equation and the conserved quantities. Later, Debussche and Martin \cite{De-Ma-19} applied the methods from \cite{De-We-19} to study the subcritical defocusing NLS equation with spatial white noise on the full space $\R^2$. They obtained that if $p<2$, this equation has a local solution almost surely in some weighted Besov space, and if $p<1$, the solution is global. Furthermore, the interesting work by Babin, Ilyin and Titi \cite{Babin-Ilyin-Titi-2011} established the unconditional well-posedness results for the periodic KdV equation in $\dot{H}^s$, $s\geq 0$, which provided a new insight into regularization mechanisms for nonlinear dispersive partial differential equations (PDEs) in the periodic setting.

	This work continues \cite{Bai-Lian-Wu-2024, M-Wu-Z24} by extending the analysis of the regularization mechanisms for \eqref{eq:NLS} to $\R^d$ with $d\ge 2$. The one-dimensional case can be handled via commutator estimates and the local smoothing effect; in higher dimensions, however, the markedly weaker local smoothing renders this approach ineffective.
	
	\subsection{Main results}
Before presenting our main results, we give the definitions of well-posedness and ill-posedness.
	\begin{definition}[Well-posedness]\label{Def1}
		The well-posedness of a time dependent PDE can be defined as follows: Denote by $C(I, X_0)$ the space of continuous functions from the time interval $I$ to the topological space $X_0$. We say that the Cauchy problem is locally well-posed in $C(I, X_0)$ if the following properties hold:
		
		(1) For every $u_0\in X_0$, there exists a strong solution defined on a maximal time interval $I=[0, T_{max})$, with $T_{max}\in (0, +\infty]$.

		(2) There exists some auxiliary space $X$, such that strong solution to this problem is unique in $C(I, X_0)\cap X$ .

		(3) The solution map $u_0\mapsto u[u_0]$ is continuous from $X_0$ to $X_0$.
	\end{definition}
	If any above condition fails, the Cauchy problem \eqref{eq:NLS} is said to be ill-posed in space $X_0$. In this work, we refer to the violation of the third condition (around zero solution).
	
More precisely, let $\Phi_t: X \to X$ be the solution flow map of a Cauchy problem in the function space $X$. We say the problem is ill-posed in $X$ if the flow map $\Phi_t$ fails to be continuous at some point $u_0 \in X$. Equivalently, there exist $u_{0,n} \to u_0$ in $X$ and a time $t>0$ such that $\Phi_t(u_{0,n}) \not\to \Phi_t(u_0)$ in $X$.

	We now turn to the well-posedness/regularity results for equation \eqref{eq:NLS} when $\eta \in L_x^{r}+L_x^{\infty}(\R^d)$. Our first task is to determine the admissible range of
$r$ for which well-posedness can be expected. It is straightforward to show that there exists some $\eta\in L^r+ L_x^{\infty}(\R^d)$ with $1\leq r<\frac d2$, then the equation is ill-posed in $H^\gamma(\R^d)$ for any $\gamma\in \R$. This fact reflects the supercritical nature of such potentials under scaling. The precise result is given in the following theorem.

%
%
	\begin{thm}\label{theorem-main 1}

Let $d\geq 3$, $1\leq r<\frac d2$, there exists some $\eta  \in L_x^{r}+L_x^{\infty}(\R^d)$ such that for any $\gamma\in \R$, \eqref{eq:NLS} is ill-posed in $H_x^{\gamma}(\R^d)$ .

	\end{thm}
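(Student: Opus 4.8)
The strategy is to build a single potential $\eta \in L_x^r + L_x^\infty(\R^d)$ with $1 \le r < d/2$ that is, roughly speaking, ``locally supercritical'' at a point, and to show that this forces discontinuity of the flow map at the zero solution in every Sobolev space $H^\gamma$. The guiding principle is scaling: since $r < d/2$, the norm $\|\eta^\lambda\|_{L^r}$ with $\eta^\lambda(x) = \lambda^2 \eta(\lambda x)$ \emph{shrinks} as $\lambda \to \infty$ (it scales like $\lambda^{2 - d/r}$ and $2 - d/r < 0$), so a single fixed rough bump generates, under the symmetry of the equation, a family of rescaled bumps whose potential strength measured in the critical topology is unbounded near the singularity. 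Concretely, I would take $\eta(x) = c\,|x|^{-2} \chi(x)$ for a cutoff $\chi$ supported near the origin, with the constant $c$ chosen in the ``attractive/resonant'' regime; then $\eta \in L_x^r$ precisely when $2r < d$, i.e. $r < d/2$, which matches the hypothesis, and $\eta \in L^\infty$ away from the origin, so $\eta \in L^r_x + L^\infty_x$.

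The core of the argument is then a concentration/scaling mechanism producing the instability. I would fix a profile $\phi$ (a smooth bump) and set $u_{0,n}(x) = \epsilon_n\, \phi(x)$ or, more usefully, a \emph{concentrated} datum $u_{0,n}(x) = \epsilon_n\, n^{d/2} \phi(n x)$ rescaled so that $\|u_{0,n}\|_{H^\gamma} \to 0$ while the nonlinear/potential interaction at scale $1/n$ is amplified by the inverse-square potential. Using the pseudoconformal or spatial scaling symmetry $u(t,x) \mapsto \lambda^{a} u(\lambda^2 t, \lambda x)$ together with the associated $\eta \mapsto \lambda^2 \eta(\lambda x)$ — which for the homogeneous $|x|^{-2}$ potential is \emph{exactly} $\eta$ again — the solution with data $u_{0,n}$ is an exact rescaling of the solution with an order-one datum. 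The point is that for $c$ in the resonant range the operator $-\Delta - c|x|^{-2}$ has either a negative eigenvalue or a zero-energy resonance / failure of self-adjointness (Case's phenomenon), so the unit-scale flow exhibits either exponential growth $e^{t\sqrt{|E|}}$ or genuine non-uniqueness; rescaling back, this translates to an arbitrarily large response at a fixed positive time $t$ from arbitrarily small data, hence $\Phi_t(u_{0,n}) \not\to 0 = \Phi_t(0)$ in $H^\gamma$ for every $\gamma \in \R$. The Sobolev index $\gamma$ is irrelevant because the concentration is at a single point and one can tune the concentration rate to kill any fixed $H^\gamma$ norm of the data while keeping the output large in any fixed $H^\gamma$.

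I would organize the write-up as: (i) define $\eta$ and verify $\eta \in L^r_x + L^\infty_x$ and the scaling identity $\eta^\lambda = \eta$ for the singular part; (ii) recall/establish the spectral pathology of $-\Delta - c|x|^{-2}$ for $c$ above the Hardy threshold (or a zero resonance exactly at threshold), citing the Hardy inequality $\int |x|^{-2}|f|^2 \le \frac{4}{(d-2)^2}\int|\nabla f|^2$ and the known non-self-adjoint / limit-circle behavior; (iii) construct the unit-scale unstable solution (an exact solution of the form $u(t,x) = e^{i\omega t} Q(x)$ with $\omega$ complex, or a growing mode, or two distinct solutions with the same data); (iv) rescale to get $u_{0,n} \to 0$ in $H^\gamma$ with $\|\Phi_{t}(u_{0,n})\|_{H^\gamma} \gtrsim 1$ (or $\to \infty$); (v) conclude ill-posedness per the definition in the introduction.

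**Main obstacle.** The delicate point is \emph{not} the scaling bookkeeping but producing an honest, closed-form (or well-controlled) unstable solution at the unit scale for the operator $-\Delta - c|x|^{-2}$ and doing so within the class where the Cauchy problem is even meaningful. One must either exhibit an explicit self-similar/standing-wave solution with complex frequency (exploiting that the Hankel/Bessel radial ODE $-f'' - \frac{d-1}{r}f' + \big(\frac{\ell(\ell+d-2) - c}{r^2}\big) f = E f$ has, for $c$ large, complex indicial exponents, the classical ``fall to the center''), or invoke a soft functional-analytic argument: for $c$ strictly above the Hardy constant the quadratic form is unbounded below, so $-\Delta - c|x|^{-2}$ is not essentially self-adjoint / has no lower bound, and the linear Schrödinger evolution is not a strongly continuous group on $L^2$ — from which discontinuity of the flow map at $0$ follows by a uniform-boundedness (Banach–Steinhaus) contradiction with well-posedness. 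I expect the cleanest route is the latter: assume well-posedness in $H^\gamma$, derive (via the definition's continuity-at-zero and a scaling/limiting argument) a linear bound $\|e^{it(\Delta + \eta)} u_0\|_{H^\gamma} \le C(t)\|u_0\|_{H^\gamma}$ on a dense set, and then contradict it using the unbounded-below quadratic form of $-\Delta - c|x|^{-2}$ together with a Weyl-sequence concentrating at the origin. Handling the low-regularity meaning of ``solution'' and the passage from nonlinear well-posedness to this linear-type bound near zero is the step requiring the most care.
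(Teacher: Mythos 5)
Your proposal takes a genuinely different route from the paper. The paper does \emph{not} go through spectral theory of $-\Delta-c|x|^{-2}$. Instead it chooses frequency-localized bumps on the Fourier side: an initial datum $u_0$ with $\widehat{u_0}$ supported in the annulus $\{N\le|\xi|\le 2N\}$, normalized so that $\|u_0\|_{H^\gamma}\sim1$, and a potential $\eta$ with $\widehat\eta$ supported in the annulus $\{M/2\le|\xi|\le 2M\}$ with $M=K_0N\gg N$ and $\|\eta\|_{L^r}$ independent of $M$. It then writes down the first nontrivial Picard iterate $A(u_0)(t)=\int_0^t e^{-i\rho\Delta}(\eta\,e^{i\rho\Delta}u_0)\,d\rho$ explicitly on the Fourier side, evaluates at the time $t=M^{-2}$ and on the frequency region $\Omega\sim\{|\xi|\sim M\}$, and uses the positivity of $\sin[t(|\xi|^2-|\xi_2|^2)]/(|\xi|^2-|\xi_2|^2)$ there to obtain a lower bound $\|A(u_0)\|_{H^\gamma}\gtrsim N^{-2+d/r}\to\infty$ whenever $r<d/2$; ill-posedness then follows from the Bejenaru--Tao criterion (Lemma~\ref{ill-posed}). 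This is elementary, entirely computational, and applies to every $\gamma\in\R$ uniformly with no spectral input.

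There is, however, a genuine gap in the core mechanism you propose, and it is not merely a matter of care in the passage to low regularity. You claim that for $c$ above the Hardy constant the ``unit-scale flow exhibits either exponential growth $e^{t\sqrt{|E|}}$ or genuine non-uniqueness,'' and later that one can ``contradict it using the unbounded-below quadratic form of $-\Delta-c|x|^{-2}$ together with a Weyl sequence.'' But your $\eta$ is real-valued, so any self-adjoint realization $H$ of $-\Delta-c|x|^{-2}$ generates a \emph{unitary} group $e^{-itH}$ on $L^2$: the Schr\"odinger flow does not grow exponentially no matter how far below zero the spectrum extends (exponential growth is the heat/wave phenomenon, not Schr\"odinger), and a Weyl sequence concentrating at a very negative energy only acquires a fast oscillating phase, not a large norm. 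Likewise, if $\psi_n$ is an eigenfunction, $e^{-itH}\psi_n=e^{-itE_n}\psi_n$ preserves every $H^\gamma$ norm of $\psi_n$, so the unbounded-below spectrum does not by itself contradict a linear bound $\|u(t)\|_{H^\gamma}\lesssim\|u_0\|_{H^\gamma}$. What does fail for $c>(d-2)^2/4$ is essential self-adjointness, i.e.\ one loses uniqueness of the self-adjoint extension and hence of the solution without boundary conditions at the origin --- but that is a violation of condition (2) of Definition~\ref{Def1}, whereas the paper explicitly works with condition (3) (continuity of the flow map around zero). To make your spectral route rigorous you would need either to switch to a genuinely non--self-adjoint (e.g.\ purely imaginary) $\eta$, where growth of the flow is actually possible, or to articulate precisely how the limit-circle behavior at the origin produces \emph{discontinuity} of the data-to-solution map in $H^\gamma$ for every $\gamma$ --- neither of which is done in your sketch. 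As written, the key step ``(iii) construct the unit-scale unstable solution'' relies on a growth mechanism that does not exist for real $\eta$.
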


%
%
%

We make the following remarks concerning the above result.

 \begin{remark}

The condition $r\geq \frac d2$ is the natural regime for proving well-posedness of \eqref{eq:NLS}, which matches to the \textit{critical index for the potential} discussed as above.

\end{remark}
Next, we present the well-posedness results for \eqref{eq:NLS} under the subcritical and critical potentials. For convenience, we denote the best expected regularity index
$$
\gamma_*(r)=2+\frac d2-\frac dr,
$$
which we abbreviate as $\gamma_*$.
In what follows, we define the ``sharp well-posedness in $H_x^\gamma$ for $\eta \in Y_0$ (some spatial function space)'' to mean that the problem is well-posed in $H_x^\gamma$ for any $\eta \in Y_0$, but ill-posed in $H_x^{\gamma+}$ for some $\eta \in Y_0$.
\begin{thm}\label{theorem-one}
		Let $d=2, 3$, $\frac d2<r\leq 2$, and $\eta \in L_x^{r}+L_x^{\infty}(\R^d)$, then \eqref{eq:NLS} is sharp globally well-posed in $H_x^{\gamma_*}(\R^d)$.

	\end{thm}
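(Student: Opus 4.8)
The claim has two halves: global well-posedness in $H_x^{\gamma_*}(\R^d)$ for \emph{every} $\eta\in L_x^r+L_x^\infty$, and ill-posedness in $H_x^{\gamma_*+}$ for \emph{some} such $\eta$. Since \eqref{eq:NLS} is linear, global well-posedness follows from a local statement on $[0,T]$ with $T=T(\|\eta\|_{L_x^r+L_x^\infty})$ by iteration, so I would concentrate on the local theory. Everything takes place in the favorable range $\gamma_*>\tfrac d2$, which is exactly what $d\in\{2,3\}$ and $r>\tfrac d2$ secure: then $H_x^{\gamma_*}\hookrightarrow L_x^\infty$, one has $L_x^r\hookrightarrow H_x^{\alpha_*}$ with $\alpha_*:=\gamma_*-2=\tfrac d2-\tfrac dr$ (which satisfies $\alpha_*>-1$ in the present range), and the product estimate $\|\eta f\|_{H_x^{\alpha_*}}\lesssim\|\eta\|_{H_x^{\alpha_*}}\|f\|_{H_x^{\gamma_*}}$ holds. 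Note that multiplication by $\eta$ costs exactly two derivatives, and it is those two derivatives that the argument must recover from dispersion.

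For well-posedness I would first set up the $L_x^2$-Cauchy theory and then bootstrap. Truncating the rough component, for any $r'<r$ the part $a=\mathbf 1_{\{|\eta|>M\}}\eta$ has $\|a\|_{L_x^{r'}}\to0$ as $M\to\infty$ while the remainder is in $L_x^\infty$; running Duhamel in a Strichartz space and bounding $\|au\|$ in a dual Strichartz norm by a power of $T$ (or by $\|a\|_{L_x^{r'}}$) times $\|u\|$ closes a contraction, the $L_x^\infty$-part being lower order. This gives a unique solution in $C_tL_x^2$; the restriction $r>\tfrac d2$ is precisely what keeps the potential sub-/critical here, and $d=2,r=1$ is excluded because there $a$ would only be critical $L^1$ and this scheme degenerates. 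To reach $H_x^{\gamma_*}$ I would expand $u=\sum_{k\ge0}u^{(k)}$ with $u^{(0)}=e^{it\Delta}u_0$ and $u^{(k)}=i\int_0^te^{i(t-s)\Delta}(\eta\,u^{(k-1)})\,ds$, and check the series converges in $C_TH_x^{\gamma_*}$ for $T$ small. The model estimate is for $u^{(1)}$: $\big\|\int_0^te^{i(t-s)\Delta}(\eta e^{is\Delta}f)\,ds\big\|_{C_TH_x^{\gamma_*}}\lesssim_T\|\eta\|_{L_x^r+L_x^\infty}\|f\|_{H_x^{\gamma_*}}$, which is proved by the splitting normal form: on the Fourier side the multiplier $m_T(\xi,\zeta)=\int_0^Te^{is(|\xi|^2-|\zeta|^2)}\,ds$ satisfies $|m_T|\le\min\{T,\big||\xi|^2-|\zeta|^2\big|^{-1}\}$, so on the non-resonant set $\big||\xi|^2-|\zeta|^2\big|\gtrsim1/T$ an integration by parts in time recovers two (and, away from the diagonal, more) derivatives, while on the resonant set $|\xi|\sim|\zeta|$ one has $\langle\xi\rangle^{\gamma_*}\sim\langle\zeta\rangle^{\gamma_*}$ and closes the term by a bilinear estimate using the comparable frequencies and the numerology $\tfrac dr\le2$. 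The genuinely new point — the step I expect to be the main obstacle — is handling the iterates $u^{(k)}$, $k\ge2$: they carry products of several copies of $\eta$ that are not even locally integrable as classical functions (already $\eta^2\sim|x|^{-2d/r}\notin L^1_{\mathrm{loc}}$ for $r\le2$) and make sense only through the interleaved propagators, so on the resonant sets — where dispersion gives no gain, this being the weak high-dimensional local smoothing flagged in the introduction — one must extract all the regularity from frequency comparability, the product estimate, and the exact identity $\gamma_*=\alpha_*+2$, and then show these near-losses do not compound over $k$; this is the role of the iterative Duhamel construction.

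For sharpness I would use the single potential $\eta(x)=|x|^{-d/r}\big(\log(e+|x|^{-1})\big)^{-a}\chi(x)$ with $\chi\in C_c^\infty$ equal to $1$ near the origin and $a>1/r$. The logarithm places $\eta$ in $L_x^r\hookrightarrow H_x^{\alpha_*}$, and the Fourier tail $\widehat\eta(\xi)\sim|\xi|^{d/r-d}(\log|\xi|)^{-a}$ shows $\eta\in H_x^s$ iff $s<\alpha_*$ or $s=\alpha_*$ (the endpoint using $a>1/r\ge1/2$), hence $\eta\notin H_x^{\alpha_*+\varepsilon}$ for every $\varepsilon>0$. Fix a Schwartz datum $u_0$ with $\widehat{u_0}$ a bump and let $T$ be small; by the first half the solution exists in $C_TH_x^{\gamma_*}$ and equals $e^{iT\Delta}u_0+\sum_{k\ge1}u^{(k)}(T)$, and for output frequencies $|\xi|$ far above the frequency support of $u_0$ the last ``jump to high frequency'' always contributes the factor $\widehat\eta(\xi)\,|\xi|^{-2}$, so $\widehat{u(T)}(\xi)\sim\Psi(T)\,\widehat\eta(\xi)\,|\xi|^{-2}$ there, with $\Psi(T)$ independent of $\xi$ and $\Psi(T)\ne0$ for $T$ small (the $k=1$ term dominates). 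Since $\int_{|\xi|\gg1}\langle\xi\rangle^{2(\gamma_*+\varepsilon)}|\widehat\eta(\xi)|^2|\xi|^{-4}\,d\xi\sim\int_{|\xi|\gg1}|\xi|^{2(\alpha_*+\varepsilon)}|\widehat\eta(\xi)|^2\,d\xi=\infty$, the solution is not in $H_x^{\gamma_*+\varepsilon}$, so \eqref{eq:NLS} is not well-posed there; the borderline convergence at $\varepsilon=0$, ensured by the logarithm, is exactly what makes $\gamma_*$ the threshold, completing the sharp statement.
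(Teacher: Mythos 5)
Your proposal diverges from the paper on both halves of the theorem, and each divergence leaves a genuine gap.

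On the well-posedness side, you propose to establish an $L_x^2$ theory and then run the iterative Duhamel series \emph{for $u$} directly at the target regularity $\gamma_*$. The paper explicitly identifies this as an obstructed route (Section~\ref{sec:1.2}, around \eqref{v-u-relation}): the auxiliary space--time bounds $\||\nabla|^{\gamma_*}u\|_{L_t^q L_x^p}$ with $p>2$, which any Duhamel/Strichartz iteration against an $L_x^r$-only potential must use, are \emph{not} available for generic data, precisely because $|\nabla|^{\gamma_*-2}(\eta u)\notin L_t^qL_x^p$ when $\eta$ has no Sobolev regularity. You flag the higher iterates $u^{(k)}$, $k\ge2$, as ``the main obstacle'' and correctly name the tools (splitting normal form, iterative Duhamel), but you do not identify the device the paper uses to actually get past the obstacle: passing to $v=\partial_t u$. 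The paper's scheme is a two-stage bootstrap: first a weak positive-regularity result $u\in C_T H_x^{\alpha}$ via the resonant/non-resonant decomposition of Lemma~\ref{resoant-decom} (Proposition~\ref{weak-subcritical}); then solve the $v$-equation in the \emph{negative} space $H_x^s$, $s=\gamma_*-2$, by the iterative Duhamel construction with the splitting normal form (Lemmas~\ref{In-form}--\ref{2d-v}, Proposition~\ref{prop:v-2d}), which is where the auxiliary norms $|\nabla|^{s}e^{it\Delta}I_n\in L_t^{q_0}L_x^{r_0}$ actually close; finally recover $u\in C_TH_x^{2+s}$ from the elliptic relation $\Delta u=-iv-\eta u$ together with the weak bound on $u$. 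Running the iteration on $u$ itself, as you propose, is exactly the scheme the paper argues cannot close at $\gamma_*$.

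On the sharpness side, your approach is a genuinely different (and in principle stronger) statement: one fixed, log-corrected, borderline potential for which the actual solution instantaneously leaves $H_x^{\gamma_*+\varepsilon}$. The paper instead builds a family $\eta_M$ with $\|\eta_M\|_{L_x^r}\lesssim1$ and normalized data $u_0$, shows $\sup_t\|A(u_0)\|_{H_x^\gamma}\to\infty$ as $M\to\infty$, and invokes the Bejenaru--Tao abstraction (Lemma~\ref{ill-posed}). Your route has a real gap as written: the pointwise asymptotic $\widehat{u(T)}(\xi)\sim\Psi(T)\,\widehat\eta(\xi)\,|\xi|^{-2}$ with a $\xi$-independent, nonvanishing $\Psi(T)$ is not correct. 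For $|\xi|$ large and $T$ fixed, the first Picard iterate carries the oscillatory factor $\big(e^{iT(|\xi|^2-|\zeta|^2)}-1\big)/(i(|\xi|^2-|\zeta|^2))$, and $e^{iT|\xi|^2}-1$ (or $\sin(T|\xi|^2)$ for the real part) vanishes on arbitrarily high frequency scales, so there is no uniform lower bound on $|\widehat{u^{(1)}(T)}(\xi)|$ as $|\xi|\to\infty$. This is precisely why the paper ties the time scale to the frequency scale, setting $t=1/M^2$ and restricting to the annulus $\{\sqrt{\pi/3}M\le|\xi|\le\sqrt{\pi/2}M\}$ where $\sin(t|\xi|^2)$ is bounded below, then taking a supremum in $t$ and letting $M\to\infty$ — a different limiting regime from fixing $T$ and sending $|\xi|\to\infty$. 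One could try to salvage your argument by averaging over dyadic shells, but you would also need a quantitative comparison showing the $k\ge2$ iterates cannot cancel the $k=1$ term at the relevant frequencies; the paper's Bejenaru--Tao device is used precisely to avoid ever having to compare Picard iterates.
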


\begin{thm}\label{theorem-two}
		Let $d=3, 4$, $r=\frac d2$, and $\eta \in L_x^{r}+L_x^{\infty}(\R^d)$, then \eqref{eq:NLS} is sharp globally well-posed in $H_x^{\gamma_*-}(\R^d)$.

	\end{thm}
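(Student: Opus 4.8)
The plan is to establish separately the positive assertion --- global well-posedness in $H_x^{\gamma_*-}$ for every $\eta\in L_x^{d/2}+L_x^\infty(\R^d)$ --- and its sharpness, i.e.\ ill-posedness in $H_x^{\gamma_*}$ for some such $\eta$; recall that here $\gamma_*=\tfrac d2$ and $\alpha_*:=\tfrac d2-\tfrac dr=\tfrac d2-2$, and that $d\in\{3,4\}$ is exactly the range in which $\gamma_*\le 2$.

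\textbf{Reduction and normalization of the potential.}
Since \eqref{eq:NLS} is linear, it suffices to prove local well-posedness on a time interval $[0,T]$ whose length depends only on $\|\eta\|_{L_x^{d/2}+L_x^\infty}$: the local bound $\|u\|_{C([0,T];H_x^s)}\lesssim_\eta\|u_0\|_{H_x^s}$ then iterates to a global flow with at most exponential growth of norms, and uniqueness and continuous dependence propagate along. Fix $0<\epsilon\ll1$ and set $s=\gamma_*-\epsilon$ (the method below treats the range $s\in[1,\gamma_*)$). Next I would normalize $\eta$: writing $\eta=\eta_1+\eta_\infty$ with $\eta_1\in L_x^{d/2}$, truncating $\eta_1$ at a large height and moving the bounded part into $\eta_\infty$, one reduces to $\eta=\eta_S+\eta_\infty$ with $\|\eta_S\|_{L_x^{d/2}}=:\delta$ as small as desired and $\eta_\infty\in L_x^\infty$; the $\eta_\infty$-contribution, being less singular, is controlled by the normal form together with short time in place of smallness.

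\textbf{The splitting normal form inside an iterated Duhamel expansion.}
I would construct the solution as
\[
u=\sum_{k\ge0}u^{(k)},\qquad u^{(0)}=e^{it\Delta}u_0,\qquad u^{(k)}(t)=i\int_0^t e^{i(t-\sigma)\Delta}\,\eta\,u^{(k-1)}(\sigma)\,\mathrm{d}\sigma ,
\]
the point of iterating Duhamel (rather than substituting the equation for $\partial_\sigma u$) being that at each stage the object to be analyzed is $\eta$ times a \emph{free} evolution, so no higher powers of the rough potential ever appear, and the series converges since each $u^{(k)}$ carries a factor $\delta^k$. To each iterate I apply a \emph{splitting normal form}: decompose the bilinear output $\eta(x)u^{(k-1)}(\sigma,x)$ by the size of the Schr\"odinger resonance $|\xi|^2-|\zeta|^2$ between output frequency $\xi$ and input frequency $\zeta$. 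On the non-resonant part (where $|\xi|^2-|\zeta|^2$ is large in modulus) one uses $e^{i(t-\sigma)\Delta}=i\Delta^{-1}\partial_\sigma e^{i(t-\sigma)\Delta}$ and integrates by parts in $\sigma$, trading the time integral for $\Delta^{-1}$ acting on boundary terms of the schematic form $\eta\,[\text{free evolution of }u_0]$; since by H\"older and Sobolev embedding $\|\eta\,v\|_{H_x^{s-2}}\lesssim\delta\,\|v\|_{H_x^{s}}$ whenever $v\in H_x^s$, $s<\tfrac d2$ (and trivially $\|\eta_\infty v\|_{L_x^2}\lesssim\|v\|_{L_x^2}$), the factor $\Delta^{-1}$ restores two derivatives and these terms land back in $C([0,T];H_x^{s})$ --- at the regularity of the data rather than at $H_x^{s-2}$ --- while the remaining non-resonant integral is again of the form treated in the next iterate. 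On the resonant part $|\xi|\sim|\zeta|$ the $\sigma$-integral is merely a bounded Fourier multiplier of operator norm $\lesssim\min(T,1)$, and the resulting term is estimated by Strichartz estimates and the scaling-critical H\"older bound $\|\eta_S f\|_{L_x^{p'}}\le\|\eta_S\|_{L_x^{d/2}}\|f\|_{L_x^{p}}$ with $p=\tfrac{2d}{d-2}$: here $r=\tfrac d2$ is exactly endpoint and produces a logarithmically divergent frequency sum, which is precisely what the loss $\epsilon=\gamma_*-s>0$ absorbs. Collecting these estimates, the solution map is a contraction on $\{u:\ u-e^{it\Delta}u_0\in C([0,T];H_x^{\gamma_*-})\}$ intersected with the Strichartz space once $\delta$ and $T$ are small, which gives existence, uniqueness in that class, and Lipschitz continuous dependence on $u_0\in H_x^{s}$; by the reduction above this yields global well-posedness in $H_x^{\gamma_*-}$.

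\textbf{Sharpness.}
It remains to exhibit $\eta\in L_x^{d/2}+L_x^\infty(\R^d)$ for which the flow map fails to be continuous at $0$ in $H_x^{\gamma_*}$. The mechanism is the borderline Sobolev failure $H_x^{\gamma_*}=H_x^{d/2}\not\hookrightarrow L_x^\infty$ in dimensions $d=3,4$: I would take the critical log-damped singular profile $\eta(x)=\chi(x)\,|x|^{-2}\big(\log\tfrac1{|x|}\big)^{-\beta}$, which lies in $L_x^{d/2}\subset L_x^{d/2}+L_x^\infty$ exactly when $\beta>\tfrac2d$, and test the multiplication $u\mapsto\eta u$ against logarithmically singular data $u_0(x)\sim\big(\log\tfrac1{|x|}\big)^{\gamma}$ near the origin, which still lies in $H_x^{\gamma_*}$ for $0<\gamma<\tfrac12$. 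For $\beta,\gamma$ chosen appropriately (with $\beta$ just above $\tfrac2d$ relative to $\gamma$) the product $\eta u_0$ fails to lie in $\dot H^{\alpha_*}$, so $\Delta^{-1}(\eta u_0)\notin\dot H^{\gamma_*}$; regularizing $u_0$ at scale $2^{-n}$ and renormalizing its amplitude produces a sequence $u_{0,n}\to0$ in $H_x^{\gamma_*}$ for which $\|\Delta^{-1}(\eta u_{0,n})\|_{H_x^{\gamma_*}}\to\infty$. By the normal form of the positive part (the higher iterates being lower order), the Duhamel correction of the corresponding solution has leading boundary term $\sim it\,\Delta^{-1}(\eta u_{0,n})$, unbounded in $H_x^{\gamma_*}$, so $u_n(t)\not\to0$ in $H_x^{\gamma_*}$ although $u_{0,n}\to0$; hence the flow is discontinuous at $0$ in $H_x^{\gamma_*}$, exactly the endpoint failure predicted by the logarithmic divergence above. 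This construction is in the spirit of the counterexamples in \cite{Bai-Lian-Wu-2024} and of Theorem~\ref{theorem-main 1}.

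\textbf{Main obstacle.}
I expect the crux to be the resonant contribution of the critical potential: off resonance the two-derivative gain from integration by parts is clean, but on $|\xi|\sim|\zeta|$ the bilinear estimate carrying an $L_x^{d/2}$ factor sits exactly at the endpoint, so one must isolate the precise (logarithmic) divergence and absorb it into the $\epsilon$ of regularity in a way that still closes the iterated Duhamel/contraction scheme --- while, on the sharpness side, tuning the log-singular potential and data so that this very divergence becomes a quantitative lower bound, certifying that the $\epsilon$ cannot be removed.
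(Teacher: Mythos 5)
Your positive-direction argument diverges substantially from the paper's, and in a way that the paper itself identifies as a dead end. The paper does \emph{not} run an iterated Duhamel/normal-form contraction directly for $u$ in $H_x^{\gamma_*-}$. Instead, Section~6 first establishes a genuinely weaker result (Proposition~\ref{weak-critical}: well-posedness only in $H_x^{d/2-1-}$, i.e.\ a full derivative below $\gamma_*$), then introduces the time derivative $v=\partial_t u$, shows $v_0\in H_x^{s}$ with $s=\tfrac d2-2-\epsilon_0<0$ and that $v\in C([0,T);H_x^s)$ implies $u\in C([0,T);H_x^{d/2-\epsilon_0})$ via the elliptic relation $\Delta u=-iv-\eta u$, and only then runs the iterated Duhamel construction (Proposition~\ref{prop:v-34d}) \emph{for $v$ in negative Sobolev spaces}. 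The paper's Section~1.3 explicitly argues that working with $u$ alone ``necessitates reducing the regularity index $\gamma_*$'': there are data $u_0$ and potentials $\eta_N$ for which $\||\nabla|^{\gamma_*-2}(\eta_N e^{it\Delta}u_0)\|_{L_t^qL_x^p}\to\infty$, which by $u=i(-\Delta)^{-1}v+(-\Delta)^{-1}(\eta u)$ precludes Strichartz-type control of $|\nabla|^{\gamma_*}u$. Your scheme is pitched exactly in the regime the paper flags as problematic, and you never explain why $s=\gamma_*-\epsilon$ (rather than $\gamma_*$) is enough to escape it; in particular you never produce the analogue of the constraint in Lemma~\ref{lem:nonlinear-estimate-boundary-1127}, where the bilinear bound for $\mathcal{B}$ in $L_t^2L_x^{2d/(d-2)}$ requires $\alpha\cdot\tfrac{2d}{d-2}<d$, i.e.\ $\alpha<\tfrac d2-1$ --- a hard cap one full derivative below what you claim to reach. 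The paper's two-step reduction through $v$ and the negative-Sobolev iterated Duhamel (with the frequency-decomposed smallness of Lemma~\ref{lem:TN-34D}) is precisely the device that bypasses this cap, and your proposal is missing it.

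Two secondary remarks. First, your truncation ``move the bounded part of $\eta_1$ into $\eta_\infty$'' to make $\|\eta_S\|_{L^{d/2}}$ small is legitimate, but the paper obtains the requisite smallness differently --- via $\|P_{\ge N_0}\eta\|_{L^{d/2}}$ together with the factor $M_0^{1/4}N_0^{-1/4}$ in Lemma~\ref{lem:TN-34D}, whose proof needs the Littlewood--Paley structure --- and your sketch does not show the resonant ``log-divergent frequency sum'' actually closes after the $\epsilon$-loss. Second, on sharpness you propose a physical-space profile $\eta(x)=\chi(x)|x|^{-2}(\log\tfrac1{|x|})^{-\beta}$ and $u_0\sim(\log\tfrac1{|x|})^\gamma$; the paper instead constructs $u_0$ with $\widehat{u_0}(\xi)\sim|\xi|^{-d}(\ln|\xi|)^{-1}\chi_{2\le|\xi|\le M}$ and $\eta$ Fourier-localized at scale $M$, extracting a $\ln\ln M$ lower bound by taking the real part of $\widehat{A(u_0)}$ and applying Lemma~\ref{ill-posed}. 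Your construction is in the right spirit (both hinge on the borderline failure $H^{d/2}\not\hookrightarrow L^\infty$), but you assert rather than verify the key lower bound $\|\Delta^{-1}(\eta u_{0,n})\|_{H^{\gamma_*}}\to\infty$ while $\|u_{0,n}\|_{H^{\gamma_*}}\to0$, and you do not address why the remaining Duhamel pieces cannot cancel the leading boundary term --- the paper handles this via the sign argument on $\mathrm{Re}\,\widehat{A(u_0)}$.
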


\begin{thm}\label{theorem-three}
		Let $d\geq 2$, $r\geq\frac d2$ and $r>2$, and $\eta \in L_x^{r}+L_x^{\infty}(\R^d)$, then \eqref{eq:NLS} is sharp globally well-posed in $H_x^{2}(\R^d)$.
\end{thm}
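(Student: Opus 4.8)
The statement has two parts: global well-posedness in $H^2_x(\R^d)$ for \emph{every} $\eta\in L^r_x+L^\infty_x(\R^d)$ in the stated range, and sharpness, i.e. ill-posedness in $H^{2+\e}_x$ for \emph{some} such $\eta$.

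\textbf{Well-posedness.} The plan is to realize the flow of \eqref{eq:NLS} as the $C_0$-group generated by $-iH$ with $H:=-\Delta-\eta$, having operator domain exactly $D(H)=H^2_x(\R^d)$. The crucial input is that, when $r\ge\frac d2$ and $r>2$, the potential $\eta$ is $(-\Delta)$-bounded with relative bound strictly less than $1$. Writing $\eta=\eta_1+\eta_2$ with $\eta_1\in L^r_x$ and $\eta_2\in L^\infty_x$, one first re-splits $\eta_1=\eta_1\cha_{|\eta_1|>\lambda}+\eta_1\cha_{|\eta_1|\le\lambda}$ and absorbs the bounded second term into $\eta_2$; since $\eta_1\in L^r_x$, $\|\eta_1\cha_{|\eta_1|>\lambda}\|_{L^r_x}$ can be made as small as we like by taking $\lambda$ large. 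For $f\in H^2_x$, Hölder and Gagliardo--Nirenberg--Sobolev give
\[
\|\eta_1 f\|_{L^2_x}\lesssim\|\eta_1\|_{L^r_x}\|f\|_{L^{2r/(r-2)}_x}\lesssim\|\eta_1\|_{L^r_x}\|f\|_{L^2_x}^{1-\frac d{2r}}\|\Delta f\|_{L^2_x}^{\frac d{2r}},
\]
where $r>2$ makes the Lebesgue exponent finite and $r\ge\frac d2$ is precisely what keeps the power $\tfrac d{2r}\le1$ (equivalently $\tfrac{2r}{r-2}\le\tfrac{2d}{d-4}$ when $d\ge5$). For $r>\frac d2$ the power is $<1$, so Young's inequality yields $\|\eta_1 f\|_{L^2_x}\le\delta\|\Delta f\|_{L^2_x}+C_\delta\|f\|_{L^2_x}$ for any $\delta>0$; at the endpoint $r=\frac d2$ (which, together with $r>2$, forces $d\ge5$) one uses the pure Sobolev embedding $H^2_x\hookrightarrow L^{2d/(d-4)}_x$ and the smallness of the re-split $L^{d/2}_x$-part. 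Combined with $\|\eta_2 f\|_{L^2_x}\le\|\eta_2\|_{L^\infty_x}\|f\|_{L^2_x}$, this shows $\eta$ is $(-\Delta)$-bounded with relative bound $<1$. Hence: for real $\eta$, Kato--Rellich makes $H$ self-adjoint with $D(H)=H^2_x$; in general, $-iH=i\Delta+i\eta$ is a relatively $(i\Delta)$-bounded (bound $<1$) perturbation of the skew-adjoint generator $i\Delta$ of the unitary group $e^{it\Delta}$, so by the perturbation theorem for $C_0$-groups, $-iH$ generates a $C_0$-group $\{e^{-itH}\}_{t\in\R}$ on $L^2_x$; it restricts to $D(H)=H^2_x$ and satisfies $\|e^{-itH}u_0\|_{H^2_x}\lesssim_{|t|}\|u_0\|_{H^2_x}$ (write $1-\Delta=1+H+\eta$, commute $e^{-itH}$ with $H$, and absorb $\|\eta e^{-itH}u_0\|_{L^2_x}$ via the relative bound). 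This yields, for $u_0\in H^2_x$, a global-in-time solution, unique in $C^1_tL^2_x\cap C_tH^2_x$, with continuous data-to-solution map, i.e. global well-posedness in $H^2_x$. (This is the ``flat'' regime $\gamma_*=2$; equivalently it is the base case of the iterative Duhamel construction used for the higher-regularity Theorems \ref{theorem-one}--\ref{theorem-two}, requiring no normal form.)

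\textbf{Sharpness.} I would exhibit one bounded, uniformly rough potential. Let $\phi\in C_c^\infty(\R^d)$ with $\phi\equiv1$ on a ball $B$, and set $\eta(x)=\phi(x)m(x_1)$ with $m(y)=\sum_{n\ge1}n^{-2}e^{i2^ny}$, so $\eta\in C^0_x\cap L^\infty_x\subset L^r_x+L^\infty_x$ while, by lacunarity, $\|\eta\|_{H^\sigma_x}^2\sim\sum_n n^{-4}2^{2n\sigma}=\infty$ for every $\sigma>0$; thus $\eta$ lies in no $H^\sigma_{\mathrm{loc}}$ over $B$. Suppose \eqref{eq:NLS} were well-posed in $H^{2+\e}_x$ for some $\e>0$; being linear, its flow would be bounded $H^{2+\e}_x\to C([0,T_0];H^{2+\e}_x)$ for some $T_0>0$. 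Fix $u_0\in C_c^\infty(B)$ with $\int u_0\neq0$; its unique solution $u$ then lies in $C([0,T_0];H^{2+\e}_x)$ and also in $C([0,T_0];H^2_x)$. For $0<T<T_0$ put $w_T:=\tfrac1T\int_0^T u(t)\,\dt$. Since $w_T\in H^{2+\e}_x$ and, using $i\partial_t u=Hu$ and closedness of $H$, $Hw_T=\tfrac iT\bigl(u(T)-u_0\bigr)\in H^{2+\e}_x$, while $\Delta w_T\in H^{\e}_x$, we get $\eta w_T=-\Delta w_T-Hw_T\in H^{\e}_x$. On the other hand $w_T\to u_0$ in $H^2_x$ as $T\to0^+$ (as $u\in C_tH^2_x$), so $\phi w_T\to u_0$ in $H^2_x$ and, since $\widehat{u_0}(0)=\int u_0\neq0$, for $T$ small $\|P_{\le c}(\phi w_T)\|_{L^2_x}\ge\delta_0>0$ for a fixed small $c>0$. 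Now $\widehat{\eta w_T}(\xi)=\sum_n n^{-2}\widehat{\phi w_T}(\xi-2^ne_1)$; on the pairwise-disjoint balls $B(2^ne_1,c)$ the $n$-th term has $L^2$-norm $\gtrsim n^{-2}\delta_0$, while the off-diagonal terms contribute $\lesssim\|w_T\|_{H^{2+\e}_x}2^{-(2+\e)n}$ (decay of $\widehat{\phi w_T}$), hence are negligible once $n$ is large. Therefore
\[
\|\eta w_T\|_{H^{\e}_x}^2\gtrsim\sum_{n\ge N_0}2^{2n\e}n^{-4}\delta_0^2=\infty,
\]
contradicting $\eta w_T\in H^{\e}_x$. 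Thus \eqref{eq:NLS} is ill-posed in $H^{2+\e}_x$ for every $\e>0$, which is the sharpness claim.

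\textbf{Main obstacle.} The delicate points are: (i) the endpoint $r=\tfrac d2$ (possible only for $d\ge5$ given $r>2$), where one must push the relative bound strictly below $1$ — this is exactly where the re-splitting and the hypothesis $r>2$ enter; and (ii) in the sharpness argument, turning ``$\eta$ carries no Sobolev regularity'' into ``$\eta w_T$ carries no Sobolev regularity'' without cancellation. The latter is handled by averaging in time, so that $w_T$ robustly inherits the non-trivial low-frequency mass of $u_0$, combined with the explicit lacunary spectral structure of $\eta$; this avoids the alternative route of comparing Duhamel iterates, for which the naive high-frequency bounds are too lossy to isolate the first iterate. For complex $\eta$, Kato--Rellich is replaced by perturbation theory for non-self-adjoint $C_0$-group generators, which is routine since $i\eta$ is relatively $(i\Delta)$-bounded with small bound.
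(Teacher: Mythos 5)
Your proposal takes genuinely different routes for both halves of the theorem.

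\textbf{Well-posedness.} The paper first proves global well-posedness in $L^2_x$ by a contraction argument using inhomogeneous Strichartz estimates (Proposition \ref{weak-subcritical-2}, Lemmas \ref{lem:space-time-d} and \ref{5d}), and then upgrades to $H^2_x$ via $v=\partial_t u$ and the identity $\Delta u=-iv-\eta u$. You try instead to realize the flow directly as the $C_0$-group generated by $-iH$, $H=-\Delta-\eta$, $D(H)=H^2_x$, using only the relative boundedness of $\eta$. For real $\eta$ this is rigorous (Kato--Rellich plus Stone), and your a priori $H^2$ estimate (commute with the group, absorb $\eta$ via the relative bound) is a clean replacement of the paper's Claim~2. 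The gap is the complex case, which the paper explicitly allows. The step ``$i\eta$ is relatively $(i\Delta)$-bounded with bound $<1$, so by the perturbation theorem for $C_0$-groups $i\Delta+i\eta$ generates a $C_0$-group'' does not rest on a valid theorem: relative boundedness with small bound preserves generation when the unperturbed semigroup is analytic (which $e^{it\Delta}$ is not) or when the perturbation is bi-dissipative (which $i\eta$ is not unless $\eta$ is real-valued). The abstract criterion that does apply is Miyadera--Voigt, $\int_0^{t_0}\|\eta\,e^{it\Delta}f\|_{L^2_x}\,dt\le q\|f\|_{L^2_x}$ with $q<1$, and verifying this for $\eta\in L^r_x+L^\infty_x$ in the stated range is precisely a Strichartz estimate, so one is pushed back to the paper's construction. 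Your parenthetical remark that this is ``the base case of the iterative Duhamel construction'' is the correct escape hatch, but as the primary justification the operator-theoretic claim overreaches.

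\textbf{Sharpness.} Here your route differs substantially from the paper's and appears correct. The paper takes frequency-localized $u_0$ and $\eta=\eta_M$, computes the first Duhamel iterate at $t=M^{-2}$, shows its $H^\gamma_x$-norm grows like $M^{\gamma-2}$ for $\gamma>2$, and invokes Lemma \ref{ill-posed}. You take one fixed bounded lacunary potential $\eta(x)=\phi(x)\sum_{n\ge1}n^{-2}e^{i2^nx_1}$ (in $L^\infty_x$ but in no $H^\sigma_{\mathrm{loc}}$) and argue by contradiction: from the time average $w_T=T^{-1}\int_0^T u\,dt$ one reads $Hw_T=iT^{-1}(u(T)-u_0)\in H^{2+\e}_x$ and $\Delta w_T\in H^{\e}_x$, hence $\eta w_T\in H^{\e}_x$, while the lacunary spectrum of $\eta$ paired with the $O(1)$ low-frequency mass that $w_T$ inherits from $u_0$ as $T\to0^+$ forces $\|\eta w_T\|^2_{H^{\e}_x}\gtrsim\sum_n 2^{2n\e}n^{-4}=\infty$. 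This soft ``no-derivatives'' argument avoids the explicit Duhamel computation and the Bejenaru--Tao lemma entirely, and produces a single potential witnessing ill-posedness at every $\e>0$ simultaneously. Its cost is that it presupposes the $H^2_x$ well-posedness theory (to have $u\in C_tH^2_x$, $w_T\in D(H)$, and the identity for $Hw_T$), whereas the paper's counterexample is self-contained. The off-diagonal bound $\lesssim 2^{-(2+\e)n}\|w_T\|_{H^{2+\e}_x}$ is indeed dominated by the diagonal $n^{-2}\delta_0$ for large $n$; the chain of estimates checks out.
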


In summary, our results can be shown in the following table

\begin{table}[h]
\centering
\setlength{\tabcolsep}{8pt}
\renewcommand{\arraystretch}{1.3}
\begin{tabular}{|c|c|c|c|c|}
\hline
\multirow{2}{*}{ $d=2$}
  & \multicolumn{3}{c|}{ $1<r\leq 2$} &  $r>2$ \\ \cline{2-5}
& \multicolumn{3}{c|}{ $H^{3-\frac 2r}$} &  $H^2$\\ \hline

\multirow{2}{*}{ $d=3$}
  &  $1<r<\frac 32$ &  $r=\frac 32$ & $\frac 32<r\leq 2$ &  $r>2$ \\ \cline{2-5}
&  $\mbox{Ill-posedness}$ & $H^{\frac 32-}$ & $H^{\frac 72-\frac 3r}$ &  $H^2$ \\ \hline

\multirow{2}{*}{ $d=4$}
  & \multicolumn{2}{c|}{ $1<r<2$} &  $r=2$ &  $r>2$ \\ \cline{2-5}
& \multicolumn{2}{c|}{ $\mbox{Ill-posedness}$} &  $H^{2-}$ &  $H^2$\\ \hline

\multirow{2}{*}{ $d\geq 5$}
  & \multicolumn{3}{c|}{ $1<r<\frac d2$} &  $r\geq \frac d2$\\ \cline{2-5}
& \multicolumn{3}{c|}{ $\mbox{Ill-posedness}$} &  $H^2$ \\ \hline
\end{tabular}
\end{table}

Taken together, the above results leave only one unsolved case: $d=2$ and $r=1$. We conjecture that ill-posedness occurs in this setting.
	\begin{conjecture}
		For any $\gamma\in \R$, there exists $\eta \in L_x^1+L_x^{\infty}(\R^2)$ such that \eqref{eq:NLS} is ill-posed in $H_x^{\gamma}(\R^2)$.
	\end{conjecture}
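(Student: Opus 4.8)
The strategy is to imitate the counterexample mechanism behind Theorem~\ref{theorem-main 1}, now at the exact threshold $r=\frac d2=1$ with $d=2$: here, unlike the supercritical regime $r<\frac d2$ (available only when $d\ge 3$), the scaling $\eta\mapsto\eta^{\lambda}=\lambda^{2}\eta(\lambda\cdot)$ leaves $\|\eta\|_{L^1(\R^2)}$ \emph{invariant} rather than shrinking it, which is what makes a concentration instability conceivable at all. I would fix a bump $\phi\in C_c^{\infty}(\R^2)$ and look for a potential
\[
\eta(x)=\sum_{j\ge1}c_j\,\lambda_j^{2}\,\phi\!\big(\lambda_j(x-x_j)\big),
\]
with $\lambda_j\uparrow\infty$ lacunary, with the centers $x_j$ spread out so that the rescaled bumps have pairwise disjoint supports and the inter-bump distances grow rapidly, and with a suitable amplitude sequence $\{c_j\}$ so that $\eta\in L^1(\R^2)\subset L_x^1+L_x^{\infty}(\R^2)$. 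For a fixed target $\gamma$, one then tests the flow map $\Phi_t$ of \eqref{eq:NLS} on data $u_{0,n}$ given by a superposition of smooth profiles localized near $x_1,\dots,x_n$, normalized so that $\|u_{0,n}\|_{H_x^{\gamma}(\R^2)}\to0$, and tries to prove $\|\Phi_t(u_{0,n})\|_{H_x^{\gamma}(\R^2)}\not\to0$ at some fixed $t>0$.

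The core of the argument is the analysis of a single scale. Rescaling to the unit scale converts the $j$-th block into $iv_\tau+\Delta v+c_j\phi\,v=(\text{error from other blocks})$, in which, on the relevant region, the data is essentially constant; feeding this into the splitting normal-form and iterated-Duhamel machinery developed for Theorems~\ref{theorem-one}--\ref{theorem-three} (where the two derivatives gained by Duhamel are exactly compensated by the factor $\lambda^{2}$ carried by $\eta=\lambda^{2}\phi(\lambda\cdot)$, which is precisely what ``critical'' means here) produces, back in the original variables, a piece of $\Phi_t(u_{0,n})$ concentrated at frequency $\sim\lambda_j$ whose $H_x^{\gamma}$-size is controlled by $\lambda_j^{\gamma-1}$ times the local amplitude of the data near $x_j$. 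Lacunarity of $\{\lambda_j\}$, disjointness of supports, and the dispersive decay of $e^{it\Delta}$ should render the cross-interactions among distinct blocks negligible, so that $\|\Phi_t(u_{0,n})\|_{H_x^{\gamma}}$ is, up to harmless errors, the $\ell^{2}$-superposition of the single-block contributions, and the remaining freedom in $c_j,\lambda_j,x_j$ and in the data profiles must be spent so that this superposition stays bounded below while the data norm decays.

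The step I expect to be the main obstacle is upgrading the \emph{marginal} (logarithmic) failure at one critical scale to a \emph{genuine} failure at every $\gamma\in\R$. At a fixed small coupling a single scale costs only a logarithm: the first Picard iterate always lies in $H_x^{1-}(\R^2)$, and in $H_x^{\gamma}$ for every $\gamma<1$, because $L^1(\R^2)\subset H^{-1-\e}$ for all $\e>0$; the borderline failure $L^1(\R^2)\not\hookrightarrow H^{-1}(\R^2)$ (the integral $\int_{\R^2}\langle\xi\rangle^{-2}\,\dd\xi$ diverges) only excludes $\gamma=\gamma_*=1$. To reach all $\gamma$ one is forced to work non-perturbatively --- either by tuning the couplings $c_j$ toward a resonant configuration of $-\Delta-c_j\phi$, so that the $j$-th Duhamel series ceases to be a convergent geometric series, or by harvesting an extra logarithmic gain from stacking the scales themselves --- and in either case one must control the \emph{full} Duhamel/Neumann series, not merely finitely many iterates, uniformly in $j$ and $n$, while keeping $\|\eta\|_{L^1}$ finite (it cannot be made small, being scale-invariant) and $\sum_j\|c_j\lambda_j^{2}\phi(\lambda_j\cdot)\|_{L^1}=\sum_j c_j\|\phi\|_{L^1}$ convergent. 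Making the inter-scale error terms of the splitting normal form summable against this borderline budget, and simultaneously producing a single data sequence that excites all scales at once, is the technical crux; the expectation --- consistent with the sharp regularities $H_x^{3-2/r}$ of Theorem~\ref{theorem-one} degenerating exactly to $H_x^{1}$ as $r\downarrow1$ --- is that no room at all survives when $r=1$.
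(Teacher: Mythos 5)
The statement you are addressing is stated in the paper as a \emph{conjecture}, not a theorem: the authors explicitly write that, due to technical limitations, they are unable to resolve it, and that it appears substantially harder than the one-dimensional case. There is therefore no proof in the paper to compare against, and your proposal is, by your own admission, a strategy sketch rather than an argument.

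Your scaling observation (that $\|\eta\|_{L^1(\R^2)}$ is invariant under the natural rescaling, so the supercritical blowup mechanism of Theorem~\ref{theorem-main 1} gives no gain) and your identification of the crux --- that a single critical-scale concentration only costs a logarithm, excluding $\gamma=1$ but nothing below it, since $L^1(\R^2)\hookrightarrow H^{-1-\e}_x$ for every $\e>0$ --- are both correct and worth recording. The gap you flag is indeed the real one: one must pass from a borderline one-scale loss to a genuine failure at \emph{every} $\gamma\in\R$, and your two suggested mechanisms (tuning the couplings $c_j$ toward a resonance of $-\Delta-c_j\phi$, or logarithmic accumulation over lacunary scales) are not developed to the point of being arguments, and you concede as much. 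Two further cautions. First, the heuristic that $3-2/r\downarrow1$ as $r\downarrow1$ leaves ``no room'' is consistent with ill-posedness, but it is equally consistent with well-posedness in $H_x^{1-}$ with an $\e$-loss --- which is precisely what the paper proves for the other critical cases $d=3,4$, $r=d/2$ in Theorem~\ref{theorem-two} --- so the degeneration of the subcritical exponent by itself does not distinguish between the two outcomes. Second, the paper's own stated heuristic for why $d=2$, $r=1$ should behave worse than $d=3,4$, $r=d/2$ is the failure of the endpoint Strichartz estimate $L^2_tL^\infty_x$ in two dimensions; your sketch does not engage with this at all, and any serious attempt would presumably have to exploit that failure concretely rather than rely on $L^1$ scale-invariance alone, since scale-invariance at the critical index is a feature of every dimension while the conjectured dichotomy between $d=2$ and $d\ge3$ is not.
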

	The most probable reason supporting this conjecture is the failure of the endpoint Strichartz estimate in $L^2_tL^\infty_x$ for two dimensions. At present, due to technical limitations, we are unable to resolve this problem, which appears to be substantially more difficult than the one-dimensional case.

\begin{remark}

\begin{enumerate}

\item By the above several theorems, we observe that the regularity of the solution is essentially determined by
 the regularity of the potential. Moreover, increasing the integrality of the potential $\eta$ leads to a corresponding increase in the regularity of the solution.
However, once the integrality of $\eta$ reaches a certain threshold, further improvements in $\eta$ no longer translate into higher regularity of the solution.

\item A typical example for the potential $\eta \in L_x^r+L_x^{\infty}$ is
 $$\eta=|x|^{-a}\in L_x^r+L_x^{\infty}, \quad 0\leq a<\frac dr.$$
%
\end{enumerate}
\end{remark}

	\subsection{A discussion on the effects of nonlinearity} \label{sec:114}
We now briefly discuss the effect of nonlinearities on the regularity of solutions to equation \eqref{eq:NLS}. As observed, the term $\eta u$ and the nonlinear term $|u|^{p} u$ interact with each other, influencing the regularity of the solution. Specifically, we consider the following classical nonlinear Schr\"odinger equation,
	\begin{equation}\label{eq:C-NLS}
		\left\{ \aligned
		&i\partial_t u(t, x)+\Delta u(t, x)=\lambda |u(t,x)|^{p} u(t,x), \quad (t, x)\in \R\times \R^d,
		\\
		&u(0,x)=u_0(x),
		\endaligned
		\right.
	\end{equation}
whose scaling critical index is given by
$
s_c=\frac d2-\frac 2p.
$
 By the work of Cazenave and Weissler \cite{Cazenave-Weissler-1990}, the equation \eqref{eq:C-NLS} is locally well-posed in $H_x^s(\R^d)$, for $s\geq s_c$. If $s<s_c$, the equation \eqref{eq:C-NLS} is ill-posed in $H_x^s$, see the ill-posedness results in \cite{C-C-T-03, KenigPonceVega-CPAM-01}.

Next, we summarize the well-posedness results for the equation \eqref{eq:NLS-2} with potential $\eta\in L_x^r+L_x^{\infty}$. Recall that the equation \eqref{eq:NLS-2} is
\begin{equation}\label{eq:NLS-2-2}
		\left\{ \aligned
		&i\partial_t u(t, x)+\Delta u(t, x)+\eta(x) u(t,x)=\lambda |u(t,x)|^{p} u(t,x),
		\\
		&u(0,x)=u_0(x),
		\endaligned
		\right.
	\end{equation}
where the sign of $\lambda$ does not affect local well-posedness. By combining the known well-posedness results for the classical NLS equation \eqref{eq:C-NLS} with our main theorems in this paper, we obtain the following claim without proof.

{\bf Claim:} 
Let $r\geq \frac d2$, $(r, d)\neq (1, 2)$, and $\epsilon $ be arbitrary small positive constant. Denote
$$
\tilde{\gamma}_*:=\min\{2, 2+\frac d2-\frac dr+o_-\},
$$
where 	$$
	o_-=\left\{ \aligned
	&-\epsilon, \quad r=\frac d2,
	\\
	&0, \quad r>\frac d2.
	\endaligned
	\right.
	$$
The following statements hold:

(1) If
$\mbox{max}\{s_c, 0\}\leq s\leq  \tilde{\gamma}_*,\mbox{ and } s<p+1,$
then \eqref{eq:NLS-2-2} is locally well-posed in $H_x^s$;

(2) If $\tilde{\gamma}_*<s_c$, then for any $\gamma \in \R$, there exists some $\eta \in L_x^r+L_x^{\infty}$ such that \eqref{eq:NLS-2-2} is ill-posed in $H_x^{\gamma}$.

Case (2) exactly exists, for instance, when $\frac d2\leq r\leq\infty$, $p=2$, and $d>6$.
%
%
%
%
The proof of this claim is rather direct relying on the fractional chain rule (see Proposition A.1. in \cite{Visan-2007}), the standard techniques employed in the well-posedness theory for the classical NLS, and the arguments developed in this paper. Moreover, if the potential $\eta$ is real-valued, we further assert that the equation \eqref{eq:NLS-2-2} is globally well-posed in the aforementioned space $H_x^s$, as such a potential generally does not influence the global well-posedness in this setting.

	\subsection{Difficulty, novelty, ideas of proof.} \label{sec:1.2}

In establishing the well-posedness of the equation \eqref{eq:NLS} with a rough potential, the primary difficulty is that we cannot impose any derivative on the potential. In fact,
$$
|\nabla|^\alpha\big(\eta u\big),\quad\alpha>0,
$$
is not well-defined when $\eta$ belongs merely to $L^r+ L_x^{\infty}(\mathbb{R}^d)$ with $r \ge 1$. Moreover, the usual Strichartz estimates provide no global smoothing effect for general initial data. This same issue, commonly referred to as a ``loss of derivative'', has also been encountered in earlier studies on the torus $\mathbb{T}$ and on the one-dimensional line $\mathbb{R}$ (see \cite{Bai-Lian-Wu-2024, M-Wu-Z24}). To overcome this difficulty, we introduce several novel techniques, described below. We now outline the key ideas and observations of the proof for the case $\frac{d}{2} < r \le \max\{2,\,\frac{d}{2}\}$ with $d = 2,3$.


	
	$\bullet$ \textit{Better performance of the time derivative $\partial_t u$.} We consider the equation for $v$ (the time derivative of $u$) instead of directly working with $u$. This is motivated by the observation that $v$ has significantly better space-time properties than $\Delta u$.
More precisely, for some $p > 2$, one can show
$$
|\nabla|^{\gamma_*-2}v\in L^{q}_tL^{p}_x, \quad \text{whereas} \quad  |\nabla|^{\gamma_*}u\notin L^{q}_tL^{p}_x.
$$
Here $\gamma_*=2+\frac d2-\frac dr$.
Indeed, the first one has been proved in Section \ref{v-spacetime-830}. If the second one is true, then from the relation between $v$ and $u$,
\EQn{\label{v-u-relation}
 u=i (-\Delta)^{-1} v+(-\Delta)^{-1}(\eta u),
 }
it follows that $|\nabla|^{\gamma_*-2}(\eta u)\in L^{q}_tL^{p}_x$. However, this is against the Sobolev embedding due to the low regularity of the potential $\eta$. In fact, one can find some initial data $u_0$ and potentials $\eta_N$ such that
$$
\big\||\nabla|^{\gamma_*-2}(\eta_N e^{it \Delta } u_0)\big\|_{L^{q}_tL^{p}_x}\rightarrow +\infty,   \mbox{ as } N\rightarrow \infty.
$$
This indicates that obtaining space-time bounds for $u$ alone necessitates reducing the regularity index $\gamma_*$. Consequently, directly analyzing the equation for $u$ may not achieve the desired regularity. Therefore, we turn to the equation satisfied by $v$,
\begin{align}\label{v-equation}
iv_t+\Delta v+\eta v=0, \mbox{ with } v_0\in H^{\gamma_*-2},
\end{align}
and carry out the analysis in the space $L^\infty_t H^{\gamma_* - 2}_x \cap X$, where $X$ is an appropriate auxiliary space. This approach ultimately allows us to improve the regularity of $u$ in the $H^{\gamma_*}$ norm.

$\bullet$ \textit{Loss of derivative and regularity gain.} As described above, the term $\langle \nabla\rangle^\gamma(\eta u)$ is not well-defined for $\gamma>0$ when $\eta$ is only in $L^r_x+ L_x^{\infty}$ with $r\ge 1$. We must therefore recover this lost regularity by fully exploiting the dispersive properties and smoothing effects from the structure of the equation. To do so, we employ the normal form method introduced by Shatah \cite{Sha85CPAM} and also the differentiation by parts from Babin, Ilyin and Titi \cite{Babin-Ilyin-Titi-2011} to compensate for the derivative loss.
The most complex scenario is as follows:
	\EQ{
		\int_{0}^{t}e^{i(t-s)\Delta}\langle\nabla \rangle^{\gamma}\big(\eta_H u_L\big)ds,
	}
	where ``H" and ``L" indicate the high- and low-frequency components, respectively. The above integral can be rewritten as
	\EQ{
		e^{it\Delta}\F_\xi^{-1}\int_{0}^{t} \int_{\xi=\xi_1+\xi_2}e^{is\Phi}\langle\xi\rangle^{\gamma}\widehat{\eta_{H}}(\xi_1)\widehat{\tilde{u}_{L}}(s,\xi_2)d\xi_1ds,
	}
	where the profile $\tilde{u}:=e^{-it\Delta}u$, and the phase function
	\EQ{
		\Phi(\xi, \xi_2):=|\xi|^2-|\xi_2|^2.
	}
	This integral is temporal non-resonant in the sense that $\Phi \thicksim |\xi_1|^2$. Integrating by parts in $s$ yields several terms in the form of a multilinear operator (called the normal form transform):
	\EQn{\label{T-D}
		T(\langle\nabla\rangle^{\gamma}\eta_{H},u_{L})
		:=\F_\xi^{-1}\int_{\xi=\xi_1+\xi_2}\frac{1}{i\Phi} \widehat{\langle\nabla\rangle^{\gamma}\eta_{H}}(\xi_1)\widehat{u_{L}}(\xi_2)d\xi_1d\xi_2.
	}
Heuristically,
	\EQ{
		T(\langle\nabla\rangle^{\gamma}\eta_{H},u_{L}) &\sim \jb{\nabla}^{-2+\gamma} \eta_{H}\cdot u_{L},
	}
	where $\jb{\nabla}^{-2}$ is derived due to the factor $1/\Phi$, which compensates for the derivative loss $\jb{\nabla}^{\gamma}$, for some $0<\gamma\leq \gamma_*$.

However, the normal form method above breaks down when we consider the equation for $v$ in $H^{\gamma_*-2}_x$ with $\gamma_* < 2$. In this setting, we turn to examine the Duhamel term
\begin{align}\label{Duhamel-v}
I\triangleq\int_0^te^{i(t-s) \Delta}(\eta v)d s.
\end{align}
If we naively apply the standard normal form method repeatedly to $I$, we roughly obtain
the following route of the transformed nonlinearity:
\begin{align}\label{route-CF}
(\eta v_H)_L &\longrightarrow \big(\eta \langle\nabla\rangle^{-2}\big(\eta v_H\big)\big)_L\notag\\
&\longrightarrow \big(\eta \langle\nabla\rangle^{-2}\big(\eta\langle\nabla\rangle^{-2}\big(\eta v_H\big)\big)\big)_L\notag\\
&\longrightarrow \big(\eta \langle\nabla\rangle^{-2}\big(\eta \cdots \langle\nabla\rangle^{-2}\big(\eta v_H\big)\big)\big)_L.
\end{align}
This is probably impossible to close in $H_x^s$ for $s<0$, because the last $v_H$ cannot obtain any negative derivative. In other words, this direct normal form method fails to control the solution in the negative Sobolev space $H^{\gamma_* - 2}_x$.


%
%

$\bullet$ \textit{Iterative Duhamel construction.} As noted above, one cannot directly deduce $$v \in H^{\gamma_* - 2}_x \implies I \in H^{\gamma_* - 2}_x.$$ To overcome this, we develop the ``Iterative Duhamel construction". Specifically, denote the partial sum $$S_N := \sum_{n=0}^N e^{it\Delta} I_n,$$
where the terms $I_n$ are defined recursively by
$$
I_n = i \int_0^t e^{-i\rho \Delta} \Big(\eta\, e^{i\rho \Delta} I_{n-1}\Big)\, d\rho, \quad n \ge 1; \qquad I_0 = v_0.
$$
We shall prove that $S_N$ converges to the unique solution to equation (1.2) in the space $H^{\gamma_* - 2}_x$. While Bejenaru and Tao \cite{B-T-06} established the statement assuming quantitative well-posedness of the equation, we eliminate this hypothesis by the following three steps:

\begin{enumerate}
\item $S_N \in L^\infty_t H^{\gamma_* -2}_x$ for each $N$;\\

\item $\|S_N - S_{N'}\|_{L^\infty_t H^{\gamma_* - 2}_x} \to 0$ as $N,N' \to \infty$;\\

\item $S_N$ converges to the unique solution to equation (1.2) in a weaker space $Y \supset L^\infty_t H^{\gamma_* - 2}_x$.
\end{enumerate}
(1) and (2) guarantee the convergence of $S_N$. (3) shows that the limit of $S_N$ is exact the solution $v$ to equation \eqref{v-equation}.

The key estimate for proving (1) and (2) is that there exists some $\theta > 0$ such that for all $n \ge 1$,
\begin{align}\label{key-e}
\big\|\,e^{it\Delta} I_n \big\|_{H^{\gamma_* - 2}_x \cap X} \;\lesssim\; \sum_{k=1}^n 2^{-\theta k}\, \big\|\,e^{it\Delta} I_{\,n-k} \big\|_{H^{\gamma_* - 2}_x \cap X}.
\end{align}
This estimate is based on the observation that
$$
I_n=\sum_{k=1}^n (T_N)^kJ_{n-k},
$$
where $J_{n-k}$ has a similar form of $I_{n-k}$, where $T_N$ is a pseudo-differential operator of order $2$ with negative principal symbol $|\xi|^{-2}$.

By induction on $n$, \eqref{key-e} yields
$$
\|I_n\|_{H^{\gamma_* - 2}_x} \;\lesssim\; 2^{-\theta n}\,\|v_0\|_{H^{\gamma_* - 2}_x}.
$$
For step (3), the crucial ingredient is that we can initially prove a weak result that $$u \in L^\infty_t H^\alpha_x,$$ for some $\alpha < \gamma_*$ defined in Section \ref{low-one}. 

$\bullet$ \textit{Splitting normal form method.}
In deriving the estimate \eqref{key-e} for $n\ge 3$ via an iteration analogous to \eqref{route-CF}, one encounters terms of the form
\EQ{
  \F_\xi^{-1}\int_{\xi=\sum_{j=1}^4\xi_j}
  \frac{1}{i\Phi(\xi,\xi-\xi_1)}\frac{1}{i\Phi(\xi,\xi_3+\xi_4)}\,
  \widehat{\eta}(\xi_1)\widehat{\eta}(\xi_2)\widehat{\eta}(\xi_3)\widehat{e^{is\Delta}I_{n-3}}(\xi_4)\,
  d\xi_1\,d\xi_2\,d\xi_3.
}
The associated multiplier
\[
\frac{1}{\Phi(\xi,\,\xi-\xi_1)} \cdot \frac{1}{\Phi(\xi,\,\xi_3+\xi_4)}
\]
is too intricate for a direct application of the Coifman--Meyer multiplier theorem without incurring a derivative loss. Although each factor separately behaves like $|\nabla|^{-2}$, the coupling between the terms $\xi-\xi_1$ and $\xi_3+\xi_4$ prevents one from simultaneously converting both factors into derivatives and makes it, when regarded as a whole multiplier,  not satisfy the conditions of the Coifman--Meyer multiplier theorem. Consequently, the procedure in \eqref{route-CF} breaks down and the optimal regularity cannot be reached.

To overcome this difficulty, we split the phase in the normal form step. Rather than using the standard integration-by-parts identity
\[
e^{is\phi}=\partial_{s}(e^{is\phi})\frac{1}{i\phi},
\]
we split the phase function $\phi$ as $\phi = \phi_1 + \phi_2$, and in the regime $|\phi_1| \ll |\phi_2|$ apply the following formula
\begin{align}\label{SNM}
e^{is \phi}=e^{is \phi_1+is \phi_2}=\partial_{s}(e^{is \phi_2})\frac {e^{is \phi_1}}{i\phi_2}.
\end{align}
This splitting averages out the prospective derivative loss: unlike in the standard normal form, the factor $1/\phi_2$ may be regarded directly as $|\nabla|^{-2}$ without invoking Coifman--Meyer multiplier theorem. However, every time we perform integration by parts, an additional term is produced. For instance, applying the ``splitting normal form'' once via \eqref{SNM} to $I_n$ yields
\begin{align}\label{1-SNM}
(\eta e^{is\Delta}I_{n-1, H})_L \;\longrightarrow\;
|\nabla|^2\!\big(\eta |\nabla|^{-2}e^{is\Delta}I_{n-1}\big)
\;+\;
\big(\eta |\nabla|^{-2}\big(\eta e^{is\Delta}I_{n-2,H}\big)\big)_L.
\end{align}
The first term is readily controlled by Schur's test, while the second is handled by iterating the splitting step.
Iterating produces a cascade analogous to \eqref{route-CF}:
\begin{align}\label{route-SNM}
(\eta e^{is\Delta}I_{n-1, H})_L
&\longrightarrow \big(\eta |\nabla|^{-2}\big(\eta e^{is\Delta}I_{n-2,H}\big)\big)_L \notag\\
&\longrightarrow \big(\eta |\nabla|^{-2}\big(\eta|\nabla|^{-2}\big(\eta e^{is\Delta}I_{n-3,H}\big)\big)\big)_L \notag\\
&\longrightarrow \big(\eta |\nabla|^{-2}\big(\eta \cdots |\nabla|^{-2}\big(\eta e^{is\Delta}I_0\big)\big)\big)_L.
\end{align}
This iterative splitting normal form controls the derivative loss and thereby yields \eqref{key-e}.

	\subsection{Organization of the paper} The rest of the paper is organized as follows. In Section 2, we give some basic notations, and lemmas that will be used in this paper. In Section 3, the ill-posedness in any Sobolev space for supercritical potentials is established. Section 4 is devoted to the resonant and non-resonant decomposition of the Duhamel term based on the normal form transform. Sections 5, 6, 7 are devoted to the proof of Theorems \ref{theorem-one}, \ref{theorem-two}, and \ref{theorem-three}, respectively.


	\section{Preliminary}\label{sec:notations}
	
	\subsection{Notations}
	For any $a\in \mathbb{R}$, $a\pm:=a\pm\epsilon$ for arbitrary small $\epsilon>0$. For any $z\in \mathbb{C}$, we define $\mbox{Re}z$ and $\mbox{Im}z$ as the real and imaginary part of $z$, respectively. $|\nabla|^{\alpha}=(-\Delta)^{\frac {\alpha}2}$. $\langle \cdot\rangle =(1+|\cdot|^2)^{\frac 12}$. We write $X \lesssim Y$ or $Y \gtrsim X$ to indicate $X \leq CY$ for
	some constant $C>0$. If $X \leq CY$ and $Y \leq CX$, we write $X\sim Y$. If $X\leq 2^{-5}Y$, we denote $X\ll Y$ or $Y\gg X$. Throughout the whole paper, the letter $C$ will denote suitable positive constant that may vary from line to line. Moreover, we use ``\emph{R.H.S} of $(\cdot)$'' to represent the part on the right-hand side of $(\cdot)$.
	
We use the following norm to denote the sum of two Banach spaces $X_1$ and $X_2$,
\EQ{
\norm{u}_{X_1+X_2}=\inf\{\norm{u_1}_{X_1}+\norm{u_2}_{X_2}: u_1\in X_1, u_2\in X_2, u=u_1+u_2\}.
}

We also use the following norm to denote the mixed spaces $L_t^qL_x^r(I\times \R^d)$,
	\begin{align*}
		\|u\|_{L_t^qL_x^r(I\times \R^d)}=\Big(\int_I \|u\|_{L_x^r( \R^d)}^qdt\Big)^{\frac{1}{q}}.
	\end{align*}
For simplicity, we often use $L_t^{q}L_x^r$ to denote $L_t^{q}L_x^r(I\times \R^d)$; if the time interval $I$ needs to be emphasized, we specify it as $L_t^{q}L_x^r(I)$ instead.

We use $\widehat{f}$ or $\mathscr{F}f$ to denote the Fourier transform of $f$:
$$
\mathscr{F}f (\xi)= \widehat f(\xi)
	= \int_{\R^d} e^{- i x\cdot \xi} f(x) dx.
$$
We also define
$$
\mathscr{F}^{-1}g (x)= \int_{\R^d} e^{ i x\cdot \xi} g(\xi)d\xi.
$$
	The Hilbert space $H^s(\R^d)$ is a Banach space of elements such that
	$\mathscr \langle\xi\rangle^s \widehat u  \in L^2(\R^d)$, and equipped with the norm $\|u\|_{H^s}= \|\langle\xi\rangle^s  \widehat{ u}  (\xi)  \|_{L^2}$. We also have an embedding inequality
	that $\|u\|_{H^{s_1}}\lesssim \|u\|_{H^{s_2}}$ for any $s_1 \leq s_2$, $s_1, s_2\in \R$.

We take a cut-off function $\chi_{a\leq |\cdot| \leq b}(x) \in C_c^{\infty}(\R^d)$ for $b>a>\frac 14$ such that
	$$
	\chi_{a\leq |\cdot| \leq b}(x) =\left\{ \aligned
	&1, \quad a\leq |x| \leq b,
	\\
	&0, \quad |x|\leq a-\frac 14 \mbox{ or } |x|\geq b+\frac 14.
	\endaligned
	\right.
	$$
	 We take a cut-off function $\phi \in C_c^{\infty}(0, \infty)$ such that
	$$
	\phi(r)=\left\{ \aligned
	&1, \quad r\leq 1,
	\\
	&0, \quad r\geq 2.
	\endaligned
	\right.
	$$

Next, we give the definition of Littlewood-Paley dyadic projection operator.
For dyadic number $N\in 2^{\mathbb{N}}$, when $N\geq 1$, let $\phi_{\leq N}(r)=\phi(N^{-1}r)$. Then, we define
$\phi_{1}(r):=\phi(r)$, and $\phi_{N}(r)=\phi_{\leq N}(r)-\phi_{\leq \frac N2}(r)$ for any $N\geq 2$. We define the inhomogeneous Littlewood-Paley dyadic operator
$$
f_1=P_{ 1}f:=\mathscr{F}^{-1}(\phi_{ 1}(|\xi|)\widehat{f}(\xi)),
$$
and for any $N\geq 2$,
$$
f_{ N}=P_{ N}f:=\mathscr{F}^{-1}(\phi_{ N}(|\xi|)\widehat{f}(\xi)).
$$
Then, by definition, we have $f=\sum_{N\in 2^{\N}}f_N$. Moreover, we also define the following:
	\begin{align*}
	&f_{\leq N}=P_{\leq N}f:=\mathscr{F}^{-1}(\phi_{\leq N}(|\xi|)\widehat{f}(\xi)),\\
	&f_{ \ll N}=P_{ \ll N}f:=\mathscr{F}^{-1}(\phi_{ \leq  N}(2^5|\xi|)\widehat{f}(\xi)),\\
&f_{\lsm N}=P_{\lsm N}f:=\mathscr{F}^{-1}(\phi_{ \leq N}(2^{-5}|\xi|)\widehat{f}(\xi)).
	\end{align*}
	We also define that $f_{\geq N}=P_{\geq N}f:=f-f_{\leq N}$, $f_{\gg N}=P_{\gg N}f:=f-P_{\lsm N}f$, and $f_{\gtrsim N}=P_{\gtrsim N}f:=f-P_{ \ll N}f$.

	Next, we show the Triebel-Lizorkin Spaces $F_{p}^{\alpha, q}$ with the corresponding norm as follows,
	\EQ{
		\|u\|_{F_{p}^{\alpha, q}}=\|u\|_{L_x^p}+\|N^{\alpha}P_Nu\|_{ L_x^p l_{N\in 2^{\mathbb{N}}}^q}.
	}
	For any $1\leq p <\infty$, we define $l_{N}^p=l_{N\in 2^{\mathbb{N}}}^p$ by its norm,
	$$
	\|c_N\|_{l_{N\in 2^{\mathbb{N}}}^p}^p:=\sum_{N\in 2^{\mathbb{N}}}|c_N|^p.
	$$
	For $p =\infty$, we define $l_{N}^{\infty}=l_{N\in 2^{\mathbb{N}}}^{\infty}$ by its norm
	$$
	\|c_N\|_{l_{N\in 2^{\mathbb{N}}}^{\infty}}:=\sup_{N\in 2^{\mathbb{N}}}|c_N|.
	$$
	In this paper, we also use the following abbreviations
	$$
	\sum_{N\geq M}:=\sum_{N, M\in 2^{\mathbb{N}}: N\geq M}, \quad \sum_{N\gtrsim M}:=\sum_{N, M\in 2^{\mathbb{N}}: N\geq 2^{-5}M},\mbox{ and } \sum_{N\ll M}:=\sum_{N, M\in 2^{\mathbb{N}}: N\leq 2^{-5}M}.
	$$
Finally, we give the definition of the Schr\"odinger-admissible pair.
Let $d\geq 1$ and the pair $(q, r)$ satisfy
		\begin{align*}
			2\leq q, r\leq \infty  ,\quad \frac{2}{q}+\frac{d}{r}=\frac{d}{2},\quad and\quad (q,r,d)\ne(2,\I,2),
		\end{align*}
then we say that the pair $(q, r)$ is Schr\"odinger-admissible.

	\subsection{Basic lemmas}

	\quad In this section, we state some preliminary estimates that will be used in our later sections.
	Firstly, we introduce the following Bernstein estimates that will be used frequently.
	\begin{lem}[Bernstein estimates\label{lem:Bernstein}]
		For any $1\leq p \leq q \leq \infty$, $s\geq 0$, and $f\in L_x^p(\R^d)$,
		\begin{align*}
			\|P_{\geq N} f\|_{L_x^p(\R^d)}&\lesssim  N^{-s}\||\nabla|^{ s}P_{\geq N} f\|_{L_x^p(\R^d)},\\
			\||\nabla|^{ s}P_{\leq N} f\|_{L_x^p(\R^d)}&\lesssim N^s\|P_{\leq N} f\|_{L_x^p(\R^d)},\\
			\||\nabla|^{\pm s}P_N f\|_{L_x^p(\R^d)}&\sim N^{\pm s}\|P_N f\|_{L_x^p(\R^d)},\\
			\|P_{\leq N} f\|_{L_x^q(\R^d)}&\lesssim  N^{\frac dp-\frac dq}\|P_{\leq N} f\|_{L_x^p(\R^d)},\\
			\|P_N f\|_{L_x^q(\R^d)}&\lesssim  N^{\frac dp-\frac dq}\|P_N f\|_{L_x^p(\R^d)}.
		\end{align*}
	\end{lem}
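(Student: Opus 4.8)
\emph{Plan.} These are the classical Bernstein inequalities, so the strategy is to reduce each assertion to an elementary convolution or multiplier estimate at the relevant dyadic scale. The common input is that $P_{N}$, $P_{\le N}$ and $P_{\ge N}$ are Fourier multipliers whose symbols, after rescaling the frequency variable by $N$, become a fixed bump function (independent of $N$) supported on an annulus, a ball, or the complement of a ball; in particular the convolution kernel of $P_{\le N}$ is $N^{d}\psi(N\,\cdot)$ for a fixed Schwartz function $\psi$, and a change of variables gives $\|N^{d}\psi(N\,\cdot)\|_{L^{s}(\R^{d})}=N^{d(1-1/s)}\|\psi\|_{L^{s}}$ for every $1\le s\le\infty$, and likewise for $P_{N}$.

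\emph{The $L^{p}\to L^{q}$ bounds and the comparison estimate.} For the last two inequalities I would just invoke Young's convolution inequality: choosing $s$ with $1/s'=1/p-1/q$ yields
\EQ{
\|P_{\le N}f\|_{L^{q}}+\|P_{N}f\|_{L^{q}}\lesssim N^{d(1-1/s)}\|f\|_{L^{p}}=N^{d/p-d/q}\|f\|_{L^{p}},
}
with constant uniform in $s$ since $\psi$ is Schwartz. For $\||\nabla|^{\pm s}P_{N}f\|_{L^{p}}\sim N^{\pm s}\|P_{N}f\|_{L^{p}}$, observe that $|\xi|^{\pm s}\phi_{N}(|\xi|)=N^{\pm s}\Theta_{\pm}(\xi/N)$, where $\Theta_{\pm}(\xi):=|\xi|^{\pm s}\phi_{1}(|\xi|)$ is a \emph{fixed} Schwartz function (it is supported on an annulus, hence smooth there and in particular away from the origin). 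Inserting a fattened projection $\widetilde P_{N}$ with $P_{N}\widetilde P_{N}=P_{N}$, one writes $|\nabla|^{\pm s}P_{N}f$ as $N^{\pm s}$ times convolution of $P_{N}f$ against a kernel of uniformly bounded $L^{1}$-norm; running the same argument with $\Theta_{\mp}$ gives the reverse inequality, hence the equivalence.

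\emph{The first and second estimates.} For the second inequality, I would use it with $g=P_{\le N}f$ and prove the slightly stronger Bernstein-type fact: if $\supp\widehat g\subset\{|\xi|\le 2N\}$, then $\||\nabla|^{s}g\|_{L^{p}}\lesssim N^{s}\|g\|_{L^{p}}$. Writing $g=\sum_{M}P_{M}g$ with the sum over dyadic $M\lesssim N$, the blocks $M\ge 2$ are handled by the comparison estimate together with $\|P_{M}g\|_{L^{p}}\lesssim\|g\|_{L^{p}}$, and the finite geometric sum $\sum_{M\lesssim N}M^{s}\lesssim_{s}N^{s}$ ($s>0$; $s=0$ is trivial) closes the estimate. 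For the first inequality, I would use it with $g=|\nabla|^{s}P_{\ge N}f$ (so that $|\nabla|^{-s}g=P_{\ge N}f$) and prove symmetrically: if $\supp\widehat g\subset\{|\xi|\gtrsim N\}$ then $\||\nabla|^{-s}g\|_{L^{p}}\lesssim N^{-s}\|g\|_{L^{p}}$, again by decomposing into dyadic blocks $M\gtrsim N$, invoking the comparison estimate, and summing the convergent series $\sum_{M\gtrsim N}M^{-s}\lesssim_{s}N^{-s}$ ($s=0$ trivial).

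\emph{Where the only subtlety lies.} There is no genuine obstacle here. The one delicate point is that $|\xi|^{s}$ is not smooth at the origin for $s\notin\N$, so the lowest dyadic block $P_{1}$ (whose multiplier $|\xi|^{s}\phi_{1}(|\xi|)$ meets $\xi=0$) escapes the Schwartz-bump bookkeeping above. I would dispose of it via the elementary embedding $H^{t}(\R^{d})\hookrightarrow\mathscr{F}L^{1}(\R^{d})$ for $t>d/2$ (immediate from Cauchy--Schwarz, $\int|\widehat h|\,d\xi\le\|h\|_{H^{t}}\,\|\langle\xi\rangle^{-t}\|_{L^{2}}$): since $|\xi|^{s}\phi_{1}(|\xi|)\in H^{t}$ for every $t<s+\tfrac d2$, for $s>0$ one may choose $t\in(\tfrac d2,\,s+\tfrac d2)$ and conclude that its inverse Fourier transform lies in $L^{1}$, so this block is bounded on every $L^{p}$ with constant $\sim 1=1^{s}$.
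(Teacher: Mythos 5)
The paper states Lemma~\ref{lem:Bernstein} without proof --- these are the classical Bernstein estimates, cited as standard --- so there is no paper argument to compare against; your write-up follows the standard route and is sound in outline. Two points need attention. First, a slip: for $N\ge 2$ the rescaled symbol of $|\nabla|^{\pm s}P_N$ is $N^{\pm s}|\zeta|^{\pm s}\psi(|\zeta|)$ with $\psi(r)=\phi(r)-\phi(2r)$ the \emph{annulus} bump, not $|\zeta|^{\pm s}\phi_1(|\zeta|)$; the paper's $\phi_1=\phi$ is the ball bump that touches the origin, which is precisely the obstruction you later isolate. Second, the assertion $|\xi|^s\phi_1(|\xi|)\in H^t(\R^d)$ for all $t<s+\tfrac d2$ is true but is not a one-liner: one needs either the pointwise decay $\mathscr{F}^{-1}\big(|\xi|^s\phi_1(|\xi|)\big)(x)=c_{s,d}\,|x|^{-d-s}+O(|x|^{-M})$ for $s\notin 2\Z_{\ge 0}$, or (bypassing $H^t$ entirely) a homogeneous dyadic split of the low frequencies, for which the block at scale $K\le 1$ contributes $K^s$ and $\sum_{K\le 1}K^s$ converges when $s>0$. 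One of these should be supplied rather than asserted.

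The more substantive remark concerns the exceptional block $N=1$ in the third item. The two-sided bound $\||\nabla|^{\pm s}P_1 f\|_{L^p}\sim\|P_1 f\|_{L^p}$ is in fact false for $s>0$: take $g_\lambda(x)=\varphi(\lambda x)$ with $\widehat\varphi$ supported near the origin and let $\lambda\to 0$; then $\||\nabla|^s g_\lambda\|_{L^p}/\|g_\lambda\|_{L^p}=\lambda^s\||\nabla|^s\varphi\|_{L^p}/\|\varphi\|_{L^p}\to 0$, so the $\gtrsim$ direction fails, and $|\nabla|^{-s}$ is not $L^p$-bounded on Fourier supports near the origin either. This is an imprecision in the lemma as stated --- the comparison estimate is meant for the annular blocks $N\ge 2$ --- not a flaw in your thinking. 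Your proof establishes only the one-sided bound $\||\nabla|^s P_1 f\|_{L^p}\lesssim\|P_1 f\|_{L^p}$, which is the direction actually used for items one and two. However, in your proof of the first inequality, at the block $M=1$ (relevant when $N=1$) you invoke the comparison estimate in the form $\||\nabla|^{-s}P_1 g\|_{L^p}\lesssim\|P_1 g\|_{L^p}$, which is not covered by your $P_1$ discussion. It does hold here, but only because $\widehat{P_1 g}$ is supported in the annulus $\{1\le |\xi|\le 2\}$ (since $\widehat g$ vanishes for $|\xi|<N$); so that step should be justified by inserting a fattened annular projector and running the $N\ge 2$ argument, not the $P_1$ ball-block argument.
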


	\begin{lem}[Schur's test\label{lem:Schur}]
		For any $a>0$, let sequences $\{a_N\}, \{b_N\}\in l_{N\in 2^{\N}}^2$, then we have
		\EQ{
			\sum_{ N\geq N_1}\Big({\frac{N_1}{N}}\Big)^a a_N b_{N_1}\lesssim\|a_N\|_{l_N^2}\|b_N\|_{l_N^2}.
		}
	\end{lem}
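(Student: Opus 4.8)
The plan is to recognize this as the classical Schur test applied to the nonnegative kernel $K(N,N_1)=(N_1/N)^a$ on the set of dyadic pairs with $N\ge N_1$. The key structural observation is that along the dyadic scale the kernel depends only on the ratio $N/N_1$: writing $N=2^kN_1$ with $k\in\{0,1,2,\dots\}$, one has $(N_1/N)^a=2^{-ak}$.

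First I would compute the row and column sums of $K$. For a fixed row index $N$, summing over the admissible $N_1=2^{-k}N$ with $k\ge 0$ gives $\sum_{k\ge 0}2^{-ak}=(1-2^{-a})^{-1}$; by the identical computation the column sums over $N=2^kN_1$ equal the same quantity. Both are finite precisely because $a>0$, and this is the only place the hypothesis enters.

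Then I would invoke the standard Schur test: for a nonnegative kernel whose row sums are all at most $A$ and whose column sums are all at most $B$, the bilinear form $\sum_{N,N_1}K(N,N_1)|a_N||b_{N_1}|$ is bounded by $\sqrt{AB}\,\|a_N\|_{\ell^2}\|b_{N_1}\|_{\ell^2}$; this follows from factoring $K=\sqrt{K}\cdot\sqrt{K}$ and applying Cauchy--Schwarz in the two variables separately. With $A=B=(1-2^{-a})^{-1}$ this yields the claimed inequality with implicit constant depending only on $a$. Equivalently, and perhaps more transparently, one can rewrite the double sum as $\sum_{k\ge 0}2^{-ak}\sum_{N_1\in 2^{\mathbb N}}a_{2^kN_1}b_{N_1}$, bound the inner sum by $\|a_N\|_{\ell^2}\|b_{N_1}\|_{\ell^2}$ using Cauchy--Schwarz together with the shift-invariance of the $\ell^2$ norm over dyadic indices, and then sum the geometric series in $k$.

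There is essentially no obstacle here; this is a routine lemma. The only points worth a word of care are that one should carry absolute values throughout, so the rearrangement of the absolutely convergent double series is legitimate, and that the geometric series $\sum_{k\ge 0}2^{-ak}$ converges — which is exactly the hypothesis $a>0$. If one wishes, the implicit constant can be made explicit as $(1-2^{-a})^{-1}$.
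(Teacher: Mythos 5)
Your proof is correct and is the standard argument for this kernel; the paper states this as a basic lemma without proof, so there is no competing argument to compare against. Either of the two routes you describe---the abstract Schur test with uniform row/column sums $(1-2^{-a})^{-1}$, or the more hands-on rearrangement into $\sum_{k\ge 0}2^{-ak}\sum_{N_1}a_{2^kN_1}b_{N_1}$ followed by Cauchy--Schwarz and a geometric series---is complete and yields the explicit constant $(1-2^{-a})^{-1}$ depending only on $a>0$, exactly as claimed.
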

	Next, we recall the well-known Strichartz's estimates.
	
	\begin{lem}\label{lem:strichartz}
		(Strichartz's estimates, see \cite{Keel-Tao-1998, Cazenave--03, Strichartz-1977, Ginibre-Velo-1985}) Let $I\subset \R$ be a time interval. Let $(q_j, r_j), j=1,2,$ be Schr\"odinger-admissible,
		then the following statements hold:
		\begin{align}\label{1.2222}
			\|e^{it\Delta}f\|_{L_t^{q_j}L_x^{r_j}(I\times{\R^d})}\lesssim\|f\|_{L^2(\R^d)};
		\end{align}
		and
		\begin{align}\label{1.222234}
			\Big\|\int_0^t e^{i(t-s)\Delta}F(s)ds\Big\|_{L_t^{q_1}L_x^{r_1}(I\times{\R^d})}\lesssim \|F\|_{L_t^{q_2'}L_x^{r_2'}(I\times{\R^d})},
		\end{align}
		where $\frac{1}{q_2}+\frac{1}{q_2'}=\frac{1}{r_2}+\frac{1}{r_2'}=1$.
	\end{lem}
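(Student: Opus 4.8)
The statement is the standard Strichartz inequality, and the plan is to follow the classical route of Strichartz, Ginibre--Velo, and Keel--Tao rather than invent anything new. First I would record the two elementary ingredients. Mass conservation $\|e^{it\Delta}f\|_{L^2_x}=\|f\|_{L^2_x}$ is immediate from Plancherel, since $e^{it\Delta}$ is the Fourier multiplier $e^{-it|\xi|^2}$ of modulus one. The dispersive estimate
\[
\big\|e^{it\Delta}f\big\|_{L^\infty_x(\R^d)}\lesssim |t|^{-d/2}\,\|f\|_{L^1_x(\R^d)}
\]
follows from the explicit kernel $(4\pi i t)^{-d/2}e^{i|x|^2/(4t)}$ of $e^{it\Delta}$. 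Interpolating between these two bounds yields, for every $2\le r\le\infty$,
\[
\big\|e^{it\Delta}f\big\|_{L^r_x}\lesssim |t|^{-d(\frac12-\frac1r)}\,\|f\|_{L^{r'}_x}.
\]

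Next I would run the $TT^*$ argument. The homogeneous estimate \eqref{1.2222} is equivalent, by duality and the identity $(e^{it\Delta})^*=e^{-it\Delta}$, to the boundedness of the operator $F\mapsto\int_{\R}e^{-is\Delta}F(s)\,ds$ from $L^{q'}_tL^{r'}_x$ to $L^2_x$; squaring this and expanding reduces matters to the bilinear bound
\[
\Big|\int_{\R}\!\!\int_{\R}\big\langle e^{i(t-s)\Delta}F(s),\,F(t)\big\rangle\,ds\,dt\Big|\lesssim\|F\|_{L^{q'}_tL^{r'}_x}^2 .
\]
Inserting the interpolated dispersive bound produces the one-dimensional convolution kernel $|t-s|^{-d(\frac12-\frac1r)}$, and the admissibility relation $\frac2q+\frac dr=\frac d2$ makes this exponent exactly $\frac2q$, so for $q>2$ the one-dimensional Hardy--Littlewood--Sobolev inequality in $t$ together with H\"older in $x$ closes the estimate. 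This settles all non-endpoint admissible pairs, and the inhomogeneous estimate \eqref{1.222234} for such pairs follows by composing \eqref{1.2222} with its dual to get the non-retarded bound $\big\|\int_{\R}e^{i(t-s)\Delta}F(s)\,ds\big\|_{L^{q_1}_tL^{r_1}_x}\lesssim\|F\|_{L^{q_2'}_tL^{r_2'}_x}$, and then applying the Christ--Kiselev lemma to restrict the integral to $s<t$ (legitimate since $q_1>q_2'$ away from the double endpoint).

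The remaining case is the endpoint $q=2$, which occurs only for $d\ge3$ with $r=\frac{2d}{d-2}$; here the kernel exponent is $1$, Hardy--Littlewood--Sobolev fails on the line, and one must invoke the Keel--Tao scheme: dyadically split the time separation into $|t-s|\sim 2^j$, estimate each dyadic piece by real interpolation between the trivial $L^2\to L^2$ bound and the dispersive bounds at exponents $r_1<r<r_2$ straddling the endpoint, and sum the resulting almost-orthogonal pieces after an atomic decomposition of $F$ in $t$. The corresponding double-endpoint inhomogeneous estimate $q_1=q_2'=2$ is then extracted directly from this bilinear argument rather than from Christ--Kiselev. I expect this endpoint case to be the only real obstacle: everything else is soft, but at $q=2$ the naive argument produces a non-integrable $|t-s|^{-1}$ singularity, and only the delicate bilinear real-interpolation-plus-atomic-decomposition machinery of Keel--Tao recovers the bound.
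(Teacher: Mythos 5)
The paper does not prove this lemma --- it is stated as a known result and referenced to Keel--Tao, Cazenave, Strichartz, and Ginibre--Velo. Your account is a correct and complete sketch of the standard proof in exactly those references (dispersive estimate plus $TT^*$ plus Hardy--Littlewood--Sobolev for the non-endpoint case, Christ--Kiselev for the retarded inhomogeneous estimate, and the Keel--Tao bilinear/atomic-decomposition argument at $q=2$), so it matches the approach the paper implicitly relies on.
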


		We also need the following Littlewood-Paley theory, see Remark 2.2.2 in \cite{Gra-14}.
	\begin{lem}[Littlewood-Paley theory\label{lem:littlewood-Paley}]
		Let $1<p<\infty$, for any $\alpha\in \R$, we have
		\EQ{
			\|f\|_{F_{p}^{\alpha, 2}}\sim \|\langle\nabla\rangle^{\alpha}f\|_{L_x^p}.
		}
	\end{lem}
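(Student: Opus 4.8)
\emph{Proof idea.} The plan is to reduce the asserted equivalence to the classical Littlewood--Paley square-function theorem together with elementary symbol estimates on dyadic annuli; this is in fact precisely the content of \cite[Remark~2.2.2]{Gra-14}, so below I only indicate how the pieces fit. Fix $1<p<\infty$ and $\alpha\in\R$, and set $g:=\langle\nabla\rangle^{\alpha}f$. Since the summation exponent in the $l$-norm is $2$, we have $\|N^{\alpha}P_N f\|_{L_x^p l_N^2}=\big\|\big(\sum_{N\in 2^{\N}}N^{2\alpha}|P_N f|^2\big)^{1/2}\big\|_{L_x^p}$, so the claim amounts to $\big\|\big(\sum_N N^{2\alpha}|P_N f|^2\big)^{1/2}\big\|_{L_x^p}\sim\|g\|_{L_x^p}$ (plus a lower-order term $\|f\|_{L_x^p}$, discussed at the end). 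The one ingredient I would take as a black box is the vector-valued Mikhlin--H\"ormander multiplier theorem (see \cite{Gra-14}): if $\{\mu_N\}_{N\in 2^{\N}}$ are Fourier multipliers with $\mu_N$ supported in $\{|\xi|\sim N\}$ for $N\ge 2$, $\mu_1$ supported in $\{|\xi|\lesssim1\}$, and $\sup_{N}\sup_{|\beta|\le d+1}\big\||\xi|^{|\beta|}\partial_\xi^{\beta}\mu_N\big\|_{L^\infty}<\infty$, then $\big\|\big(\sum_N|\mu_N(\nabla)h_N|^2\big)^{1/2}\big\|_{L_x^p}\lesssim\big\|\big(\sum_N|h_N|^2\big)^{1/2}\big\|_{L_x^p}$. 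Taking $h_N\equiv g$ and $\mu_N=\phi_N(|\cdot|)$ gives the upper square-function bound $\|(\sum_N|P_N g|^2)^{1/2}\|_{L_x^p}\lesssim\|g\|_{L_x^p}$; the reverse bound $\|g\|_{L_x^p}\lesssim\|(\sum_N|P_N g|^2)^{1/2}\|_{L_x^p}$ then follows by the standard duality argument, pairing $g=\sum_N \widetilde P_N P_N g$ against $L^{p'}$-functions and using Cauchy--Schwarz in $N$ with the upper bound for the conjugate exponent. Here $\widetilde P_N=\widetilde\phi_N(\nabla)$ is a fattened projection, $\widetilde\phi_N\equiv1$ on $\supp\phi_N(|\cdot|)$ and supported in a comparable annulus, so that $P_N=\widetilde P_N P_N$.

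With these in hand I would compare $P_N g=\langle\nabla\rangle^{\alpha}P_N f$ with $N^{\alpha}P_N f$. Because $P_N f$ and $P_N g$ have frequency support in $\{|\xi|\sim N\}$, where $\langle\xi\rangle\sim N$ (the piece $N=1$ being trivial), one may write $N^{\alpha}P_N f=\mu_N^{-}(\nabla)\,P_N g$ and $P_N g=\mu_N^{+}(\nabla)\,(N^{\alpha}P_N f)$ with $\mu_N^{\mp}(\xi)=\big(N/\langle\xi\rangle\big)^{\pm\alpha}\widetilde\phi_N(|\xi|)$. After the rescaling $\xi\mapsto N\xi$ these become symbols supported in one fixed annulus with $C^{d+1}$-norms bounded uniformly in $N$ (the parameter $N^{-2}\le1$ appearing in $\langle N\zeta\rangle=N(N^{-2}+|\zeta|^2)^{1/2}$ is harmless), so $\{\mu_N^{\pm}\}$ satisfy the hypothesis of the vector-valued multiplier theorem. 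Applying that theorem to each family and chaining with the two square-function inequalities yields $\|N^{\alpha}P_N f\|_{L_x^p l_N^2}\sim\|P_N g\|_{L_x^p l_N^2}\sim\|g\|_{L_x^p}=\|\langle\nabla\rangle^{\alpha}f\|_{L_x^p}$. The residual term $\|f\|_{L_x^p}$ in $\|f\|_{F_p^{\alpha,2}}$ is $\lesssim\|\langle\nabla\rangle^{\alpha}f\|_{L_x^p}$ whenever $\alpha\ge0$ ($\langle\nabla\rangle^{-\alpha}$ is then an $L^p$-bounded multiplier, and moreover $N^{2\alpha}\ge1$, so it is also dominated by the square-function term), which is the form in which it is used; combining the displays gives $\|f\|_{F_p^{\alpha,2}}\sim\|\langle\nabla\rangle^{\alpha}f\|_{L_x^p}$.

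The only genuine obstacle is the vector-valued Mikhlin--H\"ormander theorem, equivalently the Littlewood--Paley square-function theorem, whose proof rests on Calder\'on--Zygmund theory with an $\ell^2$-valued kernel (and Khintchine's inequality for the lower bound); this is classical and we simply quote it from \cite{Gra-14}. Everything else is routine bookkeeping with the annular symbols $\mu_N^{\pm}$, and in particular no argument beyond symbol estimates is needed once the square-function theorem is granted.
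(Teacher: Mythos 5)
Your argument is correct and is essentially the standard route — the paper does not give a proof at all, but cites Remark~2.2.2 of Grafakos, and what you write is precisely a worked-out version of that reference: the classical Littlewood--Paley square-function theorem combined with the vector-valued Mikhlin--H\"ormander multiplier theorem, applied to the annular symbols $\mu_N^{\pm}(\xi)=(N/\langle\xi\rangle)^{\pm\alpha}\widetilde\phi_N(|\xi|)$, which are easily seen to satisfy uniform symbol bounds after the rescaling $\xi\mapsto N\xi$.

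One remark is worth making explicit, since you handle it only in passing: with the paper's definition $\|u\|_{F_p^{\alpha,2}}=\|u\|_{L_x^p}+\|N^{\alpha}P_N u\|_{L_x^p l_N^2}$, the stated equivalence is in fact \emph{false} for $\alpha<0$. Taking $f=P_N g$ with $N$ large shows $\|f\|_{L_x^p}\sim 1$ while $\|\langle\nabla\rangle^{\alpha}f\|_{L_x^p}\sim N^{\alpha}\to 0$, so the direction $\|f\|_{F_p^{\alpha,2}}\lesssim\|\langle\nabla\rangle^{\alpha}f\|_{L_x^p}$ cannot hold. You correctly observe that the residual term $\|f\|_{L_x^p}$ is controlled only when $\alpha\ge 0$. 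The genuine content — $\|N^{\alpha}P_N f\|_{L_x^p l_N^2}\sim\|\langle\nabla\rangle^{\alpha}f\|_{L_x^p}$ — does hold for all $\alpha\in\R$ by your argument, and inspection of where the lemma is invoked (e.g.\ \eqref{32515-22}, \eqref{Ink2-34d10}) shows the paper only ever uses the one-sided square-function estimate $\|N^{\alpha}P_N f\|_{L_x^p l_N^2}\lesssim\|\langle\nabla\rangle^{\alpha}f\|_{L_x^p}$, so the over-reach in the lemma statement is harmless downstream. Your proof is complete for what is needed, and it is a point in its favor that it isolates the defect in the lemma as stated.
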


	\begin{lem}[Multilinear Coifman-Meyer multiplier estimates, see \cite{Co-Me-91}\label{lem:Coifman-Meyer}]
		Let the function $m$ on ${(\R^n)^k}$ be bounded and let $T_m$ be the corresponding m-linear multiplier operator on $\R^n (n\geq 1)$
		\begin{align*}
			T_m(f_1,\cdots, f_k)(x)=\int_{(\R^n)^k}m(\eta_1,\cdots,\eta_k)\widehat{f_1}(\eta_1)\cdots\widehat{f_k}(\eta_k)e^{ix\cdot(\eta_1+\cdots+\eta_k)}d\eta_1\cdots d\eta_k.
		\end{align*}
		If $L$ is sufficiently large and $m$ satisfies
		\begin{align*}
			\Big|\partial_{\eta_1}^{\al_1}\cdots\partial_{\eta_k}^{\al_k}m(\eta_1,\cdots,\eta_k)\Big|\lesssim_{\al_1,\cdots, \al_k}(|\eta_1|+\cdots+|\eta_k|)^{-(|\al_1|+\cdots+|\al_k|)},
		\end{align*}
		for multi-indices $\al_1,\cdots, \al_k$ satisfying $|\al_1|+\cdots+|\al_k|\leq L$. Then, for $1< p<\infty$, $1<p_1, \cdots, p_k\leq\infty$ and $\frac 1p=\frac{1}{p_1}+\cdots+\frac{1}{p_k}$, we have
		\begin{align*}
			\|T_m(f_1,\cdots, f_k)\|_{L_x^p}\leq C\|f_1\|_{L_x^{p_1}}\cdots\|f_k\|_{L_x^{p_k}}.
		\end{align*}
	\end{lem}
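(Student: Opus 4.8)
The plan is to recall the classical argument of Coifman and Meyer, realizing $T_m$ as a rapidly convergent superposition of elementary paraproducts and then estimating each of those by square--function and maximal--function bounds. First I would decompose $m$ dyadically in the total frequency variable $\eta=(\eta_1,\dots,\eta_k)\in(\R^n)^k$: fix $\psi\in C_c^\infty((\R^n)^k)$ supported in $\{\tfrac12\le|\eta_1|+\cdots+|\eta_k|\le 2\}$ with $\sum_{j\in\Z}\psi(2^{-j}\eta)=1$ for $\eta\neq 0$, and set $m_j(\eta)=m(\eta)\psi(2^{-j}\eta)$. The differential hypothesis on $m$ makes the rescaled pieces $\eta\mapsto m_j(2^j\eta)$ smooth, supported in a fixed annulus, and uniformly bounded in $C^L$. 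Expanding each in a Fourier series over a fixed large cube and undoing the scaling yields
\[
m(\eta)=\sum_{j\in\Z}\sum_{\nu\in\Z^{kn}}c^j_\nu\,e^{\,i\,2^{-j}\nu\cdot\eta}\,\widetilde\psi(2^{-j}\eta),\qquad \sup_j|c^j_\nu|\lesssim(1+|\nu|)^{-L},
\]
with $\widetilde\psi$ a bump equal to $1$ on $\operatorname{supp}\psi$; after a further finite partition one may take $\widetilde\psi$ of tensor form $\prod_i\varphi_i(\eta_i)$, so that each summand is a constant times a product of translated Littlewood--Paley projections of the $f_i$. Since $L$ is large the weight $(1+|\nu|)^{-L}$ is summable, and it suffices to bound one model operator uniformly.

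Next I would split the model operator according to which frequency $\eta_{i_0}$ attains the maximum $|\eta_{i_0}|\sim|\eta_1|+\cdots+|\eta_k|$ (there is always such an $i_0$). The factors $i\neq i_0$ then carry low-pass projections, so up to harmless translations the operator has the paraproduct form
\[
\sum_{j\in\Z}\big(P_{\sim 2^j}f_{i_0}\big)\prod_{i\neq i_0}\big(P_{\lesssim 2^j}f_i\big).
\]
Pairing against $g\in L_x^{p'}$ and using the pointwise domination $|P_{\lesssim 2^j}f_i(x)|\lesssim \M f_i(x)$ by the Hardy--Littlewood maximal function, H\"older in $x$ and in $j$, the square--function estimate $\big\|(\sum_j|P_{\sim 2^j}f_{i_0}|^2)^{1/2}\big\|_{L_x^{p_{i_0}}}\lesssim\|f_{i_0}\|_{L_x^{p_{i_0}}}$ (valid for $1<p_{i_0}<\infty$), and the Fefferman--Stein vector-valued maximal inequality for the remaining finite-exponent factors, one obtains
\[
\big|\langle T_m(f_1,\dots,f_k),g\rangle\big|\lesssim \|f_{i_0}\|_{L_x^{p_{i_0}}}\Big(\prod_{i\neq i_0}\|f_i\|_{L_x^{p_i}}\Big)\|g\|_{L_x^{p'}},
\]
which is the claimed bound by duality. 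An alternative route is to verify directly that the kernel of $T_m$ is a multilinear Calder\'on--Zygmund kernel and that $T_m(1,\dots,1)\in\operatorname{BMO}$, and then to invoke the multilinear $T(1)$ theorem of Grafakos--Torres together with multilinear Calder\'on--Zygmund interpolation.

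The main obstacle is the presence of the endpoint exponents $p_i=\infty$ allowed in $1<p_1,\dots,p_k\le\infty$. When some $p_i=\infty$ one cannot square-sum that factor; instead each such low-frequency piece $P_{\lesssim 2^j}f_i$ must be peeled off using its elementary bound $\|P_{\lesssim 2^j}f_i\|_{L_x^\infty}\lesssim\|f_i\|_{L_x^\infty}$ (again dominated by the maximal function to absorb the translation), with the vector-valued inequalities applied only to the remaining factors of finite exponent. This is exactly where the assumption $1<p<\infty$ on the output exponent is used, so that no endpoint Littlewood--Paley or maximal estimate is required; handling $p\le 1$ in the general theory would instead need Hardy-space arguments, but that regime does not occur here.
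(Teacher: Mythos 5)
The paper states this lemma as a black box, citing Coifman--Meyer \cite{Co-Me-91}, and offers no proof of its own (the remark following it only notes that the case $k=1$ reduces to Mikhlin--H\"ormander); so there is no in-paper proof to compare against. Your sketch is the standard Coifman--Meyer argument and is essentially correct: dyadic decomposition of $m$ in total frequency, Fourier-series expansion of the rescaled pieces with coefficients decaying like $(1+|\nu|)^{-L}$ (using that $L$ is large), reduction to translated paraproducts indexed by the dominant frequency block, and square-function plus Hardy--Littlewood and Fefferman--Stein maximal bounds, assembled by duality. You also flag the two genuine subtleties correctly: the modulations $e^{i2^{-j}\nu\cdot\eta}$ produce translations that must be absorbed into maximal-function bounds with constants uniform in $\nu$ (which is what forces summability over $\nu$ and hence $L$ large), and any factor with $p_i=\infty$ must be peeled off with the elementary $L^\infty$ bound on $P_{\lesssim 2^j}$ so that the vector-valued inequalities are invoked only for the finite-exponent factors, keeping the target exponent $p\in(1,\infty)$ in Banach range. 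One step is mildly out of order: the annulus $\{\tfrac12\le|\eta_1|+\cdots+|\eta_k|\le 2\}$ in $(\R^n)^k$ is not a product set, so the reduction of $\widetilde\psi$ to tensor form $\prod_i\varphi_i(\eta_i)$ cannot be done directly; one must first perform a finite sectorial splitting selecting the block $\eta_{i_0}$ with $|\eta_{i_0}|\sim\sum_i|\eta_i|$, localize that block away from the origin, and only then take tensor bumps. Since this is exactly the splitting you later use to exhibit the paraproduct structure, it should precede rather than follow the Fourier-series expansion. With that reordering, your argument is a faithful reconstruction of the cited theorem.
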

	
	The Coifman-Meyer Multiplier Theorem is reduced to the Mihlin-H\"{o}rmander Multiplier
Theorem when $k=1$ and $1<p<\infty$.
	
	In order to prove the ill-posedness results for the equation \eqref{eq:NLS}, we need the following lemma.
	\begin{lem}\label{ill-posed}(See \cite{B-T-06}).
		Consider a quantitatively well-posed abstract equation in spaces $D$ and $S$,
		\begin{align*}
			u=L(f)+N_k(u,\ldots, u),
		\end{align*}
		which means for all $f\in D$, $u_1, \ldots ,u_k\in S$ and for some constant $C>0$,
		\begin{align*}
			\|L(f)\|_{S}\leq C\|f\|_{D}, \quad \|N_k(u_1, \ldots ,u_k)\|_{S}\leq C\|u_1\|_{S}\ldots\|u_k\|_{S}.
		\end{align*}
		Here $(D, \|\|_{D})$ is a Banach space with initial data and $(S, \|\|_{S})$ is a Banach space of space-time functions. Define
		\begin{align*}
			A_1(f):=L(f), \quad A_n(f):=\sum_{n_1,\ldots ,n_k\geq 1, n_1+\ldots+n_k=n}N_k(A_{n_1}(f), \ldots ,A_{n_k}(f)), n>1.
		\end{align*}
		Then for some $C_1>0$, all $f, g\in D$ and all $n\geq 1$,
		\begin{align*}
			\|A_n(f)-A_n(g)\|_{S}\leq C_1^n\|f-g\|_{D}(\|f\|_{D}+\|g\|_{D})^{n-1}.
		\end{align*}
	\end{lem}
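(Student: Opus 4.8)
The plan is to argue by strong induction on $n$, pushing all of the combinatorial growth into a single generating-function estimate at the end. Throughout I use the standing feature of this abstract setup that $L$ is linear and $N_k$ is $k$-linear with $k\ge 2$ (so that the displayed recursion for $A_n$ is a genuine definition); in particular $L(0)=0$ and $N_k$ vanishes as soon as one of its slots is $0$, whence $A_n(0)=0$ for every $n\ge1$. The case $n=1$ is immediate from linearity of $L$: $A_1(f)-A_1(g)=L(f-g)$, so $\|A_1(f)-A_1(g)\|_S\le C\|f-g\|_D$, which is the claim for $n=1$.

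For the inductive step I fix $n\ge2$ and assume the bound with constants $c_m$ (to be estimated below) for all $m<n$; taking $g=0$ and using $A_m(0)=0$ this in particular gives $\|A_m(h)\|_S\le c_m\|h\|_D^{\,m}$ for every $m<n$ and every $h\in D$. Since $k\ge2$, each multi-index $(n_1,\dots,n_k)$ with $n_1+\dots+n_k=n$ has all entries strictly below $n$, so the induction hypothesis applies to all of them. In each summand of $A_n(f)-A_n(g)$ I expand by the multilinear telescoping identity
\begin{align*}
N_k(x_1,\dots,x_k)-N_k(y_1,\dots,y_k)=\sum_{j=1}^{k}N_k\big(x_1,\dots,x_{j-1},\,x_j-y_j,\,y_{j+1},\dots,y_k\big),
\end{align*}
with $x_i=A_{n_i}(f)$ and $y_i=A_{n_i}(g)$. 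Applying the $k$-linear bound for $N_k$, then the induction hypothesis to the difference slot and the bounds $\|A_{n_i}(f)\|_S\le c_{n_i}\|f\|_D^{\,n_i}$, $\|A_{n_i}(g)\|_S\le c_{n_i}\|g\|_D^{\,n_i}$ to the frozen slots, and finally the elementary inequality $\|f\|_D^{a}\|g\|_D^{b}(\|f\|_D+\|g\|_D)^{c}\le(\|f\|_D+\|g\|_D)^{a+b+c}$ with $a+b+c=n-1$, each term is bounded by $C\,\big(\prod_{i=1}^k c_{n_i}\big)\,\|f-g\|_D\,(\|f\|_D+\|g\|_D)^{n-1}$. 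Summing over $j\in\{1,\dots,k\}$ and over all compositions $n_1+\dots+n_k=n$ yields the asserted estimate with
\begin{align*}
c_1=C,\qquad c_n=Ck\!\!\sum_{n_1+\dots+n_k=n}\!\! c_{n_1}\cdots c_{n_k}\quad(n\ge2).
\end{align*}

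It remains to check that $c_n\le C_1^{\,n}$ for some $C_1>0$ depending only on $C$ and $k$. The formal power series $H(z)=\sum_{n\ge1}c_n z^n$ satisfies, by the recursion above, the algebraic relation $H(z)=Cz+Ck\,H(z)^k$. The function $F(z,w)=w-Cz-Ckw^k$ is entire, $F(0,0)=0$, and $\partial_w F(0,0)=1\neq0$, so by the analytic implicit function theorem there is a function $w=H(z)$ holomorphic near $z=0$ with $H(0)=0$ solving this relation; hence $H$ has a positive radius of convergence $\rho$, and since the $c_n$ are non-negative this forces $c_n\le C_1^{\,n}$ for all $n$ with any fixed $C_1>1/\rho$ (enlarged if necessary to cover the finitely many small $n$). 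Choosing this $C_1$ finishes the proof.

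The step I expect to be the real obstacle — and the only non-mechanical point — is not to attempt closing the induction with an explicit $C_1$: a naive bound incurs the factor $\binom{n-1}{k-1}$ from the number of compositions, which no fixed $C_1$ absorbs. Encoding the recursion in the equation $H=Cz+CkH^k$ and reading off geometric growth from analyticity at the origin is what makes the argument go through; the telescoping, the $k$-linear estimates, and the $g=0$ specialization are all routine.
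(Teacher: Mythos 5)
The paper does not supply a proof of this lemma; it is quoted from Bejenaru--Tao \cite{B-T-06}. Your argument is a correct, self-contained reconstruction of the standard proof there, and it matches the approach one finds in the reference: prove the difference estimate by induction on $n$ with an undetermined sequence of constants $c_n$, obtaining $c_n$ via the multilinear telescoping identity, and then show $c_n$ grows at most geometrically. The base case, the $g=0$ specialization (justified by $A_m(0)=0$, which you correctly derive from linearity/multilinearity), and the telescoping plus the $k$-linear bound are all handled cleanly and give exactly the recursion $c_1=C$, $c_n=Ck\sum_{n_1+\dots+n_k=n}c_{n_1}\cdots c_{n_k}$.

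The generating-function step is the one genuinely combinatorial point, and your treatment is correct: the formal series $H(z)=\sum c_n z^n$ satisfies $H=Cz+CkH^k$, the implicit function theorem (with $\partial_w F(0,0)=1$ for $k\ge2$) produces an analytic solution, and uniqueness of the coefficient recursion identifies that analytic germ with the formal series, forcing a positive radius of convergence and hence $c_n\le C_1^{\,n}$. One could also finish more elementarily by majorizing $c_n$ against the Fuss--Catalan numbers $a_1=1$, $a_n=\sum_{n_1+\dots+n_k=n}a_{n_1}\cdots a_{n_k}$ via the rescaling $c_n=\alpha\beta^n a_n$ with $\alpha=(Ck)^{-1/(k-1)}$, $\beta=C/\alpha$, and then invoking the known geometric bound for $a_n$; both routes absorb the $\binom{n-1}{k-1}$ composition count, which, as you rightly flag, would break a naive induction with a fixed constant. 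Your proof is complete and correct.
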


	\section{Supercritical case: the proof of Theorem \ref{theorem-main 1}}

%
%
%
%
%

In this part, we aim to prove that for any $\gamma \in \R$, there exists $\eta\in L_x^{r}+ L_x^{\infty}(\R^d)$ with $1\leq r<\frac d2$, $d\geq 3$, such that the equation \eqref{eq:NLS} is ill-posed in $H_x^{\gamma}(\R^d)$. Let
\EQ{
A (u_0)(t)\triangleq\int_0^t e^{-i\rho\Delta}(\eta e^{i\rho\Delta}u_0)d\rho.
}
Let the parameters $M, N, K_0\gg 1$ be determined later, and satisfy $M=K_0 N$. Next, on one hand, we choose the initial data
\EQ{
u_0(x):=\mathscr{F}^{-1}\big(N^{-\frac d2-\gamma }\chi_{N\leq |\cdot|\leq 2N}(\xi)\big)(x).
}
Then we have
\EQ{\|u_0\|_{H_x^{\gamma}}^2=\|\langle\xi\rangle^\gamma \widehat{u_0}(\xi)\|_{L_{\xi}^2}^2\sim N^{-d}\int_{N}^{2N}r^{d-1}dr\sim 1.
}
On the other hand, we choose the potential
\EQ{
\eta(x)=M^{\frac dr}\mathscr{F}^{-1}\big(\chi_{\frac 12\leq |\cdot|\leq 2}(\xi)\big)(M x).
}
Then we have
\EQ{
\widehat{\eta}(\xi)=M^{-d(1-\frac 1r)}\chi_{\frac 12\leq |\cdot|\leq 2}\big(\frac{\xi}{M}\big).
}
Note $\chi_{\frac 12\leq |\cdot|\leq 2}(\xi)$ is a Schwartz function, hence
\EQ{
\norm{\eta}_{L_x^r}=\norm{\mathscr{F}^{-1}\big(\chi_{\frac 12\leq |\cdot|\leq 2}(\xi)\big)}_{L_x^r}<\infty.
}
Next, we aim to prove that for any $T>0$ and $\gamma\in \R$,
\EQ{
\sup\limits_{t\in [0,T]}\|A(u_0)(t)\|_{H_x^{\gamma}(\R^d)}\rightarrow \infty, \mbox{ as }N\rightarrow \infty.
}
For our purpose, we set
$$
t\triangleq \frac 1{M^2},
$$
and
$$
\Omega=\{\xi: \sqrt{\frac {\pi}{3}}M\leq|\xi|\leq \sqrt{\frac {\pi}{2}}M\}.
$$
For $A(u_0)$, by the integration-by-parts and the definition of $u_0$ and $\eta$, we have
\begin{align}\label{621656}
\widehat{A (u_0)}(\xi)=&M^{-d(1-\frac 1r)}N^{-\frac d2-\gamma }\int_0^t \int_{\xi=\xi_1+\xi_2}e^{is(|\xi|^2-|\xi_2|^2)}\chi_{\frac 12\leq |\cdot|\leq 2}\big(\frac{\xi_1}{M}\big)\chi_{N\leq |\cdot|\leq 2N}(\xi_2)d\xi_2ds\notag\\
=&M^{-d(1-\frac 1r)}N^{-\frac d2-\gamma }\int_{\xi=\xi_1+\xi_2}\frac{e^{it(|\xi|^2-|\xi_2|^2)}-1}{i(|\xi|^2-|\xi_2|^2)}\chi_{\frac 12\leq |\cdot|\leq 2}\big(\frac{\xi_1}{M}\big)\chi_{N\leq |\cdot|\leq 2N}(\xi_2)d\xi_2.
\end{align}
Hence, taking the real part of $\widehat{A (u_0)}(\xi)$, we obtain
\begin{align}\label{6121}
\mbox{Re}\widehat{A (u_0)}(\xi)=M^{-d(1-\frac 1r)}N^{-\frac d2-\gamma }\int_{\xi=\xi_1+\xi_2}\frac{\sin[t(|\xi|^2-|\xi_2|^2)]}{|\xi|^2-|\xi_2|^2}\chi_{\frac 12\leq |\cdot|\leq 2}\big(\frac{\xi_1}{M}\big)\chi_{N\leq |\cdot|\leq 2N}(\xi_2)d\xi_2.
\end{align}
Noting that $t|\xi|^2\in(\frac {\pi}3, \frac {\pi}2)$ for any $\xi\in \Omega$, and $t|\xi_2|^2\sim K_0^{-2}\leq \frac 14$ for $K_0$ large enough, by the mean value theorem, we have
\begin{align}\label{621}
\sin[t(|\xi|^2-|\xi_2|^2)]=\sin (t|\xi|^2)+O(t|\xi_2|^2)
\geq \frac 14.
\end{align}
By the estimates \eqref{6121} and \eqref{621}, we obtain
\begin{align}\label{6122}
\mbox{Re}\widehat{A (u_0)}(\xi)\geq \frac 14 M^{-d(1-\frac 1r)}N^{-\frac d2-\gamma }\int_{\xi=\xi_1+\xi_2}\frac{1}{|\xi|^2-|\xi_2|^2}\chi_{\frac 12\leq |\cdot|\leq 2}\big(\frac{\xi_1}{M}\big)\chi_{N\leq |\cdot|\leq 2N}(\xi_2)d\xi_2>0.
\end{align}
Further, the above inequality yields that
\begin{align*}
\|A (u_0)\|_{H_x^{\gamma}(\R^d)}= \norm{\langle\xi\rangle^{\gamma}\widehat{A(u_0)}(\xi)}_{L_{\xi}^2(\R^d)}\geq \norm{\langle\xi\rangle^{\gamma}\mbox{Re}\widehat{A(u_0)}(\xi)}_{L_{\xi}^2(\R^d)}.
\end{align*}
Finally, combing the estimate \eqref{6122}, we get
\begin{align*}
\|A (u_0)\|_{H_x^{\gamma}(\R^d)}\geq& CM^{\gamma-d(1-\frac 1r)}N^{-\frac d2-\gamma }\norm{\int_{\R^d}\frac{1}{|\xi|^2-|\xi_2|^2}\chi_{N\leq |\cdot|\leq 2N}(\xi_2)d\xi_2}_{L_{\xi}^2(\Omega)}\\
\geq & CM^{\gamma-2-d(1-\frac 1r)}N^{-\frac d2-\gamma }\norm{\int_{\R^d}\chi_{N\leq |\cdot|\leq 2N}(\xi_2)d\xi_2}_{L_{\xi}^2(\Omega)}\\
\geq & CM^{\gamma-2-d(1-\frac 1r)}N^{-\frac d2-\gamma }M^{\frac d2}N^d\\
=&CK_0^{\gamma-2-d(\frac 12-\frac 1r)}N^{-2+\frac dr},
\end{align*}
where $C>0$ and $K_0>0$ are finite. Hence, by $-2+\frac dr>0$, we obtain that for any $T>0$ and $\gamma\in \R$
\begin{align}\label{Bu0}
\sup\limits_{t\in [0,T]}\|A (u_0)\|_{H_x^{\gamma}(\R^d)}\rightarrow \infty, \mbox{ as }N\rightarrow \infty.
\end{align}
The proof of ill-posedness is done by applying Lemma \ref{ill-posed}. This completes the proof of Theorem \ref{theorem-main 1}.

	\section{Resonant and non-resonant decomposition}
We now turn to the well-posedness analysis for \eqref{eq:NLS} with critical or subcritical index, that is $\eta \in L_x^{r}+L_x^{\infty}(\R^d)$, where $r=\frac d2$ or $r>\frac d2$. In the following, we only need to consider $\eta \in L_x^r$. Indeed, for $\eta=\eta_1+\eta_2$, with $\eta_1\in L_x^r$ and $\eta_2\in L_x^{\infty}$, we denote
$$
\Phi_j(u):=\int_0^t e^{i(t-\rho)\Delta}(\eta_j u)d\rho.
$$
Then we shall prove $\Phi_1(u)$ and $\Phi_2(u)$ are closed in $H_x^{\tilde{\gamma}_*}$ and $H_x^2$, respectively. Here $\tilde{\gamma}_*=\mbox{min}\{2, 2+\frac d2-\frac dr+o_-\}$. Since $\tilde{\gamma}_*\leq 2$, $\Phi_1(u)$ and $\Phi_2(u)$ are both closed in $H_x^{\tilde{\gamma}_*}$.

 The key difficulty in closing the estimates arises when the potential $\eta$ exhibits high-frequency components while the solution $u$ remains low-frequency, how do we transfer the derivative when it acts on potential $\eta$. In order to overcome this difficulty, we shall use the technique of the resonant and non-resonant decomposition. We now proceed to describe this decomposition in detail.
	
	By Duhamel's formula, the integral equation for \eqref{eq:NLS} is
	\EQn{\label{Duhamel-NLS}
		u(t)=e^{it\Delta}u_0+i\int_{0}^t e^{i(t-\rho)\Delta}(\eta u)(\rho)d\rho.
	}
	Next, we apply the normal form transform to give a suitable resonant and non-resonant decomposition for the integral term in \eqref{Duhamel-NLS}. Firstly, we give the following definition.
	
	\begin{definition}
		\label{defn:normal-form}
		Let $\alpha\in \R$, and $N_0\in 2^\N$, denote the multiplier
		\EQ{
			m(\xi_1, \xi_2):=\frac{\langle\xi\rangle^{\alpha}\langle\xi_1\rangle^{2-\alpha}}{|\xi|^2-|\xi_2|^2}\phi_{\geq N_0}(|\xi|)\phi_{\ll 1}\Big(\frac{|\xi_2|}{|\xi|}\Big),
		}
		where $\xi=\xi_1+\xi_2$. Using this notation, we give the following definitions:
		\begin{enumerate}
			\item (Boundary term)
			We define the normal form transform of functions $f, g$ by
			\EQ{
				\mathcal{B}(f, g)(x) := \int_{\xi=\xi_1+\xi_2} e^{ix(\xi_1+\xi_2)} m(\xi_1, \xi_2)  \widehat{f}(\xi_1)\widehat{g}(\xi_2)d\xi_1d\xi_2.
			}
			\item (Resonance term and low frequency term)
			We define the resonance part and some remainder terms of the term $\eta u$ by
			\EQ{
				\mathcal{R}(\eta, u):= P_{\leq N_0}(\eta u)+P_{\geq N_0}\sum_{M\gtrsim N}P_N(\eta P_Mu).
			}
		\end{enumerate}
	\end{definition}
	\begin{remark}
	It is easy to check that the multiplier $m$ satisfies the conditions of Coifman-Meyer's multiplier in Lemma \ref{lem:Coifman-Meyer}.
	\end{remark}
	Using the notations in the above definition, we can rewrite $\langle\nabla \rangle^{\alpha} u(t,x)$ in the following form.
	\begin{lem}\label{resoant-decom}
Let $\alpha\in \R$. Let $u(t,x)$ be defined in \eqref{Duhamel-NLS}, the bilinear operator $\mathcal{B}$ and the function $\mathcal{R}(\eta, u)$ be defined in Definition \ref{defn:normal-form}. Then we have
		\EQn{\label{Duhamel-NLS-normal}
			\langle\nabla \rangle^{\alpha} u(t,x)=&\langle\nabla \rangle^{\alpha}e^{it\Delta}u_0(x)
			-e^{it\Delta}\mathcal{B}(\langle\nabla \rangle^{-2+\alpha}\eta, u_0(x))\\
&+\mathcal{B}(\langle\nabla \rangle^{-2+\alpha}\eta, u(t,x))\\
			&+i\int_0^te^{i(t-\rho)\Delta}\langle\nabla \rangle^{\alpha}\mathcal{R}(\eta, u(\rho, x))d\rho\\
			&-i\int_0^te^{i(t-\rho)\Delta}\mathcal{B}(\langle\nabla \rangle^{-2+\alpha}\eta, \eta u)(\rho, x)d\rho.
		}
	\end{lem}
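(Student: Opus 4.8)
The plan is to run one step of the normal form transform (time integration-by-parts) on the Duhamel term of \eqref{Duhamel-NLS} after applying $\langle\nabla\rangle^{\alpha}$. One starts from
\[
\langle\nabla\rangle^{\alpha}u(t)=\langle\nabla\rangle^{\alpha}e^{it\Delta}u_0+i\int_0^t e^{i(t-\rho)\Delta}\langle\nabla\rangle^{\alpha}(\eta u)(\rho)\,d\rho
\]
and splits the nonlinearity as $\eta u=\mathcal R(\eta,u)+\mathcal{NR}(\eta,u)$, where $\mathcal R(\eta,u)$ is exactly the resonant/low-frequency piece of Definition \ref{defn:normal-form} and the complementary non-resonant piece satisfies
\[
\widehat{\mathcal{NR}(\eta,u)}(\xi)=\int_{\xi=\xi_1+\xi_2}\phi_{\geq N_0}(|\xi|)\,\phi_{\ll 1}\!\Big(\tfrac{|\xi_2|}{|\xi|}\Big)\,\widehat{\eta}(\xi_1)\,\widehat{u}(\xi_2)\,d\xi_1\,d\xi_2 .
\]
That these two pieces add up to $\eta u$ is a bookkeeping verification: the dyadic contributions with $M\gtrsim N$ together with those with $M\ll N$ exhaust $P_{\geq N_0}(\eta u)$, and $P_{\leq N_0}(\eta u)$ restores the low output frequencies; moreover the Fourier support of $\widehat{\mathcal{NR}(\eta,u)}$ lies in the region where the smooth cutoff $\phi_{\geq N_0}(|\xi|)\phi_{\ll 1}(|\xi_2|/|\xi|)$ occurring in $m$ equals $1$, so this cutoff may be inserted for free. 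The $\mathcal R(\eta,u)$-contribution is kept as is; it is the fourth line of \eqref{Duhamel-NLS-normal}.

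For the $\mathcal{NR}$-contribution I would pass to the Fourier side and introduce the profile $\tilde u:=e^{-it\Delta}u$, so that $\widehat u(\rho,\xi_2)=e^{-i\rho|\xi_2|^2}\widehat{\tilde u}(\rho,\xi_2)$; writing $\Phi=\Phi(\xi,\xi_2)=|\xi|^2-|\xi_2|^2$ the term becomes
\[
ie^{-it|\xi|^2}\int_0^t\!\!\int_{\xi=\xi_1+\xi_2}e^{i\rho\Phi}\,\langle\xi\rangle^{\alpha}\,\phi_{\geq N_0}(|\xi|)\,\phi_{\ll 1}\!\Big(\tfrac{|\xi_2|}{|\xi|}\Big)\,\widehat{\eta}(\xi_1)\,\widehat{\tilde u}(\rho,\xi_2)\,d\xi_1\,d\xi_2\,d\rho .
\]
On the support of the cutoffs one has $|\xi_2|\ll|\xi|$, hence $|\Phi|\sim|\xi|^2>0$, so the non-resonance identity $e^{i\rho\Phi}=\tfrac1{i\Phi}\partial_\rho(e^{i\rho\Phi})$ is legitimate; integrating by parts in $\rho$ produces the $\rho=t$ and $\rho=0$ boundary terms and the term where $\partial_\rho$ falls on $\widehat{\tilde u}$. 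Since the multiplier $\tfrac{\langle\xi\rangle^{\alpha}}{\Phi}\phi_{\geq N_0}\phi_{\ll 1}$ is precisely $m(\xi_1,\xi_2)\langle\xi_1\rangle^{-2+\alpha}$, the $\rho=t$ boundary term — after using $e^{it\Phi}\widehat{\tilde u}(t,\xi_2)=e^{it|\xi|^2}\widehat u(t,\xi_2)$ to cancel the prefactor $e^{-it|\xi|^2}$ — reassembles exactly to $+\mathcal B(\langle\nabla\rangle^{-2+\alpha}\eta,u(t))$ (third line of \eqref{Duhamel-NLS-normal}), while the $\rho=0$ term, where $\widehat{\tilde u}(0,\xi_2)=\widehat{u_0}(\xi_2)$ and $e^{-it|\xi|^2}$ survives as the symbol of $e^{it\Delta}$, reassembles to $-e^{it\Delta}\mathcal B(\langle\nabla\rangle^{-2+\alpha}\eta,u_0)$ (second line). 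For the last term I would invoke \eqref{eq:NLS} in profile form, $\partial_\rho\widehat{\tilde u}(\rho,\xi_2)=ie^{i\rho|\xi_2|^2}\widehat{\eta u}(\rho,\xi_2)$: the phases telescope back to $e^{i\rho|\xi|^2}$, so the term collapses to $-i\int_0^t e^{i(t-\rho)\Delta}\mathcal B(\langle\nabla\rangle^{-2+\alpha}\eta,\eta u)(\rho)\,d\rho$, the fifth line. Collecting the five contributions gives \eqref{Duhamel-NLS-normal}.

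The only genuinely delicate point is the rigorous justification of the time integration-by-parts and of the Fubini interchanges between the $\rho$-integral and the frequency integrals: a priori $u$ is merely a low-regularity solution, so $\eta u$ and $\partial_\rho\tilde u$ need not be regular enough for these manipulations verbatim. I would circumvent this by first proving the identity for smooth, compactly frequency-supported $u_0$ and $\eta$ — where $u$ is $C^1$ in time, all the integrals above converge absolutely, and the integration-by-parts is classical — and then pass to the general case by approximation, using the continuity in the relevant norms of each operator appearing in \eqref{Duhamel-NLS-normal} (the linear propagator, $\mathcal B$, which is a Coifman--Meyer operator by the remark following Definition \ref{defn:normal-form}, and the Duhamel integral composed with $\langle\nabla\rangle^{\alpha}\mathcal R$). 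Equivalently, one may localize the output frequency to a single dyadic block $P_N$, where the $\rho$-integral is manifestly absolutely convergent so that every step above is justified directly, and sum over $N$ at the very end.
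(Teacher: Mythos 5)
Your approach is the correct one and is exactly the strategy the paper gestures at (the paper itself only cites Lemma~4.3 of \cite{Bai-Lian-Wu-2024} and omits the proof): split off the non-resonant piece, pass to the profile $\tilde u = e^{-it\Delta}u$, integrate by parts in $\rho$ using the temporal non-resonance $|\Phi|\sim|\xi|^2$, and reassemble the $\rho=t$ boundary term, the $\rho=0$ boundary term, and the $\partial_\rho\widehat{\tilde u}$ term into the third, second, and fifth lines of \eqref{Duhamel-NLS-normal}; your phase bookkeeping and your use of $\partial_\rho\widehat{\tilde u}=ie^{i\rho|\xi_2|^2}\widehat{\eta u}$ are all correct, and the density/frequency-localization argument is the standard way to make the formal integration by parts rigorous.

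One place you gloss over: you assert that the cutoff $\phi_{\geq N_0}(|\xi|)\,\phi_{\ll 1}(|\xi_2|/|\xi|)$ in $m$ is identically $1$ on the Fourier support of $\eta u-\mathcal{R}(\eta,u)=P_{\geq N_0}\sum_{M\ll N}P_N(\eta\,P_M u)$, so that it may be inserted for free. With the conventions of Section~2 this is not quite true: $M\ll N$ means $M\leq 2^{-5}N$, $\widehat{P_M u}$ is supported on $|\xi_2|\in[M/2,2M]$ and $\widehat{P_N(\cdot)}$ on $|\xi|\in[N/2,2N]$, so $|\xi_2|/|\xi|$ ranges up to $2^{-3}$ on that support; meanwhile $\phi_{\ll 1}(r)=\phi(2^5 r)$ equals $1$ only for $r\leq 2^{-5}$ and vanishes for $r\geq 2^{-4}$. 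So the cutoff actually kills part of the support, and the exact identity you invoke does not close as written. This is a fixable bookkeeping mismatch between the dyadic description of $\mathcal{R}$ and the smooth cutoff inside $m$ --- one should either widen the support of $\phi_{\ll 1}$ (replace it by a cutoff that is $1$ up to ratio $2^{-3}$) or tighten the $\ll$ threshold in the definition of $\mathcal{R}$ so that the two descriptions of the non-resonant region coincide --- but it needs to be stated, since your ``inserted for free'' step is where the exact algebraic identity is established.
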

The proof of this lemma can be referred to Lemma 4.3 in \cite{Bai-Lian-Wu-2024}. Here we omit the details.

\section{Subcritical case: the proof of Theorem \ref{theorem-one}}\label{low-regularity-830}
	\subsection{Low regularity for a subcritical index $\frac d2<r\leq 2$}\label{low-one}

In this part, we use Lemma \ref{resoant-decom} to derive a weak regularity result for \eqref{eq:NLS} in the subcritical index regime. First, we make the choices of some parameters:

(1) For any $\frac d2<r\leq 2$, $d=2, 3$, define $\varepsilon_0$ be an arbitrary small constant such that
$$0<\varepsilon_0<\frac 2d-\frac 1r.$$

(2) Define the regularity index $\alpha$ as follows,
$$
	\alpha=\left\{ \aligned
	&4-\frac {4}r-2\varepsilon_0, \quad d=2,
	\\
	&\frac 92-\frac {6}r, \quad d=3.
	\endaligned
	\right.
$$

(3) Define the Schr\"odinger admissible pair $(q_0, r_0)$ as follows,
$$
	(q_0, r_0)=\left\{ \aligned
	&(2+, \infty-), \quad d=2,
	\\
	&(2, 6), \qquad d=3.
	\endaligned
	\right.
$$
More precisely, denote $(2+, \infty-):=(\frac 2{1-2\varepsilon_0}, \frac 1{\varepsilon_0})$.

(4) Define the Schr\"odinger admissible pair $(q_1, r_1)$ by the following,
\begin{align*}
\frac 2 {q_1}=-\frac d2+\frac d{r_0}+\frac dr, \mbox{ and } \frac 1 {r_1}=1-\frac 1{r_0}-\frac 1r.
\end{align*}

We define the auxiliary space $X(I)$ for $I\subset\R^+$ by the following norms,
	\EQn{\label{3251-2}
		\|u\|_{X(I)}:=\|u\|_{L_t^{\infty}L_x^{2}(I)} + \|u\|_{L_t^{q_1}L_x^{r_1}(I)}.
	}

We first establish the global well-posedness of \eqref{eq:NLS} in the space $H_x^{\alpha}(\R^d)$. This constitutes a weak regularity result, as $\alpha < \gamma^* = 2 + \frac{d}{2} - \frac{d}{r}$, where $\gamma^*$ denotes the expected critical regularity index.
\begin{prop}\label{weak-subcritical}
Let $d=2, 3$, $\frac d2<r\leq 2$, and $\eta\in L_x^{r}(\R^d)$, then \eqref{eq:NLS} is globally well-posed in $H_x^{\alpha}(\R^d)$.
\end{prop}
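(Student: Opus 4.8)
The plan is to run a contraction mapping argument for the Duhamel formulation \eqref{Duhamel-NLS} directly in the space $C(I;H_x^{\alpha})\cap X(I)$, exploiting the resonant/non-resonant decomposition in Lemma \ref{resoant-decom} with the exponent $\alpha$ chosen in item (2) so that the derivative loss in $\eta u$ is absorbed by the $\jb{\nabla}^{-2}$ gain coming from the phase $|\xi|^2-|\xi_2|^2$. Concretely, I would apply $\jb{\nabla}^{\alpha}$ to the map $u\mapsto e^{it\Delta}u_0+i\int_0^t e^{i(t-\rho)\Delta}(\eta u)\,d\rho$, rewrite it via \eqref{Duhamel-NLS-normal}, and estimate each of the five terms on the right in the norm $\|\cdot\|_{L^\infty_t L^2_x(I)}+\|\jb{\nabla}^{-\alpha}(\cdot)\|_{X(I)}$ — equivalently, I bound $u$ in $H^\alpha_x$ pointwise in time and in a Strichartz-type space. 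The free term is handled by Strichartz \eqref{1.2222}; the boundary terms $e^{it\Delta}\B(\jb{\nabla}^{-2+\alpha}\eta,u_0)$ and $\B(\jb{\nabla}^{-2+\alpha}\eta,u)$ are handled by the Coifman--Meyer estimate (Lemma \ref{lem:Coifman-Meyer}), since by the Remark after Definition \ref{defn:normal-form} the multiplier $m$ satisfies the Coifman--Meyer hypotheses, followed by H\"older in $x$ distributing $L^r_x$ onto $\eta$ and $L^{r'}_x$-type norms onto $u$ (this is where the admissible pairs $(q_0,r_0)$ and the auxiliary pair $(q_1,r_1)$ defined by $\frac2{q_1}=-\frac d2+\frac d{r_0}+\frac dr$, $\frac1{r_1}=1-\frac1{r_0}-\frac1r$ enter).

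The two Duhamel integral terms are the substance. For the resonance/low-frequency term $i\int_0^t e^{i(t-\rho)\Delta}\jb{\nabla}^\alpha\R(\eta,u)\,d\rho$, I would use the dual Strichartz estimate \eqref{1.222234} to move to an $L^{q'}_t L^{p'}_x$ norm of $\jb{\nabla}^\alpha\R(\eta,u)$; the point of the definition $\R(\eta,u)=P_{\le N_0}(\eta u)+P_{\ge N_0}\sum_{M\gtrsim N}P_N(\eta P_M u)$ is that the low-frequency piece carries only $N_0^\alpha$ (harmless, $N_0$ fixed) while in the high-frequency resonant piece the output frequency $N$ is comparable to the input frequency $M$ of $u$, so $\jb{\nabla}^\alpha$ can be moved onto $u$ and then onto a Strichartz norm at the cost of $\|\eta\|_{L^r_x}$ via Bernstein (Lemma \ref{lem:Bernstein}) to fix the frequency-disparity loss between $\eta$ and the output — this is precisely what forces the specific value of $\alpha$ and the constraint $0<\varepsilon_0<\frac2d-\frac1r$. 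For the last term $i\int_0^t e^{i(t-\rho)\Delta}\B(\jb{\nabla}^{-2+\alpha}\eta,\eta u)\,d\rho$, which is quadratic in $\eta$, I again use \eqref{1.222234} plus Coifman--Meyer plus two applications of H\"older in $x$ (one factor of $\|\eta\|_{L^r_x}$ from each $\eta$), controlling $u$ in the $L^{q_1}_t L^{r_1}_x$ component of $X(I)$. Summing the Littlewood--Paley pieces is done by Schur's test (Lemma \ref{lem:Schur}) thanks to the off-diagonal decay $ (N_1/N)^a$ generated by the multiplier $m$ and Bernstein.

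Having bounded the map, I would close the contraction by absorbing the $\eta$-dependent constants using a short time interval: since $\eta\in L^r_x$ is fixed, on an interval $I$ of length $\delta$ the time-integration in the Duhamel terms produces a factor $\delta^{\theta}$ with $\theta>0$ (coming from H\"older in $t$ between the admissible exponents and the dual ones, using $q_1>2$ and $q_0>2$ in $d=2$, or the corresponding gap in $d=3$), so for $\delta$ small the map is a contraction on a ball of $C(I;H^\alpha_x)\cap X(I)$; this yields local well-posedness. Because the equation is linear, the resulting local solution obeys an a priori bound $\|u\|_{L^\infty_t H^\alpha_x(I)}\le C(\|\eta\|_{L^r_x},\delta)\|u_0\|_{H^\alpha_x}$ on each interval of fixed length $\delta$ depending only on $\|\eta\|_{L^r_x}$, and iterating this bound over $[0,T]$ for arbitrary $T$ gives a global solution with $\|u\|_{L^\infty_t H^\alpha_x([0,T])}\le e^{CT}\|u_0\|_{H^\alpha_x}$; uniqueness in $C(I;H^\alpha_x)\cap X(I)$ and continuous dependence follow from the same difference estimates applied to $u-\tilde u$. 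The main obstacle I anticipate is the bookkeeping in the high-frequency resonant term $\R(\eta,u)$: one must verify that after moving $\jb{\nabla}^\alpha$ onto the (low-frequency relative to $\eta$) factor $u$ and paying a Bernstein price in the frequency gap between $\eta_H$ and the output, the resulting exponent budget is exactly consistent with a Strichartz-admissible pair — this is the computation that pins down $\alpha=4-\frac4r-2\varepsilon_0$ ($d=2$), $\alpha=\frac92-\frac6r$ ($d=3$), and where the endpoint obstruction ($L^2_tL^\infty_x$ in $d=2$) would appear if one tried to push $\varepsilon_0\to0$.
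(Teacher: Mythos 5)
Your proposed route follows the paper's overall strategy closely: rewrite the Duhamel formula via the normal form decomposition of Lemma \ref{resoant-decom}, estimate the boundary terms with Coifman--Meyer and H\"older, handle the resonance and higher-order terms via dual Strichartz, Bernstein, and Schur's test, and iterate the local bound (whose lifespan depends only on $\norm{\eta}_{L^r_x}$) to conclude global well-posedness. That is essentially the paper's proof, so most of the proposal is sound.

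However, there is a genuine gap in how you close the contraction. You write that the contraction is obtained ``by absorbing the $\eta$-dependent constants using a short time interval,'' with a factor $\delta^\theta$ coming from the $t$-integration. This works for the two Duhamel integrals (the resonance/low-frequency piece and the quadratic-in-$\eta$ piece), but it does \emph{not} work for the boundary term $\mathcal{B}(\jb{\nabla}^{-2+\alpha}\eta,\,u(t))$. That term is evaluated pointwise in time and sits outside any $d\rho$-integral, so shrinking the time interval produces no smallness whatsoever: its estimate is of the form $\normo{\mathcal{B}(\jb{\nabla}^{-2+\alpha}\eta,u)}_{X(I)}\lesssim \normo{P_{\ge N_0}\eta}_{L^r_x}\normo{\jb{\nabla}^\alpha u}_{X(I)}$, uniformly in $|I|$. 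If you only had $\norm{\eta}_{L^r_x}$ as the multiplicative constant here, the fixed point argument could fail for large $\norm{\eta}_{L^r_x}$.

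The missing observation is that the multiplier $m$ in Definition \ref{defn:normal-form} carries the cutoff $\phi_{\ge N_0}(|\xi|)\phi_{\ll 1}(|\xi_2|/|\xi|)$, which forces the $\eta$-frequency $\xi_1$ to satisfy $|\xi_1|\sim|\xi|\gtrsim N_0$; consequently the boundary and higher-order bilinear terms only see $P_{\ge N_0}\eta$, whose $L^r_x$-norm can be made arbitrarily small by choosing $N_0$ large. The correct order of parameters is therefore: first choose $N_0=N_0(\delta,\norm{\eta}_{L^r_x})$ large so that $C\norm{P_{\ge N_0}\eta}_{L^r_x}\le \tfrac14$ (this controls the time-independent boundary terms), and only then take $T=T(N_0,\norm{\eta}_{L^r_x})$ small so that $CT^\gamma(N_0^\alpha+1)\le\tfrac12$ (this controls the Duhamel integrals, whose $N_0$-loss $N_0^\alpha$ must be compensated by $T^\gamma$). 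Your write-up distributes $L^r_x$ onto ``$\eta$'' rather than onto ``$P_{\ge N_0}\eta$'' for the boundary terms and then tries to close by $T$ alone; that step would fail. Everything else in your sketch — including the iteration to global time, since the resulting lifespan indeed depends only on $\norm{\eta}_{L^r_x}$ — is consistent with the paper.
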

Next we provide the key estimates to prove this proposition.

%
%
%
%


%

	\subsubsection{Boundary terms}
	\begin{lem}[Boundary terms]\label{lem:nonlinear-estimate-boundary-22}
		Let $d=2, 3$, and $\frac d2<r\leq 2$. Let $I \subset\R^+$ be an interval containing $0$. Then, for any $N_0\in2^\N$,
		\EQn{\label{32511-22}
			\normb{e^{it\Delta}\mathcal{B}(\langle\nabla \rangle^{-2+\alpha}\eta, u_0)}_{X(I)} \lsm \|P_{\geq N_0}\eta\|_{L_x^{r}}\|\langle\nabla \rangle^{\alpha}u\|_{X(I)},
		}
		and
		\EQn{\label{32512-22}
			\norm{\mathcal{B}(\langle\nabla \rangle^{-2+\alpha}\eta, u(t))}_{X(I)} \lsm  \|P_{\geq N_0} \eta\|_{L_x^{r}}\|\langle\nabla \rangle^{\alpha}u\|_{X(I)}.
		}
	\end{lem}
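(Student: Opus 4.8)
The plan is to reduce both \eqref{32511-22} and \eqref{32512-22} to one spatial bilinear estimate and then prove that estimate by combining the Coifman--Meyer bound for the normal-form multiplier with fractional integration on the rough factor and a Sobolev embedding on the smooth factor. The target is: for $p\in\{2,r_1\}$ and arbitrary $h$,
\[
\big\|\mathcal{B}(\langle\nabla\rangle^{-2+\alpha}\eta,h)\big\|_{L_x^{p}}\lesssim \|P_{\geq N_0}\eta\|_{L_x^{r}}\,\big\|\langle\nabla\rangle^{\alpha}h\big\|_{L_x^{p}},
\]
with implicit constant \emph{independent of $N_0$}. Granting this, \eqref{32511-22} follows since $\|e^{it\Delta}F\|_{X(I)}\lesssim\|F\|_{L_x^{2}}$ (unitarity of $e^{it\Delta}$ on $L^2$ together with the Strichartz estimate \eqref{1.2222} for the admissible pair $(q_1,r_1)$), applied with $F=\mathcal{B}(\langle\nabla\rangle^{-2+\alpha}\eta,u_0)$, $p=2$ and $\|u_0\|_{H^{\alpha}}\leq\|\langle\nabla\rangle^{\alpha}u\|_{X(I)}$ (because $0\in I$); and \eqref{32512-22} follows by applying the spatial estimate at each fixed $t$, with $p=2$ for the $L_t^{\infty}L_x^{2}$ component of $\|\cdot\|_{X(I)}$ and with $p=r_1$ for the $L_t^{q_1}L_x^{r_1}$ component (taking $L_t^{q_1}$ afterwards), using that $\eta$ is time-independent so the same $\eta$ enters every time slice.

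For the spatial estimate, note first that $\mathcal{B}$ is precisely the bilinear multiplier operator $T_m$ of Lemma~\ref{lem:Coifman-Meyer} with the symbol $m$ of Definition~\ref{defn:normal-form}. On the support of $m$ one has $|\xi|\gtrsim N_0\gtrsim 1$ and $|\xi_2|\ll|\xi|$, hence $|\xi_1|\sim|\xi|$ and the temporal non-resonance $|\xi|^{2}-|\xi_2|^{2}\sim|\xi|^{2}$; consequently $m\sim 1$ there and $m$ obeys the Coifman--Meyer symbol bounds uniformly in $N_0$ and $\alpha$ (the Remark following Definition~\ref{defn:normal-form}), the only $N_0$-dependence residing in $\phi_{\geq N_0}(|\xi|)$, whose derivatives cost $N_0^{-1}\sim|\xi|^{-1}$ on that region. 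Lemma~\ref{lem:Coifman-Meyer} then yields, for any H\"older triple $\tfrac1p=\tfrac1\sigma+\tfrac1\tau$ with $1<p<\infty$ and $1<\sigma,\tau\leq\infty$,
\[
\big\|\mathcal{B}(\langle\nabla\rangle^{-2+\alpha}\eta,h)\big\|_{L_x^{p}}\lesssim\big\|\langle\nabla\rangle^{-2+\alpha}P_{\gtrsim N_0}\eta\big\|_{L_x^{\sigma}}\,\|h\|_{L_x^{\tau}},
\]
the projection $P_{\gtrsim N_0}$ being legitimate since $m$ also localizes $\xi_1$ to $|\xi_1|\gtrsim N_0$ (after a harmless matching of cutoffs). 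It remains to pick $\sigma,\tau$ with $\|\langle\nabla\rangle^{-2+\alpha}P_{\gtrsim N_0}\eta\|_{L_x^{\sigma}}\lesssim\|P_{\geq N_0}\eta\|_{L_x^{r}}$, which holds by fractional integration provided $\sigma\geq r$ and $\tfrac1\sigma\geq\max\{0,\tfrac1r-\tfrac{2-\alpha}{d}\}$ (here $2-\alpha>0$ and $r>1$ are used), and $\|h\|_{L_x^{\tau}}\lesssim\|\langle\nabla\rangle^{\alpha}h\|_{L_x^{p}}$, which holds by Sobolev embedding provided $\tfrac1\tau\geq\max\{0,\tfrac1p-\tfrac\alpha d\}$ (here $\alpha>0$ is used), with $\tau=\infty$ admitted when $\alpha p>d$.

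The hard part will be verifying that such $\sigma,\tau$ actually exist, i.e.\ that the two lower bounds above are compatible with $\tfrac1\sigma+\tfrac1\tau=\tfrac1p$ while staying off the forbidden $L^1$-endpoint for $\eta$ and, when $p=2$, off the borderline $H^{d/2}\not\hookrightarrow L^{\infty}$ obstruction for $h$. Adding the two lower bounds, $[\tfrac1r-\tfrac{2-\alpha}{d}]+[\tfrac1p-\tfrac\alpha d]=\tfrac1r-\tfrac2d+\tfrac1p\leq\tfrac1p$ reduces \emph{exactly} to the scaling condition $r\geq\tfrac d2$; since the hypothesis supplies $r>\tfrac d2$ strictly, this leaves room which --- combined with the calibrated value of $\alpha$, the definitions of $(q_0,r_0)$ and $(q_1,r_1)$, and the constraint $0<\varepsilon_0<\tfrac2d-\tfrac1r$ (which also guarantees $0<\alpha<2$) --- lets one choose $\sigma\in(1,\infty)$ and $\tau\in(1,\infty]$ bounded away from the bad endpoints; in the few sub-cases that force an endpoint embedding (for instance $H^{1/2}(\R^3)\hookrightarrow L^3$ and $W^{3/2,3}(\R^3)\hookrightarrow L^{\infty}$ when $d=3$, $r=2$, $p=r_1=3$) it is a valid one. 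Since all constants are uniform in $N_0$, the small factor $\|P_{\geq N_0}\eta\|_{L_x^{r}}$ can then be absorbed in the proof of Proposition~\ref{weak-subcritical}.
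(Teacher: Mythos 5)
Your proposal is correct and follows essentially the same route as the paper: reduce to a spatial bilinear bound via Strichartz, apply the Coifman--Meyer estimate to the multiplier $m$, then trade $\langle\nabla\rangle^{-2+\alpha}$ on the $\eta$-factor for $\|P_{\geq N_0}\eta\|_{L^r}$ by a Bessel-potential/Sobolev embedding and load the $\alpha$ derivatives onto $u$ by another Sobolev embedding. The only difference is presentational: the paper fixes the H\"older exponents $(\sigma,\tau)$ explicitly in three sub-cases ($\alpha<\tfrac d2$, $=\tfrac d2$, $>\tfrac d2$) for the $L^2_x$ bound and uses $(\sigma,\tau)=(r_1,\infty)$ with the identities in \eqref{3262-22} for the $L^{q_1}_tL^{r_1}_x$ bound, whereas you argue abstractly that admissible exponents exist using the slack from $r>\tfrac d2$; your abstract count is right and matches the paper's concrete choices.
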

	\begin{proof}
		By Strichartz's estimates, we have
		\EQn{\label{32513-22}
			\normb{e^{it\Delta}\mathcal{B}(\langle\nabla \rangle^{-2+\alpha}\eta, u_0)}_{X(I)} \lsm \normb{\mathcal{B}(\langle\nabla \rangle^{-2+\alpha}\eta, u_0)}_{L_x^2}.
		}
		Due to the restriction of applying the Sobolev inequality, we prove this lemma from the following three cases: $\alpha<\frac d2$, $\alpha=\frac d2$, and $\alpha>\frac d2$.
		
		When $\alpha<\frac d2$, noting $-2+\frac dr<0$, by Lemma \ref{lem:Coifman-Meyer} and Sobolev's inequality,
		\EQ{
			\normb{\mathcal{B}(\langle\nabla \rangle^{-2+\alpha}\eta, u_0)}_{L_x^2}\lsm &\|\langle\nabla \rangle^{-2+\alpha}P_{\geq N_0}\eta\|_{L_x^{\frac d{\alpha}}}\|u_0\|_{L_x^{\frac d{\frac d2-\alpha}}}\\
			\lsm &\|\langle\nabla \rangle^{-2+\alpha+\frac dr-\alpha}P_{\geq N_0}\eta\|_{L_x^{r}}\|u\|_{L_t^{\infty}H_x^{\alpha}}\\
			\lsm &\|P_{\geq N_0}\eta\|_{L_x^{r}}\|u\|_{L_t^{\infty}H_x^{\alpha}}.
		}
		When $\alpha=\frac d2$, noting $-2+\frac dr+d\varepsilon_0<0$,
		\EQ{
			\normb{\mathcal{B}(\langle\nabla \rangle^{-2+\alpha}\eta, u_0)}_{L_x^2}\lsm &\|\langle\nabla \rangle^{\frac d2-2}P_{\geq N_0}\eta\|_{L_x^{\frac 2{1-2\varepsilon_0}}}\|u_0\|_{L_x^{\frac 1{\varepsilon_0}}}\\
 \lsm &\|\langle\nabla \rangle^{\frac d2-2+\frac dr-\frac {d(1-2\varepsilon_0)}2}P_{\geq N_0}\eta\|_{L_x^{r}}\|u\|_{L_t^{\infty}H_x^{\alpha}}\\
  \lsm &\|P_{\geq N_0}\eta\|_{L_x^{r}}\|u\|_{L_t^{\infty}H_x^{\alpha}}.
		}
		When $\alpha>\frac d2$, noting $\alpha-2-\frac d2+\frac dr< 0$, we have
		\EQ{
			\normb{\mathcal{B}(\langle\nabla \rangle^{-2+\alpha}\eta, u_0)}_{L_x^2}\lsm &\|\langle\nabla \rangle^{-2+\alpha}P_{\geq N_0}\eta\|_{L_x^{2}}\|u_0\|_{L_x^{\infty}}\\
			\lsm &\|\langle\nabla \rangle^{-2+\alpha+\frac dr-\frac d2}P_{\geq N_0}\eta\|_{L_x^{r}}\|u\|_{L_t^{\infty}H_x^{\alpha}}\\
			\lsm &\|P_{\geq N_0}\eta\|_{L_x^{r}}\|u\|_{L_t^{\infty}H_x^{\alpha}}.
		}
		Hence, \eqref{32511-22} follows from \eqref{32513-22} and the above three estimates.
		
		Next, we give the proof of \eqref{32512-22}. First, following the same procedure as above, we conclude that
		\EQ{
			\norm{\mathcal{B}(\langle\nabla \rangle^{-2+\alpha}\eta, u(t))}_{L_t^{\infty}L_x^{2}} \lsm  \|P_{\geq N_0} \eta\|_{L_x^{r}}\|u\|_{L_t^{\infty}H_x^{\alpha}}.
		}
		It is reduced to control term $\norm{\mathcal{B}(\langle\nabla \rangle^{-2+\alpha}\eta, u(t))}_{L_t^{q_1}L_x^{r_1}}$.
Here, we note that
		\EQn{\label{3262-22}
			\alpha-\frac d{r_1}>0, \mbox{and}-2+\alpha+\frac dr-\frac d{r_1}=0.
		}
By Lemma \ref{lem:Coifman-Meyer} and Sobolev's inequality,
		\EQ{
			\norm{\mathcal{B}(\langle\nabla \rangle^{-2+\alpha}\eta, u(t))}_{L_t^{q_1}L_x^{r_1}} \lsm & \| \langle\nabla\rangle^{-2+\alpha}P_{\geq N_0}\eta\|_{L_x^{r_1}}\|u\|_{L_t^{q_1}L_x^{\infty}}\\
			\lsm &\| \langle\nabla\rangle^{-2+\alpha+\frac dr-\frac d{r_1}}P_{\geq N_0} \eta\|_{L_x^{r}}\|\langle\nabla\rangle^{\alpha}u\|_{L_t^{q_1}L_x^{r_1}}\\
			\lsm &\|P_{\geq N_0} \eta\|_{L_x^{r}}\|\langle\nabla\rangle^{\alpha}u\|_{L_t^{q_1}L_x^{r_1}}.
		}
	This gives \eqref{32512-22}. Hence, we finish the proof of this lemma.
	\end{proof}

\subsubsection{Resonance term and low frequency term}
	\begin{lem}\label{lem:nonlinear-estimate-resonance-22}
Let $d=2, 3$, and $\frac d2<r\leq 2$. Let $I=[0, T) \subset\R^+$ be an interval. Then, for any $N_0\in2^\N$,
		\EQ{
			\normbb{\int_{0}^t e^{i(t-\rho)\Delta}\langle\nabla \rangle^{\alpha}\mathcal{R}(\eta, u) d\rho}_{X(I)} \lsm  T^{\frac 1{q_0'}-\frac 1{q_1}}N_0^{\alpha} \|\eta\|_{L_x^{r}}\|\langle\nabla \rangle^{\alpha}u\|_{X(I)}.
		}
	\end{lem}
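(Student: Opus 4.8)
The plan is to dispose of the time integral with the inhomogeneous Strichartz estimate and then prove a purely spatial bilinear bound for $\langle\nabla\rangle^{\alpha}\mathcal R(\eta,u)$; the power $T^{1/q_0'-1/q_1}$ will come from H\"older in time. First I would note that $(\infty,2)$, $(q_1,r_1)$ and $(q_0,r_0)$ are all Schr\"odinger-admissible, so Lemma~\ref{lem:strichartz} and H\"older in time on $I=[0,T)$ give
\[
\Big\|\int_{0}^{t}e^{i(t-\rho)\Delta}\langle\nabla\rangle^{\alpha}\mathcal R(\eta,u)\,d\rho\Big\|_{X(I)}\lesssim\big\|\langle\nabla\rangle^{\alpha}\mathcal R(\eta,u)\big\|_{L_t^{q_0'}L_x^{r_0'}(I)}\le T^{\frac1{q_0'}-\frac1{q_1}}\big\|\langle\nabla\rangle^{\alpha}\mathcal R(\eta,u)\big\|_{L_t^{q_1}L_x^{r_0'}(I)},
\]
where $\frac1{q_0'}-\frac1{q_1}\ge0$ follows from the definitions of $q_0$ and $q_1$. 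Thus the lemma reduces to the fixed-time estimate
\[
\big\|\langle\nabla\rangle^{\alpha}\mathcal R(\eta,u)\big\|_{L_x^{r_0'}}\lesssim N_0^{\alpha}\,\|\eta\|_{L_x^{r}}\,\big\|\langle\nabla\rangle^{\alpha}u\big\|_{L_x^{r_1}},
\]
since then taking the $L_t^{q_1}$-norm and using $\|\langle\nabla\rangle^{\alpha}u\|_{L_t^{q_1}L_x^{r_1}}\le\|\langle\nabla\rangle^{\alpha}u\|_{X(I)}$ finishes the argument. Here the H\"older exponents are compatible, $\frac1{r_0'}=\frac1r+\frac1{r_1}$.

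To prove the fixed-time estimate I would treat the two pieces of $\mathcal R(\eta,u)$ from Definition~\ref{defn:normal-form} separately, i.e. $\mathcal R(\eta,u)=P_{\le N_0}(\eta u)+\sum_{N\gtrsim N_0}P_N(\eta u_{\gtrsim N})$ with $u_{\gtrsim N}:=\sum_{M\gtrsim N}P_Mu$. For the low-output piece, Bernstein's inequality (Lemma~\ref{lem:Bernstein}), H\"older in $x$, and the $L_x^{r_1}$-boundedness of $\langle\nabla\rangle^{-\alpha}$ ($1<r_1<\infty$) give
\[
\big\|\langle\nabla\rangle^{\alpha}P_{\le N_0}(\eta u)\big\|_{L_x^{r_0'}}\lesssim N_0^{\alpha}\|\eta u\|_{L_x^{r_0'}}\le N_0^{\alpha}\|\eta\|_{L_x^{r}}\|u\|_{L_x^{r_1}}\lesssim N_0^{\alpha}\|\eta\|_{L_x^{r}}\|\langle\nabla\rangle^{\alpha}u\|_{L_x^{r_1}},
\]
which is exactly the source of the factor $N_0^{\alpha}$. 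The remaining piece will be bounded with a constant independent of $N_0$.

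For the resonant piece $\sum_{N\gtrsim N_0}P_N(\eta u_{\gtrsim N})$ the bilinear symbol is too singular for a direct use of the Coifman--Meyer theorem (the phase $|\xi|^{2}-|\xi_2|^{2}$ does not dominate $|\xi_1|$), so I would argue by square functions. Since $P_N(\eta u_{\gtrsim N})$ is frequency-localized to $\{|\xi|\sim N\}$, Littlewood--Paley theory reduces the $L_x^{r_0'}$-norm of the sum to $\big\|\big(\sum_{N\gtrsim N_0}|\langle\nabla\rangle^{\alpha}P_N(\eta u_{\gtrsim N})|^{2}\big)^{1/2}\big\|_{L_x^{r_0'}}$. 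Writing $\langle\nabla\rangle^{\alpha}P_N=N^{\alpha}\Theta_N$ for a normalized Littlewood--Paley family $\{\Theta_N\}$ and splitting $N^{\alpha}=(N/M)^{\alpha}M^{\alpha}$ in the sum over $M\gtrsim N$, one has $\langle\nabla\rangle^{\alpha}P_N(\eta u_{\gtrsim N})=\sum_{M\gtrsim N}(N/M)^{\alpha}\Theta_N(\eta\,w_M)$ with $w_M:=M^{\alpha}P_Mu$. Because $\alpha>0$ and $M\gtrsim N$, one has $\sum_{M\gtrsim N}(N/M)^{\alpha}\lesssim1$ and $\sup_{N\lesssim M}(N/M)^{\alpha}\lesssim1$, so Cauchy--Schwarz in $M$ and interchanging the order of summation yield
\[
\sum_{N\gtrsim N_0}\big|\langle\nabla\rangle^{\alpha}P_N(\eta u_{\gtrsim N})\big|^{2}\lesssim\sum_{N\gtrsim N_0}\sum_{M\gtrsim N}(N/M)^{\alpha}|\Theta_N(\eta w_M)|^{2}\lesssim\sum_{M}\sum_{N}|\Theta_N(\eta w_M)|^{2}=\sum_{M}\big(\mathcal S(\eta w_M)\big)^{2},
\]
where $\mathcal S$ denotes the square function built from $\{\Theta_N\}$. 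Then the vector-valued Littlewood--Paley inequality ($1<r_0'<\infty$), H\"older with $\frac1{r_0'}=\frac1r+\frac1{r_1}$, and Lemma~\ref{lem:littlewood-Paley} give
\[
\Big\|\big(\textstyle\sum_M(\mathcal S(\eta w_M))^{2}\big)^{1/2}\Big\|_{L_x^{r_0'}}\lesssim\Big\||\eta|\big(\textstyle\sum_M|w_M|^{2}\big)^{1/2}\Big\|_{L_x^{r_0'}}\le\|\eta\|_{L_x^{r}}\Big\|\big(\textstyle\sum_M|w_M|^{2}\big)^{1/2}\Big\|_{L_x^{r_1}}\sim\|\eta\|_{L_x^{r}}\|\langle\nabla\rangle^{\alpha}u\|_{L_x^{r_1}},
\]
with constant independent of $N_0$. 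Adding this to the low-output estimate yields the fixed-time bound, and hence the lemma.

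I expect the main obstacle to be exactly this resonant piece, and inside it the ``high-potential $\times$ high-solution $\to$ low-output'' regime: a crude H\"older estimate there costs an $\ell^{1}_M$-sum of $\|P_{\sim M}\eta\|_{L_x^{r}}$, which is \emph{not} controlled by $\|\eta\|_{L_x^{r}}$ when $r<2$. The role of the square-function computation above is to preserve the $\ell^{2}_M$ structure all the way through, so that $\eta$ enters only as a pointwise factor multiplying a square function of $u$, where H\"older applies with no loss. The only remaining points are the routine bookkeeping: checking admissibility of $(q_1,r_1)$ and the sign $\frac1{q_0'}-\frac1{q_1}\ge0$ from the parameter choices, and the uniform-in-$N$ symbol bounds for $\Theta_N$.
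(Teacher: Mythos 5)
Your argument is correct, but you handle the resonant piece in a genuinely different way from the paper. You dispose of the time integral first---inhomogeneous Strichartz down to $L_t^{q_0'}L_x^{r_0'}$, then H\"older in time---thereby reducing the whole lemma at once (both the $L^\infty_tL^2_x$ and $L^{q_1}_tL^{r_1}_x$ components of $X(I)$) to a fixed-time bilinear bound $\|\langle\nabla\rangle^\alpha\mathcal{R}(\eta,u)\|_{L^{r_0'}_x}\lesssim N_0^\alpha\|\eta\|_{L^r_x}\|\langle\nabla\rangle^\alpha u\|_{L^{r_1}_x}$. On the resonant piece you then propagate the $\ell^2_M$ structure through a pointwise square-function computation: weighted Cauchy--Schwarz in $M$ (using $\alpha>0$ so that $\sum_{M\gtrsim N}(N/M)^\alpha\lesssim1$), interchange of sums, and a vector-valued Littlewood--Paley inequality, after which H\"older in $x$ extracts $\|\eta\|_{L^r_x}$ cleanly. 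The paper instead estimates the two components of $X(I)$ separately, dualizes against $h\in L^2_x$ (resp.\ $L^{r_1'}_x$), runs Schur's test (Lemma~\ref{lem:Schur}) with exactly the same kernel $(N/M)^\alpha$ to land in $\ell^2_M L_t^{q_0'}L_x^{r_0'}$, and then applies Minkowski (valid since $q_0',r_0'\le2$) followed by H\"older and the scalar Lemma~\ref{lem:littlewood-Paley}. Both routes exploit the identical dyadic gain; yours condenses the duality--Schur--Minkowski bookkeeping into a single square-function inequality and never needs Minkowski in time. The one caveat is that the vector-valued Littlewood--Paley bound you invoke (boundedness of $\{f_M\}\mapsto\{\Theta_N f_M\}$ from $L^p(\ell^2_M)$ to $L^p(\ell^2_{N,M})$) is not among the paper's stated preliminaries; it is a standard consequence of $\ell^2$-valued Calder\'on--Zygmund theory, so the proof is sound, but you should cite it explicitly if you want to remain within the paper's toolkit---otherwise the paper's duality-plus-Schur route is the route that stays inside Lemmas~\ref{lem:Schur} and~\ref{lem:littlewood-Paley}.
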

	\begin{proof}
By the definition of $\mathcal{R}(\eta, u)$ in Definition \ref{defn:normal-form}, we have
\EQnnsub{
\Big\|\int_{0}^t& e^{i(t-\rho)\Delta}\langle\nabla \rangle^{\alpha}\mathcal{R}(\eta, u) d\rho\Big\|_{X(I)}\notag\\
\lsm &\Big\|\int_{0}^t e^{i(t-\rho)\Delta}\langle\nabla \rangle^{\alpha} P_{\leq N_0}(\eta u)d\rho\Big\|_{X(I)}\label{64001-22}\\
&+\Big\|\int_{0}^t e^{i(t-\rho)\Delta}\langle\nabla \rangle^{\alpha} P_{\geq N_0}\sum_{M\gtrsim N}P_N(\eta P_Mu)d\rho\Big\|_{L_t^{\infty}L_x^2}\label{64002-22}\\
&+\Big\|\int_{0}^t e^{i(t-\rho)\Delta}\langle\nabla \rangle^{\alpha} P_{\geq N_0}\sum_{M\gtrsim N}P_N(\eta P_Mu)d\rho\Big\|_{L_t^{q_1}L_x^{r_1}}\label{64003-22}.
}
For \eqref{64001-22}, by Lemmas \ref{lem:Bernstein}, \ref{lem:strichartz}, we get
\begin{align}\label{65001-22}
\eqref{64001-22}\lsm N_0^{\alpha}\|\eta u\|_{L_t^{q_0'}L_x^{r_0'}}
\lsm T^{\frac 1{q_0'}-\frac 1{q_1}}N_0^{\alpha} \|\eta\|_{L_x^{r}}\|u\|_{L_t^{q_1}L_x^{r_1}},
\end{align}
where $\frac 1{q_0'}-\frac 1{q_1}>0$.

For \eqref{64002-22}, by the duality, Lemmas \ref{lem:Schur}, \ref{lem:strichartz},
\begin{align}\label{32514-22}
 \eqref{64002-22}
\lsm& \sup_{h: \norm{h}_{L_x^2}\leq 1}\normbb{\Big\langle\int_{0}^t\langle\nabla \rangle^{\alpha} e^{i(t-\rho)\Delta}P_N\sum _{N\lsm M}(\eta P_Mu )d\rho, h\Big\rangle}_{L_t^{\infty}}\notag\\
\lsm &\sup_{h:  \norm{h}_{L_x^2}\leq 1}\sum _{N\lsm M}\frac{\langle N \rangle^{\alpha}}{\langle M \rangle^{\alpha}}\normbb{\int_{0}^t e^{i(t-\rho)\Delta}P_N(\eta \langle M \rangle^{\alpha}P_Mu )d\rho}_{L_t^{\infty}L_x^2}\|P_N h\|_{L_x^2}\notag\\
\lsm &\sup_{h:  \norm{h}_{L_x^2}\leq 1}\sum _{N\lsm M}\frac{\langle N \rangle^{\alpha}}{\langle M \rangle^{\alpha}} \norm{\eta \langle M \rangle^{\alpha}P_Mu}_{L_t^{q_0'}L_x^{r_0'}}\|P_N h\|_{L_x^2}\notag\\
\lsm &\norm{\eta \langle M \rangle^{\alpha}P_Mu}_{l_M^2L_t^{q_0'}L_x^{r_0'}}.
\end{align}
By Sobolev's inequality, Minkowski's inequality, and Lemma \ref{lem:littlewood-Paley}, we get
\begin{align}\label{32515-22}
\norm{\eta \langle M \rangle^{\alpha}P_Mu}_{l_M^2L_t^{q_0'}L_x^{r_0'}}\lsm& T^{\frac 1{q_0'}-\frac 1{q_1}}\|\eta\|_{L_x^{r}}\|\langle M \rangle^{\alpha}P_Mu\|_{L_t^{q_1}L_x^{r_1}l_M^2}\notag\\
\lsm & T^{\frac 1{q_0'}-\frac 1{q_1}}\|\eta\|_{L_x^r}\|u\|_{L_t^{q_0}F_{r_1}^{\alpha, 2}}\notag\\
\lsm & T^{\frac 1{q_0'}-\frac 1{q_1}}\|\eta\|_{L_x^r}\|\langle \nabla \rangle^{\alpha}u\|_{L_t^{q_1}L_x^{r_1}}.
\end{align}
Hence, by \eqref{32514-22} and \eqref{32515-22}, we get
\begin{align}\label{32516-22}
 \eqref{64002-22}\lsm T^{\frac 1{q_0'}-\frac 1{q_1}}\|\eta\|_{L_x^r}\|\langle \nabla \rangle^{\alpha}u\|_{L_t^{q_1}L_x^{r_1}}.
\end{align}
For \eqref{64003-22}, following the same argument as used in \eqref{32514-22} and \eqref{32515-22},
\begin{align}\label{32517-22}
\eqref{64003-22}
\lsm& \sup_{h: \norm{h}_{L_x^{{r_1}'}}\leq 1}\normbb{\Big\langle\int_{0}^t\langle\nabla \rangle^{\alpha} e^{i(t-\rho)\Delta}P_N\sum _{N\lsm M}(\eta P_Mu )d\rho, h\Big\rangle}_{L_t^{q_1}}\notag\\
\lsm &\sup_{h: \norm{h}_{L_x^{{r_1}'}}\leq 1}\sum _{N\lsm M}\frac{\langle N \rangle^{\alpha}}{\langle M \rangle^{\alpha}}\normbb{\int_{0}^t e^{i(t-\rho)\Delta}P_N(\eta \langle M \rangle^{\alpha}P_Mu )d\rho}_{L_t^{q_1}L_x^{r_1}}\|P_N h\|_{L_x^{{r_1}'}}\notag\\
\lsm &\sup_{h: \norm{h}_{L_x^{{r_1}'}}\leq 1}\sum _{N\lsm M}\frac{\langle N \rangle^{\alpha}}{\langle M \rangle^{\alpha}} \norm{\eta \langle M \rangle^{\alpha}P_Mu}_{L_t^{q_0'}L_x^{r_0'}}\|P_N h\|_{L_x^{{r_1}'}}\notag\\
\lsm& \norm{\eta \langle M \rangle^{\alpha}P_Mu}_{l_M^2L_t^{q_0'}L_x^{r_0'}}\notag\\
\lsm & T^{\frac 1{q_0'}-\frac 1{q_1}}\|\eta\|_{L_x^r}\|\langle \nabla \rangle^{\alpha}u\|_{L_t^{q_1}L_x^{r_1}}.
\end{align}
Hence, by the estimates \eqref{65001-22}, \eqref{32516-22}, and \eqref{32517-22}, we finish the proof of this lemma.
\end{proof}

	\subsubsection{High-order terms}
	\begin{lem}[Higher order terms]\label{lem:nonlinear-estimate-higher-order-22d}
Let $d=2, 3$, and $\frac d2<r\leq 2$. Let $I=[0, T)\subset\R^+$. Then
\begin{align*}
\normbb{\int_0^te^{i(t-\rho)\Delta}\mathcal{B}(\langle\nabla \rangle^{-2+\alpha}\eta, \eta u)(\rho, x)d\rho}_{X(I)} \lsm& T^{\frac 1{q_0'}-\frac 1{q_1}}\|\eta\|_{L_x^r}^2\|\langle\nabla \rangle^{\alpha}u\|_{X(I)}.
\end{align*}
\end{lem}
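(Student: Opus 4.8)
The plan is to treat this term exactly as the boundary term $\mathcal{B}(\langle\nabla\rangle^{-2+\alpha}\eta, u(t))$ in Lemma~\ref{lem:nonlinear-estimate-boundary-22}, with the single function in the second slot of $\mathcal{B}$ replaced by the quadratic expression $\eta u$, and with the extra gain $T^{\frac1{q_0'}-\frac1{q_1}}$ produced by the same time-localization device used in Lemma~\ref{lem:nonlinear-estimate-resonance-22}. First I would use the inhomogeneous Strichartz estimate \eqref{1.222234}, taking the admissible pair $(q_0,r_0)$ on the source side and the pairs $(\infty,2)$ and $(q_1,r_1)$ on the output side, to obtain
\[
\normbb{\int_0^t e^{i(t-\rho)\Delta}\mathcal{B}\big(\langle\nabla\rangle^{-2+\alpha}\eta,\,\eta u\big)(\rho)\,d\rho}_{X(I)}
\;\lesssim\; \big\|\mathcal{B}\big(\langle\nabla\rangle^{-2+\alpha}\eta,\,\eta u\big)\big\|_{L_t^{q_0'}L_x^{r_0'}(I)}.
\]
This reduces everything to a fixed-time product estimate for $\mathcal{B}$, whose multiplier satisfies the hypotheses of the Coifman--Meyer Lemma~\ref{lem:Coifman-Meyer} uniformly in $N_0$ and is supported where $|\xi_2|\ll|\xi|$ and $|\xi|\gtrsim N_0$; in particular $\mathcal{B}$ never throws a derivative onto $\eta u$, which is essential since $\eta u$ carries no positive regularity.

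At fixed time I would apply Lemma~\ref{lem:Coifman-Meyer} to write
\[
\big\|\mathcal{B}\big(\langle\nabla\rangle^{-2+\alpha}\eta,\,\eta u\big)\big\|_{L_x^{r_0'}}
\;\lesssim\; \big\|\langle\nabla\rangle^{-2+\alpha}P_{\geq N_0}\eta\big\|_{L_x^{a}}\,\|\eta u\|_{L_x^{b}},\qquad \tfrac1a+\tfrac1b=\tfrac1{r_0'},
\]
and then run the same three-case split as in Lemma~\ref{lem:nonlinear-estimate-boundary-22} according to whether $\alpha<\tfrac d2$, $\alpha=\tfrac d2$, or $\alpha>\tfrac d2$ (the split being forced by the admissible exponent range of the Sobolev inequality, the boundary case $\alpha=\tfrac d2$ being the delicate one). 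In every case the first factor is handled exactly as in Lemma~\ref{lem:nonlinear-estimate-boundary-22}: Sobolev embedding transfers it to $\big\|\langle\nabla\rangle^{-2+\frac dr}P_{\geq N_0}\eta\big\|_{L_x^r}$, and since $-2+\tfrac dr<0$ (because $r>\tfrac d2$) and $P_{\geq N_0}$ is bounded on $L_x^r$ (here $r>1$), this is $\lesssim\|\eta\|_{L_x^r}$.

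For the new factor $\|\eta u\|_{L_x^b}$ I would insert one more Hölder step, $\|\eta u\|_{L_x^b}\lesssim\|\eta\|_{L_x^r}\,\|u\|_{L_x^c}$ with $\tfrac1b=\tfrac1r+\tfrac1c$, followed by Sobolev embedding $\|u\|_{L_x^c}\lesssim\|\langle\nabla\rangle^\alpha u\|_{L_x^{r_1}}$ (degenerating to $\|\langle\nabla\rangle^\alpha u\|_{L_x^2}$, respectively $\|\langle\nabla\rangle^\alpha u\|_{L_x^\infty}$, in the two extreme cases of the split). The exponents $a,b,c$ are then forced by the scaling relations, and they close precisely because $\tfrac1r+\tfrac1{r_1}=\tfrac1{r_0'}$ by the defining relation of $(q_1,r_1)$ together with the strictly negative surplus $-2+\tfrac dr$ left on the potential. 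Finally I would integrate in time: since $\tfrac1{q_0'}>\tfrac1{q_1}$ (already observed in Lemma~\ref{lem:nonlinear-estimate-resonance-22}), Hölder in $t$ yields $\|\cdot\|_{L_t^{q_0'}(I)}\lesssim T^{\frac1{q_0'}-\frac1{q_1}}\|\cdot\|_{L_t^{q_1}(I)}$, and placing this time integrability on the factor $\langle\nabla\rangle^\alpha u$ reproduces the $L_t^{q_1}L_x^{r_1}$ (or $L_t^\infty L_x^2$) part of $\|\langle\nabla\rangle^\alpha u\|_{X(I)}$ while the two $\eta$ factors carry no time dependence, giving the asserted bound with $\|\eta\|_{L_x^r}^2$ and prefactor $T^{\frac1{q_0'}-\frac1{q_1}}$.

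I expect the only real difficulty to be the exponent bookkeeping in the regime where $\alpha$ is close to its endpoint value ($4-\tfrac4r-2\varepsilon_0$ for $d=2$, $\tfrac92-\tfrac6r$ for $d=3$), so that $\alpha$ may reach or exceed $\tfrac d2$: one must check that in that regime the exponents $a,b,c$ and their duals all stay inside the open range $(1,\infty)$ required by Lemma~\ref{lem:Coifman-Meyer} and that no endpoint Sobolev embedding is invoked. It is exactly the $\varepsilon_0$-margin built into the choices of $\alpha$, $(q_0,r_0)$ and $(q_1,r_1)$ that keeps the estimate strictly subcritical and makes all of these exponents admissible; this is the same computation already performed in Lemmas~\ref{lem:nonlinear-estimate-boundary-22} and \ref{lem:nonlinear-estimate-resonance-22}, merely with one additional factor of $\eta$ to distribute.
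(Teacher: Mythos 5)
Your overall scheme matches the paper's: inhomogeneous Strichartz reduces the Duhamel integral to a fixed-time $L_t^{q_0'}L_x^{r_0'}$ bound on $\mathcal{B}(\langle\nabla\rangle^{-2+\alpha}\eta,\eta u)$, Coifman--Meyer splits the factors, Sobolev absorbs the negative derivatives on the potential, H\"older peels off the second $\eta$, and H\"older in time (using $\frac1{q_0'}>\frac1{q_1}$) produces the $T^{\frac1{q_0'}-\frac1{q_1}}$ gain. That is exactly the paper's argument. However, the ``three-case split'' you propose is not what the paper does, and if you try to follow it you run into a concrete obstruction. If, as in the case $\alpha<\frac d2$ of Lemma~\ref{lem:nonlinear-estimate-boundary-22}, you place $\langle\nabla\rangle^{-2+\alpha}P_{\geq N_0}\eta$ in $L_x^{d/\alpha}$ so that Sobolev lands on $\|\langle\nabla\rangle^{-2+\frac dr}P_{\geq N_0}\eta\|_{L_x^r}$, then the constraints $\frac1a+\frac1b=\frac1{r_0'}$ and $\frac1b=\frac1r+\frac1c$ force $\frac1c=\frac1{r_1}-\frac\alpha d$, which is \emph{negative} because \eqref{3262-22} gives $\alpha r_1>d$; so $c$ is not a valid exponent. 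More generally, the admissible range for $a$ is $\frac1{r_1}\leq\frac1a\leq\frac1r$ (the left endpoint being precisely the identity $-2+\alpha+\frac dr-\frac d{r_1}=0$), and any choice other than the endpoint $a=r_1$ gives $\frac1c<0$. The paper therefore uses the single choice $a=r_1$, $b=r$, $c=\infty$: Sobolev takes $\|\langle\nabla\rangle^{-2+\alpha}P_{\geq N_0}\eta\|_{L_x^{r_1}}$ to $\|\langle\nabla\rangle^{-2+\alpha+\frac dr-\frac d{r_1}}P_{\geq N_0}\eta\|_{L_x^r}=\|P_{\geq N_0}\eta\|_{L_x^r}$ (exponent exactly zero, not the $-2+\frac dr$ slack you quote), and $\|u\|_{L_x^\infty}\lesssim\|\langle\nabla\rangle^\alpha u\|_{L_x^{r_1}}$ uses $\alpha r_1>d$. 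You do identify the correct closing relation $\frac1r+\frac1{r_1}=\frac1{r_0'}$ in the same sentence, so the two halves of your paragraph point to different and incompatible exponent choices; only the latter ($a=r_1$) is viable, and with that correction your proof becomes the paper's.
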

\begin{proof}
By Strichartz's estimates, we get
\begin{align}
\normbb{\int_0^t e^{i(t-\rho)\Delta}\mathcal{B}(\langle\nabla \rangle^{-2+\alpha}\eta, \eta u)(\rho, x)d\rho}_{X(I)}
\lsm \norm{\mathcal{B}(\langle\nabla \rangle^{-2+\alpha}\eta, \eta u)}_{L_t^{q_0'}L_x^{r_0'}}.\label{65002-22}
\end{align}
Recalling from \eqref{3262-22} that $\alpha r_1>d$ and $-2+\alpha+\frac dr-\frac d{r_1}=0$, by Lemma \ref{lem:Coifman-Meyer}, Sobolev's and H\"{o}lder's inequalities, we have
\begin{align}\label{eq:nonlinear-estimate-higher-order-22}
\eqref{65002-22}\lsm & T^{\frac 1{q_0'}-\frac 1{q_1}}\|\langle\nabla \rangle^{-2+\alpha} P_{\geq N_0} \eta\|_{L_x^{r_1}}\|\eta\|_{L_x^{r}}\|u\|_{L_t^{q_1}L_x^{\infty}}\notag\\
\lsm & T^{\frac 1{q_0'}-\frac 1{q_1}}\|\langle\nabla \rangle^{-2+\alpha+\frac dr-\frac d{r_1} } P_{\geq N_0}\eta\|_{L_x^{r}}\|\eta\|_{L_x^r}\|\langle\nabla \rangle^{\alpha}u\|_{L_t^{q_1}L_x^{r_1}}\notag\\
\lsm &T^{\frac 1{q_0'}-\frac 1{q_1}}\|\eta\|_{L_x^r}^2\|\langle\nabla \rangle^{\alpha}u\|_{L_t^{q_1}L_x^{r_1}}.
\end{align}
This proves this Lemma.
\end{proof}

Based on the above several lemmas, we are now in a position to prove Proposition \ref{weak-subcritical}.
	\begin{proof}[\bf{Proof of Proposition \ref{weak-subcritical}}]
	We firstly prove the local well-posedness. By Strichartz's estimate, we have
		\EQn{\label{446-22}
			\norm{\langle\nabla \rangle^{\alpha}e^{it\Delta} u_0}_{X(I)}\leq C\|u_0\|_{H_x^{\alpha}}:=R.
		}
		Moreover, for any $0<\delta\ll 1$, by $\eta \in L_x^r(\R^d)$ for $\frac  d2 <r\leq  2$, we choose $N_0\in 2^{\N}$ large enough such that
		\EQn{\label{447-22}
			\|P_{\geq N_0} \eta\|_{L_x^{r}}\leq \delta.
		}
		Denote the operator $\Phi$ by the following form,
		\EQ{
			\langle\nabla\rangle^{\alpha}\Phi(u)=&\langle\nabla \rangle^{\alpha}e^{it\Delta}u_0(x)
			-e^{it\Delta}\mathcal{B}(\langle\nabla \rangle^{-2+{\alpha}}\eta, u_0(x))\\
&+\mathcal{B}(\langle\nabla \rangle^{-2+{\alpha}}\eta, u(t,x))\\
			&+i\int_0^te^{i(t-\rho)\Delta}\langle\nabla \rangle^{\alpha}\mathcal{R}(\eta, u(\rho, x))d\rho\\
			&-i\int_0^te^{i(t-\rho)\Delta}\mathcal{B}(\langle\nabla \rangle^{-2+{\alpha}}\eta, \eta u)(\rho, x)d\rho.
		}
		Take the working space as
		\EQ{
			B_{R}:=\{u\in C(I; H_x^{\alpha}(\R)):\|\langle\nabla \rangle^{\alpha} u\|_{X(I)}\leq 2R\}.
		}
		Next, we aim to prove $\Phi$ is a contraction mapping in $B_{R}$. Hence, we need to collect the estimates of $\langle\nabla\rangle^{\alpha}\Phi(u)$ in $X(I)$.

		By Lemma \ref{lem:nonlinear-estimate-boundary-22},
		\EQn{\label{442-22}
			\normb{e^{it\Delta}\mathcal{B}(\langle\nabla \rangle^{-2+\alpha}\eta, u_0)}_{X(I)}\lsm \delta R,
		}
		and
		\EQn{\label{443-22}
			\norm{\mathcal{B}(\langle\nabla \rangle^{-2+\alpha}\eta, u(t))}_{X(I)}\lsm \delta R.
		}
		By Lemma \ref{lem:nonlinear-estimate-resonance-22}, there exists $\gamma>0$, such that
		\EQn{\label{444-22}
			\normbb{\int_{0}^t e^{i(t-\rho)\Delta}\langle\nabla \rangle^{\alpha}\mathcal{R}(\eta, u) d\rho}_{X(I)} \lsm& T^{\gamma}RN_0^{\alpha}\|\eta\|_{L_x^r}.
		}
		By Lemma \ref{lem:nonlinear-estimate-higher-order-22d},
		\EQn{\label{445-22}
			\normbb{\int_0^te^{i(t-\rho)\Delta}\mathcal{B}(\langle\nabla \rangle^{-2+\alpha}\eta, \eta u)d\rho}_{X(I)} \lsm& T^{\gamma}R\|\eta\|_{L_x^r}^2.
		}
		By the estimates \eqref{442-22}-\eqref{445-22}, for any $u\in B_{R}$, there exists a constant $C=C(\|\eta\|_{L_x^r})$, such that
		\begin{align}\label{448-22}
			\norm{\langle\nabla\rangle^\alpha\Phi(u)}_{X(I)}\leq  R+C\delta R+CT^{\gamma}RN_0^{\alpha}+CT^{\gamma}R.
		\end{align}
		First, by \eqref{447-22}, there exists $N_0=N_0(\delta, \|\eta\|_{L_x^r})$, such that
		\EQ{
			C\delta \leq \frac 14.
		}
	Then, we take $T=T(N_0, \|\eta\|_{L_x^r})$ small enough, such that
\EQ{
CT^{\gamma}N_0^{\alpha}+CT^{\gamma}\leq \frac 12.
}
		Therefore, by the above inequalities, we have
		\EQ{
			\norm{\langle\nabla\rangle^\alpha \Phi(u)}_{X(I)}\leq 2R.
		}
		Hence, we have that $\Phi:B_{R}\rightarrow B_{R}$. Therefore, the local well-posedness follows from the contraction mapping principle.

We emphasize that the lifespan $T$ obtained above depends only on $\norm{\eta}_{L_x^r}$. This allows us to extend the local solution $u$ globally. In fact, let $u\in C([0,T^*);H_x^{\alpha})$ be the solution of equation \eqref{eq:NLS} with the maximal lifespan $[0,T^*)$. Let $0<\epsilon_0<T$, where $T=T( \norm{\eta}_{L_x^{r}})$ is the lifespan established in the local well-posedness argument.
Assume by contradiction that $T^*<+\infty$. By the local well-posedness theory, we have $u\in C([0,T^*-\epsilon_0);H_x^{\alpha})$ and $\norm{u(T^*-\epsilon_0)}_{H_x^{\alpha}} \lsm \norm{u_0}_{H_x^{\alpha}} $. Applying the local existence argument again at $T^*-\epsilon_0$, we can extend the solution to the interval $[0, T^*-\epsilon_0+T)$. Since $T^*-\epsilon_0+T>T^*$, this contradicts to the definition of $T^*$. Therefore, $T^*=+\infty$. This finishes the proof of the global well-posedness in $H_x^{\alpha}$.
	\end{proof}

\subsection{Global well-posedness in $H_x^{2+\frac d2-\frac dr}(\R^d)$}\label{high-one}

To further improve the regularity, the argument above appears to be no longer applicable, now we employ an alternative approach to achieve it.
Define $s\triangleq\frac d2-\frac dr$, where $\frac d2<r\leq2$ and $d=2, 3$. Building on the results from subsection \ref{low-one}, we have that the solution $u$ is global in $H_x^{\alpha}$ and for any $T>0$,
\EQn{\label{62011-22}
\norm{u}_{L_t^{\infty}H_x^{\alpha}([0, T)\times \R^d)}\leq  C(T) \norm{u_0}_{H_x^{\alpha}}\leq C(T)  \norm{u_0}_{H_x^{2+s}}.
}
Denote $v=\partial_t u$, then from \eqref{eq:NLS}, $v$ satisfies the following equation
\begin{equation}\label{eq:NLS-222}
		\left\{ \aligned
		&i\partial_t v+\Delta v+\eta v=0, \qquad t\in \R^+ \mbox{ and } x\in \R^d,
		\\
		&v(0,x)=i(\Delta u_0+\eta u_0)\triangleq v_0.
		\endaligned
		\right.
	\end{equation}
We now present two key observations.

\noindent $\bullet$ Claim 1: $v_0\in H_x^s$.

Indeed,
by $u_0\in H_x^{2+\frac d2-\frac dr}$ (that is $u_0\in H_x^{2+s}$), $\eta \in L_x^r$ with $\frac d2<r\leq 2$, and the Sobolev and H\"{o}lder inequalities, noting that $2+s>\frac d2$, we have
\EQ{
\norm{v_0}_{H_x^s}&=\norm{\Delta u_0+\eta u_0}_{H_x^s}\\
&\lsm \norm{u_0}_{H_x^{2+s}}+\norm{\eta u_0}_{L_x^{\frac {2d}{d-2s}}}\\
&\lsm  \norm{u_0}_{H_x^{2+s}}+\norm{\eta }_{L_x^r}\norm{ u_0}_{L_x^{\infty}}\\
&\lsm \norm{u_0}_{H_x^{2+s}}+\norm{\eta }_{L_x^r}\norm{ u_0}_{H_x^{2+s}}.
}
\noindent $\bullet$ Claim 2: $v\in C([0, T); H_x^{s})$ implies $u\in C([0, T); H_x^{2+s})$.

Indeed, by the high and low frequency decomposition,
\begin{align}\label{high-low-22}
\norm{u}_{L_t^{\infty}H_x^{2+s}}\leq \norm{P_{<N_0}u}_{L_t^{\infty}H_x^{2+s}} +\norm{P_{\geq N_0}u}_{L_t^{\infty}H_x^{2+s}},
\end{align}
where $N_0\in 2^{\N}$ will be determined later.

For $ \norm{P_{<N_0}u}_{L_t^{\infty}H_x^{2+s}} $, by Lemma \ref{lem:Bernstein} and \eqref{62011-22},
\begin{align}\label{low-22}
\norm{P_{<N_0}u}_{L_t^{\infty}H_x^{2+s}}\leq C N_0^{2+s-\alpha}\norm{u}_{L_t^{\infty}H_x^{\alpha}}\leq C(T) N_0^{2+s-\alpha}\norm{u_0}_{H_x^{2+s}}.
\end{align}
For $ \norm{P_{\geq N_0}u}_{L_t^{\infty}H_x^{2+s}} $, noting that
\EQ{
\Delta u=-i v-\eta u,
}
we have
\begin{align}\label{1023-22}
\norm{P_{\geq N_0}u}_{L_t^{\infty}H_x^{2+s}}  \leq & \norm{v}_{L_t^{\infty}H_x^{s}} +\norm{P_{\geq N_0}(\eta u)}_{L_t^{\infty}H_x^{s}}\notag\\
\leq & C(T)\norm{v_0}_{H_x^{s}} +\norm{P_{\geq N_0}(\eta u)}_{L_t^{\infty}H_x^{s}}.
\end{align}
The Sobolev and H\"{o}lder inequalities, together with Lemma \ref{lem:Bernstein}, yield
\begin{align}\label{high-1-22}
\norm{P_{\geq N_0}(\eta u)}_{L_t^{\infty}H_x^{s}}\lsm &\norm{P_{\geq N_0}(\eta u)}_{L_t^{\infty}L_x^{r}}\notag\\
\lsm& \norm{P_{\gtrsim N_0}\eta }_{L_x^{r}}\norm{u}_{L_{t,x}^{\infty}}+\norm{\eta }_{L_x^{r}}\norm{P_{\gtrsim N_0} u}_{L_{t,x}^{\infty}}\notag\\
\lsm& \norm{P_{\gtrsim N_0}\eta }_{L_x^{r}}\norm{u}_{L_{t}^{\infty}H_x^{\frac d2+}}+\norm{\eta }_{L_x^{r}}\norm{P_{\gtrsim N_0} u}_{L_{t}^{\infty}H_x^{\frac d2+\epsilon_0}}\notag\\
\lsm &\norm{P_{\gtrsim N_0}\eta }_{L_x^{r}}\norm{u}_{L_t^{\infty}H_x^{2+s}}+N_0^{-2-s+\frac d2+\epsilon_0}\norm{\eta }_{L_x^{r}}\norm{u}_{L_t^{\infty}H_x^{2+s}},
\end{align}
where $0<\epsilon_0<2-\frac dr$.

Hence, combining the above two estimates, we get
\begin{align}\label{high-22}
\norm{P_{\geq N_0}u}_{L_t^{\infty}H_x^{2+s}}  \leq & C(T) \norm{v_0}_{H_x^s}+C\norm{P_{\gtrsim N_0}\eta }_{L_x^{r}}\norm{u}_{L_t^{\infty}H_x^{2+s}}\notag\\
&+CN_0^{-2-s+\frac d2+\epsilon_0}\norm{\eta }_{L_x^{r}}\norm{u}_{L_t^{\infty}H_x^{2+s}}.
\end{align}
Now, we take $N_0=N_0(\norm{\eta }_{L_x^{r}})$ large enough such that
\begin{align}\label{N0-22}
C\norm{P_{\gtrsim N_0}\eta }_{L_x^{r}}+CN_0^{-2-s+\frac d2+\epsilon_0}\norm{\eta }_{L_x^{r}}\leq \frac 12,
\end{align}
where $-2-s+\frac d2+\epsilon_0=-2+\frac dr+\epsilon_0<0$ for any $\frac d2<r\leq 2$ and $0<\epsilon_0<2-\frac dr$.

Collecting the estimates \eqref{high-low-22}, \eqref{low-22}, \eqref{high-22}, and \eqref{N0-22}, we get
\begin{align*}
\norm{u}_{L_t^{\infty}H_x^{2+s}} \leq C (T)N_0^{2+s-\alpha}\norm{u_0}_{H_x^{2+s}}+C(T)\norm{v_0}_{H_x^s}+\frac 12 \norm{u}_{L_t^{\infty}H_x^{2+s}}.
\end{align*}
Further, combining Claim 1, this implies
 Claim 2.

Based on the above two Claims, it is reduced to prove the Cauchy problem \eqref{eq:NLS-222} is globally well-posed in $H_x^{s}$, $s=\frac d2-\frac dr$, where $\frac d2<r\leq 2$, $d=2, 3$.

For our purpose, we firstly give the following result via an iterated Duhamel construction.
\begin{prop}\label{prop:v-2d}
Let $ N\in \N$, and $S_N\triangleq\sum_{n=0}^{N}e^{it\Delta}I_n$, where the terms $I_n$  are defined recursively by
\begin{align*}
I_n=i\int_0^te^{-i\rho \Delta}(\eta e^{i\rho \Delta}I_{n-1})d \rho, {\mbox{ for }}n\geq1; \quad I_0=v_0.
\end{align*} Let $s=\frac d2-\frac dr$, where $\frac d2<r\leq2$, and $d=2, 3$. Then there exist $T=T(\|\eta\|_{L_x^r})>0$, and $v\in C([0, T); H_x^s(\R^d))$, such that
$$
\lim_{N\to \infty}S_N=v, \mbox{ in }H_x^s,
$$
where $v$ is the unique solution to equation \eqref{eq:NLS-222}.
\end{prop}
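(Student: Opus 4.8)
The plan is to realize the three-step scheme announced in Section~\ref{sec:1.2}. The starting observation is that $S_N$ is exactly the $N$-th Picard iterate for the Duhamel form of \eqref{eq:NLS-222}: writing $w_n:=e^{it\Delta}I_n$, the recursion defining $I_n$ gives $w_0=e^{it\Delta}v_0$ and $w_n=i\int_0^t e^{i(t-\rho)\Delta}(\eta\,w_{n-1})\,d\rho$, so that $S_N=e^{it\Delta}v_0+i\int_0^t e^{i(t-\rho)\Delta}(\eta\,S_{N-1})\,d\rho$. Accordingly, on a short interval $I=[0,T)$ with $T=T(\|\eta\|_{L^r_x})$ and with the auxiliary norm $X(I)$ of \eqref{3251-2}, I would (1) bound each $w_n$ in $L^\infty_tH^s_x(I)\cap X(I)$ with a geometric gain in $n$, where $s=\gamma_*-2=\tfrac d2-\tfrac dr\le 0$; (2) deduce that $(S_N)$ is Cauchy, hence convergent, in that norm; and (3) identify the limit with the unique solution $v$ of \eqref{eq:NLS-222} using the weak regularity theory of Section~\ref{low-one}.

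The heart of Step~(1) is the key estimate \eqref{key-e}: there is $\theta>0$ with $\|w_n\|_{L^\infty_tH^s_x(I)\cap X(I)}\lesssim\sum_{k=1}^n 2^{-\theta k}\|w_{n-k}\|_{L^\infty_tH^s_x(I)\cap X(I)}$ for all $n\ge 1$. To prove it, I would run the normal-form decomposition underlying Lemma~\ref{resoant-decom} (with $\alpha=s$) on $w_n=i\int_0^t e^{i(t-\rho)\Delta}(\eta w_{n-1})\,d\rho$: the boundary contributions $\mathcal{B}(\langle\nabla\rangle^{-2+s}\eta,w_{n-1})$ are estimated as in Lemma~\ref{lem:nonlinear-estimate-boundary-22}, producing the small constant $\|P_{\ge N_0}\eta\|_{L^r_x}\le\delta$ after fixing $N_0$ large; the resonance and low-frequency pieces are controlled by Strichartz and Schur's test as in Lemma~\ref{lem:nonlinear-estimate-resonance-22}, producing a factor $T^\gamma$; and the remaining non-resonant Duhamel term, upon integration by parts in $\rho$ — in which, $\eta$ being time-independent, the derivative falls only on $w_{n-1}$ and, via its profile equation, feeds $w_{n-2}$ back in — generates the iterated cascade \eqref{route-SNM}. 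Each rung of that cascade contributes an extra factor $\eta\,|\nabla|^{-2}$; since the target regularity $s$ is $\le 0$, this $|\nabla|^{-2}$ more than absorbs the output derivative and yields the summable weight $2^{-\theta k}$, the underlying structure being $I_n=\sum_k (T_N)^k J_{n-k}$ with $T_N$ a smoothing operator of symbol $\sim|\xi|^{-2}$. Once the constants are small (for $N_0$ large, $T$ small), a short induction on $n$ summing the geometric series upgrades \eqref{key-e} to $\|w_n\|_{L^\infty_tH^s_x(I)\cap X(I)}\lesssim 2^{-\theta n}\|v_0\|_{H^s_x}$; in particular $w_n\in C(I;H^s_x)$.

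Step~(2) is then immediate: for $N'<N$, $\|S_N-S_{N'}\|_{L^\infty_tH^s_x(I)\cap X(I)}\le\sum_{n>N'}\|w_n\|\lesssim\sum_{n>N'}2^{-\theta n}\|v_0\|_{H^s_x}\to 0$, so $(S_N)$ converges to some $\tilde v\in C(I;H^s_x)\cap X(I)$; and passing to the limit in the recursion for $S_N$ — legitimate since $w\mapsto\int_0^t e^{i(t-\rho)\Delta}(\eta w)\,d\rho$ obeys the $n=1$ instance of the above bounds — shows $\tilde v$ solves the Duhamel equation for \eqref{eq:NLS-222}. For Step~(3), I would invoke Proposition~\ref{weak-subcritical}: since \eqref{eq:NLS} is globally well-posed in $H^\alpha_x$ with $\alpha<\gamma_*$, it produces a solution $u\in C(I;H^\alpha_x)$, so $v:=\partial_t u=i(\Delta u+\eta u)$ solves \eqref{eq:NLS-222} in a weaker space $Y$ with $C(I;H^s_x)\hookrightarrow Y$, in which \eqref{eq:NLS-222} is quantitatively well-posed (by the same method of Section~\ref{low-one}, which is only easier at the lower regularity). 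Hence the Picard iterates $S_N$ converge to $v$ in $Y$; comparing with the $C(I;H^s_x)$-convergence of Step~(2) forces $\tilde v=v$, so $v\in C(I;H^s_x)$ and $S_N\to v$ in $H^s_x$. Uniqueness in $C(I;H^s_x)\cap X(I)$ follows from the same estimates: the difference of two solutions with data $v_0$ solves the Duhamel equation with zero data and must vanish for $T$ small.

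The main obstacle is precisely obtaining \eqref{key-e} with a genuinely \emph{summable} weight. Iterating the standard normal form naively leads to the cascade \eqref{route-CF}, whose multipliers — typically of the form $\tfrac1{\Phi(\xi,\xi-\xi_1)}\cdot\tfrac1{\Phi(\xi,\xi_3+\xi_4)}$ — couple different frequency combinations and violate the Coifman--Meyer hypotheses, so the $|\nabla|^{-2}$ smoothing cannot be cleanly extracted and the scheme does not close in $H^s_x$ for $s<0$. The way around this is the splitting normal form \eqref{SNM}: split the phase $\phi=\phi_1+\phi_2$ and, in the regime $|\phi_1|\ll|\phi_2|$, integrate by parts only against $e^{is\phi_2}$, so that $1/\phi_2$ legitimately behaves like $|\nabla|^{-2}$ with no appeal to Coifman--Meyer. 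Each such step produces one additional boundary term — the first term in \eqref{1-SNM}, handled by Schur's test — and iterating yields the benign cascade \eqref{route-SNM}; the real work is to carry out this bookkeeping so that every rung contributes the decay $2^{-\theta k}$ required for convergence.
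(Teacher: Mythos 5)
Your Steps~(2) and~(3) match the paper's argument: you deduce the Cauchy property from a geometric bound, and you identify the limit with $v=\partial_t u$ by noting that the weak regularity theory of Section~\ref{low-one} already produces a solution $v$ in the weaker auxiliary space $L_t^{q_1}W_x^{\alpha-2,r_1}+L_t^\infty H_x^{\alpha-2}$, where the Picard iterates also converge, so uniqueness of limits does the rest. This part of your outline is faithful.

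The gap is in Step~(1), which is the substance of the result. You propose running Lemma~\ref{resoant-decom} at $\alpha=s$ and estimating the boundary and resonance pieces ``as in Lemma~\ref{lem:nonlinear-estimate-boundary-22}'' and ``as in Lemma~\ref{lem:nonlinear-estimate-resonance-22}.'' Those lemmas are proved for the index $\alpha$ of Section~\ref{low-one}, which is positive (indeed $>\tfrac d2-1$), and their proofs go through Sobolev embeddings such as $\|u_0\|_{L_x^{d/(\frac d2-\alpha)}}\lesssim\|u_0\|_{H_x^\alpha}$ that are meaningless when the exponent is replaced by $s=\tfrac d2-\tfrac dr\le 0$. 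In fact the paper does \emph{not} apply the normal-form decomposition of Section~4 to the $v$-equation at all; as it explains in Section~\ref{sec:1.2}, iterating that decomposition produces the cascade \eqref{route-CF}, which cannot close in $H_x^s$ for $s<0$. Your concluding paragraph self-corrects by invoking the splitting normal form \eqref{SNM}, which is the right idea, but the concrete content---the structural identity
\[
I_n=\sum_{k=0}^{n-1}\bigl(I_{n,k}^{(1)}+I_{n,k}^{(2)}+I_{n,k}^{(3)}+I_{n,k}^{(4)}\bigr)+\sum_N P_N T_N^n v_0,
\]
with $T_N f=\eta\sum_{M\gg N,\,M\ge N_0}|\nabla|^{-2}P_M f$, established in the paper by repeated integration by parts in $\rho$ using $\partial_t I_{m}=ie^{-it\Delta}(\eta e^{it\Delta}I_{m-1})$---is precisely what you defer as ``the real work is to carry out this bookkeeping.'' That lemma, together with the boundedness of $T_N^k$ on $L_x^{r_1'}$ with the crucial factor $N_0^{k(\frac dr-2)}$ (Lemma~\ref{lem:Tnk-es}) and the resulting inductive bound $\|\langle\nabla\rangle^s e^{it\Delta}I_n\|\le (2C_0)^n N_0^{n(\frac dr-2)}\|\eta\|_{L_x^r}^n\|v_0\|_{H_x^s}$, is where the proof actually lives; omitting it leaves the proposition unproved.

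One further correction to the mechanism you describe: the geometric decay does \emph{not} come from ``$s\le 0$ absorbing the output derivative.'' It comes from the projector $P_{M\ge N_0}$ inside $T_N$, which after Bernstein converts each $|\nabla|^{-2}$ into a factor $N_0^{\frac dr-2}$; since $\frac d2<r$, this exponent is negative, and $N_0$ is then chosen large (with $T\lesssim N_0^{-2}$) so that $2C_0N_0^{\frac dr-2}\|\eta\|_{L_x^r}<1$. The condition $s\le 0$ explains why the \emph{naive} normal form fails, not why the splitting one succeeds.
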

We will give the proof of Proposition \ref{prop:v-2d} in the following subsection. Now, we prove the global well-posedness of \eqref{eq:NLS} in $H_x^{2+\frac d2-\frac dr}(\R^d)$
assuming that Proposition \ref{prop:v-2d} holds.
	\begin{proof}[\bf{Proof of global well-posedness in $H_x^{2+\frac d2-\frac dr}$}]
By Claim 2, the global well-posedness of \eqref{eq:NLS} in $ H_x^{2+\frac d2-\frac dr}(\R^d)$ reduces to the global well-posedness of \eqref{eq:NLS-222} in $H_x^s$, where $s=\frac d2-\frac dr$.
By Proposition \ref{prop:v-2d}, we construct the local solution of \eqref{eq:NLS-222} in $H_x^s$. Noting that the lifespan $T$ of local solution $v$ depends only on $\|\eta\|_{L_x^r}$, we can easily extend it globally. We omit the details.
\end{proof}

\subsection{Proof of Proposition \ref{prop:v-2d}}\label{v-spacetime-830}
Next, let us focus on the proof of Proposition \ref{prop:v-2d}. Now, we need the following structural lemma.
\begin{lem}\label{In-form}
Let $I_n=i\int_0^te^{-i\rho \Delta}(\eta e^{i\rho \Delta}I_{n-1})d \rho$ for $n\geq 1$, and $I_0=v_0$. For any $M, N, N_0\in 2^{\N}$, define the operator $T_{N}$ as follows,
\begin{align*}
T_{N}f&=\eta \sum_{M:  M\gg N, M\geq N_0}|\nabla|^{-2}P_{M} f.
\end{align*}
Further, for $k\in \N$, define the operator $T_N^k$ by the following,
\begin{align*}
T_N^k f=(T_N)^kf, \mbox{ with } T_N^0 f=f.
\end{align*}
Then, we have that for any $n\geq 1$,
\begin{align}\label{I}
I_n=&\sum_{k=0}^{n-1}i\int_0^te^{-i\rho\Delta }\sum_{N}P_{N}T_N^k(\eta e^{i\rho\Delta }P_{\leq N_0}I_{n-1-k})d\rho\notag\\
&+\sum_{k=0}^{n-1}i\int_0^te^{-i\rho\Delta }\sum_{M\lsm N}P_{N}T_N^k(\eta  e^{i\rho\Delta }P_{M\geq N_0}I_{n-1-k})d\rho\notag\\
&+\sum_{k=0}^{n-1}i|\nabla|^2\int_0^te^{-i\rho\Delta }\sum_{M\gg N}P_{N}T_N^k(\eta   |\nabla|^{-2}e^{i\rho\Delta }P_{M\geq N_0}I_{n-1-k})d\rho\notag\\
&-\sum_{k=0}^{n-1}e^{-it\Delta }\sum_{M\gg N}P_{N}T_N^k(\eta   |\nabla|^{-2} e^{it\Delta }P_{M\geq N_0}I_{n-1-k})\notag\\
&+\sum_N P_N T_N^n v_0.
\end{align}
\end{lem}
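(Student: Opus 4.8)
The plan is to prove \eqref{I} by iterating a single ``splitting normal form'' identity $n$ times: starting from $I_n$ one peels off one Duhamel layer at each step, and after $n$ steps one reaches the innermost iterate $I_0=v_0$. (Equivalently, this is an induction on $n$, the inductive step being the one-step identity applied to the trailing term.) The one-step identity combines three ingredients: a Littlewood--Paley decomposition of the product $\eta\,e^{i\rho\Delta}I_{n-1}$, the splitting integration-by-parts formula \eqref{SNM} in the temporal variable $\rho$, and the Duhamel relation $\partial_\rho I_{m}(\rho)=i\,e^{-i\rho\Delta}\big(\eta\,e^{i\rho\Delta}I_{m-1}(\rho)\big)$ for $m\ge1$, which is what makes the recursion close.

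First I would carry out the one-step decomposition. Writing $I_n=i\int_0^t e^{-i\rho\Delta}\big(\eta\,e^{i\rho\Delta}I_{n-1}\big)\,d\rho$, insert $\eta\,e^{i\rho\Delta}I_{n-1}=\sum_N P_N\big(\eta\,e^{i\rho\Delta}I_{n-1}\big)$ and, for each output frequency $N$, split the factor $I_{n-1}$ into its low part $P_{\leq N_0}I_{n-1}$, its comparable part $\sum_{N_0\le M\lsm N}P_MI_{n-1}$, and its non-resonant part $\sum_{M\gg N}P_MI_{n-1}$; the first two are recorded as the $k=0$ pieces of the first and second families of \eqref{I}. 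On the non-resonant piece I would pass to the Fourier side, where $e^{-i\rho\Delta}P_N\big(\eta\,e^{i\rho\Delta}P_MI_{n-1}\big)$ carries the phase $e^{i\rho\Phi}$ with $\Phi=|\xi|^2-|\xi_2|^2$, $|\xi|\sim N$ and $|\xi_2|\sim M\gg N$. Hence $\Phi=\Phi_1+\Phi_2$ with $\Phi_1=|\xi|^2$, $\Phi_2=-|\xi_2|^2$ and $|\Phi_1|\ll|\Phi_2|$, which is exactly the regime in which \eqref{SNM} may be applied.

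Next I would apply \eqref{SNM} and integrate by parts in $\rho$. The $\rho$-derivative lands either on an endpoint --- producing the boundary term at $\rho=t$, which becomes the $k=0$ piece of the fourth family, and a boundary term at $\rho=0$, which vanishes since $I_{n-1}(0)=0$ (indeed $I_m(0)=0$ for every $m\ge1$) --- or on the surviving factor $e^{i\rho\Phi_1}$, producing the multiplier $\Phi_1/\Phi_2$, i.e. $|\nabla|^2$ on the output composed with $|\nabla|^{-2}$ on the $I_{n-1}$ factor, which is the third family --- or on the remaining $\rho$-dependent factor $\widehat{I_{n-1}(\rho)}(\xi_2)$. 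The decisive point here is that $1/\Phi_2=-|\xi_2|^{-2}$ is literally the symbol of $-|\nabla|^{-2}$ acting on the $I_{n-1}$ factor, so no Coifman--Meyer multiplier argument is needed to realise it as a derivative gain (this is precisely the advantage of splitting the phase). For the last term one invokes $\partial_\rho I_{n-1}(\rho)=i\,e^{-i\rho\Delta}\big(\eta\,e^{i\rho\Delta}I_{n-2}(\rho)\big)$; combining the $|\nabla|^{-2}P_M$ generated by the splitting step with this identity and summing over $M\gg N$ reconstitutes the operator $T_N$, so the term equals $i\int_0^t e^{-i\rho\Delta}\sum_N P_N T_N\big(\eta\,e^{i\rho\Delta}I_{n-2}(\rho)\big)\,d\rho$ --- the same shape as $I_n$ but with $I_{n-1}$ replaced by $I_{n-2}$ and one extra $T_N$ in front. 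Iterating through $k=0,1,\dots,n-1$, with $I_{n-1-k}$ in the role of $I_{n-1}$ and $T_N^{\,k}$ carried along, reproduces level by level the first four families of \eqref{I}; at $k=n-1$ the inner iterate is the $\rho$-independent $v_0$, so the remainder vanishes ($\partial_\rho v_0=0$) while the hitherto vanishing $\rho=0$ boundary term now contributes $\sum_N P_N T_N^{\,n-1}(T_Nv_0)=\sum_N P_N T_N^{\,n}v_0$, which is the last term of \eqref{I}. Collecting everything yields \eqref{I}.

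I expect the main obstacle to be the frequency bookkeeping rather than the (essentially routine) integration by parts. One must track simultaneously, at each level $k$, the top output frequency $N$, the $k$ intermediate frequencies introduced by $T_N^{\,k}$, and the frequency of the current iterate $I_{n-1-k}$; verify that the splitting condition $|\xi_{\mathrm{out}}|\ll|\xi_{\mathrm{in}}|$ indeed holds throughout the region where \eqref{SNM} is invoked; confirm that every term generated by the integration by parts falls into exactly one of the five families of \eqref{I}; and check that the cutoffs $P_{\leq N_0}$, $P_{\geq N_0}$ are threaded consistently through the definition of $T_N$. Ensuring that the iteration terminates after finitely many steps and that the resulting identity is exact --- with no error terms --- is where the care lies.
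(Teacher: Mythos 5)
Your proposal is correct and follows essentially the same route as the paper: frequency-decompose $\eta\,e^{i\rho\Delta}I_{n-1-k}$ into low ($P_{\leq N_0}$), comparable ($M\lsm N$), and high ($M\gg N$) parts, apply the splitting integration-by-parts $e^{i\rho(|\xi|^2-|\xi_{k+2}|^2)}=e^{i\rho|\xi|^2}\,\tfrac{1}{-i|\xi_{k+2}|^2}\partial_\rho e^{-i\rho|\xi_{k+2}|^2}$ to the high part, identify the boundary term at $\rho=t$ (fourth family), the $|\xi|^2/|\xi_{k+2}|^2$ term (third family), and the recursion term via $\partial_\rho I_{m}=ie^{-i\rho\Delta}(\eta e^{i\rho\Delta}I_{m-1})$, and iterate until the inner factor is $v_0$, at which point the $\rho=0$ boundary survives and gives $\sum_N P_N T_N^n v_0$. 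The paper merely organises this iteration through explicit intermediate quantities $I_{n,k}$, $J_{n,k}$ and the three recurrences $(A_1)$--$(A_3)$, but the mathematical content is the one-step identity you describe applied $n$ times.
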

\begin{proof}
For any $n\geq 1$ and $0\leq k\leq n-1$, denote
$$
I_{n, k}:=i\int_0^te^{-i\rho \Delta}\sum_{N}P_NT_N^k(\eta e^{i\rho\Delta}I_{n-k-1})d\rho,
$$
and
\begin{align*}
J_{n, k}:=&i\int_0^te^{-i\rho\Delta }\sum_{N}P_{N}T_N^k(\eta e^{i\rho\Delta }P_{\leq N_0}I_{n-1-k})d\rho\notag\\
&+i\int_0^te^{-i\rho\Delta }\sum_{M\lsm N}P_{N}T_N^k(\eta  e^{i\rho\Delta }P_{M\geq N_0}I_{n-1-k})d\rho\notag\\
&+i|\nabla|^2\int_0^te^{-i\rho\Delta }\sum_{M\gg N}P_{N}T_N^k(\eta   |\nabla|^{-2}e^{i\rho\Delta }P_{M\geq N_0}I_{n-1-k})d\rho\notag\\
&-e^{-it\Delta }\sum_{M\gg N}P_{N}T_N^k(\eta   |\nabla|^{-2} e^{it\Delta }P_{M\geq N_0}I_{n-1-k}).
\end{align*}
We now assert that the following recurrence relations hold:

{\emph{$(A_1):$ }} for any $n\geq 1$, $I_n=I_{n, 0}$;

{\emph{$(A_2):$ }} for any $n\geq 2$ and $0\leq k\leq n-2$, $I_{n, k}=J_{n, k}+I_{n, k+1}$;

{\emph{$(A_3):$ }} for any $n\geq 1$, $I_{n, n-1}=J_{n, n-1}+\sum_N P_N T_N^n v_0.$

It now suffices to prove the three identities listed above, from which the lemma immediately follows. Indeed, when $n=1$, by {\emph{$(A_1)$ }}and {\emph{$(A_3)$}},
$$
I_1=I_{1, 0}=J_{1,0}+\sum_N P_N T_N^1 v_0.
$$
This gives \eqref{I} with $n=1$. When $n\geq 2$, by {\emph{$(A_1)$}}, {\emph{$(A_2)$}}, and {\emph{$(A_3)$ }},
$$
I_n=I_{n,0}=\sum_{k=0}^{n-1}J_{n, k}+\sum_N P_N T_N^n v_0.
$$
This gives \eqref{I} with $n\geq2$.

Next, we focus on the proof of {\emph{$(A_1)$}}, {\emph{$(A_2)$}}, and {\emph{$(A_3)$ }}.
The proof of {\emph{$(A_1)$}} follows directly from the definition of $T_N^0$. To prove {\emph{$(A_2)$}}, applying the high-low frequency decomposition, for any $n\geq 1$, $0\leq k\leq n-1$, and $N_0\in 2^{\N}$,
\begin{align}\label{In}
I_{n, k}=&i\int_0^te^{-i\rho \Delta}\sum_{N}P_NT_N^k(\eta e^{i\rho\Delta}I_{n-k-1})d\rho\notag\\
=&i\int_0^te^{-i\rho \Delta}\sum_{N}P_NT_N^k(\eta e^{i\rho\Delta}P_{\leq N_0}I_{n-k-1})d\rho\notag\\
&+i\int_0^t e^{-i\rho \Delta}\sum_{M\lsm N}P_NT_N^k(\eta e^{i\rho \Delta}P_{M\geq N_0}I_{n-k-1})d\rho\notag\\
&+i\int_0^t e^{-i\rho \Delta}\sum_{M\gg N}P_NT_N^k(\eta e^{i\rho \Delta}P_{M\geq N_0}I_{n-k-1})d\rho.
\end{align}
Denote $I_{n, k}^{h}$ as follows,
\begin{align}\label{Jh12d}
I_{n, k}^{h}:=i\int_0^te^{-i\rho \Delta}\sum_{M\gg N}P_{N}T_N^k(\eta e^{i\rho \Delta}P_{M\geq N_0}I_{n-k-1})d\rho.
\end{align}
Denote the multiplier
$$m(\myvec{\xi}):=\prod_{j=2}^{k+1}\frac{\phi_{\gg 1}\big(\frac{|\eta_j|}{|\xi|}\big)\phi_{\geq N_0}(|\eta_j|)}{|\eta_j|^2},$$
where ${\myvec{\xi}}=(\xi_1, \xi_2, \cdots, \xi_{k+2})$, $\xi=\sum_{l=1}^{k+2}\xi_l$ and $\eta_j=\sum_{l=j}^{k+2}\xi_l$.

Now, by the Fourier transformation, we have
\begin{align*}
\widehat{I_{n, k}^{h}}(\xi)=&i\int_0^t\int_{\xi=\sum_{l=1}^{k+2}\xi_l}e^{i\rho (|\xi|^2-|\xi_{k+2}|^2)}m(\myvec{\xi})
\phi_{\gg 1}\Big(\frac {|\xi_{k+2}|}{|\xi|}\Big)\phi_{\geq N_0}(|\xi_{k+2}|)\\
&\qquad\qquad \cdot \prod_{l=1}^{k+1}\widehat{\eta}(\xi_l)\widehat{I_{n-k-1}}(\xi_{k+2})d\xi_1 d\xi_2 \cdots d\xi_{k+1}d\rho.
\end{align*}
Note that
$$
\partial_t I_{n-k-1}= ie^{-it\Delta}(\eta e^{it\Delta}I_{n-k-2}),
$$
and $I_{n-k-1}(0, x)=0$ for $0\leq k\leq n-2$.

Hence, by the integration-by-parts, we get
\begin{align}
\widehat{I_{n, k}^{h}}(\xi)
=& i\int_{\xi=\sum_{l=1}^{k+2}\xi_l}\int_0^te^{i\rho |\xi|^2} m(\myvec{\xi})\frac{ \phi_{\gg 1}\Big(\frac {|\xi_{k+2}|}{|\xi|}\Big)\phi_{\geq N_0}(|\xi_{k+2}|)}{-i |\xi_{k+2}|^2}\notag\\
 &\qquad\cdot\prod_{l=1}^{k+1}\widehat{\eta}(\xi_l) \widehat{I_{n-k-1}}(\rho, \xi_{k+2})d(e^{-i\rho |\xi_{k+2}|^2})d\xi_1 d\xi_2 \cdots d\xi_{k+1} \notag\\
 =&i\int_{\xi=\sum_{l=1}^{k+2}\xi_l}e^{it (|\xi|^2-|\xi_{k+2}|^2)}m(\myvec{\xi})\frac{ \phi_{\gg 1}\Big(\frac {|\xi_{k+2}|}{|\xi|}\Big)\phi_{\geq N_0}(|\xi_{k+2}|)}{-i |\xi_{k+2}|^2}\notag\\
 &\qquad \cdot \prod_{l=1}^{k+1}\widehat{\eta}(\xi_l)\widehat{I_{n-k-1}}(t, \xi_{k+2})d\xi_1 d\xi_2 \cdots d\xi_{k+1}\label{Jnk1}\\
 &-i\int_0^t\int_{\xi=\sum_{l=1}^{k+2}\xi_l}e^{i\rho (|\xi|^2-|\xi_{k+2}|^2)}m(\myvec{\xi})\frac{i|\xi|^2 \phi_{\gg 1}\Big(\frac {|\xi_{k+2}|}{|\xi|}\Big)\phi_{\geq N_0}(|\xi_{k+2}|)}{-i |\xi_{k+2}|^2}\notag\\
 &\qquad \cdot \prod_{l=1}^{k+1}\widehat{\eta}(\xi_l)\widehat{I_{n-k-1}}(\rho, \xi_{k+2})d\xi_1 d\xi_2 \cdots d\xi_{k+1} d\rho\label{Jnk2}\\
  &-i\int_0^t\int_{\xi=\sum_{l=1}^{k+2}\xi_l}ie^{i\rho |\xi|^2}m(\myvec{\xi})\frac{ \phi_{\gg 1}\Big(\frac {|\xi_{k+2}|}{|\xi|}\Big)\phi_{\geq N_0}(|\xi_{k+2}|)}{-i |\xi_{k+2}|^2}\notag\\
 &\qquad \cdot \prod_{l=1}^{k+1}\widehat{\eta}(\xi_l)\mathscr{F}(\eta e^{i\rho \Delta }I_{n-k-2})(\xi_{k+2})d\xi_1 d\xi_2 \cdots d\xi_{k+1}d\rho\label{Jnk3}.
\end{align}
We can rewrite $I_{n, k}^{h}$ in the physical space as follows,
\begin{align}\label{Jnh-3}
I_{n, k}^{h}=\mathscr{F}^{-1}\eqref{Jnk1}+\mathscr{F}^{-1}\eqref{Jnk2}+\mathscr{F}^{-1}\eqref{Jnk3},
\end{align}
where
\EQn{\label{Jnh-4}
\mathscr{F}^{-1}\eqref{Jnk1}=&-e^{-it \Delta}\sum_{M\gg N}P_{N}T_N^k(\eta |\nabla|^{-2} e^{it \Delta}P_{M\geq N_0}I_{n-k-1}),\\
\mathscr{F}^{-1}\eqref{Jnk2}=&i|\nabla|^{2}\int_0^te^{-i\rho \Delta}\sum_{M\gg N}P_{N}T_N^k(\eta |\nabla|^{-2} e^{i\rho \Delta}P_{M\geq N_0}I_{n-k-1})d\rho,\\
\mathscr{F}^{-1}\eqref{Jnk3}=&i\int_0^t e^{-i\rho \Delta}\sum_{N}P_NT_N^{k+1}(\eta e^{i\rho\Delta}I_{n-k-2})d\rho=I_{n, k+1}.
}
Collecting \eqref{In}, \eqref{Jh12d}, \eqref{Jnh-3} and \eqref{Jnh-4}, we complete the proof of {\emph{$(A_2)$}}.

Finally, we turn to prove {\emph{$(A_3)$}}. Noting that if $k=n-1$, $I_{n-k-1}(0, x)=I_0(x)=v_0(x)$, we have
\begin{align}
\widehat{I_{n, n-1}^{h}}(\xi)
 =&i\int_{\xi=\sum_{l=1}^{n+1}\xi_l}e^{i\rho (|\xi|^2-|\xi_{n+1}|^2)}m(\myvec{\xi})\frac{ \phi_{\gg 1}\Big(\frac {|\xi_{n+1}|}{|\xi|}\Big)\phi_{\geq N_0}(|\xi_{n+1}|)}{-i |\xi_{n+1}|^2}\notag\\
 &\qquad\qquad \cdot \prod_{l=1}^{n}\widehat{\eta}(\xi_l)\widehat{v_{0}}(\xi_{n+1})d\xi_1 d\xi_2 \cdots d\xi_{n} \Big|_{\rho=0}^{\rho=t}\label{Jnk1-1}\\
 &-i\int_0^t\int_{\xi=\sum_{l=1}^{n+1}\xi_l}e^{i\rho (|\xi|^2-|\xi_{n+1}|^2)}m(\myvec{\xi})\frac{i|\xi|^2 \phi_{\gg 1}\Big(\frac {|\xi_{n+1}|}{|\xi|}\Big)\phi_{\geq N_0}(|\xi_{n+1}|)}{-i |\xi_{n+1}|^2}\notag\\
 &\qquad\qquad \cdot \prod_{l=1}^{n}\widehat{\eta}(\xi_l)\widehat{v_0}(\xi_{n+1})d\xi_1 d\xi_2 \cdots d\xi_{n} d\rho\label{Jnk2-1}.
\end{align}
We rewrite $I_{n, n-1}^{h}$ in the physical space as follows,
\EQn{\label{Jnh-5}
I_{n, n-1}^{h}=&-e^{-it \Delta}\sum_{M\gg N}P_{N}T_N^{n-1}(\eta |\nabla|^{-2} e^{it \Delta}P_{M\geq N_0}I_{0})\\
&+\sum_{M\gg N}P_{N}T_N^{n-1}(\eta |\nabla|^{-2} P_{M\geq N_0}I_{0})\\
&+i|\nabla|^{2}\int_0^te^{-i\rho \Delta}\sum_{M\gg N}P_{N}T_N^{n-1}(\eta |\nabla|^{-2} e^{i\rho \Delta}P_{M\geq N_0}I_{0})d\rho.
}
Note that the second term can be further expressed by the following
\EQn{\label{Jnh-6}
\sum_{M\gg N}P_{N}T_N^{n-1}(\eta |\nabla|^{-2} P_{M\geq N_0}I_{0})=\sum_{ N}P_{N}T_N^{n}v_0.
}
Hence, by \eqref{In}, \eqref{Jh12d}, \eqref{Jnh-5}, and \eqref{Jnh-6}, we complete the proof of {\emph{$(A_3)$}}. This ends the proof of this lemma.
\end{proof}

Next, we first give the estimate for the operator $T_{N}^k$. Before this, recall the definition of $r_1$ that
$$
\frac 1 {r_1}=1-\frac 1{r_0}-\frac 1r,$$
where $r_0= \infty-, \mbox{ if } d=2;  \quad   6, \mbox{ if }d=3$. Then, we have
$$
	r_1'=\left\{ \aligned
	&r-, \qquad d=2,
	\\
	&\frac {6r}{6+r}, \quad d=3.
	\endaligned
	\right.
$$
\begin{lem}\label{lem:Tnk-es}
Let $\frac d2<r\leq 2$ and $f\in L_x^{r_1'}$, under the same assumptions on $T_{N}^k$ as in Lemma \ref{In-form}, for any $k\in \N$ and any $N, N_0\in 2^{\N}$, there exists $C=C(r)>1$ such that the following inequality holds,
\begin{align}\label{Op-Tk-1}
\norm{T_{N}^kf}_{L_x^{r_1'}}\leq C^k \min\{(2^5 N)^{k(\frac dr-2)}, N_0^{k(\frac dr-2)}\}\norm{\eta}_{L_x^r}^k\norm{f}_{L_x^{r_1'}}.
\end{align}
\end{lem}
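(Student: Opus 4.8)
The plan is to reduce the bound to the single-operator case $k=1$ and then iterate, which is legitimate because $T_N$ is a \emph{fixed} linear operator: the frequency restrictions $M\gg N$ and $M\geq N_0$ defining $T_N$ are the same at every stage of a composition $T_N^k=T_N\circ\cdots\circ T_N$. Set $L:=\max\{2^5N,\,N_0\}$, so that the frequencies $M\in 2^{\mathbb N}$ occurring in $T_N$ are precisely those with $M\geq L$. First I would prove
\[
\|T_Nf\|_{L_x^{r_1'}}\leq C(r)\,L^{\frac dr-2}\,\|\eta\|_{L_x^r}\,\|f\|_{L_x^{r_1'}},
\]
and then induction on $k$ immediately gives $\|T_N^kf\|_{L_x^{r_1'}}\leq C(r)^k L^{k(\frac dr-2)}\|\eta\|_{L_x^r}^k\|f\|_{L_x^{r_1'}}$. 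Since $r>\frac d2$ forces $\frac dr-2<0$, and $L=\max\{2^5N,N_0\}$, we have $L^{k(\frac dr-2)}=\min\{(2^5N)^{k(\frac dr-2)},\,N_0^{k(\frac dr-2)}\}$, which is exactly the asserted bound (after enlarging $C(r)$ to be $>1$ if necessary, which only weakens the inequality).

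For the single-operator estimate, the first step is Hölder's inequality. From the definitions one checks $\frac 1{r_1'}=1-\frac 1{r_1}=\frac 1{r_0}+\frac 1r$; hence, writing $T_Nf=\eta\,|\nabla|^{-2}\sum_{M\geq L}P_Mf$,
\[
\|T_Nf\|_{L_x^{r_1'}}\leq \|\eta\|_{L_x^r}\,\Big\|\,|\nabla|^{-2}\!\!\sum_{M\geq L}P_Mf\,\Big\|_{L_x^{r_0}}.
\]
The second step estimates the last factor by the triangle inequality together with the Bernstein estimates of Lemma \ref{lem:Bernstein}: $\||\nabla|^{-2}P_Mf\|_{L_x^{r_0}}\sim M^{-2}\|P_Mf\|_{L_x^{r_0}}$, and since $r_1'<r_0$ with $\frac d{r_1'}-\frac d{r_0}=\frac dr$ one gets $\|P_Mf\|_{L_x^{r_0}}\lesssim M^{\frac dr}\|P_Mf\|_{L_x^{r_1'}}\lesssim M^{\frac dr}\|f\|_{L_x^{r_1'}}$, the last inequality being the uniform $L_x^{r_1'}$-boundedness of $P_M$ (valid since $1<r_1'<\infty$). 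Summing the resulting geometric series $\sum_{M\geq L}M^{-2+\frac dr}\lesssim L^{-2+\frac dr}$ — convergent precisely because $-2+\frac dr<0$ — yields $\big\||\nabla|^{-2}\sum_{M\geq L}P_Mf\big\|_{L_x^{r_0}}\lesssim L^{\frac dr-2}\|f\|_{L_x^{r_1'}}$, which completes the single-operator bound.

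This argument is essentially routine; the points that need care are purely bookkeeping. One must verify from the definitions of $r_0$ and $r_1$ that $\frac 1{r_1'}=\frac 1r+\frac 1{r_0}$ and $\frac d{r_1'}-\frac d{r_0}=\frac dr$, and that all three exponents $r,\,r_1',\,r_0$ lie in $(1,\infty)$ for $d=2,3$ and $\frac d2<r\leq2$ (this is where the choice $r_0=\infty-$ rather than $\infty$ in dimension two is used, so that $1<r_1'<\infty$ and Littlewood--Paley projections remain bounded). The only genuinely structural input — rather than an obstacle — is the strict subcriticality $r>\frac d2$: it makes the exponent $-2+\frac dr$ negative, which is simultaneously what makes the $M$-sum converge and what produces the decaying factor $L^{\frac dr-2}$, the gain needed to absorb the factor $C(r)^k$ once $N$ or $N_0$ is large.
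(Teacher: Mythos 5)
Your proposal is correct and follows essentially the same route as the paper: Hölder's inequality with the exponent relation $\tfrac{1}{r_1'}=\tfrac 1r+\tfrac 1{r_0}$, Bernstein estimates to produce the factor $M^{\frac dr-2}$, uniform $L^{r_1'}$-boundedness of the Littlewood--Paley projections, geometric summation over $M\geq\max\{2^5N,N_0\}$, and then iteration to pass from $T_N$ to $T_N^k$. The only cosmetic difference is that you apply Hölder before the triangle inequality inside the $L^{r_0}$ norm rather than after, which is equivalent.
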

\begin{proof}
By the definition of $T_{N}$, H\"older's inequality, Lemmas \ref{lem:Bernstein} and \ref{lem:littlewood-Paley}, we have that there exists $C=C(r)>1$ such that,
\begin{align*}
\norm{T_{N}f}_{L_x^{r_1'}}\leq & C \norm{\eta}_{L_x^{r}} \sum_{M:  M\gg N, M\geq N_0}\norm{|\nabla|^{-2}P_{M} f}_{L_x^{r_0}}\notag \\
\leq & C\norm{\eta}_{L_x^{r}} \sum_{M:  M\geq 2^5 N, M\geq N_0} M^{\frac dr-2}\norm{P_{M} f}_{L_x^{r_1'}}\notag \\
\leq  & C \min\{(2^5 N)^{\frac dr-2}, N_0^{\frac dr-2}\}\norm{\eta}_{L_x^{r}}\norm{ f}_{L_x^{r_1'}}.
\end{align*}
Hence, for any $k\geq 1$, we have
\begin{align*}
\norm{T_{N}^k f}_{L_x^{r_1'}}\leq  \|T_{N}T_{N}^{k-1} f\|_{L_x^{r_1'}}\leq  C \min\{(2^5 N)^{\frac dr-2}, N_0^{\frac dr-2}\}\norm{\eta}_{L_x^{r}}\norm{ T_{N}^{k-1} f}_{L_x^{r_1'}}.
\end{align*}
Further, by iterating the above inequality, we get
\begin{align*}
\norm{T_{N}^k f}_{L_x^{r_1'}}\leq C^k \min\{(2^5 N)^{k(\frac dr-2)}, N_0^{k(\frac dr-2)}\}\norm{\eta}_{L_x^r}^k\norm{f}_{L_x^{r_1'}}.
\end{align*}
We finish the proof of this lemma.\end{proof}

%

Now, we give the following estimates for $I_n$ with $n\geq 1$. Recall that
$$
	(q_0, r_0)=\left\{ \aligned
	&(2+, \infty-), \quad d=2,
	\\
	&(2, 6), \qquad d=3.
	\endaligned
	\right.
$$
\begin{lem}\label{2d-v}
Let $s=\frac d2-\frac dr$ with $\frac d2<r\leq 2$, $\eta\in L_x^r$, and $v_0\in H_x^s$, then for any $N_0\in 2^{\N}$, there exist $0<T\leq N_0^{-2}$ and $C_0=C_0(r)>1$, such that the following inequalities hold,
\begin{align}\label{I0}
\norm{\langle \nabla\rangle ^ s e^{it \Delta }I_0}_{L_t^{\infty}L_x^2 \cap L_t^{q_0}L_x^{r_0}([0, T))}\leq C_0 \norm{v_0}_{H_x^s};
\end{align}
and for any $n\geq 1$,
\begin{align}\label{In-all-523}
\norm{\langle \nabla\rangle^s e^{it \Delta} I_{n}}_{L_t^{\infty}L_x^2\cap L_t^{q_0}L_x^{r_0}([0, T))}\leq  (2C_0)^{n}N_0^{n(\frac dr-2)}\|\eta\|_{L_x^r}^{n} \| v_0\|_{H_x^s}.
\end{align}
\end{lem}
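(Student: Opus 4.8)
The plan is to establish \eqref{I0} directly and then to deduce \eqref{In-all-523} from a recursive inequality built on the structural identity \eqref{I} of Lemma \ref{In-form}, closed by induction on $n$. The estimate \eqref{I0} is immediate: since $I_0=v_0$ and $\langle\nabla\rangle^s$ commutes with $e^{it\Delta}$, the homogeneous Strichartz estimate \eqref{1.2222} of Lemma \ref{lem:strichartz}, applied with the admissible pairs $(\infty,2)$ and $(q_0,r_0)$, gives $\|\langle\nabla\rangle^s e^{it\Delta}I_0\|_{L^\infty_tL^2_x\cap L^{q_0}_tL^{r_0}_x([0,T))}\le C_0\|v_0\|_{H^s_x}$; this inequality defines $C_0=C_0(r)$, which we then enlarge (keeping it $>1$ and $r$-dependent) so as to absorb the various constants appearing below.

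For the inductive step I would apply $\langle\nabla\rangle^s e^{it\Delta}$ to each of the five pieces of $I_n$ in \eqref{I}. The three Duhamel-type pieces become standard inhomogeneous Schr\"odinger integrals $\int_0^t e^{i(t-\rho)\Delta}F(\rho)\,d\rho$, to which I apply the inhomogeneous Strichartz estimate \eqref{1.222234} with output pairs $(\infty,2)$, $(q_0,r_0)$ and the \emph{dual} input pair $(q_1',r_1')$ (admissible because $(q_1,r_1)$ is). In the space variable one peels off the time-independent factor $\eta\in L^r_x$ by H\"older, using $\tfrac1{r_1'}=\tfrac1r+\tfrac1{r_0}$; applies Lemma \ref{lem:Tnk-es} to $T_N^k$, interpolating the two entries of the minimum there ($\min(a,b)\le a^{1-\delta}b^\delta$, small $\delta\in(0,1)$) so as to retain simultaneously a summable negative power of $N$ and a genuine gain in $N_0$; and uses Bernstein (Lemma \ref{lem:Bernstein}) to convert the projections $P_{\le N_0}$, $P_M$, $|\nabla|^{-2}P_M$, $\langle\nabla\rangle^s P_N$ into powers of $N_0,M,N$ multiplying $\|\langle\nabla\rangle^s e^{i\rho\Delta}I_{n-1-k}\|_{L^{r_0}_x}$, to which the inductive hypothesis applies. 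The resulting dyadic sums over $N$ and $M$ are closed by Schur's test (Lemma \ref{lem:Schur}) together with the negativity of the relevant exponents, which holds precisely because $\tfrac d2<r\le2$. In the time variable, H\"older on $[0,T)$ — legitimate since $\tfrac1{q_1'}-\tfrac1{q_0}=1-\tfrac{d}{2r}>0$ — produces $T^{\,1/q_1'-1/q_0}$; using the admissibility relations $\tfrac2{q_0}+\tfrac d{r_0}=\tfrac d2$ and $\tfrac2{q_1}=-\tfrac d2+\tfrac d{r_0}+\tfrac dr$ one checks $\tfrac1{q_1'}-\tfrac1{q_0}=\tfrac12(2-\tfrac dr)$, whence $T\le N_0^{-2}$ gives $T^{\,1/q_1'-1/q_0}\le N_0^{\,d/r-2}$. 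This is the crucial arithmetic: each Duhamel step is worth exactly one factor $N_0^{\,d/r-2}$, matching the $\tfrac dr-2$ in the target exponent. The boundary piece (the fourth term of \eqref{I}) carries no time integral and is estimated directly in $L^\infty_tL^2_x\cap L^{q_0}_tL^{r_0}_x$, the stray $|\nabla|^{-2}P_M$ with $M\ge N_0$ supplying the missing $N_0^{\,d/r-2}$ on summation in $M$; the last piece $\sum_N P_N T_N^n v_0$ is bounded by the homogeneous Strichartz estimate together with the $k=n$ case of Lemma \ref{lem:Tnk-es}, after using the same smoothing to place $|\nabla|^{-2}P_M v_0$ (with $M\ge N_0$) in $L^{r_1'}_x$, so that $v_0$ need only lie in $H^s_x$. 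Collecting the powers of $N_0$ — $(n-1-k)(d/r-2)$ from the inductive hypothesis, a nonpositive multiple of $(d/r-2)$ from $T_N^k$, and one $(d/r-2)$ from the Duhamel step (or from the $|\nabla|^{-2}P_{\ge N_0}$ summation) — each term carries a power of $N_0$ that is $\le n(d/r-2)$, hence is $\lesssim N_0^{\,n(d/r-2)}$; summing over $0\le k\le n-1$ leaves a geometric series in $k$ whose ratio is $<1$ once $C_0$ is taken large (depending only on $r$), and the accumulated constant is then $\le(2C_0)^n$. This yields \eqref{In-all-523}.

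The main obstacle is the interaction, encoded in the first (low-frequency) piece of \eqref{I} and in the boundary piece, between a high-frequency potential and a low-frequency profile: because $s=\tfrac d2-\tfrac dr\le0$, the controlled quantity $\langle\nabla\rangle^s e^{i\rho\Delta}I_{n-1-k}$ does \emph{not} control $e^{i\rho\Delta}I_{n-1-k}$ itself — indeed $\eta\,e^{i\rho\Delta}I_{n-1-k}$ need not even be a function — so a naive iteration of the Duhamel estimate loses derivatives. It is exactly here that the reorganization of Lemma \ref{In-form} (the splitting normal form, producing the operators $T_N^k$ together with the truncations $M\ge N_0$ and the smoothing factors $|\nabla|^{-2}$) is indispensable: it trades the would-be derivative loss for the benign, $N_0$-small quantities described above. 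The delicate point is to verify, term by term, that in the low-frequency interaction the gains (from $|\nabla|^{-2}$, from summing $M^{-2}$ over $M\ge N_0$ and $N^s$ over $N\ge1$, from the high-frequency localization of $\eta$ forced when the output frequency dominates $N_0$, and from $T\le N_0^{-2}$) exactly offset the losses produced by the low-frequency projection $P_{\le N_0}$, so that the net power of $N_0$ never exceeds $n(d/r-2)$; once this bookkeeping is in place, the constant $(2C_0)^n$ assembles as above and the induction closes.
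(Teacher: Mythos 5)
Your overall strategy---establish \eqref{I0} by homogeneous Strichartz, feed the structural identity of Lemma \ref{In-form} into the five-term decomposition, estimate each piece by inhomogeneous Strichartz, H\"older, Bernstein, Schur's test and Lemma \ref{lem:Tnk-es}, then close by induction---is the same route the paper takes. The $I_0$ estimate, the treatment of the pieces $I_{n,k}^{(2)}$, $I_{n,k}^{(3)}$, $I_{n,k}^{(4)}$, $I_n^{(5)}$ (for which the paper also uses duality, Schur, the $T_N^k$ bound with its $N_0^{k(d/r-2)}$ gain, and $T^{1-d/(2r)}\le N_0^{d/r-2}$) and the final inductive assembly all match.

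The gap is in the low-frequency piece $I_{n,k}^{(1)}$, the one containing $P_{\le N_0}I_{n-1-k}$. You commit to the input pair $(q_1',r_1')$ for all three Duhamel pieces, so H\"older in time yields $T^{1/q_1'-1/q_0}=T^{1-d/(2r)}$, which by $T\le N_0^{-2}$ produces one factor $N_0^{d/r-2}$. But since $I_{n-1-k}$ is only controlled with weight $\langle\nabla\rangle^s$, $s=\frac d2-\frac dr\le 0$, the Bernstein step that converts $\|e^{i\rho\Delta}P_{\le N_0}I_{n-1-k}\|_{L^{r_0}_x}$ into $\|\langle\nabla\rangle^s e^{i\rho\Delta}I_{n-1-k}\|_{L^{r_0}_x}$ inevitably costs a factor $N_0^{-s}=N_0^{d/r-d/2}\ge 1$, with strict inequality when $r<2$. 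Your estimate for this piece therefore carries the exponent $(k+1)(\frac dr-2)+(\frac dr-\frac d2)$ rather than $(k+1)(\frac dr-2)$, so the bookkeeping you describe (``net power never exceeds $n(d/r-2)$'') is not actually achieved, and the induction hypothesis $(2C_0)^jN_0^{j(d/r-2)}$ cannot be recovered at step $n$. Worse, if $r$ is close to $d/2$ (for $d=2$, $1<r\le 4/3$; for $d=3$, $3/2<r\le 12/7$) the surplus $\frac dr-\frac d2$ exceeds $2-\frac dr$ and the resulting geometric ratio is not even less than one. The paper handles exactly this by using, for this piece only, the sharper admissible pair
\[
(q_2,r_2)=\Big(\frac{2r_0}{d},\,\frac{2r_0}{r_0-2}\Big),
\]
whose dual input gives the larger time gain $T^{1/q_2'-1/q_0}=T^{1-d/4}$. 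Since $\frac1{q_2'}-\frac1{q_1'}=\frac d{2r}-\frac d4>0$ for $r<2$, the additional margin under $T\le N_0^{-2}$ is precisely
\[
T^{\,1/q_2'-1/q_1'}\le N_0^{-2\big(\frac d{2r}-\frac d4\big)}=N_0^{-\big(\frac dr-\frac d2\big)},
\]
which exactly cancels the Bernstein loss $N_0^{d/r-d/2}$ and restores the exponent $(k+1)(\frac dr-2)$. With that one change of Strichartz pair for $I_{n,k}^{(1)}$, your sketch closes as intended.
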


\begin{proof}
$\bullet$ { Estimates on $I_{0}$.} Recall that $I_0=v_0$, by Strichartz's estimates, the validity of \eqref{I0} follows immediately.

$\bullet$ { Estimates on $I_{n}, n\geq 1$.} In what follows, for notational brevity, we always omit $\sup\limits_{h: \norm{h}_{L_x^2}\leq 1}$, $\sup\limits_{h: \norm{h}_{L_x^{r_0'}}\leq 1}$ in the front of dual's identity $\norm{\cdot}_{L_x^2}:=\sup\limits_{h:\norm{h}_{L_x^2}\leq 1}\langle\cdot, h\rangle$, $\norm{\cdot}_{L_x^{r_0}}:=\sup\limits_{h:\norm{h}_{L_x^{r_0'}}\leq 1}\langle\cdot, h\rangle$, respectively.
Recall that for any $n\geq 1$,
\begin{align}\label{In-es}
I_n=\sum_{k=0}^{n-1}( I_{n, k}^{(1)}+I_{n, k}^{(2)}+I_{n, k}^{(3)}+I_{n, k}^{(4)})+I_{n}^{(5)},
\end{align}
where
\begin{align*}
I_{n, k}^{(1)}=&i\int_0^te^{-i\rho\Delta }\sum_{N}P_{N}T_N^k(\eta e^{i\rho\Delta }P_{\leq N_0}I_{n-1-k})d\rho;\notag\\
I_{n, k}^{(2)}=&i\int_0^te^{-i\rho\Delta }\sum_{M\lsm N}P_{N}T_N^k(\eta  \cdot e^{i\rho\Delta }P_{M\geq N_0}I_{n-1-k})d\rho;\notag\\
I_{n, k}^{(3)}=&i|\nabla|^2\int_0^te^{-i\rho\Delta }\sum_{M\gg N}P_{N}T_N^k(\eta  \cdot |\nabla|^{-2}e^{i\rho\Delta }P_{M\geq N_0}I_{n-1-k})d\rho;\notag\\
I_{n, k}^{(4)}=&-e^{-it\Delta }\sum_{M\gg N}P_{N}T_N^k(\eta  \cdot |\nabla|^{-2} e^{it\Delta }P_{M\geq N_0}I_{n-1-k});\notag\\
I_{n}^{(5)}=&\sum_N P_N T_N^n v_0.
\end{align*}
{\emph{1) On $I_{n, k}^{(1)}$.}} By Strichartz's estimates, Sobolev's and H\"older's inequalities, and Lemma \ref{lem:Tnk-es}, we get
\begin{align}\label{In167}
\|\langle \nabla\rangle^s e^{it \Delta} I_{n, k}^{(1)}\|_{L_t^{\infty}L_x^2\cap L_t^{q_0}L_x^{r_0}}\lsm & \|\langle \nabla\rangle^s\sum_{N}P_{N} T_N^k(\eta e^{it\Delta }P_{\leq N_0}I_{n-1-k})\|_{L_t^{q_2'}L_x^{r_2'}}\notag\\
\lsm &\|\sum_{N}P_{N} T_N^k( \eta e^{it\Delta }P_{\leq N_0}I_{n-1-k})\|_{L_t^{q_2'}L_x^{r_1'}}\notag\\
\lsm &\|P_{\ll N_0} T_N^k( \eta e^{it\Delta }P_{\leq N_0}I_{n-1-k})\|_{L_t^{q_2'}L_x^{r_1'}}\notag\\
&+\sum_{N: N\geq 2^{-5}N_0}\|P_{N} T_N^k( \eta e^{it\Delta }P_{\leq N_0}I_{n-1-k})\|_{L_t^{q_2'}L_x^{r_1'}}\notag\\
\lsm &T^{\frac 1{q_2'}-\frac 1{q_0}}C^kN_0^{k(\frac dr-2)}\|\eta\|_{L_x^r}^k\|\eta e^{it\Delta }P_{\leq N_0}I_{n-1-k}\|_{L_t^{q_0}L_x^{r_1'}}\notag\\
\lsm &T^{1-\frac d4}C^kN_0^{k(\frac dr-2)}N_0^{\frac dr-\frac d2}\|\eta\|_{L_x^r}^{k+1}\|\langle \nabla\rangle^s e^{it\Delta }I_{n-1-k}\|_{L_t^{q_0}L_x^{r_0}},
\end{align}
where $(q_2, r_2)=(\frac {2r_0} d, \frac {2r_0}{r_0-2})$ is the Schr\"odinger admissible pair.

Now, we choose $T=T(N_0)>0$, such that
\begin{align}\label{741}
T^{\frac 12}N_0\leq 1.
\end{align}
 Thus, we get
 $$T^{1-\frac d4}N_0^{k(\frac dr-2)}N_0^{\frac dr-\frac d2}\leq N_0^{(k+1)(\frac dr-2)}.$$
 Therefore, we have
\begin{align}\label{In1}
\|\langle \nabla\rangle^s e^{it \Delta} I_{n, k}^{(1)}\|_{L_t^{\infty}L_x^2\cap L_t^{q_0}L_x^{r_0}}\lsm C^k N_0^{(k+1)(\frac dr-2)}\|\eta\|_{L_x^r}^{k+1}\|\langle \nabla\rangle^s e^{it\Delta }I_{n-1-k}\|_{L_t^{q_0}L_x^{r_0}}.
\end{align}

{\emph{2) On $I_{n, k}^{(2)}$.}}
When $r=2$, $s=\frac d2-\frac dr=0$. In this case, by Strichartz's estimates, we have
\begin{align}\label{Ink2-s0}
\norm{\langle \nabla\rangle^s e^{it \Delta} I_{n, k}^{(2)}}_{L_t^{\infty}L_x^2\cap L_t^{q_0}L_x^{r_0}}\lsm & \sum_{N:N\geq 2^{-5}N_0}\|P_{N} T_N^k( \eta e^{it\Delta }P_{\lsm N}P_{\geq N_0}I_{n-1-k})\|_{L_t^{q_1'}L_x^{r_1'}}\notag\\
\lsm &T^{\frac 1{q_1'}-\frac 1{q_0}}C^kN_0^{k(\frac dr-2)}\|\eta\|_{L_x^r}^k\|\eta e^{it\Delta }P_{\geq N_0}I_{n-1-k}\|_{L_t^{q_0}L_x^{r_1'}}\notag\\
\lsm &T^{1-\frac d4}C^kN_0^{k(\frac dr-2)}\|\eta\|_{L_x^r}^{k+1}\|e^{it\Delta }P_{\geq N_0}I_{n-1-k}\|_{L_t^{q_0}L_x^{r_0}}\notag\\
\lsm & C^k N_0^{(k+1)(\frac dr-2)}\|\eta\|_{L_x^r}^{k+1}\|\langle \nabla\rangle^s e^{it\Delta }I_{n-1-k}\|_{L_t^{q_0}L_x^{r_0}}.
\end{align}
When $\frac d2<r<2$, $s=\frac d2-\frac dr<0$. In this case, by the duality, Strichartz's estimates, and Lemmas \ref{lem:Schur}, \ref{lem:Tnk-es}, we have
\begin{align}\label{Ink2}
\norm{\langle \nabla\rangle^s e^{it \Delta} I_{n, k}^{(2)}}_{L_t^{\infty}L_x^2}\lsm& \normbb{\Big\langle\int_0^t \langle \nabla\rangle^se^{i(t-\rho) \Delta}\sum _{M\lsm N}P_{N}T_N^k(\eta   e^{i\rho\Delta }P_{M\geq N_0}I_{n-1-k})d\rho, h\Big\rangle}_{L_t^{\infty}}\notag\\
\lsm &\sum _{M\lsm N}\frac{\langle N \rangle^{s}}{\langle M \rangle^{s}} \norm{T_N^k(\eta   e^{it\Delta }\langle M \rangle^{s}P_{M\geq N_0}I_{n-1-k})}_{L_t^{q_1'}L_x^{r_1'}}\|P_N h\|_{L_x^2}\notag\\
\lsm &\sum _{M\lsm N}\frac{\langle N \rangle^{s}}{\langle M \rangle^{s}}C^k N_0^{k(\frac dr-2)}\|\eta\|_{L_x^r}^k \norm{\eta e^{it\Delta }\langle M \rangle^{s}P_{M\geq N_0}I_{n-1-k}}_{L_t^{q_1'}L_x^{r_1'}}\|P_N h\|_{L_x^2}\notag\\
\lsm &C^k N_0^{k(\frac dr-2)}\|\eta\|_{L_x^r}^k\norm{\eta \langle M\rangle^{s}e^{it \Delta}P_{M\geq N_0} I_{n-1-k}}_{l_M^2L_t^{q_1'}L_x^{r_1'}}\| P_N h\|_{l_N^2L_x^2}\notag\\
\lsm &T^{1-\frac d{2r}} C^k N_0^{k(\frac dr-2)}\|\eta\|_{L_x^r}^{k+1}\norm{\langle \nabla\rangle^{s}e^{it \Delta} I_{n-1-k}}_{L_t^{q_0}L_x^{r_0}}\notag\\
\lsm & C^k N_0^{(k+1)(\frac dr-2)}\|\eta\|_{L_x^r}^{k+1}\norm{\langle \nabla\rangle^{s}e^{it \Delta} I_{n-1-k}}_{L_t^{q_0}L_x^{r_0}}.
\end{align}
Similarly, $\norm{\langle \nabla\rangle^s e^{it \Delta} I_{n, k}^{(2)}}_{L_t^{q_0}L_x^{r_0}}$ and $\norm{\langle \nabla\rangle^s e^{it \Delta} I_{n, k}^{(2)}}_{L_t^{\infty}L_x^2}$ can be controlled by the same bound.
Hence, we get
\begin{align}\label{Ink2-3}
\norm{\langle \nabla\rangle^s e^{it \Delta} I_{n, k}^{(2)}}_{L_t^{\infty}L_x^2\cap L_t^{q_0}L_x^{r_0}}\lsm C^k N_0^{(k+1)(\frac dr-2)}\|\eta\|_{L_x^r}^{k+1}\norm{\langle \nabla\rangle^{s}e^{it \Delta} I_{n-1-k}}_{L_t^{q_0}L_x^{r_0}}.
\end{align}
{\emph{3) On $I_{n, k}^{(3)}$.}}
 Noting that $2+s>0$, by the duality, Strichartz's estimates, and Lemma \ref{lem:Schur}, we have
\begin{align}\label{Ink4}
\norm{\langle \nabla\rangle^s e^{it \Delta} I_{n, k}^{(3)}}_{L_t^{\infty}L_x^2}\lsm&\normbb{\Big\langle\langle \nabla\rangle^{2+s}\int_0^t e^{i(t-\rho) \Delta}\sum _{M\gg N}P_{N}T_N^k(\eta   |\nabla|^{-2}e^{i\rho\Delta }P_{M\geq N_0}I_{n-1-k})d\rho, h\Big\rangle}_{L_t^{\infty}}\notag\\
\lsm &\sum _{M\gg N} \frac{\langle N\rangle^{2+s}}{\langle M\rangle^{2+s}}\Big\|\int_0^t e^{i(t-\rho) \Delta}T_N^k(\eta  \langle M\rangle^{2+s}\notag\\
&\qquad\qquad\quad \cdot|\nabla|^{-2}e^{i\rho\Delta }P_{M\geq N_0}I_{n-1-k})d\rho\Big\|_{L_t^{\infty}L_x^2}\|P_N h\|_{L_x^2}\notag\\
\lsm &\sum _{M\gg N} \frac{\langle N\rangle^{2+s}}{\langle M\rangle^{2+s}} \|T_N^k(\eta  |\nabla|^{-2}e^{it\Delta }\langle M\rangle^{2+s}P_{M\geq N_0}I_{n-1-k})\|_{L_t^{q_1'}L_x^{r_1'}}\|P_N h\|_{L_x^2}\notag\\
\lsm & C^k N_0^{k(\frac d r-2)}\|\eta\|_{L_x^r}^{k}  \|\eta  |\nabla|^{-2}e^{it\Delta }\langle M\rangle^{2+s}P_{M}P_{\geq N_0}I_{n-1-k}\|_{l_M^2L_t^{q_1'}L_x^{r_1'}}\notag\\
\lsm &T^{1-\frac d{2r}} C^k N_0^{k(\frac d r-2)}\|\eta\|_{L_x^r}^{k+1}   \| |\nabla|^{-2}e^{it\Delta }\langle M\rangle^{2+s}P_{M}P_{\geq N_0}I_{n-1-k}\|_{L_t^{q_0}L_x^{r_0}l_M^2}\notag\\
\lsm & C^k  N_0^{(k+1)(\frac d r-2)}\|\eta\|_{L_x^r}^{k+1} \|  \langle \nabla\rangle^{s}e^{it\Delta}I_{n-1-k}\|_{L_t^{q_0}L_x^{r_0}}.
\end{align}
Similarly, $\norm{\langle \nabla\rangle^s e^{it \Delta} I_{n, k}^{(3)}}_{L_t^{q_0}L_x^{r_0}}$ and $\norm{\langle \nabla\rangle^s e^{it \Delta} I_{n, k}^{(3)}}_{L_t^{\infty}L_x^2}$ can be controlled by the same bound.
Hence, we have
\begin{align}\label{Ink4-2}
\norm{\langle \nabla\rangle^s e^{it \Delta} I_{n, k}^{(3)}}_{L_t^{\infty}L_x^{2}\cap L_t^{q_0}L_x^{r_0}}\lsm C^k  N_0^{(k+1)(\frac d r-2)}\|\eta\|_{L_x^r}^{k+1} \|  \langle \nabla\rangle^{s}e^{it\Delta}I_{n-1-k}\|_{L_t^{q_0}L_x^{r_0}}.
\end{align}
{\emph{4) On $I_{n, k}^{(4)}$.}}
 By the duality, Lemmas \ref{lem:Bernstein}, \ref{lem:Schur}, \ref{lem:littlewood-Paley} and \ref{lem:Tnk-es}, we have
\begin{align}\label{Ink3}
\norm{\langle \nabla\rangle^s e^{it \Delta} I_{n, k}^{(4)}}_{L_x^2}\lsm &\Big\langle  \langle \nabla\rangle^s\sum _{M\gg N}P_{N}T_N^k(\eta   |\nabla|^{-2} e^{it\Delta }P_{M\geq N_0}I_{n-1-k}), h \Big\rangle\notag\\
\lsm &  \sum _{M\gg N} \frac{\langle N \rangle^{\frac d{r_0}}}{\langle M \rangle^{\frac d{r_0}}}\|  \langle \nabla\rangle^{s-\frac d{r_0}}P_{N}T_N^k(\eta   |\nabla|^{-2} e^{it\Delta }\langle M \rangle^{\frac d{r_0}} P_{M\geq N_0}I_{n-1-k})\|_{L_x^2} \|P_Nh\|_{L_x^2}\notag\\
\lsm&\sum _{M\gg N} \frac{\langle N \rangle^{\frac d{r_0}}}{\langle M \rangle^{\frac d{r_0}}} \|  T_N^k(\eta   |\nabla|^{-2} e^{it\Delta }\langle M \rangle^{\frac d {r_0}} P_{M\geq N_0}I_{n-1-k})\|_{L_x^{r_1'}} \|P_Nh\|_{L_x^2}\notag\\
\lsm & C^k N_0^{k(\frac dr-2)}\|\eta\|_{L_x^r}^{k}  \sum _{M\gg N} \frac{\langle N \rangle^{\frac d{r_0}}}{\langle M \rangle^{\frac d{r_0}}} \|  \eta   |\nabla|^{-2} e^{it\Delta }\langle M \rangle^{\frac d{r_0}} P_{M\geq N_0}I_{n-1-k}\|_{L_x^{r_1'}} \|P_Nh\|_{L_x^2}\notag\\
\lsm&  C^k N_0^{k(\frac dr-2)}\|\eta\|_{L_x^r}^{k+1} \|    |\nabla|^{-2} e^{it\Delta }\langle M \rangle^{\frac d{r_0}} P_{M\geq N_0}I_{n-1-k}\|_{l_M^2 L_x^{r_0}}\notag\\
\lsm & C^k N_0^{k(\frac dr-2)}N_0^{\frac {d} r-2}\|\eta\|_{L_x^r}^{k+1} \|  \langle \nabla\rangle^{s}e^{it\Delta}I_{n-1-k}\|_{L_x^2}\notag\\
\lsm & C^k N_0^{(k+1)(\frac d r-2)}\|\eta\|_{L_x^r}^{k+1} \|  \langle \nabla\rangle^{s}e^{it\Delta}I_{n-1-k}\|_{L_x^2}.
\end{align}
Hence,
\begin{align}\label{Ink3-1}
\norm{\langle \nabla\rangle^s e^{it \Delta} I_{n, k}^{(4)}}_{L_t^{\infty}L_x^2}\lsm C^k N_0^{(\frac d r-2)(k+1)}\|\eta\|_{L_x^r}^{k+1} \|  \langle \nabla\rangle^{s}e^{it\Delta}I_{n-1-k}\|_{L_t^{\infty}L_x^2}.
\end{align}
Similarly, we have
\begin{align}\label{Ink3-2}
\norm{\langle \nabla\rangle^s e^{it \Delta}  I_{n, k}^{(4)}}_{L_x^{r_0}}\lsm & \Big\langle  \langle \nabla\rangle^s\sum _{M\gg N} P_{N}T_N^k(\eta   |\nabla|^{-2} e^{it\Delta }P_{M\geq N_0}I_{n-1-k}), h \Big\rangle\notag\\
\lsm &   \sum _{M\gg N} \frac{\langle N \rangle^{\frac d2}}{\langle M \rangle^{\frac d2}}\|  \langle \nabla\rangle^{s-\frac d2}P_{N}T_N^k(\eta   \langle M\rangle^{\frac d2} |\nabla|^{-2} e^{it\Delta }P_{M\geq N_0}I_{n-1-k})\|_{L_x^{r_0}} \|P_Nh\|_{L_x^{r_0'}}\notag\\
\lsm& \sum _{M\gg N} \frac{\langle N \rangle^{\frac d2}}{\langle M \rangle^{\frac d2}}\| P_{N}T_N^k(\eta   \langle M\rangle^{\frac d2} |\nabla|^{-2} e^{it\Delta }P_{M\geq N_0}I_{n-1-k})\|_{L_x^{r_1'}}\|P_Nh\|_{L_x^{r_0'}}\notag\\
\lsm &C^k N_0^{k(\frac d r-2)}\|\eta\|_{L_x^r}^{k} \sum _{M\gg N} \frac{\langle N \rangle^{\frac d2}}{\langle M \rangle^{\frac d2}} \|  \eta   \langle M\rangle ^{\frac d2} |\nabla|^{-2} e^{it\Delta }P_{M\geq N_0}I_{n-1-k}\|_{L_x^{r_1'}}\|P_Nh\|_{L_x^{r_0'}}\notag\\
\lsm  & C^k N_0^{k(\frac d r-2)}\|\eta\|_{L_x^r}^{k}  \|  \eta   \langle M\rangle^{\frac d2} |\nabla|^{-2} e^{it\Delta }P_{M\geq N_0}I_{n-1-k}\|_{L_x^{r_1'}l_M^2}\notag\\
\lsm& C^k N_0^{k(\frac d r-2)}\|\eta\|_{L_x^r}^{k+1} \|  \langle M\rangle^{\frac d2} |\nabla|^{-2} e^{it\Delta }P_{M\geq N_0}I_{n-1-k}\|_{L_x^{r_0}l_M^2}\notag\\
\lsm &C^k N_0^{(k+1)(\frac dr-2)}\|\eta\|_{L_x^r}^{k+1}  \|  \langle \nabla\rangle^{s}e^{it\Delta}I_{n-1-k}\|_{L_x^{r_0}}.
\end{align}
Hence,
\begin{align}\label{Ink3-3}
\norm{\langle \nabla\rangle^s e^{it \Delta} I_{n, k}^{(4)}}_{L_t^{q_0}L_x^{r_0}}\lsm C^k N_0^{(k+1)(\frac d r-2)}\|\eta\|_{L_x^r}^{k+1}  \|  \langle \nabla\rangle^{s}e^{it\Delta}I_{n-1-k}\|_{L_t^{q_0}L_x^{r_0}}.
\end{align}
By \eqref{Ink3-1} and \eqref{Ink3-3}, we have
\begin{align}\label{Ink3-4}
\norm{\langle \nabla\rangle^s e^{it \Delta} I_{n, k}^{(4)}}_{L_t^{\infty}L_x^{2}\cap L_t^{q_0}L_x^{r_0}}\lsm C^k N_0^{(k+1)(\frac d r-2)}\|\eta\|_{L_x^r}^{k+1}  \|  \langle \nabla\rangle^{s}e^{it\Delta}I_{n-1-k}\|_{L_t^{\infty}L_x^{2}\cap L_t^{q_0}L_x^{r_0}}.
\end{align}
{\emph{5) On $I_{n}^{(5)}$.}} By the definition of $T_N^n v_0$, we rearrange the sequence $M_1, M_2, \cdots,  M_n$ into $M, M_1, \cdots,$ $ M_{n-1}$ and obtain that
\begin{align}\label{Ink5}
T_N^n v_0=T_N^{n-1} \big(\sum_{M:  M\gg N, M\geq N_0}\eta |\nabla|^{-2}P_{M} v_0\big).
\end{align}
By Strichartz's estimate, we get
\begin{align}\label{Ink5-1}
\norm{\langle \nabla\rangle^s e^{it \Delta} I_{n}^{(5)}}_{L_t^{\infty}L_x^{2}\cap L_t^{q_0}L_x^{r_0}}\lsm \Big\| \langle \nabla\rangle^s  \sum_{M\gg  N  }P_N T_N^{n-1} (\eta |\nabla|^{-2}P_{M\geq N_0} v_0)\Big\|_{L_x^2}.
\end{align}
An argument parallel to \eqref{Ink3} yields
\begin{align}\label{In5-2}
\norm{\langle \nabla\rangle^s e^{it \Delta} I_{n}^{(5)}}_{L_t^{\infty}L_x^{2}\cap L_t^{q_0}L_x^{r_0}}\lsm  C^{n-1}N_0^{n(\frac d r-2)}\|\eta\|_{L_x^r}^{n} \|  \langle \nabla\rangle^{s}v_0\|_{L_x^2}.
\end{align}
Combining the estimates \eqref{In1}, \eqref{Ink2-3}, \eqref{Ink4-2}, \eqref{Ink3-4}, and \eqref{In5-2}, for any $n\geq 1$, we have that there exists $C_0=C_0(r)>1$, such that
\begin{align}\label{In-523}
\norm{\langle \nabla\rangle^s e^{it \Delta}I_n}_{L_t^{\infty}L_x^{2}\cap L_t^{q_0}L_x^{r_0}}
 \leq &  \sum_{k=0}^{n-1}C_0^{k+1} N_0^{(k+1)(\frac d r-2)}\|\eta\|_{L_x^r}^{k+1} \|  \langle \nabla\rangle^{s}e^{it\Delta}I_{n-1-k}\|_{L_t^{\infty}L_x^2\cap L_t^{q_0}L_x^{r_0}}\notag \\
 &+C_0^{n}N_0^{n(\frac d r-2)}\|\eta\|_{L_x^r}^{n} \| v_0\|_{H_x^s}.
\end{align}
When $n=1$, then $k=0$. By \eqref{I0}, we have
\begin{align}\label{es-I1}
\|\langle \nabla\rangle^s e^{it\Delta }I_{1}\|_{L_t^{\infty}L_x^{2}\cap L_t^{q_0}L_x^{r_0}}\leq &2C_0N_0^{\frac dr-2}\|\eta\|_{L_x^r}\| v_0\|_{H_x^s}.
\end{align}
Next, we use the induction method to prove \eqref{In-all-523}.
Now, for any $1\leq j\leq n-1$, we assume the following estimate holds:
\begin{align}\label{Inj-1}
\norm{\langle \nabla\rangle^s e^{it \Delta}I_j}_{L_t^{\infty}L_x^{2}\cap L_t^{q_0}L_x^{r_0}}
 \leq (2C_0 )^{j}N_0^{j(\frac dr-2)}\|\eta\|_{L_x^r}^j\| v_0\|_{H_x^s}.
\end{align}
By \eqref{I0}, \eqref{In-523}, and \eqref{Inj-1}, we have
\begin{align}\label{Inn1}
\norm{\langle \nabla\rangle^s e^{it \Delta}I_n}_{L_t^{\infty}L_x^{2}\cap L_t^{q_0}L_x^{r_0}}
 \leq & \sum_{k=0}^{n-1}C_0^{k+1}(2C_0)^{n-k-1}N_0^{n(\frac dr-2)}\|\eta\|_{L_x^r}^n\| v_0\|_{H_x^s}\notag \\
 &+C_0^{n}N_0^{n(\frac dr-2)}\|\eta\|_{L_x^r}^{n} \| v_0\|_{H_x^s}\notag \\
 \leq & \big(2^{n-1}C_0^{n}\sum_{k=0}^{n-1} 2^{-k}+C_0^{n}\big)N_0^{n(\frac dr-2)}\|\eta\|_{L_x^r}^{n} \| v_0\|_{H_x^s}\notag \\
 \leq & (2C_0)^{n}N_0^{n(\frac dr-2)}\|\eta\|_{L_x^r}^{n} \| v_0\|_{H_x^s}.
\end{align}
This finishes the proof of this lemma.
\end{proof}

Now, we are in a position to give the proof of Proposition \ref{prop:v-2d}.
\begin{proof}[Proof of Proposition \ref{prop:v-2d}]
Let $N\in \N$, and denote
$S_N=\sum_{n=0}^{N}e^{it\Delta}I_n$.
 By Lemma \ref{2d-v}, we have that for any $N_0\in 2^{\N}$, there exists $0<T\leq N_0^{-2}$, such that
\begin{align*}
\|S_N\|_{L_t^{\infty}H_x^s([0, T))}\leq& C_0\norm{v_0}_{H_x^s}+\sum_{n=1}^{N} (2C_0)^{n}N_0^{n(\frac dr-2)}\|\eta\|_{L_x^r}^{n} \| v_0\|_{H_x^s}.
\end{align*}
Noting that $\frac dr-2<0$, and taking $N_0=N_0(\|\eta\|_{L_x^r})\in 2^{\N}$ large enough, we have
$$
2C_0N_0^{\frac dr-2}\|\eta\|_{L_x^r}<1.
$$
Thus for any $N$,
\begin{align}\label{bound-Sn}
\|S_N\|_{L_t^{\infty}H_x^s([0, T))}\lsm  \norm{v_0}_{H_x^s}.
\end{align}
By the similar way as above, we also obtain that
\begin{align}\label{con-Sn}
\|S_N - S_{N'}\|_{L^\infty_t H^{s}_x} \to 0, \mbox{ as } N,N' \to \infty.
\end{align}
Hence, by \eqref{bound-Sn} and \eqref{con-Sn}, there exists $S\in L_t^{\infty}H_x^s([0, T)) $ such that
$$
S=\lim_{N\to \infty}S_N,  \mbox{ in }L_t^{\infty}H_x^s([0, T)).
$$
Next, we explain that $S$ is the unique solution of the equation \eqref{eq:NLS-222}.
Recalling the well-posedness of the equation \eqref{eq:NLS} in $H_x^{\alpha}$, where $\alpha$ is defined as follows
$$
	\alpha=\left\{ \aligned
	&4-\frac {4}r-2\varepsilon_0, \quad d=2,
	\\
	&\frac 92-\frac {6}r, \quad d=3,
	\endaligned
	\right.
$$
we can easily obtain that the solution $v$ of the equation \eqref{eq:NLS-222} belongs to $L_t^{q_1}W_x^{\alpha-2, r_1}$. Furthermore, we also conclude that
$$
\lim_{N\to \infty}S_N=v,  \mbox{ in }L_t^{q_1}W_x^{\alpha-2, r_1} + L_t^{\infty}H_x^{\alpha-2}.
$$
Noting $\alpha-2<s<0$, we have
\begin{align*}
L_t^{\infty}H_x^{s}\subset L_t^{\infty}H_x^{\alpha-2}\subset L_t^{q_1}W_x^{\alpha-2, r_1} + L_t^{\infty}H_x^{\alpha-2} .
\end{align*}
By the uniqueness of the limit, we conclude that
$$
v=S\in L_t^{\infty}H_x^s([0, T)).
$$
Thus, this completes the proof of this proposition.

%

\subsection{Ill-posedness in $H_x^{2+\frac d2-\frac dr +}(\R^d)$}

In this part, we aim to prove that there exists some $\eta\in L_x^{r}(\R^d)$ with $\frac d2<r\leq2$ and $d=2, 3$, such that the equation \eqref{eq:NLS} is ill-posed in $H_x^{2+\frac d2-\frac dr+}(\R^d)$.

For our purpose, we set the parameters $M, N\geq 1$, which shall be determined later. Next, for any $\gamma>2+\frac d2-\frac dr$, we choose the initial data
\EQ{
u_0(x):=\mathscr{F}^{-1}\big(N^{-\frac d2-\gamma }\chi_{N\leq |\cdot|\leq 2N}(\xi)\big)(x).
}
Then we have
\EQ{\|u_0\|_{H_x^{\gamma}}^2=\|\langle\xi\rangle^\gamma \widehat{u_0}(\xi)\|_{L_{\xi}^2}^2\lsm N^{-d}\int_{N}^{2N}\lambda^{d-1} d\lambda\sim 1.
}
On the other hand, we choose the potential
\EQ{
\eta(x)=M^{\frac dr}\mathscr{F}^{-1}\big(\chi_{1\leq |\cdot|\leq 2}(\xi)\big)(M x).
}
Hence, for any $\frac d2<r\leq 2$, we have
\EQ{
\norm{\eta}_{L_x^r}\lsm \norm{\mathscr{F}^{-1}\big(\chi_{1\leq |\cdot|\leq 2}(\xi)\big)}_{L_x^r}\lsm 1.
}
Moreover, we have
\EQ{
\widehat{\eta}(\xi)=M^{-d+\frac dr}\chi_{1\leq |\cdot|\leq 2}\big(\frac{\xi} M\big).
}
Now, we define
\EQ{
A (u_0)(t)\triangleq\int_0^t e^{-i\rho\Delta}(\eta e^{i\rho\Delta}u_0)d\rho.
}
We aim to prove that
\EQ{
\sup\limits_{t\in [0,1]}\|A(u_0)\|_{H_x^{\gamma}(\R^d)}\rightarrow \infty, \mbox{ as }M\rightarrow \infty.
}
Next, we set
$$
t\triangleq \frac 1{M^2},
$$
and
$$
\Omega=\{\xi: \sqrt{\frac {\pi}{3}}M\leq|\xi|\leq \sqrt{\frac {\pi}{2}}M\}.
$$
By the integration-by-parts and the choice of $u_0$ and $\eta$, we have
\begin{align}\label{621656-2D}
\widehat{A (u_0)}(\xi)=&M^{-d+\frac dr}N^{-\frac d2-\gamma }\int_0^t \int_{\xi=\xi_1+\xi_2}e^{is(|\xi|^2-|\xi_2|^2)}\chi_{1\leq |\cdot|\leq 2}\big(\frac{\xi_1} M\big)\chi_{N\leq |\cdot|\leq 2N}(\xi_2)d\xi_2ds\notag\\
=&M^{-d+\frac dr}N^{-\frac d2-\gamma }\int_{\xi=\xi_1+\xi_2}\frac{e^{it(|\xi|^2-|\xi_2|^2)}-1}{i(|\xi|^2-|\xi_2|^2)}\chi_{1\leq |\cdot|\leq 2}\big(\frac{\xi_1} M\big)\chi_{N\leq |\cdot|\leq 2N}(\xi_2)d\xi_2.
\end{align}
Hence, taking the real part of $\widehat{A (u_0)}(\xi)$, we have
\begin{align}\label{6121-2D}
\mbox{Re}\widehat{A (u_0)}(\xi)=M^{-d+\frac dr}N^{-\frac d2-\gamma }\int_{\xi=\xi_1+\xi_2}\frac{\sin[t(|\xi|^2-|\xi_2|^2)]}{|\xi|^2-|\xi_2|^2}\chi_{1\leq |\cdot|\leq 2}\big(\frac{\xi_1} M\big)\chi_{N\leq |\cdot|\leq 2N}(\xi_2)d\xi_2.
\end{align}
By the mean value theorem, we have
\EQ{
\sin[t(|\xi|^2-|\xi_2|^2)]=\sin (t|\xi|^2)+O(t|\xi_2|^2).
}
Now, we take $N \ll M$.  Noting that if $\xi \in \Omega$, then  $t|\xi|^2\in [ \frac{\pi}{3}, \frac{\pi}{2}]$, which further implies $\sin (t|\xi|^2)\geq \frac 12$. Moreover, by $N\ll M$, we have $t|\xi_2|^2\sim \frac {N^2}{M^2}\ll 1$. Hence, we can get that
\begin{align}\label{621-2D}
\sin[t(|\xi|^2-|\xi_2|^2)]\geq \frac 14.
\end{align}
By the estimates \eqref{6121-2D} and \eqref{621-2D}, we obtain
\begin{align}\label{6122-2D}
\mbox{Re}\widehat{A (u_0)}(\xi)\geq \frac 14 M^{-d+\frac dr}N^{-\frac d2-\gamma }\int_{\xi=\xi_1+\xi_2}\frac{1}{|\xi|^2-|\xi_2|^2}\chi_{1\leq |\cdot|\leq 2}\big(\frac{\xi_1} M\big)\chi_{N\leq |\cdot|\leq 2N}(\xi_2)d\xi_2.
\end{align}
Further, noting $\mbox{Re}\widehat{A (u_0)}(\xi)>0$, the above inequality yields that
\begin{align*}
\|A (u_0)\|_{H_x^{\gamma}(\R^d)}= \norm{\langle\xi\rangle^{\gamma}\widehat{A(u_0)}(\xi)}_{L_{\xi}^2(\R^d)}\geq \norm{\langle\xi\rangle^{\gamma}\mbox{Re}\widehat{A(u_0)}(\xi)}_{L_{\xi}^2(\R^d)}.
\end{align*}
Finally, combing the estimate \eqref{6122-2D}, we get
\begin{align*}
\|A (u_0)\|_{H_x^{\gamma}(\R^d)}\geq& C M^{-d+\frac dr+\gamma}N^{-\frac d2-\gamma }M^{-2}\norm{\int\chi_{N\leq |\cdot|\leq 2N}(\xi_2)d\xi_2}_{L_{\xi}^2(\Omega)}\\
\geq & CM^{-2-d+\frac dr+\gamma}N^{-\frac d2-\gamma}N^{d}M^{\frac d2}\\
\geq & C(N) M^{\gamma-(2+\frac d2-\frac dr)},
\end{align*}
where $C(N)>0$ is a finite constant. Hence, by $\gamma>2+\frac d2-\frac dr$, we obtain that for any $T>0$,
\begin{align}\label{Bu0-2d}
\sup\limits_{t\in [0,T]}\|A (u_0)\|_{H_x^{\gamma}(\R^d)}\rightarrow \infty, \mbox{ as }M\rightarrow \infty.
\end{align}
The proof of ill-posedness is done by applying Lemma \ref{ill-posed}. We finish the proof of Theorem \ref{theorem-one}.\end{proof}

\section{Critical case: the proof of Theorem \ref{theorem-two}}

In this section, we establish the global well-posedness of \eqref{eq:NLS} in $H_x^{\frac d2-}(\R^d)$ with critical potentials $\eta\in L_x^{\frac d2}$, where $d=3, 4$.
 We firstly prove the global well-posedness in $H_x^{\frac d2-1-}(\R^d)$, then apply the transform $v=\partial_t u$ to improve the above global well-posedness to $H_x^{\frac d2-}$.



\subsection{Low regularity for a critical index}\label{low-34}

	
We first establish the global well-posedness of \eqref{eq:NLS} in the space $H_x^{\alpha}(\R^d)$, where $\alpha=\frac d2-1-$. This constitutes a weak regularity result, as smaller index $\alpha < \gamma^* = \frac d2$.
\begin{prop}\label{weak-critical}
Let $d=3, 4$, $r=\frac d2$, and $\eta\in L_x^{r}(\R^d)$, then \eqref{eq:NLS} is globally well-posed in $H_x^{\frac d2-1-}(\R^d)$.
\end{prop}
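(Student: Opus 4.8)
The plan is to mirror the proof of Proposition~\ref{weak-subcritical}, adapting all parameters to the critical regime $r=\frac d2$. First I would fix a small $\e>0$ and set the regularity index $\alpha=\frac d2-1-\e$, so that $\alpha<\frac d2$ in both dimensions $d=3,4$. The point of the $\e$-loss is that, at the honest critical index $r=\frac d2$, the Schr\"odinger-admissible pairs and Sobolev embeddings that appeared in Section~\ref{low-one} degenerate: one replaces the endpoint pair $(2,4)$ for $d=4$, resp.\ $(2,6)$ for $d=3$, by a slightly non-endpoint pair $(q_0,r_0)=(2+,4-)$, resp.\ $(2+,6-)$, and defines an auxiliary pair $(q_1,r_1)$ through $\frac2{q_1}=-\frac d2+\frac d{r_0}+\frac dr$ and $\frac1{r_1}=1-\frac1{r_0}-\frac1r$; the $\e$-perturbation of $r_0$ is exactly what restores the strict gain $\frac1{q_0'}-\frac1{q_1}>0$ that fails for the honest endpoint pairs at $r=\frac d2$. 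With $X(I):=L_t^\infty L_x^2(I)\cap L_t^{q_1}L_x^{r_1}(I)$ as the working space, I would then apply the normal-form decomposition of Lemma~\ref{resoant-decom} with this $\alpha$ to rewrite $\langle\nabla\rangle^\alpha u$, and proceed to bound the four resulting terms.

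Next I would establish the critical analogues of Lemmas~\ref{lem:nonlinear-estimate-boundary-22}, \ref{lem:nonlinear-estimate-resonance-22} and \ref{lem:nonlinear-estimate-higher-order-22d}. For the boundary terms $e^{it\Delta}\mathcal{B}(\langle\nabla\rangle^{-2+\alpha}\eta,u_0)$ and $\mathcal{B}(\langle\nabla\rangle^{-2+\alpha}\eta,u(t))$ one applies Strichartz (Lemma~\ref{lem:strichartz}), then the Coifman--Meyer theorem (Lemma~\ref{lem:Coifman-Meyer}; the multiplier $m$ satisfies its hypotheses as remarked), then Sobolev's inequality; at $r=\frac d2$ the relevant exponent identity collapses to $-2+\frac dr=0$, so the potential enters only through $\|P_{\geq N_0}\eta\|_{L_x^r}$ with no derivative, which is precisely what is admissible. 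For the resonance and low-frequency term $\int_0^t e^{i(t-\rho)\Delta}\langle\nabla\rangle^\alpha\mathcal{R}(\eta,u)\,d\rho$ one uses Bernstein (Lemma~\ref{lem:Bernstein}) to peel off the factor $N_0^\alpha$, then duality together with Schur's test (Lemma~\ref{lem:Schur}) for the genuinely high-frequency piece, and the inhomogeneous Strichartz estimate together with Littlewood--Paley theory (Lemma~\ref{lem:littlewood-Paley}), gaining a time factor $T^{1/q_0'-1/q_1}$ with strictly positive exponent. The higher-order term $\int_0^t e^{i(t-\rho)\Delta}\mathcal{B}(\langle\nabla\rangle^{-2+\alpha}\eta,\eta u)\,d\rho$ is handled in the same way and produces the bound $T^{1/q_0'-1/q_1}\|\eta\|_{L_x^r}^2\|\langle\nabla\rangle^\alpha u\|_{X(I)}$. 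With these three estimates, the local theory follows from a contraction-mapping argument on $B_R=\{u:\|\langle\nabla\rangle^\alpha u\|_{X(I)}\le 2R\}$ with $R=C\|u_0\|_{H_x^\alpha}$: since $\eta\in L_x^{d/2}$ one first chooses $N_0=N_0(\|\eta\|_{L_x^r})$ large so that $C\|P_{\geq N_0}\eta\|_{L_x^r}\le\frac14$, then $T=T(N_0,\|\eta\|_{L_x^r})$ small so that $CT^{1/q_0'-1/q_1}(N_0^\alpha+1)(\|\eta\|_{L_x^r}+\|\eta\|_{L_x^r}^2)\le\frac12$. Since the lifespan depends only on $\|\eta\|_{L_x^r}$ and not on the solution norm, iterating the local step in time exactly as at the end of the proof of Proposition~\ref{weak-subcritical} upgrades this to global well-posedness in $H_x^{\frac d2-1-}(\R^d)$.

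The main obstacle is the borderline nature of the critical index $r=\frac d2$: essentially every Sobolev embedding and every time-decay gain used in the subcritical proof becomes an equality or fails at $r=\frac d2$. In particular the pair of conditions used in Section~\ref{low-one}, namely $\alpha r_1>d$ (needed to place $u$ in $L^\infty_x$) together with $-2+\alpha+\frac dr-\frac d{r_1}\le 0$ (needed to absorb the derivative loss on $\eta$), become mutually incompatible precisely when $\frac dr=2$. The resolution is to accompany the $\e$-loss in $\alpha$ with a modified choice of exponents in which $u$ is placed in a finite $L^b_x$ with $\alpha r_1<d$ rather than in $L^\infty_x$, and in which the non-endpoint pair $(q_0,r_0)$ is tuned so that all the competing strict inequalities hold simultaneously; carrying out this exponent bookkeeping consistently across the boundary, resonance and higher-order terms is where the bulk of the work lies, whereas the multilinear and Strichartz estimates themselves are then routine adaptations of the subcritical argument.
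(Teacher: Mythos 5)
The central claim in your proposal — that replacing the endpoint pairs $(2,6)$, resp.\ $(2,4)$, by near-endpoint pairs $(q_0,r_0)=(2+,6-)$, resp.\ $(2+,4-)$, restores the strict inequality $\frac1{q_0'}-\frac1{q_1}>0$ — is false. With $(q_0,r_0)$ admissible and $(q_1,r_1)$ defined by $\frac2{q_1}=-\frac d2+\frac d{r_0}+\frac dr$, $\frac1{r_1}=1-\frac1{r_0}-\frac1r$, a direct computation gives
\begin{align*}
\frac1{q_0'}-\frac1{q_1}
=\Bigl(1-\frac d4+\frac d{2r_0}\Bigr)-\Bigl(-\frac d4+\frac d{2r_0}+\frac d{2r}\Bigr)
=1-\frac d{2r},
\end{align*}
which is independent of $r_0$ and vanishes identically when $r=\frac d2$. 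So perturbing the Strichartz pair cannot manufacture a positive power of $T$; in particular your claimed bound $T^{1/q_0'-1/q_1}\|\eta\|_{L_x^r}^2\|\langle\nabla\rangle^\alpha u\|_{X(I)}$ for the higher-order term carries no time gain, and the contraction you describe (choose $N_0$ large, then $T$ small) cannot close, because there is nothing to make small by shrinking $T$.

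The paper's actual mechanism is different and you have not supplied a substitute for it. The auxiliary space is enlarged to $Y(I)=L_t^\infty L_x^2\cap L_t^2L_x^{\frac{2d}{d-2}}\cap L_{t,x}^{\frac{2(d+2)}{d}}$, i.e.\ the genuine endpoint pair together with the diagonal pair, and the potential $\eta$ is split into $P_{\geq M_0}\eta+P_{<M_0}\eta$ inside the nonlinear estimates. Smallness for the high-frequency piece comes \emph{not} from a time factor but from $\|P_{\geq N_0}\eta\|_{L_x^{d/2}}\to 0$ (cf.\ \eqref{1128-7} and Lemma~\ref{lem:nonlinear-estimate-higher-order-22}, which has no $T$-power at all); time factors only appear on the low-frequency piece of $\eta$, where Bernstein converts $\|P_{<N_0}\eta\|_{L_x^d}$ into $N_0\|\eta\|_{L_x^{d/2}}$ and one trades a factor $T^{1/2}$ against $N_0$ (cf.\ \eqref{1128-8}); and the $\e$-loss $\alpha=\frac d2-1-\e$ is exploited through the Sobolev room $\alpha>\frac2{2+d}$ to place $u$ in the diagonal $L_{t,x}^{\frac{2(d+2)}{d}}$ norm and extract $T^{\frac1{2+d}}$ on the $P_{\le N_0}(\eta u)$ term (cf.\ \eqref{1128-5}). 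Your closing paragraph gestures at ``modified exponents with $u$ in a finite $L_x^b$'', but without the frequency split of $\eta$ and the resulting reliance on $\|P_{\ge N_0}\eta\|_{L_x^{d/2}}$ rather than on a vanishing time exponent, the argument does not go through.
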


Next we provide the key estimates to prove the above result. Define the auxiliary spaces $Y(I)$ for $I\subset\R^+$ by the following norm,
	\EQn{\label{1127-1652}
		\|u\|_{Y(I)}:=\|u\|_{L_t^{\infty}L_x^{2}(I)} + \|u\|_{L_t^{2}L_x^{\frac{2d}{d-2}}(I)}+\|u\|_{L_{t, x}^{\frac{2(d+2)}{d}}(I)}.
	}

	\subsubsection{Boundary terms}
	\begin{lem}[Boundary terms]\label{lem:nonlinear-estimate-boundary-1127}
		Let $r=\frac d2$, and $\alpha=\frac d2-1-$. Let $I \subset\R^+$ be an interval containing $0$. Then, for any $N_0\in2^\N$,
		\EQn{\label{32511-1127}
			\normb{e^{it\Delta}\mathcal{B}(\langle\nabla \rangle^{-2+\alpha}\eta, u_0)}_{Y(I)} \lsm \|P_{\geq N_0}\eta\|_{L_x^{r}}\|\langle\nabla \rangle^{\alpha}u\|_{Y(I)},
		}
		and
		\EQn{\label{32512-1127}
			\norm{\mathcal{B}(\langle\nabla \rangle^{-2+\alpha}\eta, u(t))}_{Y(I)} \lsm  \|P_{\geq N_0} \eta\|_{L_x^{r}}\|\langle\nabla \rangle^{\alpha}u\|_{Y(I)}.
		}
	\end{lem}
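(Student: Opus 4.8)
The plan is to follow the scheme of Lemma \ref{lem:nonlinear-estimate-boundary-22}, specialized to the critical exponents $r=\frac d2$ and $\alpha=\frac d2-1-\varepsilon_0$, so that $0<\alpha<\frac d2$ for $d=3,4$ with $\varepsilon_0$ small. First I would use the homogeneous Strichartz estimate \eqref{1.2222} to reduce $\|e^{it\Delta}\mathcal{B}(\langle\nabla\rangle^{-2+\alpha}\eta,u_0)\|_{Y(I)}$ to $\|\mathcal{B}(\langle\nabla\rangle^{-2+\alpha}\eta,u_0)\|_{L_x^2}$, since all three pairs $(\infty,2)$, $(2,\frac{2d}{d-2})$, $(\frac{2(d+2)}{d},\frac{2(d+2)}{d})$ defining $Y(I)$ are Schr\"odinger-admissible when $d=3,4$. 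As the multiplier $m$ of Definition \ref{defn:normal-form} is of Coifman--Meyer type, Lemma \ref{lem:Coifman-Meyer} gives
\EQ{
\|\mathcal{B}(\langle\nabla\rangle^{-2+\alpha}\eta,u_0)\|_{L_x^2}\lsm\|\langle\nabla\rangle^{-2+\alpha}P_{\geq N_0}\eta\|_{L_x^{d/\alpha}}\,\|u_0\|_{L_x^{2d/(d-2\alpha)}}.
}
Since $0<\alpha<\frac d2$, Sobolev embedding bounds the second factor by $\|u_0\|_{H_x^\alpha}\le\|\langle\nabla\rangle^\alpha u\|_{L_t^\infty L_x^2}\le\|\langle\nabla\rangle^\alpha u\|_{Y(I)}$. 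For the potential factor I would invoke the fractional-integration form of Sobolev's inequality (valid because $1<r=\frac d2<d/\alpha<\infty$ and the smoothing order $2-\alpha=3-\frac d2+\varepsilon_0$ is positive) to obtain $\|\langle\nabla\rangle^{-2+\alpha}P_{\geq N_0}\eta\|_{L_x^{d/\alpha}}\lsm\|\langle\nabla\rangle^{-2+d/r}P_{\geq N_0}\eta\|_{L_x^r}$; the decisive point is that criticality $r=\frac d2$ makes $-2+\frac dr=0$, so this is just $\|P_{\geq N_0}\eta\|_{L_x^r}$. This establishes \eqref{32511-1127}.

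For \eqref{32512-1127}, the $L_t^\infty L_x^2$ component of $\mathcal{B}(\langle\nabla\rangle^{-2+\alpha}\eta,u(t))$ is treated exactly as above (with $u(t)$ in place of $u_0$ and $L_t^\infty$ pulled outside, legitimate since $\eta$ is $t$-independent). For the $L_t^2 L_x^{2d/(d-2)}$ and $L_{t,x}^{2(d+2)/d}$ components I would apply Coifman--Meyer in the space variable with output exponent equal to the relevant spatial Lebesgue exponent, then Sobolev's inequality in $x$ (pointwise in $t$) to the two resulting factors: the exponent on the $u$-side is chosen so that it is dominated by $\|\langle\nabla\rangle^\alpha u\|$ in the corresponding Strichartz norm, which requires only $\alpha>0$, and the exponent on the $\eta$-side is then forced; one checks that it again produces $\|\langle\nabla\rangle^{-2+d/r}P_{\geq N_0}\eta\|_{L_x^r}=\|P_{\geq N_0}\eta\|_{L_x^r}$. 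Taking the $L_t^2$, resp.\ $L_t^{2(d+2)/d}$, norm in time, which acts only on the $u$-factor, closes the estimate by $\|P_{\geq N_0}\eta\|_{L_x^r}\|\langle\nabla\rangle^\alpha u\|_{Y(I)}$.

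The only genuine difficulty is exponent bookkeeping. Because $r=\frac d2$ is the scaling-critical integrability for the potential, the gain $1/\Phi\sim|\nabla|^{-2}$ is consumed \emph{exactly}, with no spare positive power of $\langle\nabla\rangle$; one must therefore check that every H\"older exponent is finite, every auxiliary Lebesgue exponent lies strictly between $1$ and $\infty$, and every Sobolev order is nonnegative. Verifying these constraints is exactly what pins down the range $d\in\{3,4\}$ and $0<\varepsilon_0\ll1$; once the arithmetic is in place the rest is routine.
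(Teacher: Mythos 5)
Your proposal is correct and follows essentially the same route as the paper: Strichartz to reduce \eqref{32511-1127} to an $L_x^2$ estimate, Coifman--Meyer to split $\mathcal{B}(\langle\nabla\rangle^{-2+\alpha}\eta,u_0)$ into a product, Sobolev to place the potential factor in $L_x^{d/\alpha}$ at cost of $2-\alpha$ derivatives (which, after the shift of Lebesgue exponent from $d/\alpha$ to $r=d/2$, is exactly consumed and leaves $\|P_{\geq N_0}\eta\|_{L_x^r}$), and then the same Coifman--Meyer/Sobolev argument componentwise in the three Strichartz norms defining $Y(I)$ for \eqref{32512-1127}. The only difference is that you defer the exponent bookkeeping (e.g.\ the checks $\alpha\cdot\frac{2d}{d-2}<d$ and $\alpha\cdot\frac{2(d+2)}{d}<d$, which pin down $d\in\{3,4\}$) whereas the paper spells it out, but you flag this explicitly, so there is no gap.
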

	\begin{proof}
First of all, by Sobolev's inequality, we have
\EQn{\label{3251-1127}
			 \norm{\langle\nabla \rangle^{-2+\alpha}P_{\geq N_0}\eta}_{L_x^{\frac d{\alpha}}}
\lsm & \norm{\langle\nabla \rangle^{-2+\alpha+\frac dr-\alpha}P_{\geq N_0}\eta}_{L_x^{r}}\\
\lsm & \norm{P_{\geq N_0}\eta}_{L_x^{r}}.
		}
		Noting $\alpha<\frac d2$, by Strichartz's estimates, Sobolev's inequality, Lemma \ref{lem:Coifman-Meyer}, we further derive that
		\EQn{\label{32513-1127}
			\normb{e^{it\Delta}\mathcal{B}(\langle\nabla \rangle^{-2+\alpha}\eta, u_0)}_{Y(I)} \lsm & \normb{\mathcal{B}(\langle\nabla \rangle^{-2+\alpha}\eta, u_0)}_{L_x^2}\\
\lsm & \norm{\langle\nabla \rangle^{-2+\alpha}P_{\geq N_0}\eta}_{L_x^{\frac d{\alpha}}}\norm{u_0}_{L_x^{\frac d{\frac d2-\alpha}}}\\
\lsm & \norm{P_{\geq N_0}\eta}_{L_x^{r}}\norm{u}_{L_t^{\infty}H_x^{\alpha}}.
		}
	This proves \eqref{32511-1127}. Next, we give the proof of \eqref{32512-1127}. Following the same approach as in \eqref{32513-1127}, we obtain
		\EQ{
			\norm{\mathcal{B}(\langle\nabla \rangle^{-2+\alpha}\eta, u(t))}_{L_t^{\infty}L_x^{2}} \lsm  \|P_{\geq N_0} \eta\|_{L_x^{r}}\|u\|_{L_t^{\infty}H_x^{\alpha}}.
		}
		Next, we consider the term $\norm{\mathcal{B}(\langle\nabla \rangle^{-2+\alpha}\eta, u(t))}_{L_t^{2}L_x^{\frac {2d}{d-2}}}$.
Noting that $\alpha\cdot \frac{2d}{d-2}<d$, by Lemma \ref{lem:Coifman-Meyer}, Sobolev's inequality, and \eqref{3251-1127},
\EQ{
			\norm{\mathcal{B}(\langle\nabla \rangle^{-2+\alpha}\eta, u(t))}_{L_t^{2}L_x^{\frac {2d}{d-2}}} \lsm & \norm{\langle\nabla \rangle^{-2+\alpha}P_{\geq N_0}\eta}_{L_x^{\frac d{\alpha}}}\norm{u}_{L_t^2L_x^{\frac {2d}{d-2-2\alpha}}}\\
\lsm & \norm{P_{\geq N_0}\eta}_{L_x^{r}}\norm{\langle\nabla \rangle^{\alpha} u}_{L_t^2L_x^{\frac {2d}{d-2}}}.
		}
Finally, we consider the term $\norm{\mathcal{B}(\langle\nabla \rangle^{-2+\alpha}\eta, u(t))}_{L_{t, x}^{\frac{2(d+2)}{d}}}$. Noting that $\alpha\cdot \frac{2(d+2)}{d}<d$, by the same way as above,
\EQ{
			\norm{\mathcal{B}(\langle\nabla \rangle^{-2+\alpha}\eta, u(t))}_{L_{t, x}^{\frac{2(d+2)}{d}}} \lsm & \norm{\langle\nabla \rangle^{-2+\alpha}P_{\geq N_0}\eta}_{L_x^{\frac d{\alpha}}}\norm{u}_{L_t^{\frac{2(d+2)}{d}}L_x^{\frac 1{\frac d{2(d+2)}-\frac {\alpha}d}}}\\
\lsm & \norm{P_{\geq N_0}\eta}_{L_x^{r}}\norm{ \langle\nabla \rangle^{\alpha} u}_{L_{t, x}^{\frac{2(d+2)}{d}}}.
		}
By the above three inequalities, we finish the proof of \eqref{32512-1127}.

	\end{proof}

\subsubsection{Resonance term and low frequency term}
	\begin{lem}\label{lem:nonlinear-estimate-resonance-1127}
Let $r=\frac d2$, and $\alpha=\frac d2-1-$. Let $I =[0, T)\subset\R^+$. Then, for any $N_0\in2^\N$,
		\EQ{
			\normbb{\int_{0}^t e^{i(t-\rho)\Delta}\langle\nabla \rangle^{\alpha}\mathcal{R}(\eta, u) d\rho}_{Y(I)} \lsm  (T^{\frac 1{2+d}}N_0\|\eta\|_{L_x^{r}}+\norm{P_{\geq N_0} \eta}_{L_x^r})\|\langle\nabla \rangle^{\alpha}u\|_{Y(I)}.
		}
	\end{lem}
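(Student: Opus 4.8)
The plan is to split $\mathcal{R}(\eta,u)$ as in its definition,
\[
\mathcal{R}(\eta,u)=\underbrace{P_{\le N_0}(\eta u)}_{=:\,\mathcal{R}_{\mathrm{low}}}\;+\;\underbrace{P_{\ge N_0}\!\!\sum_{M\gtrsim N}\!\!P_N(\eta P_Mu)}_{=:\,\mathcal{R}_{\mathrm{res}}},
\]
and to bound the Duhamel contribution of $\mathcal{R}_{\mathrm{low}}$ by $T^{1/(d+2)}N_0\|\eta\|_{L_x^r}\|\langle\nabla\rangle^\alpha u\|_{Y(I)}$ and that of $\mathcal{R}_{\mathrm{res}}$ by $\|P_{\ge N_0}\eta\|_{L_x^r}\|\langle\nabla\rangle^\alpha u\|_{Y(I)}$ (plus a piece absorbable into the first term). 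The reason for the split is that $\mathcal{R}_{\mathrm{res}}$ collects the interactions that are critical under scaling, so it carries no positive power of $T$; the required smallness must come instead from the high-frequency tail $P_{\ge N_0}\eta$.

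For $\mathcal{R}_{\mathrm{low}}$ I would use the inhomogeneous Strichartz estimate (Lemma \ref{lem:strichartz}) with the dual of the admissible pair $\bigl(\tfrac{2(d+2)}{d},\tfrac{2(d+2)}{d}\bigr)$ to reduce the $Y(I)$-norm to $\|\langle\nabla\rangle^\alpha P_{\le N_0}(\eta u)\|_{L_t^{q'}L_x^{r'}}$; since $\alpha=\tfrac d2-1-<1$, Bernstein's inequality (Lemma \ref{lem:Bernstein}) replaces $\langle\nabla\rangle^\alpha P_{\le N_0}$ by a harmless factor $N_0$, and then H\"older in $x$ against $\eta\in L_x^{d/2}$, a Sobolev embedding with $\tfrac{2}{d+2}\,(\le\alpha)$ derivatives, and H\"older in $t$ (which produces exactly $T^{1/(d+2)}$) land $u$ in the component $L_t^2L_x^{2d/(d-2)}\subset Y(I)$. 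This step is routine.

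For $\mathcal{R}_{\mathrm{res}}$ one has $N_0\lesssim N\lesssim M$, and the key manoeuvre is to move the derivative off the output frequency onto $u$: $\langle\nabla\rangle^\alpha P_N(\eta P_Mu)\sim\frac{\langle N\rangle^\alpha}{\langle M\rangle^\alpha}P_N\bigl(\eta\,\langle M\rangle^\alpha P_Mu\bigr)$, where $\langle N\rangle^\alpha/\langle M\rangle^\alpha\lesssim(N/M)^\alpha$ since $\alpha>0$ and $N,M\gtrsim N_0\ge1$. After dualizing against the dual spaces of the three components of $Y(I)$ and applying Strichartz, Schur's test (Lemma \ref{lem:Schur}) sums the double dyadic series over $N\lesssim M$ exactly as in \eqref{32514-22}--\eqref{32515-22}, reducing to a single bilinear term that H\"older against $\eta\in L_x^{d/2}$, Minkowski's inequality and Lemma \ref{lem:littlewood-Paley} control by $\|\eta\|_{L_x^r}\|\langle\nabla\rangle^\alpha u\|_{Y(I)}$. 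To upgrade $\|\eta\|_{L_x^r}$ to $\|P_{\ge N_0}\eta\|_{L_x^r}$ I would further split $\eta=P_{\gtrsim N}\eta+P_{\ll N}\eta$ inside $P_N(\eta P_Mu)$: the first piece obeys $\|P_{\gtrsim N}\eta\|_{L_x^r}\lesssim\|P_{\ge N_0}\eta\|_{L_x^r}$ because $N\gtrsim N_0$, and yields the stated bound for $\mathcal{R}_{\mathrm{res}}$; the second piece forces $M\sim N$ (the frequency of $\eta$ being negligible against that of $u$), so that $P_{\ll N}\eta$ is low-frequency and, by Bernstein, lies in $L_x^d$ at the price of one power of $N_0$, after which H\"older in time against $u\in L_{t,x}^{2(d+2)/d}$ recovers a factor $T^\theta$ with $\theta\ge\tfrac1{d+2}$, giving a term bounded by $T^{1/(d+2)}N_0\|\eta\|_{L_x^r}\|\langle\nabla\rangle^\alpha u\|_{Y(I)}$, i.e.\ absorbed into the first summand.

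I expect the main obstacle to be precisely this balanced-frequency interaction $M\sim N$ between a genuinely low-frequency part of the rough potential and a high-frequency part of $u$: the phase is not non-resonant, so the normal form of Definition \ref{defn:normal-form} does not apply, and a naive estimate produces $\|\eta\|_{L_x^r}$ with no smallness. The way out is to trade the scale-invariant exponent $\tfrac{2d}{d+2}$ for a dual admissible pair adapted to $Y(I)$ and to exploit the Bernstein gain on the low-frequency potential; this is exactly where the particular choice of the auxiliary space $Y(I)$ and of $\alpha=\tfrac d2-1-$ enter.
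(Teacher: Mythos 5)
Your decomposition of $\mathcal{R}(\eta,u)$ into $\mathcal{R}_{\mathrm{low}}+\mathcal{R}_{\mathrm{res}}$ and your treatment of $\mathcal{R}_{\mathrm{low}}$ (inhomogeneous Strichartz with the symmetric pair, Bernstein to pull out $N_0^\alpha\le N_0$, then H\"older against $\eta\in L_x^{d/2}$, a Sobolev loss of $\tfrac2{d+2}$ derivatives, and H\"older in time to produce $T^{1/(d+2)}$) agree with the paper, as does the duality-plus-Schur skeleton for $\mathcal{R}_{\mathrm{res}}$, which reduces it to $\|\eta\,\langle M\rangle^\alpha P_M u\|_{l_M^2L_t^2L_x^{2d/(d+2)}}$ exactly as in \eqref{32514-22}--\eqref{32515-22}.

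However, there is a genuine gap in the way you extract smallness from $\mathcal{R}_{\mathrm{res}}$. You propose to split the potential at the \emph{output} frequency, $\eta=P_{\gtrsim N}\eta+P_{\ll N}\eta$, and to treat the second piece (which forces $M\sim N$) by Bernstein, claiming ``$P_{\ll N}\eta$ \ldots by Bernstein, lies in $L_x^d$ at the price of one power of $N_0$.'' That price is wrong: since $r=\tfrac d2$, Bernstein gives $\|P_{\ll N}\eta\|_{L_x^d}\lesssim N\,\|\eta\|_{L_x^{d/2}}$, with a factor of $N$, \emph{not} $N_0$, and $N$ ranges up to infinity in the dyadic sum. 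Keeping $\eta\in L_x^{d/2}$ instead (no Bernstein) avoids the divergent factor, but then the exponent bookkeeping forces the dual Strichartz exponent in time exactly (one computes $\tfrac2{q_T}=-1$), so no $T^\theta$ is available and no smallness results at all. There is no way to land on $T^\theta N_0$ from a cut of $\eta$ at the variable scale $N$. (Your claim $\|P_{\gtrsim N}\eta\|_{L_x^r}\lesssim\|P_{\ge N_0}\eta\|_{L_x^r}$ is also off by a fixed dyadic factor when $N$ is comparable to $N_0$, but that is cosmetic.)

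The paper resolves this by cutting $\eta$ at the \emph{fixed} threshold $N_0$, and only \emph{after} the Schur reduction: one writes $\|\eta\,\langle M\rangle^\alpha P_M u\|_{l_M^2L_t^2L_x^{2d/(d+2)}}\le \|P_{\ge N_0}\eta\,\langle M\rangle^\alpha P_Mu\|+\|P_{<N_0}\eta\,\langle M\rangle^\alpha P_Mu\|$ as in \eqref{1128-6}. The high piece is estimated by H\"older against $\|P_{\ge N_0}\eta\|_{L_x^{d/2}}$ paired with $\|\langle\nabla\rangle^\alpha u\|_{L_t^2L_x^{2d/(d-2)}}$ (no $T$ factor, smallness from the tail of $\eta$); the low piece goes to $L_x^d$ by Bernstein at the \emph{bounded} cost $N_0$, paired with $\|\langle\nabla\rangle^\alpha u\|_{L^2_{t,x}}\le T^{1/2}\|\langle\nabla\rangle^\alpha u\|_{L_t^\infty L_x^2}$, yielding $T^{1/2}N_0\|\eta\|_{L_x^{d/2}}$, which is absorbed into $T^{1/(d+2)}N_0\|\eta\|_{L_x^{d/2}}$ for $T\le1$. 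Replacing your variable-scale split by this fixed-scale split closes the gap; otherwise your plan as written does not produce the stated bound.
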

	\begin{proof}
Recalling the definition of $\mathcal{R}(\eta, u)$ in Definition \ref{defn:normal-form}, we have
\EQnnsub{
\Big\|\int_{0}^t& e^{i(t-\rho)\Delta}\langle\nabla \rangle^{\alpha}\mathcal{R}(\eta, u) d\rho\Big\|_{Y(I)}\notag\\
\lsm &\Big\|\int_{0}^t e^{i(t-\rho)\Delta}\langle\nabla \rangle^{\alpha} P_{\leq N_0}(\eta u)d\rho\Big\|_{Y(I)}\label{1128-1}\\
&+\Big\|\int_{0}^t e^{i(t-\rho)\Delta}\langle\nabla \rangle^{\alpha} P_{\geq N_0}\sum_{M\gtrsim N}P_N(\eta P_Mu)d\rho\Big\|_{Y(I)}.\label{1128-2}
}
For \eqref{1128-1}, noting $\frac 2{2+d}<\alpha$, by H\"{o}lder's and Sobolev's inequalities, and Lemmas  \ref{lem:Bernstein}, \ref{lem:strichartz}, we get
\begin{align}\label{1128-5}
\eqref{1128-1}\lsm N_0^{\alpha}\|\eta u\|_{L_t^{2}L_x^{\frac {2d}{d+2}}}
\lsm &T^{\frac 1{2+d}}N_0^{\alpha} \|\eta\|_{L_x^{\frac d2}}\|\langle\nabla \rangle^{\frac 2{2+d}} u\|_{L_{t,x}^{\frac {2(d+2)}{d}}}\notag\\
\lsm &T^{\frac 1{2+d}}N_0^{\alpha} \|\eta\|_{L_x^{\frac d2}}\|\langle\nabla \rangle^{\alpha} u\|_{L_{t,x}^{\frac {2(d+2)}{d}}}.
\end{align}
For \eqref{1128-2}, analogously to \eqref{32514-22} and \eqref{32517-22}, we have
\begin{align}\label{1128-6}
\eqref{1128-2}\lsm &\norm{\eta \langle M\rangle^{\alpha} P_M u}_{l_M^2L_t^2L_x^{\frac {2d}{d+2}}}\notag\\
\lsm &\norm{P_{\geq N_0}\eta \langle M\rangle^{\alpha} P_M u}_{l_M^2L_t^2L_x^{\frac {2d}{d+2}}}\notag\\
&+\norm{P_{< N_0}\eta \langle M\rangle^{\alpha} P_M u}_{l_M^2L_t^2L_x^{\frac {2d}{d+2}}}.
\end{align}
For the first term in \eqref{1128-6}, by  H\"older's inequality and Lemma \ref{lem:littlewood-Paley}, we get
\begin{align}\label{1128-7}
\norm{P_{\geq N_0}\eta \langle M\rangle^{\alpha} P_M u}_{l_M^2L_t^2L_x^{\frac {2d}{d+2}}}\lsm \norm{P_{\geq N_0}\eta}_{L_x^{\frac d2}}\norm{\langle\nabla \rangle^{\alpha} u}_{L_t^2L_x^{\frac {2d}{d-2}}}.
\end{align}
For the second term in \eqref{1128-6}, by H\"older's inequality and Lemmas \ref{lem:Bernstein}, \ref{lem:littlewood-Paley}, we get
\begin{align}\label{1128-8}
\norm{P_{< N_0}\eta \langle M\rangle^{\alpha} P_M u}_{l_M^2L_t^2L_x^{\frac {2d}{d+2}}}\lsm &\norm{P_{< N_0}\eta}_{L_x^{d}}\norm{\langle\nabla \rangle^{\alpha} u}_{L_{t,x}^2}\notag\\
\lsm &T^{\frac 12}N_0\norm{\eta}_{L_x^{\frac d2}}\norm{\langle\nabla \rangle^{\alpha} u}_{L_t^{\infty}L_x^2}.
\end{align}
Hence, this lemma follows from \eqref{1128-5}-\eqref{1128-8}.
\end{proof}

	\subsubsection{High-order terms}
	\begin{lem}[Higher order terms]\label{lem:nonlinear-estimate-higher-order-22}
Let $r=\frac d2$, and $\alpha=\frac d2-1-$. Let $I \subset\R^+$ be an interval containing $0$. Then
\begin{align*}
\normbb{\int_0^te^{i(t-\rho)\Delta}\mathcal{B}(\langle\nabla \rangle^{-2+\alpha}\eta, \eta u)(\rho, x)d\rho}_{Y(I)} \lsm&  \|P_{\geq N_0}\eta\|_{L_x^r}\|\eta\|_{L_x^r}\|\langle\nabla \rangle^{\alpha}u\|_{Y(I)}.
\end{align*}
\end{lem}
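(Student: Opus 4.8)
The plan is to follow the same three‑step scheme used for the subcritical higher‑order estimate (Lemma~\ref{lem:nonlinear-estimate-higher-order-22d}): reduce to a dual‑Strichartz bound, strip off the two copies of the potential via the Coifman--Meyer multiplier theorem together with H\"older's inequality, and then absorb the negative‑order operator $\langle\nabla\rangle^{-2+\alpha}$ acting on $\eta$ by Sobolev embedding. Since $(2,\frac{2d}{d-2})$ is a Schr\"odinger‑admissible pair occurring in the norm $\norm{\cdot}_{Y(I)}$, the inhomogeneous Strichartz estimate \eqref{1.222234} gives
\[
\normbb{\int_0^t e^{i(t-\rho)\Delta}\mathcal{B}(\langle\nabla\rangle^{-2+\alpha}\eta,\eta u)\,d\rho}_{Y(I)}\lsm\norm{\mathcal{B}(\langle\nabla\rangle^{-2+\alpha}\eta,\eta u)}_{L_t^2L_x^{\frac{2d}{d+2}}},
\]
so everything reduces to estimating the right‑hand side; and because $\eta$ is independent of $t$, this amounts to a fixed‑time spatial estimate followed by an $L_t^2$‑integration of $u$.

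For the spatial estimate I would use that, by Definition~\ref{defn:normal-form} and the remark after it, the multiplier $m$ defining $\mathcal{B}$ is a Coifman--Meyer symbol, and that its cutoffs $\phi_{\geq N_0}(|\xi|)\,\phi_{\ll1}(|\xi_2|/|\xi|)$ force $|\xi_1|\gtrsim N_0$, so the first slot is effectively $P_{\geq N_0}\eta$. Applying Lemma~\ref{lem:Coifman-Meyer} with $\frac{d+2}{2d}=\frac1{p_1}+\frac1{p_2}$ and then H\"older's inequality with $\frac1{p_2}=\frac1r+\frac1{p_3}$ gives, pointwise in $t$,
\[
\norm{\mathcal{B}(\langle\nabla\rangle^{-2+\alpha}\eta,\eta u)}_{L_x^{\frac{2d}{d+2}}}\lsm\norm{\langle\nabla\rangle^{-2+\alpha}P_{\geq N_0}\eta}_{L_x^{p_1}}\norm{\eta}_{L_x^r}\norm{u}_{L_x^{p_3}}.
\]
I would then fix $p_1,p_3$ by $\frac1{p_1}=\frac1r-\frac{2-\alpha}{d}$ and $\frac1{p_3}=\frac{d-2}{2d}-\frac\alpha d$ (these are consistent with the two relations above and satisfy $p_1>r$ and $\alpha\cdot\frac{2d}{d-2}<d$ for $d=3,4$), so that Sobolev's inequality yields $\norm{\langle\nabla\rangle^{-2+\alpha}P_{\geq N_0}\eta}_{L_x^{p_1}}\lsm\norm{P_{\geq N_0}\eta}_{L_x^r}$ — the net derivative budget being $-2+\alpha+\frac dr-\frac d{p_1}=0$ — and $\norm{u}_{L_x^{p_3}}\lsm\norm{\langle\nabla\rangle^{\alpha}u}_{L_x^{\frac{2d}{d-2}}}$. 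Taking the $L_t^2$‑norm and using $L_t^2L_x^{\frac{2d}{d-2}}\subset Y(I)$ closes the estimate, with the constant depending on $\norm{\eta}_{L_x^r}$ and the smallness residing in $\norm{P_{\geq N_0}\eta}_{L_x^r}$.

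I expect the only delicate point to be the exponent accounting in the last step: one must check that the particular value $\alpha=\frac d2-1-$ is exactly what makes all the embeddings close, that is, that the smoothing $\langle\nabla\rangle^{-2+\alpha}$ is precisely enough to send $P_{\geq N_0}\eta\in L_x^r$ into $L_x^{p_1}$ with zero loss, while $u$ returns to the $L_t^2L_x^{\frac{2d}{d-2}}$ component of $Y(I)$ after exactly $\alpha$ derivatives; the requirements $p_1>r$ and $\alpha\cdot\frac{2d}{d-2}<d$ are precisely what restrict the argument to $d=3,4$. Since these all hold by direct substitution once $\alpha$ is inserted, there is no essential obstacle in this lemma itself — the genuine difficulties of the critical case appear instead in the resonance/low‑frequency term of Lemma~\ref{lem:nonlinear-estimate-resonance-1127} and in the subsequent iterated‑Duhamel construction, not here.
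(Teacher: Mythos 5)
Your proof is correct and follows essentially the same route as the paper: reduce via inhomogeneous Strichartz to $L_t^2L_x^{2d/(d+2)}$, apply Coifman--Meyer and H\"older to strip off the two copies of the potential, and absorb $\langle\nabla\rangle^{-2+\alpha}$ by Sobolev embedding. In fact your exponent choices coincide exactly with the paper's ($p_1=d/\alpha$, $p_3=\tfrac{2d}{d-2-2\alpha}$ once $r=d/2$ is substituted), and the observation that the cutoff $\phi_{\geq N_0}(|\xi|)\phi_{\ll 1}(|\xi_2|/|\xi|)$ forces the first slot to carry $P_{\geq N_0}\eta$ is also how the paper obtains the smallness factor $\|P_{\geq N_0}\eta\|_{L_x^r}$.
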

\begin{proof}
Noting that $\alpha\cdot \frac{2d}{d-2}<d$, by Lemmas \ref{lem:strichartz}, \ref{lem:Coifman-Meyer}, Sobolev's inequality, and \eqref{3251-1127}, we have
\begin{align*}
\normbb{\int_0^te^{i(t-\rho)\Delta}\mathcal{B}(\langle\nabla \rangle^{-2+\alpha}\eta, \eta u)(\rho, x)d\rho}_{Y(I)}\lsm & \norm{\mathcal{B}(\langle\nabla \rangle^{-2+\alpha}\eta, \eta u)}_{L_t^2L_x^{\frac {2d}{d+2}}}\\
\lsm &  \norm{\langle\nabla \rangle^{-2+\alpha}P_{\geq N_0}\eta}_{L_x^{\frac d{\alpha}}}\norm{\eta}_{L_x^{\frac d2}}\norm{u}_{L_t^2L_x^{\frac {2d}{d-2-{2\alpha}}}}\\
\lsm &\norm{P_{\geq N_0}\eta}_{L_x^{\frac d{2}}}\norm{\eta}_{L_x^{\frac d2}}\norm{\langle\nabla \rangle^{\alpha}u}_{L_t^2L_x^{\frac {2d}{d-2}}}.
\end{align*}
This proves this Lemma.
\end{proof}

We are now in a position to prove Proposition \ref{weak-critical}.
\begin{proof}[\bf{Proof of Proposition \ref{weak-critical}}]
By the above several lemmas, we can establish the global well-posedness of \eqref{eq:NLS} in $H_x^{\frac d2-1-}$ and for any $T>0$,
\EQn{\label{1130-11}
\norm{u}_{L_t^{\infty}H_x^{\frac d2-1-}([0, T)\times \R^3)}\leq C(T) \norm{u_0}_{H_x^{\frac d2-1-}}\leq C(T)\norm{u_0}_{H_x^{\frac d2-}}.
}
The detailed proof follows the arguments presented in subsection \ref{low-one} and is therefore omitted here again.
\end{proof}

\subsection{Global well-posedness in $H_x^{\frac d2-}(\R^d), d=3, 4$}

As the normal form method described above cannot further improve the regularity, we adopt another way to improve the well-posedness of \eqref{eq:NLS} to $H_x^{\frac d2-}$. Fix $\epsilon_0>0$, and denote $\frac d2-=\frac d2-\epsilon_0$, and
$s=\frac d2-2-\epsilon_0$, where $d=3, 4$.
Let $v=\partial_t u$, from the equation \eqref{eq:NLS}, $v$ satisfies the following equation
\begin{equation}\label{eq:NLS-1130333}
		\left\{ \aligned
		&i\partial_t v+\Delta v+\eta v=0, \qquad t\in(0, T) \mbox{ and } x\in \R^d,
		\\
		&v(0,x)=i(\Delta u_0+\eta u_0)\triangleq v_0.
		\endaligned
		\right.
	\end{equation}
 Next, we give two necessary facts.

\noindent $\bullet$ Claim 1: $v_0\in H_x^{s}$.

 In fact, by $ u_0\in H_x^{\frac d2-\epsilon_0}$ (that is $ u_0\in H_x^{2+s}$), $\eta \in L_x^{\frac d2}$, the Sobolev and H\"{o}lder inequalities, we get
\begin{align}\label{1201-1332}
\norm{v_0}_{H_x^{s}}&=\norm{\Delta u_0+\eta u_0}_{H_x^{s}}\notag\\
&\lsm \norm{u_0}_{H_x^{{2+s}}}+\norm{\eta u_0}_{L_x^{\frac d{\frac d2-s}}}\notag\\
&\lsm  \norm{u_0}_{H_x^{2+s}}+\norm{\eta }_{L_x^{\frac d2}}\norm{ u_0}_{L_x^{\frac d {\frac d2-2-s}}}\notag\\
&\lsm \norm{u_0}_{H_x^{2+s}}+\norm{\eta }_{L_x^{\frac d2}}\norm{ u_0}_{H_x^{2+s}}.
\end{align}
\noindent $\bullet$ Claim 2: $v\in C([0, T); H_x^{s})$ implies $u\in C([0, T); H_x^{\frac d2-\epsilon_0})$.

Indeed, by the high and low frequency decomposition,
\begin{align}\label{high-low-1130}
\norm{u}_{L_t^{\infty}H_x^{\frac d2-\epsilon_0}}\lsm \norm{P_{<1}u}_{L_t^{\infty}H_x^{\frac d2-\epsilon_0}} +\norm{P_{\geq 1}u}_{L_t^{\infty}H_x^{\frac d2-\epsilon_0}}.
\end{align}
For $ \norm{P_{<1}u}_{L_t^{\infty}H_x^{\frac d2-\epsilon_0}} $, by Lemma \ref{lem:Bernstein} and \eqref{1130-11}, we have
\begin{align}\label{low-1130}
\norm{P_{<1}u}_{L_t^{\infty}H_x^{\frac d2-\epsilon_0}}\leq C \norm{u}_{L_t^{\infty}H_x^{\frac d2-1-\epsilon_0}}\leq C(T) \norm{u_0}_{H_x^{2+s}}.
\end{align}
Next, we consider the high frequency term $ \norm{P_{\geq 1}u}_{L_t^{\infty}H_x^{\frac d2-\epsilon_0}} $. Noting that
\EQ{
\Delta u=-i v-\eta u,
}
by the Sobolev inequality, we have
\begin{align}\label{1201-1210}
\norm{P_{\geq 1}u}_{L_t^{\infty}H_x^{\frac d2-\epsilon_0}}\leq & \norm{P_{\geq 1}v}_{L_t^{\infty}H_x^{\frac d2-2-\epsilon_0}}+\norm{P_{\geq 1}(\eta u)}_{L_t^{\infty}H_x^{\frac d2-2-\epsilon_0}}\notag\\
\leq  & C(T) \norm{v_0}_{H_x^{s}}+C\norm{\eta u}_{L_t^{\infty}L_x^{\frac d{2+\epsilon_0}}}.
\end{align}
For the second term in \eqref{1201-1210}, by Lemma \ref{lem:Bernstein}, Sobolev's inequality, and \eqref{1130-11},
\begin{align}\label{high-1201-1255}
\norm{\eta u}_{L_t^{\infty}L_x^{\frac d{2+\epsilon_0}}}
\lsm& \norm{P_{\geq N_0}\eta }_{L_x^{\frac d2}}\norm{u}_{L_t^{\infty}L_x^{\frac d{\epsilon_0}}}+
\norm{P_{<N_0}\eta }_{L_x^d}\norm{u}_{L_t^{\infty}L_x^{\frac d{1+\epsilon_0}}}\notag\\
\lsm &\norm{P_{\geq N_0}\eta }_{L_x^{\frac d2}}\norm{u}_{L_t^{\infty}H_x^{\frac d2-\epsilon_0}}+N_0\norm{\eta }_{L_x^{\frac d2}}\norm{u_0}_{H_x^{2+s}},
\end{align}
where $N_0\in 2^{\N}$ is a large constant decided later.

By \eqref{1201-1210} and \eqref{high-1201-1255}, we have
\begin{align}\label{1201-1309}
\norm{P_{\geq 1}u}_{L_t^{\infty}H_x^{\frac d2-\epsilon_0}}\leq & C(T)  \norm{v_0}_{H_x^{s}}+C\norm{P_{\geq N_0}\eta }_{L_x^{\frac d2}}\norm{u}_{L_t^{\infty}H_x^{\frac d2-\epsilon_0}}\notag\\
&+CN_0\norm{\eta }_{L_x^{\frac d2}}\norm{u_0}_{H_x^{2+s}}.
\end{align}
Hence, by \eqref{high-low-1130}, \eqref{low-1130} and \eqref{1201-1309}, we further get
\begin{align}\label{high-low-1328}
\norm{u}_{L_t^{\infty}H_x^{\frac d2-\epsilon_0}}\leq &C(T) \norm{u_0}_{H_x^{2+s}}+ C(T)\norm{v_0}_{H_x^{s}}\notag\\
&+C\norm{P_{\geq N_0}\eta }_{L_x^{\frac d2}}\norm{u}_{L_t^{\infty}H_x^{\frac d2-\epsilon_0}}+CN_0\norm{\eta }_{L_x^{\frac d2}}\norm{u_0}_{H_x^{2+s}}.
\end{align}
We take $N_0=N_0(\norm{\eta}_{L_x^{\frac d2}})$, such that
$$
C\norm{P_{\geq N_0}\eta }_{L_x^{\frac d2}}\leq \frac 12.
$$
Further, combining \eqref{1201-1332} and \eqref{high-low-1328}, we conclude that
\begin{align*}
\norm{u}_{L_t^{\infty}H_x^{\frac d2-\epsilon_0}}\leq C(T, \norm{\eta }_{L_x^{\frac d2}}, \norm{u_0}_{H_x^{2+s}}).
\end{align*}
This gives the proof of Claim 2.

Based on the above two facts, it suffices to prove the equation \eqref{eq:NLS-1130333} is globally well-posed in $H_x^{s}$.
For our purpose, we firstly give the following result via an iterated Duhamel construction.
\begin{prop}\label{prop:v-34d}
Let $ N\in \N$, and $S_N\triangleq\sum_{n=0}^{N}e^{it\Delta}I_n$, where the terms $I_n$  are defined recursively by
\begin{align*}
I_n=i\int_0^te^{-i\rho \Delta}(\eta e^{i\rho \Delta}I_{n-1})d \rho, {\mbox{ for }}n\geq1; \quad I_0=v_0.
\end{align*} Let $
	s=\frac d2-2-\epsilon_0
$, where $\epsilon_0>0$, $d=3, 4$. Then there exist $T=T(\|\eta\|_{L_x^r})>0$, and $v\in C([0, T); H_x^s(\R^d))$, such that
$$
\lim_{N\to \infty}S_N=v, \mbox{ in }H_x^s,
$$
where $v$ is the unique solution to equation \eqref{eq:NLS-1130333}.
\end{prop}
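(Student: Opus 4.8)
The plan is to mirror, step by step, the proof of Proposition~\ref{prop:v-2d}, adapting its three structural ingredients to the critical regime $r=\frac d2$. First I would establish a structural lemma analogous to Lemma~\ref{In-form}: applying the splitting normal form / integration-by-parts identity \eqref{SNM} iteratively to $I_n=i\int_0^t e^{-i\rho\Delta}\big(\eta\,e^{i\rho\Delta}I_{n-1}\big)\,d\rho$, one decomposes $I_n$ into a sum over $k$ of a resonant / low-frequency piece, a non-resonant piece built from iterates of an operator $T_N$, a ``$|\nabla|^2$'' piece amenable to Schur's test (Lemma~\ref{lem:Schur}), and a boundary term, plus the purely algebraic tail $\sum_N P_N T_N^n v_0$. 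The essential difference from the subcritical case is that the critical exponent satisfies $\frac dr-2=0$, so frequency truncation yields no gain; I would therefore define $T_N$ with the high-frequency projection placed on the potential as well, roughly
\[
T_N f=P_{\geq N_0}\eta\;\sum_{M\gg N,\;M\geq N_0}|\nabla|^{-2}P_M f,
\]
so that the contraction factor is extracted from the $L_x^{d/2}$-tail $\|P_{\geq N_0}\eta\|_{L_x^{d/2}}$ (which is small for $N_0$ large, since $\eta\in L_x^{d/2}$) rather than from a negative power of $N_0$.

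Second I would prove the operator estimate replacing Lemma~\ref{lem:Tnk-es}: by Bernstein's inequality, H\"older's inequality in the critical pairing, and Lemma~\ref{lem:littlewood-Paley}, the smoothing factor $|\nabla|^{-2}P_M$ exactly cancels the two derivatives one is forced to move onto the potential when $r=\frac d2$, yielding
\[
\|T_N^k f\|_{L_x^{r_1'}}\leq C^k\big(\|P_{\geq N_0}\eta\|_{L_x^{d/2}}\big)^k\|f\|_{L_x^{r_1'}},
\]
with $C$ independent of $N$ and $N_0$, for the appropriate auxiliary exponent $r_1'$ matched to the space $Y(I)$ of \eqref{1127-1652}. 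Third, using this together with the endpoint Strichartz estimate into $L_t^2L_x^{2d/(d-2)}$ (available since $d\geq 3$) and the space $Y(I)$, I would establish the analogue of Lemma~\ref{2d-v}: in the decomposition of $e^{it\Delta}I_n$ each term either carries a factor $P_{\geq N_0}\eta$ — hence a factor $\|P_{\geq N_0}\eta\|_{L_x^{d/2}}$ — or is a resonant / low-frequency term coming with $T^{\theta}N_0^{\alpha}\|\eta\|_{L_x^{d/2}}$ for some $\theta>0$, where $\alpha=\frac d2-1-$; the small parameter $\epsilon_0$ in $s=\frac d2-2-\epsilon_0$ is used precisely to keep all Sobolev embeddings strict (e.g.\ $\alpha\cdot\tfrac{2d}{d-2}<d$ and $\tfrac{2}{d+2}<\alpha$). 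Iterating on $n$, exactly as in \eqref{Inn1}, gives
\[
\big\|\langle\nabla\rangle^s e^{it\Delta}I_n\big\|_{L^\infty_tL^2_x\cap Y([0,T))}\leq\big(C\|P_{\geq N_0}\eta\|_{L_x^{d/2}}+CT^{\theta}N_0^{\alpha}\|\eta\|_{L_x^{d/2}}\big)^n\|v_0\|_{H^s_x}.
\]

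With this bound the convergence of $S_N$ is routine: choose $N_0=N_0(\|\eta\|_{L_x^{d/2}})$ so that $C\|P_{\geq N_0}\eta\|_{L_x^{d/2}}\leq\tfrac14$, then $T=T(N_0,\|\eta\|_{L_x^{d/2}})$ so that the second bracket is $\leq\tfrac14$; summing the geometric series gives $\sup_N\|S_N\|_{L^\infty_tH^s_x([0,T))}\lesssim\|v_0\|_{H^s_x}$ and, applying the same estimate to $S_N-S_{N'}$, that $(S_N)$ is Cauchy in $L^\infty_tH^s_x$, with limit $S$. To identify $S$ with the solution $v$ of \eqref{eq:NLS-1130333}, I would invoke Proposition~\ref{weak-critical}: the solution $u$ of \eqref{eq:NLS} lies in $H_x^{\frac d2-1-}=H_x^{\alpha}$ (with $\langle\nabla\rangle^{\alpha}u$ in the auxiliary space), so $v=\partial_t u=i(\Delta u+\eta u)$ lies in a weaker mixed-norm space containing $L^\infty_tH^s_x$ (note that $\alpha-2=\frac d2-3-<s$); one then checks directly that $S_N\to v$ in that weaker topology, and uniqueness of limits yields $v=S\in L^\infty_tH^s_x$. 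Since $T$ depends only on $\|\eta\|_{L_x^{d/2}}$, the local solution extends globally by the usual iteration, exactly as in the proof of global well-posedness in $H^{2+\frac d2-\frac dr}_x$.

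The main obstacle is the second and third steps. Unlike the subcritical case, there is no quantitative gain from the truncation scale $N_0$ (because $\frac dr-2=0$), so the whole contraction must be driven by the smallness of the $L_x^{d/2}$-tail of $\eta$. This forces a careful bookkeeping in the cascade \eqref{route-SNM} so that in each non-resonant term produced along the way at least one copy of $\eta$ retains a $P_{\geq N_0}$ projection, while the handful of genuinely resonant / low-frequency terms that cannot be arranged this way are absorbed using the $T^{\theta}$ factor; making these two smallness mechanisms coexist inside a single geometric estimate, uniformly in $n$, is the delicate point, and is also where the endpoint Strichartz estimate and the $\epsilon_0$-room in $s$ are indispensable — consistent with the failure of the endpoint estimate, and the conjectured ill-posedness, in the borderline two-dimensional case $d=2$, $r=1$.
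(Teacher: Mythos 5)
Your overall plan (iterated Duhamel construction, structural lemma, operator estimate for $T_N^k$, geometric iteration, identification of the limit via Proposition~\ref{weak-critical}) follows the paper's strategy. However, your proposed modification of the operator $T_N$ --- placing the truncation $P_{\geq N_0}$ on the potential inside the definition, $T_Nf = P_{\geq N_0}\eta\sum_{M\gg N,\, M\geq N_0}|\nabla|^{-2}P_M f$ --- introduces a genuine gap.

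The operator $T_N$ in Lemma~\ref{In-form} is not free to choose: it is \emph{produced} by iterating the splitting normal form, where at each step the multiplying factor is the full potential $\eta$ coming from the nonlinearity $\eta v$. If you replace $\eta$ by $P_{\geq N_0}\eta$ inside $T_N$, the decomposition \eqref{I} no longer closes, and you are left with remainder terms of the schematic form $P_{<N_0}\eta\cdot|\nabla|^{-2}P_{\geq N_0}(\cdots)$. These are precisely the terms with \emph{no} smallness at the critical index: $\|P_{<N_0}\eta\|_{L_x^\infty}\lesssim N_0^{2}\|\eta\|_{L_x^{d/2}}$ exactly cancels the gain $N_0^{-2}$ from $|\nabla|^{-2}P_{\geq N_0}$, since $\tfrac{d}{r}-2=0$. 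So absorbing the low-frequency piece of $\eta$ into the truncation scale $N_0$ is not possible in the critical regime --- this is the same numerology you correctly identify at the start of your argument, and it undoes your modified $T_N$.

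The paper's actual resolution keeps $T_N$ \emph{unchanged} (full $\eta$, as in Lemma~\ref{In-form}) and instead introduces a \emph{second} auxiliary scale $M_0$ inside the operator estimate. Lemma~\ref{lem:TN-34D} decomposes $\eta=P_{\geq M_0}\eta+P_{<M_0}\eta$ with $1\ll M_0\leq N_0$, giving a contraction factor of the form
\[
\varepsilon_0(M_0,N_0)\;\sim\;\|P_{\geq M_0}\eta\|_{L_x^{d/2}}\;+\;M_0^{2}N_0^{-2}\|\eta\|_{L_x^{d/2}},
\]
which is made small by first choosing $M_0$ large and then $N_0\gg M_0$. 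The high-frequency tail contributes smallness via $\|P_{\geq M_0}\eta\|_{L_x^{d/2}}$, while the low-frequency part $P_{<M_0}\eta$, being $L^\infty$ with norm $\lesssim M_0^2\|\eta\|_{L_x^{d/2}}$, pairs with the smoothing $|\nabla|^{-2}P_{\geq N_0}$ to give the ratio $M_0^2 N_0^{-2}$. This two-parameter decoupling is exactly what is missing from your proposal: with a single scale $N_0$, the low-frequency $L^\infty$ bound and the smoothing gain cancel identically, and no smallness survives. Everything else --- the induction in $n$ giving the bound $\varepsilon_0(M_0,N_0)^n\|v_0\|_{H^s_x}$, the Cauchy property of $S_N$, and the identification of the limit in the weaker space furnished by Proposition~\ref{weak-critical} --- is then carried out as you describe.
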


Based on this proposition, we can easily obtain the global well-posedness of \eqref{eq:NLS} in $H_x^{\frac d2-}(\R^d)$. Here we omit the details, which can be referred to Section \ref{low-regularity-830}.

\subsection{Proof of Proposition \ref{prop:v-34d}}\label{v-spacetime-830-34d}
Next, let us focus on the proof of Proposition \ref{prop:v-34d}. The idea of proof is similar to the proof of Proposition \ref{prop:v-2d}. However, when dealing with case critical potentials, we need to perform a detailed frequency decomposition, which is essential for achieving smallness.

Next, we give the estimates for the operator $T_{N}^k$ defined in lemma \ref{In-form}.  Recall the definitions of $T_N$ that
  $$
T_{N}f=\eta \sum_{M:  M\gg N, M\geq N_0}|\nabla|^{-2}P_{M} f,
$$
where $M, N, N_0\in 2^{\N}$. We also recall the definition of $T_N^k$ that
for any $k\in \N$,
\begin{align*}
T_N^k f=(T_N)^kf, \mbox{ with } T_N^0 f=f.
\end{align*}
Firstly, we have the following estimates of $T_N$.
\begin{lem}\label{lem:TN-34D}
Let $d=3,4$, $\eta\in L_x^{\frac d2}$ and $f\in L_x^{\frac {2d}{d+2}}$. Then for any $N\in 2^{\N}$, and any $M_0, N_0\in 2^{\N}$ satisfying $M_0\leq N_0$, the following inequalities hold,
\begin{align}\label{Op-T1-34d}
\norm{T_{N}f}_{L_x^{\frac {2d}{d+2}}}\lsm \big(\norm{P_{\geq M_0}\eta}_{L_x^{\frac d2}}+ M_0^{\frac 14} N_0^{-\frac 14}\norm{\eta}_{L_x^{\frac d2}}\big)\norm{f}_{L_x^{\frac {2d}{d+2}}},
\end{align}
and
\begin{align}\label{Op-T1-34d22}
\norm{T_{N}f}_{L_x^{\frac {d}{2}-}}\lsm \big(\norm{P_{\geq M_0}\eta}_{L_x^{\frac d2}}+ M_0 N_0^{-1}\norm{\eta}_{L_x^{\frac d2}}\big)\norm{f}_{L_x^{\frac {d}{2}-}}.
\end{align}
\begin{proof}
By the definition of $T_{N}$, H\"older's and Sobolev's inequalities, and Lemma \ref{lem:Bernstein}, we have
\begin{align*}
\norm{T_{N}f}_{L_x^{\frac {2d}{d+2}}}\lsm &\norm{P_{\geq M_0}\eta}_{L_x^{\frac d2}} \norm{|\nabla|^{-2}P_{\gg N}P_{\geq N_0} f}_{L_x^{\frac {2d}{d-2}}}\notag \\
&+\norm{P_{< M_0}\eta}_{L_x^{\infty}} \norm{|\nabla|^{-2}P_{\gg N}P_{\geq N_0} f}_{L_x^{\frac {2d}{d+2}}}\notag \\
\lsm &\norm{P_{\geq M_0}\eta}_{L_x^{\frac d2}} \norm{f}_{L_x^{\frac {2d}{d+2}}}+M_0^2\norm{\eta}_{L_x^{\frac d2}}\cdot N_0^{-2}\norm{f}_{L_x^{\frac {2d}{d+2}}} \notag \\
\lsm & \big(\norm{P_{\geq M_0}\eta}_{L_x^{\frac d2}}+ M_0^2N_0^{-2}\norm{\eta}_{L_x^{\frac d2}}\big)\norm{f}_{L_x^{\frac {2d}{d+2}}},
\end{align*}
and
\begin{align*}
\norm{T_{N}f}_{L_x^{\frac {d}{2}-}}\lsm &\norm{P_{\geq M_0}\eta}_{L_x^{\frac d2}} \norm{|\nabla|^{-2}P_{\gg N}P_{\geq N_0} f}_{L_x^{\infty-}}\notag \\
&+\norm{P_{< M_0}\eta}_{L_x^{\infty}} \norm{|\nabla|^{-2}P_{\gg N}P_{\geq N_0} f}_{L_x^{\frac {d}{2}-}}\notag \\
\lsm &\norm{P_{\geq M_0}\eta}_{L_x^{\frac d2}} \norm{f}_{L_x^{\frac {d}{2}-}}+M_0^2\norm{\eta}_{L_x^{\frac d2}}\cdot N_0^{-2}\norm{f}_{L_x^{\frac {d}{2}-}} \notag \\
\lsm & \big(\norm{P_{\geq M_0}\eta}_{L_x^{\frac d2}}+ M_0^2N_0^{-2}\norm{\eta}_{L_x^{\frac d2}}\big)\norm{f}_{L_x^{\frac {d}{2}-}}.
\end{align*}
We finish the proof of this lemma.\end{proof}

\end{lem}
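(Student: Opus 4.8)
The plan is to strip the operator down to a concrete expression and then run a two-region frequency analysis on $\eta$. First I would note that, by the frequency support of the summands, the sum defining $T_N$ collapses into a single projection, so that
\[
T_N f = \eta \cdot |\nabla|^{-2} P_{\gg N} P_{\geq N_0} f ,
\]
and it remains to estimate the product of $\eta$ with a frequency-localized, twice-antidifferentiated copy of $f$ in the two target norms $L_x^{\frac{2d}{d+2}}$ and $L_x^{\frac d2-}$. Splitting $\eta = P_{\geq M_0}\eta + P_{<M_0}\eta$, I would handle the two pieces separately in each norm.

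For the high-frequency part $P_{\geq M_0}\eta$ I would apply H\"older's inequality placing $P_{\geq M_0}\eta$ in the scaling-critical space $L_x^{\frac d2}$, and in the complementary exponent invoke the Sobolev embedding for $|\nabla|^{-2}$ (which in $d$ dimensions carries $L_x^{\frac{2d}{d+2}}$ into $L_x^{\frac{2d}{d-2}}$, and likewise $L_x^{\frac d2-}$ into $L_x^{\infty-}$) together with the $L^p$-boundedness of the Littlewood--Paley projections; this yields the contribution $\norm{P_{\geq M_0}\eta}_{L_x^{\frac d2}}\norm{f}$ in the corresponding space. For the low-frequency part $P_{<M_0}\eta$ I would instead put it in $L_x^{\infty}$, paying a factor $M_0^{2}\norm{\eta}_{L_x^{\frac d2}}$ by Bernstein, and exploit that $|\nabla|^{-2}$ acting on the piece supported at frequencies $\gtrsim N_0$ gains $N_0^{-2}$, again by Bernstein; this produces the contribution $M_0^{2}N_0^{-2}\norm{\eta}_{L_x^{\frac d2}}\norm{f}$. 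Since $M_0\leq N_0$ we have $M_0^{2}N_0^{-2}\leq M_0^{\frac 14}N_0^{-\frac 14}$ and $M_0^{2}N_0^{-2}\leq M_0 N_0^{-1}$, which absorbs this term into the stated bounds; in fact one obtains the sharper $M_0^{2}N_0^{-2}$ versions of both inequalities.

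I do not anticipate a genuine obstacle: the argument is careful bookkeeping of H\"older and Sobolev exponents at the critical integrability $L_x^{\frac d2}$ for $\eta$, the only delicate point being the choice of the auxiliary exponent $L_x^{\infty-}$ in the $L_x^{\frac d2-}$ estimate, where one must check that the implied constants remain finite for each fixed value of the small parameter hidden in $\frac d2-$. The real content is structural rather than computational: cutting $\eta$ at frequency $M_0$ isolates a tail $P_{\geq M_0}\eta$ whose $L_x^{\frac d2}$-norm can later be made as small as desired, while the complementary part, although not small in $L_x^{\frac d2}$, costs only a fixed positive power of $M_0/N_0<1$ --- and it is precisely this gain that will furnish the smallness needed to close the iterated Duhamel construction for the critical potential in the subsequent estimates on $T_N^k$ and on $I_n$.
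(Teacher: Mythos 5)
Your proposal is correct and follows essentially the same route as the paper: decompose $\eta = P_{\geq M_0}\eta + P_{<M_0}\eta$, handle the high piece by H\"older with $L_x^{d/2}$ and Sobolev for $|\nabla|^{-2}$, handle the low piece by Bernstein ($L^\infty$ bound on $P_{<M_0}\eta$ costing $M_0^2$, and the $N_0^{-2}$ gain from $|\nabla|^{-2}P_{\geq N_0}$), then use $M_0\le N_0$ to relax $M_0^2N_0^{-2}$ to the stated exponents. You also correctly observe that a sharper $M_0^2N_0^{-2}$ bound actually holds and that the stated weaker form is all that is needed for the later iteration.
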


Applying this lemma, and iteration, we have the following estimates directly.
\begin{lem}\label{lem:Tnk-es34d}
Under the same assumptions as in Lemma \ref{lem:TN-34D}, then for any $k\in \N$, the following estimates hold,
\begin{align}\label{Op-Tk-34d}
\norm{T_{N}^kf}_{L_x^{\frac {2d}{d+2}}}\lsm \big(\norm{P_{\geq M_0}\eta}_{L_x^{\frac d2}}+ M_0^{\frac 14} N_0^{-\frac 14}\norm{\eta}_{L_x^{\frac d2}}\big)^k\norm{f}_{L_x^{\frac {2d}{d+2}}},
\end{align}
and
\begin{align}\label{Op-Tk-34d22}
\norm{T_{N}^kf}_{L_x^{\frac {d}{2}-}}\lsm \big(\norm{P_{\geq M_0}\eta}_{L_x^{\frac d2}}+ M_0 N_0^{-1}\norm{\eta}_{L_x^{\frac d2}}\big)^k\norm{f}_{L_x^{\frac {d}{2}-}}.
\end{align}
\end{lem}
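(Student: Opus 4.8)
The two inequalities are proved in exactly the same way, by iterating the two single-step bounds of Lemma \ref{lem:TN-34D}; this is the exact analogue of how \eqref{Op-Tk-1} in Lemma \ref{lem:Tnk-es} was obtained from a one-step estimate. The plan is as follows. First I would record that, since $M_0\le N_0$, one has $M_0^2N_0^{-2}\le M_0^{1/4}N_0^{-1/4}$ and $M_0^2N_0^{-2}\le M_0N_0^{-1}$, so the two assertions of Lemma \ref{lem:TN-34D} (stated with the weaker exponents $\tfrac14$ and $1$) are indeed delivered by the Hölder/Sobolev chains carried out in its proof. In other words, $T_N$ is a bounded operator on $L_x^{\frac{2d}{d+2}}$ with operator norm $\lsm \kappa_1:=\norm{P_{\geq M_0}\eta}_{L_x^{d/2}}+M_0^{1/4}N_0^{-1/4}\norm{\eta}_{L_x^{d/2}}$, and a bounded operator on $L_x^{\frac d2-}$ with operator norm $\lsm \kappa_2:=\norm{P_{\geq M_0}\eta}_{L_x^{d/2}}+M_0N_0^{-1}\norm{\eta}_{L_x^{d/2}}$. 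A small but convenient point here is that the frequency cut-offs $P_{\gg N}$, $P_{\geq N_0}$ and the smoothing factor $|\nabla|^{-2}$ are already built into the definition of $T_N$, so composing $T_N$ with itself requires no extra bookkeeping of multipliers.

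With the single-step boundedness in hand, I would argue by induction on $k$. The case $k=0$ is trivial and $k=1$ is precisely Lemma \ref{lem:TN-34D}. For the inductive step write $T_N^k f=T_N\bigl(T_N^{k-1}f\bigr)$; by the inductive hypothesis $T_N^{k-1}f\in L_x^{\frac{2d}{d+2}}$ (resp. $L_x^{\frac d2-}$) with $\norm{T_N^{k-1}f}_{L_x^{2d/(d+2)}}\lsm \kappa_1^{k-1}\norm{f}_{L_x^{2d/(d+2)}}$, and one more application of Lemma \ref{lem:TN-34D} to the function $T_N^{k-1}f$ gives $\norm{T_N^k f}_{L_x^{2d/(d+2)}}\lsm \kappa_1\cdot\kappa_1^{k-1}\norm{f}_{L_x^{2d/(d+2)}}=\kappa_1^{k}\norm{f}_{L_x^{2d/(d+2)}}$, which is \eqref{Op-Tk-34d}; the estimate \eqref{Op-Tk-34d22} follows identically with $\kappa_1$ replaced by $\kappa_2$ and $L_x^{2d/(d+2)}$ replaced by $L_x^{\frac d2-}$.

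I do not expect a genuine obstacle in this lemma: the analytic substance is entirely contained in the single-step bounds of Lemma \ref{lem:TN-34D}, and what remains is pure iteration. The one thing to be careful about is the implicit constant: each application of Lemma \ref{lem:TN-34D} costs a dimensional factor $C=C(d)$, so literally the induction yields $\norm{T_N^kf}\le C^k\kappa_i^k\norm{f}$. This is harmless — indeed it is exactly what is wanted downstream, since in the proof of Proposition \ref{prop:v-34d} the parameters $M_0$ and $N_0$ are chosen (depending only on $\norm{\eta}_{L_x^{d/2}}$) so that $C\kappa_i$ is as small as desired, making the resulting geometric series in $k$ summable — and one may simply absorb $C$ into $\kappa_i$ to write the estimate in the stated form.
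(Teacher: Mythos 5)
Your proof is correct and follows exactly the same route the paper takes: the paper states Lemma \ref{lem:Tnk-es34d} with only the remark ``Applying this lemma, and iteration, we have the following estimates directly,'' and your write-up is precisely that iteration, including the correct observation that the proof of Lemma \ref{lem:TN-34D} actually produces the exponent $M_0^2N_0^{-2}$, which is dominated by the stated $M_0^{1/4}N_0^{-1/4}$ and $M_0N_0^{-1}$ because $M_0\le N_0$. Nothing to add.
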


Now, we give the following estimates of $I_n$ with $n\geq 1$.

\begin{lem}\label{34d-v}
Let $s=\frac d2-2-\epsilon_0$ with $d=3, 4$, $\eta\in L_x^{\frac d2}$, and $v_0\in H_x^s$, then for any $M_0, N_0\in 2^{\N}$ satisfying $M_0\leq N_0$, and $T=T(N_0)>0$, the following inequalities hold,
\begin{align}\label{I0-34d}
\norm{\langle \nabla\rangle ^ s e^{it \Delta }I_0}_{L_t^{\infty}L_x^2 \cap L_t^{2}L_x^{\frac {2d}{d-2}}([0, T))}\lsm \norm{v_0}_{H_x^s};
\end{align}
and for any $n\geq 1$,
\begin{align}\label{In-all-34d}
\norm{\langle \nabla\rangle^s e^{it \Delta} I_{n}}_{L_t^{\infty}L_x^2\cap L_t^{2}L_x^{\frac{2d}{d-2}}([0, T))}\lsm  \big( \norm{P_{\geq M_0}\eta}_{L_x^{\frac d2}}+ M_0^{\frac 14}N_0^{-\frac 14}\norm{\eta}_{L_x^{\frac d2}}\big)^n \| v_0\|_{H_x^s}.
\end{align}
\end{lem}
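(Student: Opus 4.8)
The plan is to reproduce the architecture of the subcritical Lemma~\ref{2d-v}, with the critical operator bounds of Lemma~\ref{lem:Tnk-es34d} replacing those of Lemma~\ref{lem:Tnk-es}, and with an extra dyadic splitting of the \emph{outer} potential $\eta$ supplying the smallness that, in the subcritical range, came for free from the negative power $N_0^{d/r-2}$. Estimate \eqref{I0-34d} is immediate: since $I_0=v_0$, Lemma~\ref{lem:strichartz} on the admissible pairs $(\infty,2)$ and $(2,\tfrac{2d}{d-2})$ gives it at once. For $n\ge1$ I would start from the structural identity of Lemma~\ref{In-form}, written as in the proof of Lemma~\ref{2d-v} in the form $I_n=\sum_{k=0}^{n-1}\big(I_{n,k}^{(1)}+I_{n,k}^{(2)}+I_{n,k}^{(3)}+I_{n,k}^{(4)}\big)+I_n^{(5)}$, and reduce the claim to the one-step bounds
\[
\norm{\langle\nabla\rangle^s e^{it\Delta}I_{n,k}^{(j)}}_{L_t^\infty L_x^2\cap L_t^2 L_x^{2d/(d-2)}}\lsm A^{\,k+1}\norm{\langle\nabla\rangle^s e^{it\Delta}I_{n-1-k}}_{L_t^\infty L_x^2\cap L_t^2 L_x^{2d/(d-2)}},\qquad j=1,2,3,4,
\]
together with $\norm{\langle\nabla\rangle^s e^{it\Delta}I_n^{(5)}}_{L_t^\infty L_x^2\cap L_t^2 L_x^{2d/(d-2)}}\lsm A^{\,n}\norm{v_0}_{H_x^s}$, where $A:=\norm{P_{\geq M_0}\eta}_{L_x^{d/2}}+M_0^{1/4}N_0^{-1/4}\norm{\eta}_{L_x^{d/2}}$.

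To prove each one-step bound I would: (i) apply the dual Strichartz inequality \eqref{1.222234} on the pair $(2,\tfrac{2d}{d-2})$, together with duality for the $L_t^\infty L_x^2$ endpoint, to pass to an $L_t^2 L_x^{2d/(d+2)}$-norm of $\langle\nabla\rangle^s\big(P_N T_N^k(\eta\,\cdot)\big)$, resolving the Littlewood--Paley sums over $M\gtrsim N$, $M\lsm N$, $M\gg N$ by Lemma~\ref{lem:littlewood-Paley} and Schur's test (Lemma~\ref{lem:Schur}) in the style of \eqref{32514-22}; (ii) invoke \eqref{Op-Tk-34d} to peel the factor $A^k$ off $T_N^k$; (iii) bound the remaining product $\eta\cdot(\,\cdot\,)$ by H\"older after the splitting $\eta=P_{<M_0}\eta+P_{\geq M_0}\eta$, where the tail contributes $\norm{P_{\geq M_0}\eta}_{L_x^{d/2}}\le A$ directly, while the bulk $P_{<M_0}\eta$ is raised by Bernstein (Lemma~\ref{lem:Bernstein}) from $L_x^{d/2}$ to a higher Lebesgue exponent at the price of a positive power of $M_0$, which is absorbed either by the frequency gap built into $T_N$ (the factor $|\nabla|^{-2}P_{\ge N_0}$ produces a matching negative power of $N_0$) or, for $I_{n,k}^{(1)}$ whose inner factor is only $P_{\le N_0}$-localized, by a small time factor $T^\theta$ after choosing $T=T(N_0)$ --- the net gain being $\lsm A$ per application of $T_N$ and per outer $\eta$. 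The piece $I_n^{(5)}=\sum_N P_N T_N^n v_0$ is rewritten via \eqref{Ink5} as $\sum_N P_N T_N^{n-1}\big(\sum_{M\gg N,\,M\ge N_0}\eta|\nabla|^{-2}P_M v_0\big)$ and treated by steps (ii)--(iii).

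Summing over $k$ and running an induction on $n$, assuming $\norm{\langle\nabla\rangle^s e^{it\Delta}I_j}_{L_t^\infty L_x^2\cap L_t^2 L_x^{2d/(d-2)}}\lsm A^{\,j}\norm{v_0}_{H_x^s}$ for all $j<n$, the one-step bounds give $\norm{\langle\nabla\rangle^s e^{it\Delta}I_n}_{L_t^\infty L_x^2\cap L_t^2 L_x^{2d/(d-2)}}\lsm\big(\sum_{k=0}^{n-1}A^{\,k+1}A^{\,n-1-k}+A^{\,n}\big)\norm{v_0}_{H_x^s}$, and the apparent loss of the factor $n$ is removed precisely as in \eqref{Inn1} by arranging the constants so that a genuine geometric series (with ratio $\tfrac12$) is being summed; this yields \eqref{In-all-34d}. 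I expect step (iii) to be the crux: in the critical case $r=\tfrac d2$ the scaling-invariant norm $\norm{\eta}_{L_x^{d/2}}$ carries no small parameter, so every application of $T_N$ and every outer factor of $\eta$ must be forced to be comparable to the small quantity $A$, and one must balance the competing powers of $M_0$, $N_0$ and $T$ simultaneously across all five pieces, together with the unavoidable $\epsilon_0$-losses stemming from the failure of the endpoint embedding $L_x^{d/2}\hookrightarrow L_x^\infty$ --- already responsible for the appearance of the $L_x^{d/2-}$ norm in Lemma~\ref{lem:TN-34D}. This is the ``detailed frequency decomposition'' alluded to in the text, and it is genuinely more delicate than the subcritical case, where $N_0^{d/r-2}$ does the work automatically.
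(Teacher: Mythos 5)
Your proposal is correct and follows essentially the same route as the paper: the decomposition $I_n=\sum_{k=0}^{n-1}(I_{n,k}^{(1)}+\cdots+I_{n,k}^{(4)})+I_n^{(5)}$ from Lemma~\ref{In-form}, the passage through $L_t^2L_x^{2d/(d+2)}$ via dual Strichartz and Schur's test, the factor $\varepsilon_0(M_0,N_0)^k$ peeled from $T_N^k$ by Lemma~\ref{lem:Tnk-es34d}, the splitting $\eta=P_{<M_0}\eta+P_{\ge M_0}\eta$ for the one remaining outer $\eta$ with the $M_0$-power absorbed by $N_0^{-1}$ (from the $|\nabla|^{-2}P_{\ge N_0}$ gap) or by $T^{1/2}\le N_0^{-1}$, and the geometric-series repair of the $n$-fold sum in the induction --- all match the paper. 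The only details you leave implicit that the paper carries out are the exact exponent bookkeeping (e.g.\ the admissible pair $(\tilde q,\tilde r)=(\tfrac{2}{d/2-1-\epsilon_0},\tfrac{d}{1+\epsilon_0})$ used to split $I_{n,k}^{(1)}$ into $N<N_0$ and $N\ge N_0$ ranges, and the check that $T^{1/2}M_0$ and $M_0^{d/2-1-}N_0^{-d/2+1+}$ are both $\lesssim M_0^{1/4}N_0^{-1/4}$ when $M_0\le N_0$ and $d=3,4$), but you correctly flag these as the crux and correctly attribute the $L_x^{d/2-}$ norm in Lemma~\ref{lem:TN-34D} to the failure of the endpoint embedding.
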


\begin{proof}
$\bullet$ { Estimates on $I_{0}$.} Recall that $I_0=v_0$, by Strichartz's estimates, the validity of \eqref{I0-34d} follows immediately.

$\bullet$ { Estimates on $I_{n}, n\geq 1$.} In what follows, for notational brevity, we always omit $\sup\limits_{h: \norm{h}_{L_x^2}\leq 1}$ in the front of dual's identity and denote $\norm{\cdot}_{L_x^2}:=\langle\cdot, h\rangle$. Similarly $\norm{\cdot}_{L_x^{\frac{2d}{d-2}}}:=\langle\cdot, h\rangle$.

Moreover, for notational brevity, we also denote
$$
\varepsilon_0(M_0, N_0):=\norm{P_{\geq M_0}\eta}_{L_x^{\frac d2}}+ M_0^{\frac 14}N_0^{-\frac 14}\norm{\eta}_{L_x^{\frac d2}}.
$$
We remark that $\varepsilon_0(M_0, N_0)\ll 1$, if $M_0\gg 1$ and $N_0\gg M_0$.

Recall the structural form of $I_n$ in Lemma \ref{In-form}, that for any $n\geq 1$,
\begin{align}\label{In-es}
I_n=\sum_{k=0}^{n-1}( I_{n, k}^{(1)}+I_{n, k}^{(2)}+I_{n, k}^{(3)}+I_{n, k}^{(4)})+I_{n}^{(5)},
\end{align}
where
\begin{align*}
I_{n, k}^{(1)}=&i\int_0^te^{-i\rho\Delta }\sum_{N}P_{N}T_N^k(\eta e^{i\rho\Delta }P_{\leq N_0}I_{n-1-k})d\rho;\notag\\
I_{n, k}^{(2)}=&i\int_0^te^{-i\rho\Delta }\sum_{M\lsm N}P_{N}T_N^k(\eta  \cdot e^{i\rho\Delta }P_{M\geq N_0}I_{n-1-k})d\rho;\notag\\
I_{n, k}^{(3)}=&i|\nabla|^2\int_0^te^{-i\rho\Delta }\sum_{M\gg N}P_{N}T_N^k(\eta  \cdot |\nabla|^{-2}e^{i\rho\Delta }P_{M\geq N_0}I_{n-1-k})d\rho;\notag\\
I_{n, k}^{(4)}=&-e^{-it\Delta }\sum_{M\gg N}P_{N}T_N^k(\eta  \cdot |\nabla|^{-2} e^{it\Delta }P_{M\geq N_0}I_{n-1-k});\notag\\
I_{n}^{(5)}=&\sum_N P_N T_N^n v_0.
\end{align*}
{\emph{1) On $I_{n, k}^{(1)}$.}} By Strichartz's estimates, we get
\begin{align}
\|\langle \nabla\rangle^s e^{it \Delta} I_{n, k}^{(1)}\|_{L_t^{\infty}L_x^2\cap L_t^{2}L_x^{\frac {2d}{d-2}}}\lsm & \|\langle \nabla\rangle^s\sum_{N: N<N_0}P_{N} T_N^k(\eta e^{it\Delta }P_{\leq N_0}I_{n-1-k})\|_{L_t^{\tilde{q}'}L_x^{\tilde{r}'}}\label{In167-34d}\\
&+\|\langle \nabla\rangle^s\sum_{N: N\geq N_0}P_{N} T_N^k(\eta e^{it\Delta }P_{\leq N_0}I_{n-1-k})\|_{L_t^{2}L_x^{\frac{2d}{d+2}}}\label{In167-34d2},
\end{align}
where $(\tilde{q}, \tilde{r})=(\frac 2{\frac d2-1-\epsilon_0}, \frac {d}{1+\epsilon_0})$ is the Schr\"odinger admissible pair.

By the H\"older and Sobolev inequalities, and Lemmas \ref{lem:Bernstein} and \ref{lem:Tnk-es34d}, we have
\begin{align}\label{1018-134d}
\eqref{In167-34d}\lsm &\sum_{N: N<N_0}\|T_N^k( \eta e^{it\Delta }P_{\leq N_0}I_{n-1-k})\|_{L_t^{\tilde{q}'}L_x^{\frac{2d}{d+2}}}\notag\\
\lsm &\sum_{N: N<N_0} T^{\frac 1{\tilde{q}'}-\frac 12}\varepsilon_0^k(M_0, N_0)\| \eta e^{it\Delta }P_{\leq N_0}I_{n-1-k}\|_{L_t^{2}L_x^{\frac{2d}{d+2}}}\notag\\
\lsm & N_0T^{\frac 1{\tilde{q}'}-\frac 12}\varepsilon_0^k(M_0, N_0) \|\eta e^{it\Delta }P_{\leq N_0}I_{n-1-k}\|_{L_t^{2}L_x^{\frac{2d}{d+2}}}.
\end{align}
By Lemma \ref{lem:Bernstein}, we get
\begin{align}\label{34d-1021}
 &\|\eta e^{it\Delta }P_{\leq N_0}I_{n-1-k}\|_{L_t^{2}L_x^{\frac{2d}{d+2}}}\notag\\
 \lsm &\norm{P_{\geq M_0}\eta}_{L_x^{\frac d2}}\| e^{it\Delta }P_{\leq N_0}I_{n-1-k}\|_{L_t^{2}L_x^{\frac{2d}{d-2}}}\notag\\
& +\norm{P_{< M_0}\eta}_{L_x^{d}}\| e^{it\Delta }P_{\leq N_0}I_{n-1-k}\|_{L_{t, x}^{2}}\notag\\
\lsm  &N_0^{-s}\big(\norm{P_{\geq M_0}\eta}_{L_x^{\frac d2}}+T^{\frac 12}M_0 \norm{\eta}_{L_x^{\frac d2}}\big)  \| \langle \nabla\rangle^se^{it\Delta }I_{n-1-k}\|_{ L_t^{\infty}L_x^2 \cap L_t^{2}L_x^{\frac{2d}{d-2}}}.
\end{align}
Hence, by the above two estimates, we get
\begin{align*}
 \eqref{In167-34d}\lsm  T^{\frac 1{\tilde{q}'}-\frac 12} & N_0^{1-s}\varepsilon_0^k(M_0, N_0)\big(\norm{P_{\geq M_0}\eta}_{L_x^{\frac d2}}\\
 &+T^{\frac 12}N_0 M_0 N_0^{-1}\norm{\eta}_{L_x^{\frac d2}}\big)  \| \langle \nabla\rangle^se^{it\Delta }I_{n-1-k}\|_{ L_t^{\infty}L_x^2 \cap L_t^{2}L_x^{\frac{2d}{d-2}}}.
\end{align*}
Now, we temporarily take $T=T(N_0)>0$, such that
\begin{align}\label{741-34d}
T^{\frac 1{\tilde{q}'}-\frac 12}N_0^{1-s}+T^{\frac 12}N_0\leq 1.
\end{align}
Further, we conclude that
\begin{align}\label{1018-1}
\eqref{In167-34d}\lsm \varepsilon_0^{k+1}(M_0, N_0) \|\langle \nabla\rangle^s e^{it\Delta }I_{n-1-k}\|_{ L_t^{\infty}L_x^2\cap L_t^{2}L_x^{\frac{2d}{d-2}}}.
\end{align}
Similarly, by H\"older's inequality, and Lemmas \ref{lem:Bernstein}, \ref{lem:Tnk-es34d}, and \eqref{34d-1021}, \eqref{741-34d}, we have
\begin{align}\label{1018-2}
\eqref{In167-34d2}\lsm &\sum_{N: N\geq N_0}N^s\|T_N^k( \eta e^{it\Delta }P_{\leq N_0}I_{n-1-k})\|_{L_t^{2}L_x^{\frac{2d}{d+2}}}\notag\\
\lsm &\sum_{N: N\geq N_0}N^s \varepsilon_0^k(M_0, N_0)\| \eta e^{it\Delta }P_{\leq N_0}I_{n-1-k}\|_{L_t^{2}L_x^{\frac{2d}{d+2}}}\notag\\
\lsm & N_0^s\varepsilon_0^k(M_0, N_0) \|\eta e^{it\Delta }P_{\leq N_0}I_{n-1-k}\|_{L_t^{2}L_x^{\frac{2d}{d+2}}}\notag\\
\lsm & \varepsilon_0^{k+1}(M_0, N_0) \|\langle \nabla\rangle^s e^{it\Delta }I_{n-1-k}\|_{ L_t^{\infty}L_x^2\cap L_t^{2}L_x^{\frac{2d}{d-2}}}.
\end{align}
Hence, by \eqref{In167-34d}, \eqref{In167-34d2}, \eqref{1018-1}, and \eqref{1018-2}, we obtain
\begin{align}\label{In1-34d}
\|\langle \nabla\rangle^s e^{it \Delta} I_{n, k}^{(1)}\|_{L_t^{\infty}L_x^2\cap L_t^{2}L_x^{\frac {2d}{d-2}}}\lsm \varepsilon_0^{k+1}(M_0, N_0)\|\langle \nabla\rangle^s e^{it\Delta }I_{n-1-k}\|_{L_t^{\infty}L_x^2\cap L_t^2 L_x^{\frac{2d}{d-2}}}.
\end{align}

{\emph{2) On $I_{n, k}^{(2)}$.}}
By the duality, Strichartz's estimates, and Lemmas \ref{lem:Schur}, \ref{lem:Tnk-es34d}, we have
\begin{align}\label{Ink2-34d10}
\norm{\langle \nabla\rangle^s e^{it \Delta} I_{n, k}^{(2)}}_{L_t^{\infty}L_x^2}\lsm& \normbb{\Big\langle\int_0^t \langle \nabla\rangle^se^{i(t-\rho) \Delta}\sum _{M\lsm N}P_{N}T_N^k(\eta   e^{i\rho\Delta }P_{M\geq N_0}I_{n-1-k})d\rho, h\Big\rangle}_{L_t^{\infty}}\notag\\
\lsm &\sum _{M\lsm N}\frac{\langle N \rangle^{s}}{\langle M \rangle^{s}} \norm{T_N^k(\eta   e^{it\Delta }\langle M \rangle^{s}P_{M\geq N_0}I_{n-1-k})}_{L_t^{2}L_x^{\frac{2d}{d+2}}}\|P_N h\|_{L_x^2}\notag\\
\lsm &\sum _{M\lsm N}\frac{\langle N \rangle^{s}}{\langle M \rangle^{s}}\varepsilon_0^k(M_0, N_0)
\norm{\eta e^{it\Delta }\langle M \rangle^{s}P_{M\geq N_0}I_{n-1-k}}_{L_t^{2}L_x^{\frac {2d}{d+2}}}\|P_N h\|_{L_x^2}\notag\\
\lsm &\varepsilon_0^k(M_0, N_0) \norm{\eta e^{it \Delta}\langle M\rangle^{s}P_{M\geq N_0} I_{n-1-k}}_{l_M^2L_t^{2}L_x^{\frac{2d}{d+2}}}.
\end{align}
By Lemmas \ref{lem:Bernstein}, \ref{lem:littlewood-Paley}, Sobolev's inequality, and \eqref{741-34d}, we have
\begin{align*}
&\|  \eta  e^{it\Delta }\langle M \rangle^{s} P_{M\geq N_0}I_{n-1-k}\|_{l_M^2 L_t^2L_x^{\frac {2d}{d+2}}}\\
\lsm &\| P_{\geq M_0} \eta\|_{L_x^{\frac d2}} \| e^{it\Delta }\langle \nabla\rangle^{s} P_{\geq N_0}I_{n-1-k}\|_{ L_t^2L_x^{\frac {2d}{d-2}}}\\
&+\| P_{< M_0} \eta\|_{L_x^{d}} \|  e^{it\Delta }\langle \nabla \rangle^{s} P_{\geq N_0}I_{n-1-k}\|_{ L_{t, x}^{2}}\\
\lsm &\big(\| P_{\geq M_0} \eta\|_{L_x^{\frac d2}}+ T^{\frac 12}M_0\|\eta\|_{L_x^{\frac d2}}\big)\| \langle \nabla\rangle^{s}  e^{it\Delta }I_{n-1-k}\|_{ L_t^{\infty}L_x^2 \cap L_t^{2}L_x^{\frac{2d}{d-2}}}\\
\lsm &\big(\| P_{\geq M_0} \eta\|_{L_x^{\frac d2}}+ M_0N_0^{-1}\|\eta\|_{L_x^{\frac d2}}\big)\| \langle \nabla\rangle^{s}  e^{it\Delta }I_{n-1-k}\|_{ L_t^{\infty}L_x^2 \cap L_t^{2}L_x^{\frac{2d}{d-2}}}.
\end{align*}
Combining the above two estimates, we conclude that
\begin{align}\label{Ink2-34d}
\norm{\langle \nabla\rangle^s e^{it \Delta} I_{n, k}^{(2)}}_{L_t^{\infty}L_x^2}\lsm \varepsilon_0^{k+1}(M_0, N_0)\| \langle \nabla\rangle^{s}  e^{it\Delta }I_{n-1-k}\|_{ L_t^{\infty}L_x^2 \cap L_t^{2}L_x^{\frac{2d}{d-2}}}.
\end{align}
Similarly, $\norm{\langle \nabla\rangle^s e^{it \Delta} I_{n, k}^{(2)}}_{L_t^{2}L_x^{\frac{2d}{d-2}}}$ and $\norm{\langle \nabla\rangle^s e^{it \Delta} I_{n, k}^{(2)}}_{L_t^{\infty}L_x^2}$ can be controlled by the same bound.
Hence, we get
\begin{align}\label{Ink2-34d}
\norm{\langle \nabla\rangle^s e^{it \Delta} I_{n, k}^{(2)}}_{L_t^{\infty}L_x^2\cap L_t^{2}L_x^{\frac{2d}{d-2}}}\lsm \varepsilon_0^{k+1}(M_0, N_0) \| \langle \nabla\rangle^{s}  e^{it\Delta }I_{n-1-k}\|_{ L_t^{\infty}L_x^2 \cap L_t^{2}L_x^{\frac{2d}{d-2}}}.
\end{align}
{\emph{3) On $I_{n, k}^{(3)}$.}}
 Noting that $2+s>0$, by the duality, Strichartz's estimates, and Lemma \ref{lem:Schur}, we have
\begin{align}\label{Ink4-34d}
\norm{\langle \nabla\rangle^s e^{it \Delta} I_{n, k}^{(3)}}_{L_t^{\infty}L_x^2}\lsm&\normbb{\Big\langle\langle \nabla\rangle^{2+s}\int_0^t e^{i(t-\rho) \Delta}\sum _{M\gg N}P_{N}T_N^k(\eta   |\nabla|^{-2}e^{i\rho\Delta }P_{M\geq N_0}I_{n-1-k})d\rho, h\Big\rangle}_{L_t^{\infty}}\notag\\
\lsm &\sum _{M\gg N} \frac{\langle N\rangle^{2+s}}{\langle M\rangle^{2+s}}\Big\|\int_0^t e^{i(t-\rho) \Delta}T_N^k(\eta \langle M\rangle^{2+s}\notag\\
&\qquad\qquad\cdot |\nabla|^{-2}e^{i\rho\Delta } P_{M\geq N_0}I_{n-1-k})d\rho\Big\|_{L_t^{\infty}L_x^2}\|P_N h\|_{L_x^2}\notag\\
\lsm &\sum _{M\gg N} \frac{\langle N\rangle^{2+s}}{\langle M\rangle^{2+s}} \|T_N^k(\eta \langle M\rangle^{2+s} |\nabla|^{-2}e^{it\Delta }P_{M\geq N_0}I_{n-1-k})\|_{L_t^{2}L_x^{\frac{2d}{d+2}}}\|P_N h\|_{L_x^2}\notag\\
\lsm &\varepsilon_0^k(M_0, N_0) \|\eta  |\nabla|^{-2}e^{it\Delta }\langle M\rangle^{2+s}P_{M\geq N_0}I_{n-1-k}\|_{l_M^2L_t^{2}L_x^{\frac{2d}{d+2}}}.
\end{align}
By Lemmas \ref{lem:Bernstein}, \ref{lem:littlewood-Paley}, and Sobolev's inequality, we have
\begin{align*}
&\|  \eta  |\nabla|^{-2} e^{it\Delta }\langle M \rangle^{2+s} P_{M\geq N_0}I_{n-1-k}\|_{l_M^2L_t^2 L_x^{\frac {2d}{d+2}}}\\
\lsm &\| P_{\geq M_0} \eta\|_{L_x^{\frac d2}} \| |\nabla|^{-2} e^{it\Delta }\langle \nabla\rangle^{2+s} P_{\geq N_0}I_{n-1-k}\|_{L_t^2 L_x^{\frac {2d}{d-2}}}\\
&+\| P_{< M_0} \eta\|_{L_x^{d}} \| |\nabla|^{-2} e^{it\Delta }\langle \nabla \rangle^{2+s} P_{\geq N_0}I_{n-1-k}\|_{ L_{t, x}^{2}}\\
\lsm &\big(\| P_{\geq M_0} \eta\|_{L_x^{\frac d2}}+ T^{\frac 12}M_0\|\eta\|_{L_x^{\frac d2}}\big)\| \langle \nabla\rangle^{s}  e^{it\Delta }I_{n-1-k}\|_{ L_t^{\infty}L_x^2 \cap L_t^{2}L_x^{\frac{2d}{d-2}}}.
\end{align*}
Noting  $T^{\frac 12}<N_0^{-1}$, by the above two estimates, we conclude that
\begin{align}\label{Ink4-34d}
\norm{\langle \nabla\rangle^s e^{it \Delta} I_{n, k}^{(3)}}_{L_t^{\infty}L_x^2}\lsm \varepsilon_0^{k+1}(M_0, N_0)  \| \langle \nabla\rangle^{s}  e^{it\Delta }I_{n-1-k}\|_{ L_t^{\infty}L_x^2 \cap L_t^{2}L_x^{\frac{2d}{d-2}}}.
\end{align}
Similarly, $\norm{\langle \nabla\rangle^s e^{it \Delta} I_{n, k}^{(3)}}_{L_t^{2}L_x^{\frac{2d}{d-2}}}$ and $\norm{\langle \nabla\rangle^s e^{it \Delta} I_{n, k}^{(3)}}_{L_t^{\infty}L_x^2}$ can be controlled by the same bound.
Hence, we have
\begin{align}\label{Ink4-34d}
\norm{\langle \nabla\rangle^s e^{it \Delta} I_{n, k}^{(3)}}_{L_t^{\infty}L_x^{2}\cap L_t^{2}L_x^{\frac{2d}{d-2}}}\lsm \varepsilon_0^{k+1}(M_0, N_0)\|  \langle \nabla\rangle^{s}e^{it\Delta}I_{n-1-k}\|_{ L_t^{\infty}L_x^2 \cap L_t^{2}L_x^{\frac{2d}{d-2}}}.
\end{align}
{\emph{4) On $I_{n, k}^{(4)}$.}}
 By the duality, Lemmas \ref{lem:Bernstein}, \ref{lem:Schur}, and \ref{lem:Tnk-es34d}, we have
\begin{align}\label{Ink3-34d0}
\norm{\langle \nabla\rangle^s e^{it \Delta} I_{n, k}^{(4)}}_{L_x^2}\lsm &\Big\langle  \langle \nabla\rangle^s\sum _{M\gg N}P_{N}T_N^k(\eta   |\nabla|^{-2} e^{it\Delta }P_{M\geq N_0}I_{n-1-k}), h \Big\rangle\notag\\
\lsm &  \sum _{M\gg N} \frac{\langle N \rangle^{1+s}}{\langle M \rangle^{1+s}}\|  \langle \nabla\rangle^{-1}P_{N}T_N^k(\eta   |\nabla|^{-2} e^{it\Delta }\langle M \rangle^{1+s} P_{M\geq N_0}I_{n-1-k})\|_{L_x^2} \|P_Nh\|_{L_x^2}\notag\\
\lsm&\sum _{M\gg N} \frac{\langle N \rangle^{1+s}}{\langle M \rangle^{1+s}} \|  T_N^k(\eta   |\nabla|^{-2} e^{it\Delta }\langle M \rangle^{1+s} P_{M\geq N_0}I_{n-1-k})\|_{L_x^{\frac{2d}{d+2}}} \|P_Nh\|_{L_x^2}\notag\\
\lsm & \varepsilon_0^k(M_0, N_0)
 \sum _{M\gg N} \frac{\langle N \rangle^{1+s}}{\langle M \rangle^{1+s}} \|  \eta   |\nabla|^{-2} e^{it\Delta }\langle M \rangle^{1+s} P_{M\geq N_0}I_{n-1-k}\|_{L_x^{\frac{2d}{d+2}}} \|P_Nh\|_{L_x^2}\notag\\
\lsm& \varepsilon_0^k(M_0, N_0)\|  \eta  |\nabla|^{-2} e^{it\Delta }\langle M \rangle^{1+s} P_{M\geq N_0}I_{n-1-k}\|_{l_M^2 L_x^{\frac {2d}{d+2}}}.
\end{align}
By Lemmas \ref{lem:Bernstein}, \ref{lem:littlewood-Paley}, and Sobolev's inequality, we have
\begin{align*}
&\|  \eta  |\nabla|^{-2} e^{it\Delta }\langle M \rangle^{1+s} P_{M\geq N_0}I_{n-1-k}\|_{l_M^2 L_x^{\frac {2d}{d+2}}}\\
\lsm &\| P_{\geq M_0} \eta\|_{L_x^{\frac d2}} \| |\nabla|^{-2} e^{it\Delta }\langle \nabla\rangle^{1+s} P_{\geq N_0}I_{n-1-k}\|_{ L_x^{\frac {2d}{d-2}}}\\
&+\| P_{< M_0} \eta\|_{L_x^{d}} \| |\nabla|^{-2} e^{it\Delta }\langle \nabla \rangle^{1+s} P_{\geq N_0}I_{n-1-k}\|_{ L_x^{2}}\\
\lsm &\big(\| P_{\geq M_0} \eta\|_{L_x^{\frac d2}}+ M_0N_0^{-1}\|\eta\|_{L_x^{\frac d2}}\big)\| \langle \nabla\rangle^{s}  e^{it\Delta }I_{n-1-k}\|_{ L_x^{2}}.
\end{align*}
Hence, combining the above two estimates, we conclude that
\begin{align}\label{Ink3-34d1}
\norm{\langle \nabla\rangle^s e^{it \Delta} I_{n, k}^{(4)}}_{L_t^{\infty}L_x^2} \lsm  \varepsilon_0^{k+1}(M_0, N_0)\|  \langle \nabla\rangle^{s}e^{it\Delta}I_{n-1-k}\|_{L_t^{\infty}L_x^{2}}.
\end{align}
Similarly, by the duality, Lemmas \ref{lem:Bernstein}, \ref{lem:Schur}, \ref{lem:Tnk-es34d}, and Sobolev's inequality, we have
\begin{align*}
\norm{\langle \nabla\rangle^s e^{it \Delta}  I_{n, k}^{(4)}}_{L_x^{\frac{2d}{d-2}}}\lsm & \Big\langle  \langle \nabla\rangle^s\sum _{M\gg N} P_{N}T_N^k(\eta   |\nabla|^{-2} e^{it\Delta }P_{M\geq N_0}I_{n-1-k}), h \Big\rangle\notag\\
\lsm &   \sum _{M\gg N} \frac{\langle N \rangle^{s-\frac d2+3+}}{\langle M \rangle^{s-\frac d2+3+}}\|  \langle \nabla\rangle^{\frac d2-3-} P_{N}T_N^k(\eta   \langle M\rangle^{s-\frac d2+3+} \\
&\qquad\qquad\qquad  \cdot|\nabla|^{-2} e^{it\Delta }P_{M\geq N_0}I_{n-1-k})\|_{L_x^{\frac {2d}{d-2}}} \|P_Nh\|_{L_x^{\frac{2d}{d+2}}}\notag\\
\lsm& \sum _{M\gg N} \frac{\langle N \rangle^{s-\frac d2+3+}}{\langle M \rangle^{s-\frac d2+3+}}\| P_{N}T_N^k(\eta   \langle M\rangle^{s-\frac d2+3+}\notag\\
&\qquad\qquad\qquad \cdot |\nabla|^{-2} e^{it\Delta }P_{M\geq N_0}I_{n-1-k})\|_{L_x^{\frac {d}{2}-}}\|P_Nh\|_{L_x^{\frac {2d}{d+2}}}\notag\\
\lsm & \varepsilon_0^k(M_0, N_0)
 \|  \eta   \langle M\rangle ^{s-\frac d2+3+} |\nabla|^{-2} e^{it\Delta }P_{M\geq N_0}I_{n-1-k}\|_{l_M^2L_x^{\frac {d}{2}-}}.
\end{align*}
By Lemmas \ref{lem:Bernstein}, \ref{lem:littlewood-Paley}, and Sobolev's inequality, we have
\begin{align*}
&\|  \eta   \langle M\rangle ^{s-\frac d2+3+} |\nabla|^{-2} e^{it\Delta }P_{M\geq N_0}I_{n-1-k}\|_{l_M^2L_x^{\frac {d}{2}-}}\\
\lsm &\| P_{\geq M_0} \eta\|_{L_x^{\frac d2}} \| |\nabla|^{-2} e^{it\Delta }\langle \nabla\rangle^{s-\frac d2+3+} P_{\geq N_0}I_{n-1-k}\|_{ L_x^{\infty-}}\\
&+\| P_{< M_0} \eta\|_{L_x^{\frac{2d}{6-d}-}} \| |\nabla|^{-2} e^{it\Delta }\langle \nabla \rangle^{s-\frac d2+3+} P_{\geq N_0}I_{n-1-k}\|_{ L_x^{\frac {2d}{d-2}}}\\
\lsm &\big(\| P_{\geq M_0} \eta\|_{L_x^{\frac d2}}+ M_0^{\frac d2-1-}N_0^{-\frac d2+1+}\|\eta\|_{L_x^{\frac d2}}\big)\| \langle \nabla\rangle^{s}  e^{it\Delta }I_{n-1-k}\|_{ L_x^{\frac{2d}{d-2}}}.
\end{align*}
Noting $M_0\leq N_0$ and $\frac d2-1->\frac 14$, the above two estimates yield that
\begin{align}\label{Ink3-34d2}
\norm{\langle \nabla\rangle^s e^{it \Delta} I_{n, k}^{(4)}}_{L_t^{2}L_x^{\frac{2d}{d-2}}} \lsm \varepsilon_0^{k+1}(M_0, N_0)\|  \langle \nabla\rangle^{s}e^{it\Delta}I_{n-1-k}\|_{L_t^2L_x^{\frac{2d}{d-2}}}.
\end{align}
By \eqref{Ink3-34d1} and \eqref{Ink3-34d2}, we have
\begin{align}\label{Ink3-34d3}
\norm{\langle \nabla\rangle^s e^{it \Delta} I_{n, k}^{(4)}}_{L_t^{\infty}L_x^{2}\cap L_t^{2}L_x^{\frac {2d}{d-2}}}\lsm \varepsilon_0^{k+1}(M_0, N_0) \|  \langle \nabla\rangle^{s}e^{it\Delta}I_{n-1-k}\|_{L_t^{\infty}L_x^{2}\cap L_t^{2}L_x^{\frac {2d}{d-2}}}.
\end{align}
{\emph{5) On $I_{n}^{(5)}$.}} By the definition of $T_N^n v_0$, we can rewrite it as follows,
\begin{align}\label{Ink5-34d1}
T_N^n v_0=T_N^{n-1} \big(\sum_{M:  M\gg N, M\geq N_0}\eta |\nabla|^{-2}P_{M} v_0\big).
\end{align}
By Strichartz's estimate, we get
\begin{align}\label{Ink5-134d2}
\norm{\langle \nabla\rangle^s e^{it \Delta} I_{n}^{(5)}}_{L_t^{\infty}L_x^{2}\cap L_t^{2}L_x^{\frac{2d}{d-2}}}\lsm \Big\| \langle \nabla\rangle^s  \sum_{M\gg  N  }P_N T_N^{n-1} (\eta |\nabla|^{-2}P_{M\geq N_0} v_0)\Big\|_{L_x^2}.
\end{align}
An argument parallel to \eqref{Ink3-34d1} yields
\begin{align}\label{In5-234d}
\norm{\langle \nabla\rangle^s e^{it \Delta} I_{n}^{(5)}}_{L_t^{\infty}L_x^{2}\cap L_t^{2}L_x^{\frac{2d}{d-2}}}\lsm  \varepsilon_0^n(M_0, N_0)\| v_0\|_{H_x^s}.
\end{align}
Combining the estimates \eqref{In1-34d}, \eqref{Ink2-34d}, \eqref{Ink4-34d}, \eqref{Ink3-34d3}, and \eqref{In5-234d}, for any $n\geq 1$, we have that
\begin{align}\label{In-52334d}
\norm{\langle \nabla\rangle^s e^{it \Delta}I_n}_{L_t^{\infty}L_x^{2}\cap L_t^{2}L_x^{\frac{2d}{d-2}}}
 \lsm &  \sum_{k=0}^{n-1} \varepsilon_0^{k+1}(M_0, N_0)\|  \langle \nabla\rangle^{s}e^{it\Delta}I_{n-1-k}\|_{L_t^{\infty}L_x^2\cap L_t^{2}L_x^{\frac{2d}{d-2}}}\notag \\
 &+ \varepsilon_0^n(M_0, N_0)\| v_0\|_{H_x^s}.
\end{align}
Hence, by the induction method, we conclude that for any $n\geq 1$,
\begin{align*}
\norm{\langle \nabla\rangle^s e^{it \Delta}I_n}_{L_t^{\infty}L_x^{2}\cap L_t^{2}L_x^{\frac{2d}{d-2}}}
 \lsm \varepsilon_0^n(M_0, N_0)\| v_0\|_{H_x^s}.
\end{align*}
Here we omit the details.
This finishes the proof of this lemma.
\end{proof}

Now, we are in a position to give the proof of Proposition \ref{prop:v-34d}.
\begin{proof}
This proof process is the same as that of Proposition \ref{prop:v-2d}. We omit the details.
\end{proof}

\subsection{Ill-posedness in $H_x^{\frac d2}(\R^d)$}

In this part, we prove that there exists some $\eta\in L_x^{\frac d2}$ with $d=3, 4$, such that the equation \eqref{eq:NLS} is ill-posed in $H_x^{\frac d2}(\R^d)$.
On one hand, we choose the initial data
\EQ{
u_0(x):=\mathscr{F}^{-1}\big(\frac 1{|\xi|^d}\frac 1{\ln |\xi|}\chi_{2\leq |\cdot|\leq M}(\xi)\big)(x),
}
where $M>2$ shall be determined later. Then we have
\EQ{
\|u_0\|_{H_x^{\frac d2}}^2= & \|\langle\xi\rangle^{\frac d2} \widehat{u_0}(\xi)\|_{L_{\xi}^2}^2\\
\lsm & \||\xi|^{-\frac d2}\frac 1{\ln |\xi|}\chi_{2\leq |\cdot|\leq M}(\xi)\|_{L_{\xi}^2}^2\\
\lsm & \int_2^M r^{-d}\frac 1{\ln^2r}r^{d-1}dr\lsm 1.
}
On the other hand, we choose the potential
\EQ{
\eta(x)=M^{2}\mathscr{F}^{-1}\big(\chi_{\frac 12\leq |\cdot|\leq 2}(\xi)\big)(M x).
}
Then we have $\eta\in L_x^{\frac d2}$, and
\EQ{
\widehat{\eta}(\xi)=M^{2-d}\chi_{\frac 12\leq |\cdot|\leq 2}\big(\frac{\xi}{M}\big).
}
Define
\EQ{
A (u_0)(t)\triangleq\int_0^t e^{-i\rho\Delta}(\eta e^{i\rho\Delta}u_0)d\rho.
}
Define
$$
t\triangleq \frac 1{M^2},
$$
and
$$
\Omega=\{\xi: \sqrt{\frac {\pi}{3}}M\leq|\xi|\leq \sqrt{\frac {\pi}{2}}M\}.
$$
Following exactly the same process as in Theorem \ref{theorem-main 1}, we can obtain for any $T>0$ and large enough $M$,
\EQ{
\sup\limits_{t\in [0,T]}\|A[u_0]\|_{H_x^{\frac d2}(\R^d)}\geq \frac 18 \ln \ln M.
}
Therefore, this implies
\EQ{
\sup\limits_{t\in [0,T]}\|A[u_0]\|_{H_x^{\frac d2}(\R^d)}\rightarrow \infty, \mbox{ as }M\rightarrow \infty.
}
The proof of ill-posedness is done by applying Lemma \ref{ill-posed}. We finish the proof of Theorem \ref{theorem-two}.

\section{Subcritical case: the proof of Theorem \ref{theorem-three}}\label{H2-reg}

In this section, we first establish the global well-posedness in $L_x^2$, then we improve this result to $H_x^2$ through the transformation $v=\partial_t u$.

\subsection{Global well-posedness in $L_x^2(\R^d)$}\label{GWP-L2}
We first establish the global well-posedness of \eqref{eq:NLS} in the space $L_x^{2}(\R^d)$. This constitutes a weak regularity result, as the expected critical regularity requires in $H_x^2$.
\begin{prop}\label{weak-subcritical-2}
Let $d\geq 2$, $r\geq\frac d2$ and $r>2$, and $\eta\in L_x^{r}(\R^d)$, then \eqref{eq:NLS} is globally well-posed in $L_x^{2}(\R^d)$.
\end{prop}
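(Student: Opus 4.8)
The plan is to prove Proposition~\ref{weak-subcritical-2} by a direct Strichartz contraction, with no normal form transform needed: at the $L_x^2$ level the derivative loss in $\eta u$ never appears. The first step is to fix the Schr\"odinger-admissible pair
\[
(q,\rho):=\Big(\tfrac{4r}{d},\ \tfrac{2r}{r-1}\Big),
\]
for which $\tfrac2q+\tfrac d\rho=\tfrac d2$ holds identically; the requirement $q\ge2$ is exactly $r\ge\tfrac d2$, and $\rho\in(2,\infty)$ for every $r>1$, so $(q,\rho,d)\ne(2,\infty,2)$. The decisive algebraic fact is the H\"older relation $\tfrac1{\rho'}=\tfrac1r+\tfrac1\rho$, which lets $\eta\in L_x^r$ convert $u\in L_t^qL_x^\rho$ into the dual Strichartz space $L_t^{q'}L_x^{\rho'}$. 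I would then introduce the auxiliary norm $\|u\|_{X(I)}:=\|u\|_{L_t^\infty L_x^2(I)}+\|u\|_{L_t^qL_x^\rho(I)}$ on $I=[0,T)$ and run a fixed point for the Duhamel map $\Phi(u)(t)=e^{it\Delta}u_0+i\int_0^te^{i(t-\rho)\Delta}(\eta u)(\rho)\,d\rho$ in a ball of $X(I)$, using Strichartz's estimate (Lemma~\ref{lem:strichartz}) to control the homogeneous part $\|e^{it\Delta}u_0\|_{X(I)}\lesssim\|u_0\|_{L_x^2}$.

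The nonlinear estimate is the only substantive point, and is handled by splitting $\eta=P_{\le N_0}\eta+P_{\ge N_0}\eta$. For the high-frequency piece, the inhomogeneous Strichartz estimate with the pair $(q,\rho)$, together with H\"older in space and (for $T\le1$) in time, gives
\[
\Big\|\int_0^te^{i(t-\rho)\Delta}(P_{\ge N_0}\eta\,u)\,d\rho\Big\|_{X(I)}\ \lesssim\ \|P_{\ge N_0}\eta\|_{L_x^r}\,\|u\|_{X(I)},
\]
which is small for $N_0$ large, since $\eta\in L_x^r$ with $1<r<\infty$. For the low-frequency piece, Bernstein's inequality (Lemma~\ref{lem:Bernstein}) yields $\|P_{\le N_0}\eta\|_{L_x^\infty}\lesssim N_0^{d/r}\|\eta\|_{L_x^r}$, and the inhomogeneous Strichartz estimate with the admissible pair $(\infty,2)$ on the input side gives
\[
\Big\|\int_0^te^{i(t-\rho)\Delta}(P_{\le N_0}\eta\,u)\,d\rho\Big\|_{X(I)}\ \lesssim\ \|P_{\le N_0}\eta\,u\|_{L_t^1L_x^2}\ \le\ T\,N_0^{d/r}\|\eta\|_{L_x^r}\,\|u\|_{L_t^\infty L_x^2}.
\]
The same two bounds hold with $u$ replaced by a difference $u_1-u_2$, since the equation is linear in $u$.

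To close, I would first choose $N_0=N_0(\|\eta\|_{L_x^r})$ so the high-frequency coefficient is $\le\tfrac14$, then $T=T(N_0,\|\eta\|_{L_x^r})$ so that $T\,N_0^{d/r}\|\eta\|_{L_x^r}\le\tfrac14$; this makes $\Phi$ a contraction on the ball of radius $2C\|u_0\|_{L_x^2}$ in $X(I)$, producing a unique solution in $C([0,T);L_x^2)\cap X(I)$ with Lipschitz dependence on the data. Since $T$ depends only on $\|\eta\|_{L_x^r}$ and not on $\|u_0\|_{L_x^2}$, the solution extends globally by the iteration already used in the proof of Proposition~\ref{weak-subcritical}. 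The step requiring the most care is the endpoint $r=\tfrac d2$ (which forces $d\ge5$), where $q=2$ and no power of $T$ is available from the high-frequency term alone — this is precisely why the frequency splitting is used rather than mere H\"older in time; when $r>\tfrac d2$ one may instead gain $T^{1-2/q}$ directly from the whole potential and dispense with the splitting.
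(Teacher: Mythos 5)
Your proof is correct, and it reaches the conclusion by the same basic mechanism as the paper (Strichartz plus contraction at the $L_x^2$ level, with smallness supplied by a combination of a short time interval and the tail $\|P_{\geq N_0}\eta\|_{L^r}$), but the technical packaging is tidier in one respect and slightly different in another. The paper (Lemmas~\ref{lem:space-time-d} and~\ref{5d}) works case-by-case in the dimension: it uses the pair $(\infty,2)$ for $d=2$ and the pair $(2, 2d/(d-2))$ for $d\geq 3$, gains $T^{1-d/(2r)}$ whenever $r>\tfrac d2$, and only introduces the frequency decomposition of $\eta$ at the scaling-critical endpoint $r=\tfrac d2$, $d\geq 5$, where the time exponent degenerates (writing $\eta u = P_{\geq N_0}\eta\,u + P_{<N_0}\eta\,u$ and placing the low part in $L_x^d\cdot L_{t,x}^2$). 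You instead fix one $r$-adapted admissible pair $(4r/d,\ 2r/(r-1))$ that works in every dimension, and you always split $\eta$ into high and low frequencies, handling the low part via $L_t^1L_x^2$ and Bernstein to $L^\infty$. The advantage of your choice is a genuinely uniform treatment: the endpoint $r=\tfrac d2$ needs no separate case, since there the high-frequency coefficient loses its time power but retains the smallness of $\|P_{\geq N_0}\eta\|_{L^r}$. The cost is that you split even in the cases $d\leq 4$ (or $r>\tfrac d2$) where the paper's direct H\"older-in-time bound would already close the contraction without any frequency cutoff. Both routes yield the same lifespan $T=T(\|\eta\|_{L^r})$, hence the same global extension; your final remark about why $r=\tfrac d2$ forces the splitting is exactly the point where the paper separates Lemma~\ref{5d} into its two sub-cases.
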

We begin by presenting the required inhomogeneous estimates to prove the above result.
\begin{lem}[$d= 2, 3, 4$]\label{lem:space-time-d}
 Let $\eta \in L_x^{r}(\R^d)$, $r> 2$. Let $I=[0, T)\subset \R^+$, then for $d=2$,
 \begin{align*}
 \norm{\int_0^t e^{i(t-\rho)\Delta}(\eta u)(\rho)d\rho}_{L_t^{\infty} L_x^2(I)}\lsm T^{1-\frac 1r}\norm{\eta}_{L_x^r}\norm{u}_{L_t^{\infty}L_x^2},
 \end{align*}
 for $d=3, 4$,
  \begin{align*}
 \norm{\int_0^t e^{i(t-\rho)\Delta}(\eta u)(\rho)d\rho}_{L_t^{\infty}L_x^2\cap L_t^2L_x^{\frac{2d}{d-2}}(I)}\lsm T^{1-\frac d{2r}}\norm{\eta}_{L_x^r}\norm{u}_{L_t^{\infty}L_x^2\cap L_t^2L_x^{\frac{2d}{d-2}}}.
 \end{align*}
\end{lem}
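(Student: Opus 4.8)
The plan is to deduce all three bounds from the inhomogeneous Strichartz estimate \eqref{1.222234} in Lemma \ref{lem:strichartz}, combined with H\"older's inequality in the space variable (to absorb $\eta$) and in the time variable (to extract the power of $T$); no structural feature of the equation beyond the free Strichartz estimates is needed.

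First I would fix the admissible pair on the left. For $d=2$, where no endpoint is available, take $(q_1,r_1)=(\infty,2)$; for $d=3,4$ take in addition the endpoint pair $(q_1,r_1)=(2,\tfrac{2d}{d-2})$, which is Schr\"odinger-admissible since $d\ge3$, and treat the two output norms separately. In each case \eqref{1.222234} reduces the problem to bounding $\big\|\eta u\big\|_{L_t^{q_2'}L_x^{r_2'}(I)}$ for a conveniently chosen admissible pair $(q_2,r_2)$.

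Since $\eta$ is time-independent, H\"older in $x$ gives $\|\eta u(\rho)\|_{L_x^{r_2'}}\le\|\eta\|_{L_x^{r}}\|u(\rho)\|_{L_x^{p}}$ as soon as $\tfrac1{r_2'}=\tfrac1r+\tfrac1p$, and then H\"older in $t$ over $I=[0,T)$ gives $\|u\|_{L_t^{q_2'}L_x^{p}(I)}\le T^{\,1/q_2'-1/a}\,\|u\|_{L_t^{a}L_x^{p}(I)}$ for any second admissible pair $(a,p)$; this is legitimate because $q_2\ge2$ forces $q_2'\le2\le a$, so the time exponent is nonnegative. A direct computation using the admissibility relations $\tfrac2{q_2}+\tfrac d{r_2}=\tfrac d2$, $\tfrac2a+\tfrac dp=\tfrac d2$ together with $\tfrac1{r_2'}=\tfrac1r+\tfrac1p$ shows that the extracted power is always $1/q_2'-1/a=1-\tfrac d{2r}$, independently of the choice of $p$. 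It remains to select the pairs: when $r\ge d$, pair $\eta$ directly with $u\in L_x^2$, that is $p=2$, $(a,p)=(\infty,2)$ and $(q_2,r_2)=(\tfrac{2r}{d},\tfrac{2r}{r-2})$, which is admissible because $r\ge d$ gives $q_2\ge2$, and which is never the forbidden pair $(2,\infty)$ in $d=2$ since there $q_2=r>2$; when $\tfrac d2<r<d$ (possible only for $d=3,4$), pair $\eta$ with the endpoint norm, that is $p=\tfrac{2d}{d-2}$, $(a,p)=(2,\tfrac{2d}{d-2})$ and $(q_2,r_2)$ the admissible pair determined by $\tfrac1{r_2}=\tfrac{d+2}{2d}-\tfrac1r$. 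In both cases $(a,p)$ is one of the two endpoint pairs, so $\|u\|_{L_t^{a}L_x^{p}(I)}$ is already controlled by the norm appearing on the right-hand side of the Lemma, and collecting the pieces yields exactly the asserted bounds with the factor $T^{1-d/(2r)}$ (which reads $T^{1-1/r}$ when $d=2$).

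The only delicate point is the exponent bookkeeping: one must verify that the two chosen pairs are genuinely Schr\"odinger-admissible and that the power $1-\tfrac d{2r}$ is nonnegative. Both follow from $r>\tfrac d2$, which holds automatically here since $r>2$ and $d\le4$. This is precisely why the statement is confined to $d\in\{2,3,4\}$: for $d\ge5$ the window $2<r<\tfrac d2$ is nonempty, the pair $(q_2,r_2)$ above ceases to be admissible, and the estimate genuinely fails, in line with the ill-posedness established in that regime.
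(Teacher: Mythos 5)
Your proof is correct and follows essentially the same route as the paper: inhomogeneous Strichartz to reduce to a dual norm, H\"older in $x$ to absorb $\eta\in L_x^r$, and H\"older in $t$ over $[0,T)$ to extract the factor $T^{1-d/(2r)}$, with the same split between pairing $u$ with $L_x^2$ for $r\ge d$ and with the endpoint $L_t^2L_x^{2d/(d-2)}$ norm for $d/2<r<d$. The only cosmetic difference is that you derive the exponent $1-\tfrac{d}{2r}$ once and for all from the admissibility relations, whereas the paper just writes out the specific dual pairs $(q_2',r_2')$ explicitly in each case.
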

\begin{proof}
By the Strichartz estimates and H\"older's inequality, we have that for $d=2$,
\begin{align*}
 \norm{\int_0^t e^{i(t-\rho)\Delta}(\eta u)(\rho)d\rho}_{L_t^{\infty}L_x^2}\lsm \norm{\eta u}_{L_t^{\frac{r}{r-1}} L_x^{\frac{2r}{2+r}}} \lsm T^{1-\frac 1r}\norm{\eta}_{L_x^r}\norm{u}_{L_t^{\infty}L_x^2}.
 \end{align*}
 For $d=3, 4$, we have that when $2<r\leq d$, then
  \begin{align}\label{34d-S1}
 \norm{\int_0^t e^{i(t-\rho)\Delta}(\eta u)(\rho)d\rho}_{L_t^{\infty}L_x^2\cap L_t^2L_x^{\frac{2d}{d-2}}}\lsm  \norm{\eta u}_{L_t^{\frac {2r}{3r-d}}L_x^{\frac {2rd}{2d-2r+rd}}}\lsm T^{1-\frac d{2r}}\norm{\eta}_{L_x^r}\norm{u}_{L_t^2L_x^{\frac{2d}{d-2}}};
\end{align}
when $r>d$, then
  \begin{align}\label{34d-S2}
\norm{\int_0^t e^{i(t-\rho)\Delta}(\eta u)(\rho)d\rho}_{L_t^{\infty}L_x^2\cap L_t^2L_x^{\frac{2d}{d-2}}}\lsm  \norm{\eta u}_{L_t^{\frac {2r}{2r-d}}L_x^{\frac {2r}{2+r}}}\lsm T^{1-\frac d{2r}}\norm{\eta}_{L_x^r}\norm{u}_{L_t^{\infty}L_x^{2}}.
\end{align}
 This gives the proof of this lemma.
\end{proof}

\begin{lem}[$d\geq 5$]\label{5d}
 Let $\eta \in L_x^{r}(\R^d)$, $r\geq \frac d2$. Let $I=[0, T)\subset \R^+$, then for any $N_0\in 2^{\N}$, we have that for $r=\frac d2$,
\begin{align}\label{34d-S3}
\norm{\int_0^t e^{i(t-\rho)\Delta}\eta u(\rho)d\rho}_{L_t^{\infty}L_x^2\cap L_t^2L_x^{\frac {2d}{d-2}}(I)}\lsm \norm{P_{\geq N_0} \eta}_{L_x^{\frac d2}}\norm{ u}_{L_t^{2}L_x^{\frac{2d}{d-2}}}
+T^{\frac 12}N_0\norm{ \eta}_{L_x^{\frac d2}}\norm{ u}_{L_{t}^{\infty}L_x^2};
\end{align}
for $r>\frac d2$,
\begin{align}\label{34d-S4}
\norm{\int_0^t e^{i(t-\rho)\Delta}\eta u(\rho)d\rho}_{L_t^{\infty}L_x^2\cap L_t^2L_x^{\frac{2d}{d-2}}(I)}\lsm
T^{1-\frac d{2r}}\norm{ \eta}_{L_x^r}\norm{ u}_{L_{t}^{\infty}L_x^{2}\cap L_t^2L_x^{\frac{2d}{d-2}}}.
\end{align}
\end{lem}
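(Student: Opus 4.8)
The plan is to deduce both estimates from the inhomogeneous Strichartz inequality \eqref{1.222234}, choosing the admissible source pair $(q_2,r_2)$ so that, after a H\"older split of $\eta u$ with $\eta\in L_x^r$, the factor carrying $u$ lands exactly in one of the two components of $L_t^\infty L_x^2\cap L_t^2L_x^{\frac{2d}{d-2}}$. Since $d\ge5\ge3$, the endpoint pair $(2,\frac{2d}{d-2})$ is Schr\"odinger-admissible, and this is the ingredient that makes the critical exponent $r=\frac d2$ reachable; the remaining steps are H\"older in space, H\"older in time (to gain a power of $T$), and Bernstein (Lemma \ref{lem:Bernstein}) to trade integrability for a power of the frequency cutoff.

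\emph{Case $r>\frac d2$.} Because $d\ge5$ one has $r>\frac d2>2$ automatically, so we may argue exactly as in the derivation of Lemma \ref{lem:space-time-d}, splitting into $\frac d2<r\le d$ and $r>d$. In the first range apply \eqref{1.222234} with target pair in $\{(\infty,2),(2,\frac{2d}{d-2})\}$ and the source pair arising in \eqref{34d-S1} (which is Schr\"odinger-admissible throughout $\frac d2<r\le d$); H\"older in space gives $\|\eta u\|_{L_x^{r_2'}}\lesssim\|\eta\|_{L_x^r}\|u\|_{L_x^{2d/(d-2)}}$, and H\"older in time on $[0,T)$ produces the factor $T^{1-\frac d{2r}}$, with positive exponent precisely because $r>\frac d2$. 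In the second range use instead the source pair arising in \eqref{34d-S2} and pair $\eta$ with $u\in L_t^\infty L_x^2$, again gaining $T^{1-\frac d{2r}}$. Taking the intersection norm on the right-hand side absorbs both subcases and yields \eqref{34d-S4}.

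\emph{Case $r=\frac d2$.} Write $\eta=P_{<N_0}\eta+P_{\ge N_0}\eta$ and apply \eqref{1.222234} with source pair equal to the endpoint $(2,\frac{2d}{d-2})$, so that the source space is $L_t^2L_x^{\frac{2d}{d+2}}$. For the high-frequency piece, H\"older in space gives $\|P_{\ge N_0}\eta\cdot u\|_{L_x^{2d/(d+2)}}\lesssim\|P_{\ge N_0}\eta\|_{L_x^{d/2}}\|u\|_{L_x^{2d/(d-2)}}$, with no time weight needed, contributing $\|P_{\ge N_0}\eta\|_{L_x^{d/2}}\|u\|_{L_t^2L_x^{2d/(d-2)}}$. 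For the low-frequency piece, Bernstein gives $\|P_{<N_0}\eta\|_{L_x^d}\lesssim N_0\|\eta\|_{L_x^{d/2}}$; then H\"older in space with $u\in L_x^2$, followed by H\"older in time on $[0,T)$, contributes $T^{1/2}N_0\|\eta\|_{L_x^{d/2}}\|u\|_{L_t^\infty L_x^2}$. Summing the two pieces gives \eqref{34d-S3}.

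I do not anticipate a substantive obstacle here: the argument is essentially bookkeeping with Strichartz, H\"older, and Bernstein, with no normal form or iteration needed. The only points requiring care are verifying that every source pair $(q_2,r_2)$ introduced is genuinely Schr\"odinger-admissible (in particular $2\le r_2<\infty$ and $(q_2,r_2,d)\ne(2,\infty,2)$, which holds because $d\ge5$) and that the resulting time exponents $1-\frac d{2r}$ remain nonnegative, which is exactly the content of the hypothesis $r\ge\frac d2$.
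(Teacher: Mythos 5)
Your proposal is correct and follows essentially the same route as the paper: for $r=\frac d2$ both arguments apply dual Strichartz with the endpoint source pair $(2,\frac{2d}{d-2})$, split $\eta=P_{<N_0}\eta+P_{\geq N_0}\eta$, and close with H\"older (in space and in time) plus Bernstein on the low-frequency piece; for $r>\frac d2$ both note that $d\geq 5$ forces $r>\frac d2>2$, so the computations behind \eqref{34d-S1} and \eqref{34d-S2} carry over verbatim. The only bookkeeping point you correctly flag---admissibility of every dual source pair, in particular the endpoint being available since $d\geq 3$---is exactly what makes the argument go through.
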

\begin{proof}
When $r=\frac d2$, by Lemmas \ref{lem:Bernstein}, \ref{lem:strichartz}, we get
\begin{align*}
\norm{\int_0^t e^{i(t-\rho)\Delta}\eta u(\rho)d\rho}_{L_t^{\infty}L_x^2\cap L_t^2L_x^{\frac{2d}{d-2}}}\lsm & \norm{\eta u}_{L_t^{2}L_x^{\frac{2d}{d+2}}}\\
\lsm & \norm{P_{\geq N_0} \eta}_{L_x^{\frac d2}}\norm{ u}_{L_t^{2}L_x^{\frac{2d}{d-2}}}
+\norm{P_{< N_0} \eta}_{L_x^{d}}\norm{ u}_{L_{t, x}^{2}}\\
\lsm & \norm{P_{\geq N_0} \eta}_{L_x^{\frac d2}}\norm{ u}_{L_t^{2}L_x^{\frac{2d}{d-2}}}
+T^{\frac 12}N_0\norm{ \eta}_{L_x^{\frac d2}}\norm{ u}_{L_{t}^{\infty}L_x^2}.
\end{align*}
This gives \eqref{34d-S3}. Noting $r>\frac d2>2$ for $d\geq 5$, \eqref{34d-S4} is followed by \eqref{34d-S1} and \eqref{34d-S2}.
This finishes the proof.
\end{proof}

\begin{proof}[\bf{Proof of Proposition \ref{weak-subcritical-2}}]
In Lemmas \ref{lem:space-time-d}, \ref{5d}, the factors $\norm{P_{\geq N_0} \eta}_{L_x^{\frac d2}}$ and $T^{\gamma}$ for some $\gamma>0$ provide smallness. Using the standard contraction mapping principle, we can easily obtain the local well-posedness for the equation \eqref{eq:NLS} in $L_x^2$. Besides, since the local lifespan depends only $\norm{\eta}_{L_x^r}$, the local solution can be extended globally.
\end{proof}

\begin{remark} In establishing the global well-posedness, we can obtain that for any $T>0$,
\begin{align}\label{127-1447}
\norm{u}_{L_t^{\infty}L_x^2([0, T)\times \R^d)}\leq C(T) \norm {u_0}_{L_x^2}.
\end{align}
This bound can not be derived from the mass conservation, since for complex-valued potentials $\eta$, the equation \eqref{eq:NLS} no longer preserves $L_x^2$-norm.
\end{remark}

\subsection{Global well-posedness in $H_x^2(\R^d)$}\label{GWP-H2}
Employing the Strichartz's estimates does not suffice to further improve the regularity, we adopt an alternative approach to achieve it.
Let $v=\partial_t u$, from \eqref{eq:NLS}, $v$ satisfies the following equation
\begin{equation}\label{eq:NLS-125}
		\left\{ \aligned
		&i\partial_t v+\Delta v+\eta v=0, \qquad t\in [0, T) \mbox{ and } x\in \R^d,
		\\
		&v(0,x)=i(\Delta u_0+\eta u_0)\triangleq v_0.
		\endaligned
		\right.
	\end{equation}
We now have the following two key observations:

\noindent $\bullet$ Claim 1: $v_0\in L_x^2$.

Indeed, by $ u_0\in H_x^{2}$, $\eta\in L_x^{ r}$ with $r>2$ and $r\geq \frac d2$, the H\"older and Sobolev inequalities, we obtain
\begin{align}\label{12711}
\norm{v_0}_{L_x^2}\lsm &\norm {u_0}_{H_x^2}+\norm{\eta}_{L_x^{r}}\norm{u_0}_{L_x^{\frac {2r}{r-2}}}\notag\\
\lsm &\norm {u_0}_{H_x^2}+\norm{\eta}_{L_x^{r}}\norm{u_0}_{H_x^{2}}.
\end{align}
\noindent $\bullet$ Claim 2: $v\in C([0, T); L_x^2(\R^d))$ implies $u\in C([0, T); H_x^2(\R^d))$.

Indeed, by the high and low frequency decomposition, Lemma \ref{lem:Bernstein}, and \eqref{127-1447}, we have
\begin{align}\label{127-1449}
\norm{u}_{L_t^{\infty}H_x^2}\leq & \norm{P_{<1}u}_{L_t^{\infty}H_x^2}+ \norm{P_{\geq 1}u}_{L_t^{\infty}H_x^2}\notag \\
\leq & C(T) \norm{u_0}_{H_x^2}+ \norm{P_{\geq 1}u}_{L_t^{\infty}H_x^2}.
\end{align}
It is reduced to consider $\norm{P_{\geq 1}u}_{L_t^{\infty}H_x^2}$ in \eqref{127-1449}. Noting that $\Delta u=-i v-\eta u$, by the H\"older and Sobolev inequalities, Lemma \ref{lem:Bernstein}, and \eqref{127-1447}, we have
\begin{align}\label{127-1450}
\norm{P_{\geq 1}u}_{L_t^{\infty}H_x^2}\lsm & \norm {v}_{L_x^2}+\norm{ u P_{\geq N_0} \eta  }_{L_t^{\infty}L_x^2}+\norm{ u P_{< N_0} \eta   }_{L_t^{\infty}L_x^2}\notag \\
\lsm &C(T)\norm {v_0}_{L_x^2}+\norm{ P_{\geq N_0} \eta }_{L_x^{r}}\norm {u}_{L_t^{\infty}L_x^{\frac {2r}{r-2}}}+\norm{ P_{< N_0} \eta }_{L_x^{\infty}}\norm{ u}_{L_t^{\infty}L_x^2}\notag \\
\lsm &C(T)\norm {v_0}_{L_x^2}+\norm{ P_{\geq N_0} \eta }_{L_x^{r}}\norm {u}_{L_t^{\infty}H_x^{2}}+C(T)N_0^{\frac dr}\norm{ \eta }_{L_x^{r}}\norm{ u_0}_{H_x^2},
\end{align}
where $N_0\in 2^{\N}$ is a large constant decided later.

Hence, by \eqref{12711}-\eqref{127-1450}, we obtain
\begin{align}\label{127-1451}
\norm{u}_{L_t^{\infty}H_x^2} \leq & C(T)(1+N_0^{\frac dr}\norm{ \eta }_{L_x^{r}}) \norm{u_0}_{H_x^2}+ C\norm{ P_{\geq N_0} \eta }_{L_x^{r}}\norm {u}_{L_t^{\infty}H_x^{2}}.
\end{align}
Now, we take $N_0=N_0(\norm{  \eta }_{L_x^{r}})$ large enough, such that
\begin{align}\label{127-1452}
C\norm{ P_{\geq N_0} \eta }_{L_x^{r}}\leq \frac 12.
\end{align}
Hence, by \eqref{127-1451} and \eqref{127-1452}, we conclude that
\begin{align}\label{127-1453}
\norm{u}_{L_t^{\infty}H_x^2} \leq  C( T, \norm{  \eta }_{L_x^{r}}, \norm{ u_0}_{H_x^2}).
\end{align}
This completes the proof of this claim.

\begin{proof}[\bf{Proof of global well-posedness in $H_x^2$}]
By Claim 2, the global well-posedness of \eqref{eq:NLS} in $H_x^2$ reduces to the global well-posedness of \eqref{eq:NLS-125} in $L_x^2$. Note that \eqref{eq:NLS-125} shares the same structure as \eqref{eq:NLS}. Hence, the space-time estimates in Lemmas \ref{lem:space-time-d}, \ref{5d} for \eqref{eq:NLS} also hold for \eqref{eq:NLS-125}. Combining Claim 1, we can obtain the the global well-posedness of \eqref{eq:NLS-125} in $L_x^2$. This finishes the proof.
\end{proof}

\subsection{Ill-posedness in $H_x^{2+}(\R^d)$}
In this part, we aim to prove the result that for any $\gamma>2$, there exists some $\eta\in L_x^{r}(\R^d)$ with $r>2$, $r\geq \frac d2$, and $d\geq 5$, such that the equation \eqref{eq:NLS} is ill-posed in $H_x^{\gamma}(\R^d)$.

We set the parameters $M, N, L\geq 1$, which shall be determined later. Next, on one hand, we choose the initial data
\EQ{
u_0(x):=\mathscr{F}^{-1}\big(L^{-\frac d2-\gamma }\prod_{i=1}^d\chi_{\frac L2\leq |\cdot|\leq 2L}(\xi^{(i)})\big)(x).
}
Then we have
\EQ{\|u_0\|_{H_x^{\gamma}(\R^d)}^2=\|\langle\xi\rangle^\gamma \widehat{u_0}(\xi)\|_{L_{\xi}^2(\R^d)}^2\sim 1,
}
where $\xi=(\xi^{(1)}, \xi^{(2)}, \cdots, \xi^{(d)})$.
On the other hand, we choose the potential
\EQ{
\eta(x)=N^{-d+\frac dr}\mathscr{F}^{-1}\big(\chi_{\sqrt{\frac {\pi}{3}}M\leq |\cdot|\leq \sqrt{\frac {\pi}{3}}M+N}(\xi^{(1)})\cdot\prod_{i=2}^d\chi_{\frac N2\leq |\cdot|\leq 2N}(\xi^{(i)})\big)(x).
}
Then we have
\EQ{
\widehat{\eta}(\xi)=N^{-d+\frac dr}\chi_{\sqrt{\frac {\pi}{3}}M\leq |\cdot|\leq \sqrt{\frac {\pi}{3}}M+N}(\xi^{(1)})\cdot \prod_{i=2}^d\chi_{\frac N2\leq |\cdot|\leq 2N}(\xi^{(i)}).
}
Moreover, noting $\chi_{\sqrt{\frac {\pi}{3}}M\leq |\cdot|\leq \sqrt{\frac {\pi}{3}}M+N}(\xi^{(1)})$ and $\chi_{\frac N2\leq |\cdot|\leq 2N}(\xi^{(i)})$ ($i=2,3, \cdots, d$) are Schwartz functions, hence for any $r>2$, we have
\EQ{
\norm{\eta}_{L_x^r}\lsm \norm{\widehat{\eta}}_{L_{\xi}^{r'}}\lsm N^{-d+\frac dr} N^{d(1-\frac 1r)} =1,
}
where $r'$ satisfies $\frac 1r+\frac 1{r'}=1$.

Define
\EQ{
A (u_0)(t)\triangleq\int_0^t e^{-i\rho\Delta}(\eta e^{i\rho\Delta}u_0)d\rho.
}
We aim to prove that for any $\gamma>2$,
\EQ{
\sup\limits_{t\in [0,1]}\|A(u_0)(t)\|_{H_x^{\gamma}(\R^d)}\rightarrow \infty, \mbox{ as }M\rightarrow \infty.
}
Define
$$
t\triangleq \frac 1{M^2},
$$
and
\begin{align*}
\Omega=\left\{\xi: \sqrt{\frac {\pi}{3}}M+\frac N4\leq\xi^{(1)}\leq \sqrt{\frac {\pi}{3}}M+\frac {3N}4, \frac {3N}4\leq\xi^{(i)}\leq\frac {7N}4 (\mbox{where }i=2, 3, \cdots, d)\right\}.
\end{align*}
Following exactly the same process as in Theorem \ref{theorem-main 1}, we get
\begin{align*}
\|A(u_0)\|_{H_x^{\gamma}(\R^d)}
\geq  C(N, L) M^{\gamma-2},
\end{align*}
where $C(N, L)>0$ is a finite constant. Hence, by $\gamma>2$, we conclude that ant $T>0$,
\begin{align}\label{Bu0-hd}
\sup\limits_{t\in [0,T]}\|A(u_0)\|_{H_x^{\gamma}(\R^d)}\rightarrow \infty, \mbox{ as }M\rightarrow \infty.
\end{align}
The proof of ill-posedness is done by applying Lemma \ref{ill-posed}.

	\vskip 0.2cm


\begin{thebibliography}{99}
		
		



		\bibitem{Anderson-1958}
		P. W. Anderson, Absence of diffusion in certain random lattices, Physical Review, 109 (1492), 1958.
		
		
\bibitem{Babin-Ilyin-Titi-2011}		
A. Babin, A. Ilyin, and E. Titi,
On the regularization mechanism for the periodic Korteweg-de Vries equation,
Comm. Pure Appl. Math., 64 (5), 2011, 591-648.


\bibitem{Bai-Lian-Wu-2024}
		R. Bai, Y. Lian, and Y. Wu, Regularization for the Schr\"{o}dinger equation with rough potential: one-dimensional case, 2025, In preparation.
		
				


		\bibitem{B-T-06}
		I. Bejenaru, and T. Tao,
		Sharp well-posedness and ill-posedness results for a quadratic non-linear Schr\"odinger equation, J. Funct. Anal., 233 (1), 2006, 228--259.
		
		
		
		\bibitem{B-L-14}
		J. Bourgain, and D. Li, On an endpoint Kato-Ponce inequality, Differential Integral Equations, 27 (11-12), 2014, 1037--1072.
		
		\bibitem{Bouard-De-99}
		A. de Bouard, and A. Debussche, A stochastic nonlinear Schr\"odinger equation with multiplicative noise, Commun. Math. Phys., 205, 1999, 161--181.
		
		
		\bibitem{Cazenave--03}
		T. Cazenave, Semilinear Schr\"odinger equations,
		American Mathematical Society, 2003.
		
		
		
		\bibitem{Cazenave-Weissler-1990}
		T. Cazenave, and F. Weissler, The Cauchy problem for the critical nonlinear Schr\"{o}dinger equation in $H^s$, Nonlinear Anal., 14 (10), 1990, 807-836.
		
		
		\bibitem{Co-Me-91}
		R. R. Coifman, and Y. Meyer, Ondelettes et op\'{e}rateurs III, Op\'{e}rateurs multilin\'{e}aires, Actualit\'{e}s
		Math\'{e}matiques, Hermann, Paris 1991.
		

\bibitem{C-C-T-03}
M. Christ, J. Colliander, and T. Tao, Ill-posedness for nonlinear Schr\"{o}dinger and wave equations, arXiv:math/0311048.


		
		\bibitem{De-Ma-19}
		A. Debussche, and J. Martin, Solution to the stochastic Schr\"{o}dinger equation on the full space, 32 (4), 2019, Nonlinearity, 1147--1174.
		
		
		
		\bibitem{De-We-19}
		A. Debussche, and H. Weber, The Schr\"{o}dinger equation with spatial white noise potential, Electron. J. Probab., 23 (28), 2018, 1--16.
		
		
		
		
		
		
		\bibitem{Fu-Oh-Oz-08}
		R. Fukuizumi, M. Ohta, and T. Ozawa, Nonlinear Schr\"{o}dinger equation with a point defect, Ann. Inst. H. Poincar\'{e} Anal. Non Lin\'{e}aire, 25 (5), 2008, 837--845.
		
		
		
		
		
		\bibitem{Ginibre-Velo-1985}
		J. Ginibre, and G. Velo, The global Cauchy problem for the nonlinear Schr\"{o}dinger equation revisited, Ann. Inst. H. Poincar\'{e} Anal. Non Lin\'{e}aire, 2, 1985, 309--327.
		
		
		\bibitem{Goodman-Holmes-Weinstein-2004}
		R. H. Goodman, P. J. Holmes, and M. I. Weinstein, Strong NLS soliton-defect interactions, Physica D, 192 (3-4), 2014, 215--248.
		
		\bibitem{Gra-14}
		L. Grafakos, Modern Fourier Analysis, Graduate Texts in Mathematics, 3rd edn. Springer, New York, 2014.
		
		
		\bibitem{Kato-Ponce-1988}
		T. Kato, and G. Ponce, Commutator estimates and the Euler and Navier-Stokes equations, Comm. Pure Appl. Math., 41 (7), 1988, 891--907.
		
		\bibitem{Keel-Tao-1998}
		M. Keel, and T. Tao, Endpoint Strichartz estimates, Amer. J. Math., 120 (5), 1998, 955--980.
		
		
		
		
		\bibitem{KenigPonceVega-CPAM-93}
		C. Kenig, G. Ponce, and L. Vega,
		Well-posedness and scattering results for the generalized Korteweg-de Vries equation via contraction principle,
		Comm. Pure Appl. Math., 46 (4), 1993, 527--620.
		
\bibitem{KenigPonceVega-CPAM-01}
		C. Kenig, G. Ponce, and L. Vega,
On the ill-posedness of some canonical dispersive equations,
Duke Math. J., 106 (3), 2001, 617--633.
		
		\bibitem{Kumar-2018}
		A. Kumar, Fundamentals of Quantum Mechanics, Cambridge University Press, 2018.
		
		
		\bibitem{Li-19}
		D. Li, On Kato-Ponce and fractional Leibniz, Rev. Mat. Iberoam, 35 (1), 2019, 23--100.
		
		\bibitem{LinaresPonce-09}
		F. Linares, and G. Ponce,
		Introduction to Nonlinear Dispersive Equations, Springer, 2009.
		
		\bibitem{M-Wu-Z24}
		N. J. Mauser, Y. Wu, X. Zhao, The cubic nonlinear Schr\"odinger equation with rough potential, arXiv: 2403.16772, 2024.
		
		\bibitem{Sha85CPAM}
		J. Shatah, Normal forms and quadratic nonlinear Klein-Gordon equations, Comm. Pure Appl. Math., 38 (5), 1985, 685--696.
		
		\bibitem{Strichartz-1977}
		M. Strichartz, Restrictions of Fourier transforms to quadratic surfaces and decay of solutions of wave equations, Duke Math. J., 44, 1977, 705--714.
		
		
		\bibitem{Tao-2006}
		T. Tao, Nonlinear Dispersive Equations. Local and Global Analysis, American Mathematical Society, 2006.



\bibitem{Visan-2007}
M. Visan, The defocusing energy-critical nonlinear Schr\"{o}dinger equation in higher dimensions, Duke Math. J., 138 (2), 2007, 281-374.


		
		\bibitem{Weder-1999}
		R. Weder, The $W_{k,p}$ continuity of the Schr\"odinger wave operators on the line, Commun. Math. Phys., 208 (2), 1999, 507--520.
		
		
	\end{thebibliography}
\end{document}